\newtheorem{thm}{Theorem}[section]
\newtheorem{lem}[thm]{Lemma}
\newtheorem{eg}[thm]{Example}
\newtheorem{prop}[thm]{Proposition}
\newtheorem{cor}[thm]{Corollary}
\newtheorem{rem}[thm]{Remark}
\newtheorem{defn}[thm]{Definition}
\numberwithin{equation}{section}
\newcommand{\BC}{\mathbb{C}}
\newcommand{\BN}{\mathbb{N}}
\newcommand{\BR}{\mathbb{R}}
\newcommand{\RP}{\mathbb{R}_+}
\newcommand{\CL}{\mathcal{B}}
\newcommand{\KI}{\mathfrak{I}}
\newcommand{\KJ}{\mathfrak{J}}
\newcommand{\CB}{\mathrm{CB}}
\newcommand{\CP}{\mathrm{CP}}
\newcommand{\os}{\mathrm{os}}
\newcommand{\rd}{\mathrm{d}}
\newcommand{\rn}{\mathrm{r}}
\newcommand{\Tr}{\mathrm{Tr}}
\newcommand{\tp}{\mathrm{t}}
\newcommand{\cb}{\mathrm{cb}}
\newcommand{\R}{\mathrm{Re}\ \!}
\newcommand{\Morc}{\mathrm{CCP}}
\newcommand{\sa}{\mathrm{sa}}
\newcommand{\id}{\mathrm{id}}
\newcommand{\MS}{\mathrm{S}}
\newcommand{\smnoind}{\noindent}
\begin{document}


\title{Duality for operator systems with generating cones}

\author{Yu-Shu Jia and Chi-Keung Ng}

\address[Yu-Shu Jia]{Chern Institute of Mathematics, Nankai University, Tianjin 300071, China.}
\email{1410035@mail.nankai.edu.cn}

\address[Chi-Keung Ng]{Chern Institute of Mathematics and LPMC, Nankai University, Tianjin 300071, China.}
\email{ckng@nankai.edu.cn}

\date{\today}

\keywords{operator systems, approximately unitary operator systems, duality}
\subjclass[2010]{Primary: 46L07, 47L07, 47L25, 47L50}

\begin{abstract}
Let $S$ be a complete operator system with a generating cone; i.e. $S_\sa = S_+ - S_+$. 
We show that there is a matrix norm on the dual space $S^*$, under which, and the usual dual matrix cone, $S^*$ becomes a dual operator system with a generating cone, denoted by $S^\rd$.
The canonical complete order isomorphism $\iota_{S^*}: S^* \to S^\rd$ is a dual Banach space isomorphism. 
Furthermore, we construct a canonical completely contractive weak$^*$-homeomorphism  $\beta_S: (S^\rd)^\rd\to S^{**}$, and verify that it is a complete order isomorphism. 

For a complete operator system $T$ with a generating cone and a completely positive complete contraction $\varphi:S\to T$, there is a weak$^*$-continuous completely positive complete contraction $\varphi^\rd:T^\rd \to S^\rd$ with $\iota_{S^*}\circ \varphi^* = \varphi^\rd \circ \iota_{T^*}$. 
This produces a faithful functor from the category of complete operator systems with generating cones (where morphisms are completely positive complete contractions) to the category of dual operator systems with generating cones (where morphisms are weak$^*$-continuous completely positive complete contractions). 

We define the notion of approximately unital operator systems, and verify that operator systems considered in \cite{CvS} and \cite{CvS2} are approximately unital. 
If $S$ is approximately unital, then $\iota_{S^*}:S^* \to S^\rd$ is an operator space isomorphism and $\beta_S: (S^\rd)^\rd\to S^{**}$ is a complete isometry. 
We will also establish that the restriction of the faithful functor $(S,T,\varphi)\mapsto (T^\rd, S^\rd, \varphi^\rd)$ to the category of approximately unital complete operator systems is both full and injective on objects. 

On our way to the above, we find that for a complete $^*$-operator space $X$ equipped with a norm closed matrix cone, the bidual $X^{**}$ is a unital operator system (which show that $X$ is an operator system) if and only if $X$ approximately unital. 
This gives an abstract characterization of complete operator systems whose biduals being unital.
\end{abstract}

\maketitle

\section{Introduction and Notations}\label{sec:Introd}


A duality framework for (not necessarily unital) operator systems was introduced by the second named author in \cite{Ng-dual-OS} (see also related results in \cite{Han}). 
Most of the statements in \cite{Ng-dual-OS} apply to a class of operator systems called ``dualizable operator systems''. 
However, one draw back for this framework is that it is not easy to determine when an operator system is ``dualizable'', except for unital operator systems and $C^*$-algebras. 
Therefore, a natural question is whether there exists a property for operator systems which is easier to verify, and under which the duality is good enough. 
It is the first aim of this article. 
The second aim of this article is to apply the duality results to a class of operator systems that has recently been introduced by Connes and van Suijlekom in \cite{CvS} and \cite{CvS2} (see also \cite{Faren, torlerance, van, NC}). 


For the first aim, we will show that the duality is nice for (norm) complete operator systems having generating cones. 
More precisely, let $S$ be an operator system (i.e. $S$ is a self-adjoint subspace of some $\CL(H)$ that may not contain the identity, equipped with the induced matrix norm and the induced matrix cone).
We define a matrix semi-norm on its dual space $S^*$ by 
$$\|f\|^\rn:= \sup \{\|f^{(m)}(u)\|: u\in M_m(S)_+; \|u\|\leq 1;  m\in \BN\},$$
for every $f\in M_n(S^*) = \CB(S,M_n)$. It will be shown in Theorem \ref{thm:reg-dual}(b) that the quotient of $S^*$ by the null space of this matrix semi-norm, equipped with the  matrix norm induced by $\|\cdot\|^\rn$ as well as the quotient of the dual matrix cone on $S^*$,  is a weak$^*$-dense operator subsystem of a dual operator system $S^\rd$ (a \emph{dual operator system} is a weak$^*$-closed subspace of some $\CL(H)$, equipped with the induced matrix norm, the induced matrix cone and the induced weak$^*$-topology; see Definition \ref{defn:dual-MOS}). 
The canonical map 
$$\iota_{S^*}: S^* \to S^\rd$$ 
is a completely positive complete contraction, and this map is also weak$^*$-continuous when
\begin{quote}
	we equip $S^* = \tilde S^*$ with the weak$^*$-topology $\sigma(S^*,\tilde S)$, where $\tilde S$ is the completion of $S$.
\end{quote}
Note that we will always consider this weak$^*$-topology $\sigma(S^*,\tilde S)$ on $S^*$. 
As will be seen in Theorem \ref{thm:prop-os-dual}(a), the dual operator system $S^\rd$ is \emph{universal}, in the sense that if $V$ is a dual operator system and $\varphi:S^*\to V$ is a weak$^*$-continuous completely positive complete contraction, then one can find a unique weak$^*$-continuous completely positive complete contraction $\overline{\varphi}: S^\rd \to V$ satisfying 
$$\varphi = \overline{\varphi}\circ \iota_{S^*}.$$ 
Since $S^{**}$ is always a dual operator system, the above universal property implies that the dual map $\iota_{S^*}^*:(S^\rd)^*\to S^{**}$ induces a weak$^*$-continuous completely positive complete contraction 
$$\overline{\iota_{S^*}^*}: (S^\rd)^\rd\to S^{**}.$$ 
On the other hand, any completely positive complete contraction $\varphi$ from $S$ to another operator system $T$ will induce a weak$^*$-continuous completely positive complete contraction $\varphi^\rd: T^\rd \to S^\rd$ (see Relation \eqref{eqt:phi-d}). 
Note, however, that there exist non-zero operator systems $S$ for which $S^\rd = \{0\}$ (see Example \ref{eg:not-unique}).


Let us recall that (see also Definition \ref{defn:dual})
\begin{quote}
a complete operator system $S$ is said to be \emph{dualizable} if one can find a (not necessarily isometric) complete embedding from $S^*$ to some $\CL(H)$ such that the matrix cone on $S^*$ is the one induced from this embedding. 
\end{quote}
It was shown in \cite{Ng-dual-OS}  (see also Lemma \ref{lem:dualizable}) that if $S$ is dualizable, then $\iota_{S^*}$ is a Banach space isomorphism (in fact, an operator space isomorphism). 
Moreover, $(S,T,\varphi) \mapsto (T^\rd,S^\rd,\varphi^\rd)$ is a faithful contravariant functor from the category of dualizable operator systems to dual operator systems. 
It was also shown in \cite{Ng-dual-OS} that if $S$ is unital, then $S$ is dualizable and $\overline{\iota_{S^*}^*}:(S^\rd)^\rd\to S^{**}$ is a completely isometric complete order isomorphism.


In this article, we will extend these results. 
Notice that a dualizable operator system is by definition complete and will always have a generating cone; i.e. its cone will linearly span the whole space (see Lemma \ref{lem:dualizable}).  


In the following, we assume that $S$ is an operator system such that its completion $\tilde S$ has a generating cone (here, the matrix cone of $\tilde S$ is the closure of the matrix cone of $S$). 
We will show in Proposition \ref{prop:iota-bdd-below} that
\begin{itemize}
	\item $\iota_{S^*}$ is a complete order isomorphism, a Banach space isomorphism and a weak$^*$-homeomorphism;
	
	\item $S^\rd$ also has a generating cone.
\end{itemize}
We will also obtain in Theorem \ref{thm:comp-isom-tau} that 
\begin{itemize}
	
	\item $\overline{\iota_{S^*}^*}$ is always a complete order isomorphism and a weak$^*$-homeomoprhism;

	\item $\overline{\iota_{S^*}^*}$ is completely isometric if and only if there is a complete contraction from $S$ to $(S^\rd)^\rd$ compatible with both $\overline{\iota_{S^*}^*}$ and the canonical map from $S$ to $S^{**}$.
\end{itemize}
Furthermore, the following will be established in Corollary \ref{cor:dual-funct}:
\begin{itemize}
	\item the functor $(S,T,\varphi) \mapsto (T^\rd,S^\rd,\varphi^\rd)$ from the category of complete operator systems with generating cone to dual operator systems with generating cone is faithful. 
\end{itemize}


Concerning our second aim, Connes and van Suijlekom associated a non-complete operator system with every tolerance relation on an arbitrary metric space (see \cite{CvS2}  and also \cite{CvS}). 
One of the tools they employed in \cite{CvS2} to study this kind of operator systems is  the notion of approximate order units that are  ``matrix norm defining''. 
In Definition \ref{defn:weakly-matrix}, 
we will introduce a weaker notion  (see Theorem \ref{thm:unital-bidual}(c)) called ``weakly matrix norm defining'' increasing net, and we will say that an operator system is \emph{approximately unital} if it admits a ``weakly matrix norm defining'' increasing net. 
The following will be proved:
\begin{itemize}
	\item If an operator system is approximately unital, then so is its completion (see Theorem \ref{thm:unital-bidual}(a)). 
	
	\item Operator systems associated with tolerance relations are approximately unital (see Corollary \ref{cor:toler-rel}(a)). 
	
	\item A complete operator system $S$ is approximately unital if and only if $S^{**}$ is unital (see Theorem \ref{thm:unital-bidual}(a)). 
	
	\item Every approximately unital complete operator system is dualizable (see Proposition \ref{prop:approx-unit}(a)). 
	
	\item If $S$ is approximately unital, then $\overline{\iota_{S^*}^*}$ is completely isometric (see Proposition \ref{prop:approx-unit}(c)). 
	
	\item The functor $(S,T,\varphi) \mapsto (T^\rd,S^\rd,\varphi^\rd)$ from the category of approximately unital complete operator systems to the category of dual operator systems with generating cones is faithful, full and injective on objects  (see Theorem \ref{thm:dual-funct-approx-unit}). 
	
	\item If $S$ is an operator system and $T\subseteq S$ is a subsystem which admits an increasing net that weakly defines the matrix norm on $S$, then every completely positive complete contraction from $T$ to $M_n$ extends to a completely positive complete contraction from $S$ to $M_n$ (see Proposition \ref{prop:approx-unit-subsys}(b)). 
\end{itemize}


On our way to the results stated above, we also obtain the following facts concerning a $^*$-operator space $X$ equipped with a closed matrix cone (in particular, when $X = S$ or $X = S^*$ for an operator system $S$):  

\begin{itemize}
	\item For every $n\in \BN$, the assignment $\phi\mapsto \Upsilon_\phi$ is an isometric order isomorphism from $M_n(X^*)_\sa = \CB(X,M_n)_\sa$ onto $(M_n(X)^*_\sa, \gamma_X^*)$, where 
	\begin{align*}
\gamma_X(x):= \inf\big\{\Tr_n(\beta^2)\|w\|: x=\beta w&\beta; w\in M_n(X);\\ 
& \beta\in (M_n)_+ \text{ is invertible}\big\},
	\end{align*}
	and $\Upsilon_\phi(x) :={\sum}_{k,l=1}^n \phi(x_{k,l})_{k,l}$ when $\phi\in M_n(X^*)$ and $x\in M_n(X)$ (see Theorem \ref{thm:bdd-pos-CCP}). 
	
	\item For each $n\in \BN$, there exists an isometry order isomorphism from $M_n(X)^{**}$ onto $M_n(X^{**})$ sending the image of $M_n(X)$ in $M_n(X)^{**}$ onto the image of $M_n(X)$ in  $M_n(X^{**})$ (see Theorem \ref{thm:bidual-MOS}). 
	
	\item If $X$ is complete, then $X^{**}$ is a unital operator system (which implies that $X$ is an operator system) if and only if $X$ admits a weakly matrix norm defining increasing net (see Theorem \ref{thm:unital-bidual}(a)). 
\end{itemize}


The third point above is an abstract characterization for $^*$-vector spaces equipped with complete matrix norms and norm closed matrix cones whose biduals are unital operator systems. 


In the remainder of this section, we will set some basic notation. 


For (real or complex) vector spaces $X$ and $Y$, we denote by $L(X,Y)$ the set of (respectively, real or complex) linear maps from $X$ to $Y$.
When $X$ and $Y$ are normed spaces, we will use $\CL(X,Y)\subseteq L(X,Y)$ to denote the set of bounded linear maps, and put $\CL(X):=\CL(X,X)$. 
Furthermore, we denote by 
$X^*$ the dual space of $X$ (i.e., $X^*=\CL(X,\BC)$ in the complex case and  $X^*=\CL(X,\BR)$ in the real case).
Note that we will always consider complex linearity for maps between complex vector spaces.  
Moreover, 
\begin{equation}\label{eqt:notat-B-X}
	B_X:=\left\lbrace x\in X:\left\|x\right\|\leq 1\right\rbrace,  \text{ and }\tilde X \text{ is always the completion of }X.
\end{equation}


\begin{defn}\label{defn:order}
	(a) A complex vector space $X$ is a \emph{$^*$-vector space} if it is equipped with an involution $^*:X\to X$. 
	Moreover, a complex linear map $\varphi:X\to Y$ between $^*$-vector spaces is said to be \emph{self-adjoint} if $\varphi^* = \varphi$, where
	\begin{equation}\label{eqt:def-adj}
		\varphi^*(x):= \varphi(x^*)^* \qquad (x\in X).
	\end{equation}
	
	\smnoind
	(b) A $^*$-vector space $X$ is called an \emph{ordered vector space} if 
	$$X_\sa:=\{x\in X:x^* = x \}$$ 
	is a real ordered vector space, equipped with a cone $X_+$ (see \ref{sec:ord-vs}).

	\smnoind
	(c) Let  $X$ and $Y$ be complex ordered vector spaces.
	A self-adjoint complex linear map $\varphi:X\to Y$ is called 
a \emph{positive map} (respectively, an \emph{order monomorphism} or an \emph{order isomorphism}) if $\varphi|_{X_\sa}:X_\sa \to Y_\sa$ is positive (respectively, an order monomorphism  or an order isomorphism), in the sense of Definition \ref{defn:pos}. 
\end{defn}


We write $M_{m,n}$ for the complex vector space of complex $m\times n$-matrices, and put $M_n:= M_{n,n}$. 
For a complex vector space $X$, we denote 
$$M_{m,n}(X) := M_{m,n}\otimes X,$$ 
where $\otimes$ is the tensor product over $\BC$. 
One may think of an element $x$ in $M_{m,n}(X)$ as a $m\times n$-matrix $[x_{k,l}]_{k,l}$ with $x_{k,l}\in X$. 
By the universal property of tensor products, a bilinear map from $X\times Y$ to $Z$ will induce a bilinear map from $M_{k,m}(X)\times M_{m,n}(Y)$ to $M_{k,n}(Z)$. 
We set 
$$x\oplus y:= 
\begin{pmatrix}
	x&0\\
	0&y
\end{pmatrix}\in M_{m+n}(X) \qquad (x\in M_m(X); y\in M_n(X)).$$
We will identify $M_n(X)\subseteq M_{n+1}(X)$ and $M_{m,n} \subseteq M_{\max\{m,n\}}$, by putting elements of $M_n(X)$ and $M_{m,n}$ into the upper left corners of $M_{n+1}(X)$ and $M_{\max\{m,n\}}$, respectively. 
Denote
\[M_\infty(X):={\bigcup}_{n\in \mathbb{N}}M_n(X).\]
Notice that $M_\infty := \bigcup_{n\in \BN} M_n$ is an algebra and $M_\infty(X) = M_\infty \otimes X$ is a $M_\infty$-bimodule. 
If $X$ and $Y$ are complex vector spaces and $\varphi:X\to Y$ is a complex linear map, we define 
\begin{equation*}\label{eqt:def-varphi-infty}
	\varphi^{(\infty)}:= \id_{M_\infty}\otimes \varphi: M_\infty(X)\to M_\infty(Y)
\end{equation*}
and put $\varphi^{(n)}:=\varphi^{(\infty)}|_{M_n(X)}$.

For a complex normed space $X$ and $f=[f_{k,l}]_{k,l}\in M_\infty(X^*)$, we will see $f$ as an element in $\CL(X,M_\infty)$ with 
\[f(x):=[f_{k,l}(x)]_{k,l}\qquad (x\in X).\]
Through this, we will identify implicitly
$$M_n(X^*) = \CL(X,M_n) \qquad (n\in\BN\cup \{\infty\}).$$
In a similar way, we will identify $M_n(X)$ with the subspace of $\CL(X^*,M_n)$ consisting of $\sigma(X^*,X)$-continuous maps. 


Now, let us recall the following abstract description of operator spaces (see \cite[Proposition 2.3.6]{OPS} and \cite[Theorem 2.1]{Ng-reg-mod}).


\begin{defn}\label{defn:oper-sp}
	(a) A complex vector space $X$ is called an \emph{operator space} if there is a norm $\|\cdot\|$ on $M_\infty(X)$ (called the \emph{matrix norm} on $X$) satisfying
	$$\|\alpha x\beta + \gamma y \delta\| \leq \|\alpha^*\alpha+\gamma^*\gamma\|^{1/2}\|\beta^*\beta+\delta^*\delta\|^{1/2} \max\{\|x\|, \|y\|\},$$
	whenever $x,y\in M_\infty(X)$ and $\alpha,\beta, \gamma, \delta\in M_\infty$.
	
	\smnoind
	(b) We say that $X$ is a \emph{$^*$-operator space} if it is both a $^*$-vector space and an operator space such that the induced involution on $M_\infty(X)$ is isometric. 
\end{defn}


Let $X$ and $Y$ be two operator spaces. 
A complex linear map $\varphi\in L(X,Y)$ is said to be \emph{completely bounded} (respectively, \emph{completely contractive} or \emph{completely isometric}) if $\varphi^{(\infty)}$ is bounded (respectively, contractive or isometric). 
Moreover, $\varphi$ is called a
\emph{complete embedding} if $\varphi^{(\infty)}$ is both bounded and bounded below (i.e. ${\inf}_{\left\|x\right\|=1}\left\|\varphi^{(\infty)}(x)\right\|>0$). 
The set $\CB(X,Y)$ of all completely bounded maps from $X$ to $Y$ is a normed space with norm $\|\varphi\|_\cb := \|\varphi^{(\infty)}\|$. 
In fact, $\CB(X,Y)$ is an operator space under the identification $M_\infty(\CB(X,Y)) = \CB(X,M_\infty(Y))$. 


For an operator space $X$, the dual space $X^*$ is again an operator space under the dual matrix norm given by 
\begin{equation}\label{eqt:def-dual-mat-norm}
	\left\|f\right\| :=\sup \{\|f^{(\infty)}(x)\|:x\in B_{M_\infty(X)}\} \qquad (f\in M_\infty(X^*)).
\end{equation}
If, in addition, $X$ and $Y$ are $^*$-operator spaces, then both $X^*$ and $\CB(X,Y)$ are $^*$-operator spaces under the involution defined in a similar way as \eqref{eqt:def-adj}. 


For basic theory on matrix norms and operator spaces, the reader may consult, e.g., \cite{OPS}.


\section{Preliminaries: Two technical results}\label{section2}


Let $X$ and $Y$ be complex vector spaces. 
A bilinear map $( \cdot,\cdot) :X\times Y\to \BC$ is called a \emph{pairing} between $X$ and $Y$ if it satisfies:
$$\{x\in X: {\sup}_{y\in Y}|(x,y) | = 0\} = \{0\}\ \text{and} \ \{y\in Y: {\sup}_{x\in X}|(x,y)| = 0\} = \{0\}.$$ 
In the case when $X$ and $Y$ are $^*$-vector spaces, we say that the pairing $(\cdot, \cdot)$ \emph{respects the involutions} if 
$(x^*,y)= \overline{(x,y^*)}$ ($x\in X$ and $y\in Y$).


As in \cite[p.161]{choi}, for a pairing $(\cdot, \cdot)$ between $X$ and $Y$, we define a pairing between $M_\infty(X)$ and $M_\infty(Y)$ by 
\begin{equation}\label{eqt:def-inf-pair}
	(x,y) :={\sum}^\infty_{k,l =1} (x_{k,l},y_{k,l}) \qquad (x\in M_\infty(X); y\in M_\infty(Y)).
\end{equation}
The weakest vector topology on $M_\infty(Y)$ under which $y\mapsto (x,y)$ is continuous for every $x\in M_\infty(X)$ will be denoted by $\sigma\big(M_\infty(Y),M_\infty(X)\big)$. 
It is clear that a net $\{y^{i}\}_{i\in \KI}$ in $M_\infty(Y)$ is $\sigma\big(M_\infty(Y),M_\infty(X)\big)$-converging to $y\in M_\infty(Y)$ if and only if $\{y^{i}_{k,l}\}_{i\in \KI}$ is $\sigma(Y,X)$-converging to $y_{k,l}$, for every $k,l\in \BN$. 
From this, we see that the multiplication (no matter on the left or on the right) by a fixed element in $M_\infty$ is $\sigma\big(M_\infty(Y),M_\infty(X)\big)$-continuous.


The topology on $M_n(Y)$ that is induced from $\sigma\big(M_\infty(Y),M_\infty(X)\big)$ will be denoted by $\sigma\big(M_n(Y),M_n(X)\big)$.
It coincides with the weakest topology such that $y\mapsto (x,y)$ is continuous for each $x\in M_n(X)$. 
Observe also that if a subset $S\subseteq M_\infty(Y)$ is $\sigma\big(M_\infty(Y),M_\infty(X)\big)$-closed, then 
$$S\cap M_n(Y) = \big\{y\in S: y_{kl} =0, \text{ for all }k,l\geq n \big\}$$ 
is $\sigma\big(M_n(Y),M_n(X)\big)$-closed in $M_n(Y)$ for every $n\in \BN$. 
The converse is true when $S$ is closed under the projection from $M_\infty(Y)$ to $M_n(Y)$ for all $n\in \BN$.


\begin{defn}\label{defn:weak-top}
	Let $X$ be a normed space. 
	The topology $\sigma\big(M_\infty(X^*),M_\infty(X)\big)$ is called the \emph{weak$^*$-topology} on $M_\infty(X^*)$.
\end{defn}


\begin{defn}\label{defn:mat-conv}
	Let $X$ be a complex vector space.
	A subset $K\subseteq M_\infty(X)$ is called \emph{matrix convex} if   
	$$\alpha^*u \alpha + \beta^* v \beta \in K,$$
	whenever $u,v\in K$ and $\alpha, \beta\in M_\infty$ with $\alpha^*\alpha + \beta^*\beta$ being a non-zero projection.
	In this case, we set $K_n:=K\cap M_n(X)$ ($n\in \BN$). 
\end{defn}


In the following, we always denote by $I_n$ the identity of $M_n$, which will be regarded as a projection in $M_\infty$.


\begin{rem}\label{rem:matrix-conv}
	(a) If $K\subseteq M_\infty(X)$ is matrix convex, then  $0\in K$. 
	Indeed, Pick any $u\in K_n$. 
	Then $u\oplus 0\in K_{2n}$, and it is easy to find $\alpha\in M_{2n}$ with $\alpha^*\alpha$ being a non-zero projection and $0=\alpha^*(u\oplus0)\alpha \in K$.

	\smnoind
	(b) If a subset $K\subseteq M_\infty(X)$ is matrix convex, then $0\in K_1$ and $\{K_n\}_{n\in \BN}$ is a ``matrix convex set in $X$'' in the sense of \cite{Witt} (see also \cite[\S 3]{effros1}). 
	In fact, part (a) above implies that $0\in K_1$. 
	Moreover, the following two conditions are clearly satisfied
	for any $m,n\in \mathbb{N}$, 
	\begin{itemize}
		\item $\alpha^*u\alpha\in K_n$ whenever $u\in K_m$ and $\alpha\in M_{m,n}$ with $\alpha^*\alpha=I_n$;
		\item $u\oplus v \in K_{m+n}$ whenever $u\in K_m$ and $v\in K_n$.
	\end{itemize}
	
	\smnoind
	(c) Suppose that $\{K_n\}_{n\in \BN}$ is a matrix convex set in $X$ in the sense of \cite{Witt} such that $0\in K_1$. 
	Then the subset $K:= \bigcup_{n\in \BN}K_n$ of $M_\infty(X)$ is matrix convex in the sense of Definition \ref{defn:mat-conv}. 
	Indeed, by the definition in \cite{Witt}, the two conditions as in part (b) above hold for $\{K_n\}_{n\in \BN}$.
	Let $u,v\in K$ and $\alpha,\beta\in M_\infty$ with $p:= \alpha^*\alpha + \beta^*\beta$ being a non-zero projection. 
	Since $0\in K_n$ ($n\in \BN$), we may find a large enough $m$, so that $u,v\in K_m$ (recall that we identify $M_n(X)\subseteq M_{n+k}(X)$) as well as  $\alpha, \beta\in M_m$. 
	Then there exists a unitary matrix $\nu\in M_m$ such that $\nu^*p\nu= I_k$ for some $k\leq m$. 
	Set $\delta_1 := \nu^* \alpha^* \nu$ and $\delta_2 := \nu^* \beta^* \nu$. 
	Then $(\delta_1,\delta_2)(\delta_1,\delta_2)^*
	=I_k$ and 
\begin{eqnarray*}
	\alpha^* u \alpha + \beta^* v \beta 
	&=& (\alpha^*, \beta^*)
	(u\oplus v)(\alpha^*, \beta^*)^*\\
	&=& \nu (\delta_1,\delta_2)
	(\nu^*\oplus \nu^*)(u\oplus v)(\nu\oplus \nu)(\delta_1,\delta_2)^*\nu^*\in K_m,
\end{eqnarray*}
	because $\nu \nu^* = I_m$ and $(\nu^*\oplus \nu^*)(\nu\oplus  \nu) = I_{2m}$.	
	
\smnoind
(d) Let $\{K_n\}_{n\in \BN}$ be a matrix convex set in $X$, in the sense of \cite{Witt} (but $K_1$ is  not assumed to contain $0$).
If $K:= \bigcup_{n\in \BN}K_n\subseteq M_\infty(X)$, then $K\cap M_n(X) = \{u\oplus 0: u\in K_m; m\leq n\}$ ($n\in \BN$).
Notice that $\{K\cap M_n(X)\}_{n\in \BN}$ is a matrix convex set in $X$ (in the sense of \cite{Witt}) if and only if $0\in K_1$. 
\end{rem}


\begin{lem}\label{lem:matrix-conv-with-0}
	Let $X$ be a complex vector space and $K\subseteq M_\infty(X)$. 
	Then $K$ is matrix convex if and only if $\alpha^*u \alpha + \beta^* v \beta \in K$ for any $u,v\in K$ and $\alpha, \beta\in M_\infty$ with $\|\alpha^*\alpha + \beta^*\beta\|\leq 1$. 
\end{lem}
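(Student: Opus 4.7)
The ``only if'' direction is immediate, since every non-zero projection has operator norm one. For the converse, suppose $u,v\in K$ and $\alpha,\beta\in M_\infty$ satisfy $\|\alpha^*\alpha+\beta^*\beta\|\leq 1$, and set $p:=\alpha^*\alpha+\beta^*\beta$. The degenerate case $p=0$ is easy: then $\alpha^*\alpha=\beta^*\beta=0$, hence $\alpha=\beta=0$, so $\alpha^*u\alpha+\beta^*v\beta=0$, which belongs to $K$ by Remark \ref{rem:matrix-conv}(a).

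Henceforth assume $p\neq 0$. Since $M_\infty=\bigcup_{m}M_m$ and the canonical inclusion $M_m(X)\hookrightarrow M_{m'}(X)$ sends $K_m$ into $K_{m'}$ whenever $m\leq m'$, I choose a single $n\in\BN$ with $u,v\in K_n$ and $\alpha,\beta\in M_n$. The condition $\|p\|\leq 1$ guarantees $I_n-p\geq 0$ in $M_n$, so I may set $q:=(I_n-p)^{1/2}\in M_n$.

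The crux is the following dilation in $M_{2n}$:
\[
\tilde\alpha:=\begin{pmatrix}\alpha & 0\\ 0 & 0\end{pmatrix},\qquad \tilde\beta:=\begin{pmatrix}\beta & 0\\ q & 0\end{pmatrix}.
\]
A block-by-block computation yields
\[
\tilde\alpha^*\tilde\alpha+\tilde\beta^*\tilde\beta=\begin{pmatrix}\alpha^*\alpha+\beta^*\beta+q^2 & 0\\ 0 & 0\end{pmatrix}=\begin{pmatrix}I_n & 0\\ 0 & 0\end{pmatrix},
\]
which is a non-zero projection in $M_{2n}$. At the same time, regarding $u$ and $v$ as sitting in the upper-left $n\times n$ corner of $M_{2n}(X)$, the lower-row block $q$ of $\tilde\beta$ only ever multiplies vanishing blocks of $v$, and a second block computation gives
\[
\tilde\alpha^*u\tilde\alpha+\tilde\beta^*v\tilde\beta=\begin{pmatrix}\alpha^*u\alpha+\beta^*v\beta & 0 \\ 0 & 0\end{pmatrix},
\]
which coincides with $\alpha^*u\alpha+\beta^*v\beta$ as an element of $M_\infty(X)$. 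Applying the projection formulation of matrix convexity (Definition \ref{defn:mat-conv}) to $\tilde\alpha,\tilde\beta,u,v\in K$ delivers $\alpha^*u\alpha+\beta^*v\beta\in K$, as required.

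The only substantive obstacle in this plan is the design of the dilation so as to simultaneously (i) absorb the defect $I_n-p$ to upgrade $p$ into the genuine projection $I_n\oplus 0$, and (ii) leave the quadratic forms $u\mapsto\alpha^*u\alpha$ and $v\mapsto\beta^*v\beta$ unchanged. Placing the square-root defect $q$ in an off-diagonal block of $\tilde\beta$, where it only interacts with the zero blocks of $v$, is the clean way to achieve both ends at once.
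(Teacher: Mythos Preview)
Your argument is correct and uses essentially the same idea as the paper: absorb the defect $(I_n-\alpha^*\alpha-\beta^*\beta)^{1/2}$ so that the resulting sum of Gram matrices becomes a genuine projection, while the extra block hits only zero entries of $u$ or $v$. The paper organizes this as a single conjugation of $u\oplus v\oplus 0$ by the row $(\alpha^*,\beta^*,\delta)$, whereas you pack the defect into a modified $\tilde\beta$; both are the same trick.

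One small slip: you have the ``if'' and ``only if'' labels reversed. The immediate direction (projections have norm one) is the \emph{if} direction, and your dilation argument establishes the \emph{only if} direction.
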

\begin{proof}
	Obviously, if the said condition holds, then $K$ is matrix convex. 
	Conversely, suppose that $K$ is matrix convex. 
	Let $u,v\in K$ and $\alpha,\beta\in M_\infty$ with $\|\alpha^*\alpha + \beta^*\beta\|\leq 1$. 
	Pick a large enough $m\in \BN$ such that  $\alpha, \beta\in M_m$ and $u,v\in K_m$.
	Define $\delta:= (I_m-\alpha^*\alpha - \beta^*\beta)^{1/2} \in M_m$. 
	Since $0\in K$ (see Remark \ref{rem:matrix-conv}(a)), we may regard $0\in K_m$. 
	Then one has $u\oplus v\oplus 0\in K$ and 
	$$\alpha^* u \alpha + \beta^* v \beta 
	= (\alpha^*, \beta^*, \delta)(u\oplus v\oplus 0)(\alpha^*, \beta^*, \delta)^*\in K,$$
	because $(\alpha^*,\beta^*,\delta)(\alpha^*,\beta^*,\delta)^* = I_m$. 
\end{proof}

\begin{eg}\label{eg:mat-conv}
	(a) The intersection of two matrix convex subsets of $M_\infty(X)$ is a matrix convex subset. 
	
	\smnoind
	(b) If $X$ is a normed space and $K\subseteq M_\infty(X)$ is matrix convex, then the weak$^*$-closure of $K$ in $M_\infty(X^{**})$ is a matrix convex subset of $M_\infty(X^{**})$ (because of the weak$^*$-continuity of multiplications by elements in $M_\infty$).

	\smnoind
	(c) If $X$ is a $^*$-operator space, then Lemma \ref{lem:matrix-conv-with-0} tells us that both $B_{M_\infty(X)}$ and $B_{M_\infty(X)_{sa}}$ are matrix convex subsets of $M_\infty(X)$. 	
\end{eg}


The proposition below is a matrix version of the (strong) separation theorem, which is an adaptation of \cite[Theorem 5.4]{effros1}. 
Note that $I_\infty$ in the statement is the identity of the $C^*$-algebra $\CL(\ell^2)\otimes \CL(\ell^2)$, which contains $M_\infty(M_\infty) = \bigcup_{n\in \BN} M_\infty(M_n)$ as a subalgebra.


\begin{prop}[Effros-Winkler]\label{prop:vs-polar}
	Let $Y$ and $Z$ be complex vector spaces equipped with a pairing $(\cdot, \cdot)$.
	Let $K\subseteq M_\infty(Z)$ be a $\sigma\big(M_\infty(Z),M_\infty(Y)\big)$-closed matrix convex subset (containing $0$).
	Let $n\in \BN$ and $v_0\in M_n(Z)\setminus K_n$. 
	
	\smnoind
	(a) There exists a $\sigma(Z,Y)$-continuous linear map $\psi:Z\to M_n$ satisfying  
	$$\R \psi^{(\infty)}(w)\leq I_\infty\ (w\in K)\quad\text{but}\quad \R \psi^{(n)}(v_0)\not\leq I_{M_n(M_n)}.$$

	\smnoind
	(b) Suppose that $Y$ and $Z$ are $^*$-vector spaces such that the pairing $(\cdot, \cdot)$ respects the involutions.
	If $K\subseteq M_\infty(Z)_\sa$ and $v_0\in M_n(Z)_\sa$, then the map $\psi$ in part (a) can be chosen to be self-adjoint.
\end{prop}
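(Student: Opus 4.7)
The plan is as follows. For part (a), I would first apply classical Hahn--Banach separation in the locally convex space $M_n(Z)$ (equipped with the topology $\sigma(M_n(Z), M_n(Y))$) to separate $v_0$ from the closed convex set $K_n = K\cap M_n(Z)$, which contains $0$ by Remark~\ref{rem:matrix-conv}(a). This will yield a $\sigma$-continuous $\mathbb{R}$-linear functional $f$ on $M_n(Z)$ with $f\le 1$ on $K_n$ and $f(v_0)>1$; by the identification of the continuous dual of $\bigl(M_n(Z),\sigma(M_n(Z),M_n(Y))\bigr)$ with $M_n(Y)$, it is of the form $f(u)=\mathrm{Re}\,(u,w)$ for some $w\in M_n(Y)$, where $(u,w):=\sum_{i,j}(u_{ij},w_{ij})$. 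From $w$ I build the candidate linear map $\psi:Z\to M_n$ by setting $\psi(z)_{ij}:=(z,w_{ij})$; its $\sigma(Z,Y)$-continuity is then automatic from that of each component.

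The core computation to establish is the identity
\[
\langle \xi,\,\psi^{(m)}(u)\,\xi\rangle \;=\; (C^* u C,\, w),
\]
valid when $u\in M_m(Z)$ and a unit vector $\xi\in \mathbb{C}^m\otimes\mathbb{C}^n$ is represented as the matrix $C\in M_{m,n}$ with $\|C\|_{\mathrm{HS}}=1$ under $\mathbb{C}^m\otimes\mathbb{C}^n\cong M_{m,n}$. This is a bookkeeping unwinding of the tensor structure of $\psi^{(m)}$. Granted the identity, the inequality $\mathrm{Re}\,\psi^{(\infty)}(w')\le I_\infty$ for $w'\in K$ is immediate: for $w'\in K_m$ the estimate $\|C\|_{\mathrm{op}}\le\|C\|_{\mathrm{HS}}=1$ together with Lemma~\ref{lem:matrix-conv-with-0} (applied with $\alpha=C$, $\beta=0$, $v=0$) places $C^*w'C$ back in $K_n$, so $\mathrm{Re}\,(C^*w'C,w)=f(C^*w'C)\le 1$.

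The hard part will be the non-inequality $\mathrm{Re}\,\psi^{(n)}(v_0)\not\le I_{M_n(M_n)}$. Via the identity this reduces to exhibiting some $C_0\in M_n$ with $\|C_0\|_{\mathrm{HS}}=1$ and $\mathrm{Re}\,(C_0^* v_0 C_0,w)>1$, and this is not automatic from the bare scalar separation $\mathrm{Re}\,(v_0,w)>1$, because a uniform rescaling of $w$ shifts both inequalities in tandem (and the Hilbert--Schmidt constraint $\|C_0\|_{\mathrm{HS}}=1$ does not allow us to absorb the mismatch for free). This is precisely the delicate point addressed in the Effros--Winkler matrix Hahn--Banach argument I am adapting: the remedy is to select $w$ with sufficiently rank-one character (for instance as an extreme point of the polar set of $K_n$), or equivalently to run the separation in an augmented matricial space incorporating the action of $M_n$ on both $v_0$ and $K$, so that a concrete $C_0$ (rank-one, or a suitable rescaling of $I_n/\sqrt{n}$) realises the strict inequality.

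For part (b), under the additional assumptions that the pairing respects the involutions, that $K\subseteq M_\infty(Z)_{\mathrm{sa}}$, and that $v_0\in M_n(Z)_{\mathrm{sa}}$, I would execute the same construction inside the real subspace $M_n(Z)_{\mathrm{sa}}$. The Hahn--Banach step there produces $w\in M_n(Y)$ whose associated $\psi$ need not be self-adjoint; however, the symmetrisation $w\mapsto(w+w^*)/2$, which is well defined since the involution-respecting pairing equips $Y$ with an involution compatible with that of $Z$, preserves both inequalities (using self-adjointness of $v_0$ and of every element of $K$) and delivers a self-adjoint separating map.
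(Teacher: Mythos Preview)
Your setup for part (a) is sound up to and including the forward inequality $\R\psi^{(\infty)}(w')\le I_\infty$ for $w'\in K$: the identity $\langle\xi,\psi^{(m)}(u)\xi\rangle=(C^*uC,w)$ is correct, and the passage $C^*w'C\in K_n$ via Lemma~\ref{lem:matrix-conv-with-0} is exactly right. The gap is the step you yourself flag as ``the hard part'': you need some $C_0$ with $\|C_0\|_{\mathrm{HS}}=1$ and $\R(C_0^*v_0C_0,w)>1$, and the bare scalar separation $\R(v_0,w)>1$ does not deliver this (taking $C_0=I_n/\sqrt{n}$ only gives $\R(v_0,w)/n$, which may well be $\le 1$). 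Your proposed remedies are not proofs: there is no reason an extreme point of $K_n^\circ$ should have ``rank-one character'' for a general matrix convex $K$, and ``running the separation in an augmented matricial space'' is a description of the desired outcome, not of a mechanism.

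The paper resolves this precisely via \cite[Lemma~5.3]{effros1}: from the separating functional $F$ on $M_n(Z)$ one extracts a \emph{state} $\omega$ on $M_n$ with $\R F(\alpha^*v\alpha)\le\omega(\alpha^*\alpha)$ for all $v\in K_m$ and $\alpha\in M_{m,n}$; this step genuinely exploits the matrix convexity of the whole $K$, not merely the convexity of $K_n$. After perturbing $\omega$ to be faithful, the GNS construction for $\omega$ furnishes an $n$-dimensional Hilbert space $H_0$ and a map $\varphi_F:Z\to\CL(H_0)$ with $\langle\varphi_F(z)\pi(\tilde\alpha)\xi_0,\pi(\tilde\beta)\xi_0\rangle=F(\beta^*z\alpha)$; the state-based normalisation (rather than your Hilbert--Schmidt normalisation) is exactly what makes both inequalities go through simultaneously, and the $\sigma(Z,Y)$-continuity of $F$ yields that of $\varphi_F$. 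Your symmetrisation argument for part (b) is fine in spirit, and the paper does essentially the same thing (replacing $F$ by $(F+F^*)/2$ and checking $\varphi_F$ is then self-adjoint), but of course it only kicks in once part (a) is actually established.
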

\begin{proof}
	(a) Since $K_n$ is a $\sigma\big(M_n(Z),M_n(Y)\big)$-closed convex subset of $M_n(Z)$ containing $0$, the usual separation theorem will produce a $\sigma\big(M_n(Z),M_n(Y)\big)$-continuous complex linear functional $F:M_n(Z)\to \mathbb{C}$ such that $\R F(v)\leq 1 < \R F(v_0)$ for every $v\in K_n$. 
	By \cite[Lemma 5.3]{effros1} and Remark \ref{rem:matrix-conv}(b), there is a state $\omega$ on $M_n$ with
	\[ \R F(\alpha^* v\alpha)\leq \omega(\alpha^*\alpha)\qquad (v\in K_m, \alpha\in M_{m,n}, m\in \BN).\]
	Let $\mathrm{tr}$ be the normalized trace on $M_n$. 
	By replacing $F$ and $\omega$ with $(1-\epsilon)F$ and $(1-\varepsilon)\omega+\varepsilon\ \! \mathrm{tr}$, respectively, for small $\epsilon>0$, we may assume that $\omega$ is faithful. 
	Denote by $(H, \pi, \xi_0)$ the GNS representation of $\omega$.
	Set	
	\[\tilde {\alpha}:= (\alpha^*, 0, \dots, 0)^*\in M_n \quad (\alpha\in M_{1,n}),\]
	and $H_0:=\{\pi(\tilde  \alpha)\xi_0: \alpha\in  M_{1,n}\}$. 
	A similar argument as that of \cite[Theorem 5.4]{effros1} will produce a linear map $\varphi_F:Z\to \CL(H_0)$ satisfying
	\[F(\beta^* z\alpha)=\big\langle\varphi_F(z)\pi(\tilde {\alpha})\xi_0|\pi(\tilde {\beta})\xi_0\big\rangle  \qquad (z\in Z; \alpha,\beta\in M_{1,n}),\]
	$\R \varphi_F^{(\infty)}(w)\leq I_\infty$ ($w\in K$) and  $\R \varphi_F^{(n)}(v_0)\not\leq I_{M_n(M_n)}$.
	Moreover, as $F$ is $\sigma\big(M_n(Z),M_n(Y)\big)$-continuous and $H_0$ is finite dimensional, the above displayed equality tells us  that $\varphi_F$ is $\sigma(Z,Y)$-continuous. 
	
	\smnoind
	(b) Since $K\subseteq M_\infty(Z)_\sa$ and $v_0\in M_n(Z)_\sa$, we may assume the function $F$ in the proof of part (a) is self-adjoint, by replacing $F$ with $(F+F^*)/2$ if necessary. 
	For $\zeta\in H_0$, there exists $\alpha\in M_{1,n}$ with $\zeta = \pi(\tilde \alpha)\xi_0$, and so, 
	\[\left\langle\varphi_F^*(x)\zeta|\zeta\right\rangle=\left\langle\zeta|\varphi_F(x^*)\zeta\right\rangle=\overline{F(\alpha^* x^*\alpha)}=F(\alpha^* x\alpha)=\left\langle\varphi_F(x)\zeta|\zeta\right\rangle \qquad (x\in Z).\]
	Hence,  $\varphi_F$ is self-adjoint.
\end{proof}


Suppose that there is a normed space $X$ with an isometric involution such that the $^*$-vector spaces $Y$ and $Z$  in Proposition \ref{prop:vs-polar}(b) are 
dual space $X^*$ and bidual space $X^{**}$, respectively, and the pairing $(\cdot, \cdot)$ is the canonical one. 
Then the weak$^*$-continuous map $\psi: X^{**}\to M_n$ given by the statement of Proposition \ref{prop:vs-polar} will be of the form $\phi^{**}$ for a bounded linear map $\phi:X\to M_n$. 
This gives the following. 


\begin{cor}\label{cor:polar}
	Let $X$ be a normed space with an isometric involution and $K\subseteq M_\infty(X^{**})_\sa$ be a weak$^*$-closed matrix convex subset  (containing $0$). 
	For $n\in \BN$ and $v_0\in M_n(X^{**})_\sa\setminus K_n$, one can find a bounded self-adjoint linear map $\phi:X\to M_n$ satisfying $(\phi^{**})^{(\infty)}(w)\leq I_\infty$ ($w\in K$) but 
	$(\phi^{**})^{(n)}(v_0)\not\leq I_{M_n(M_n)}$.
\end{cor}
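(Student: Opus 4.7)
The plan is to deduce this as a direct transcription of Proposition~\ref{prop:vs-polar}(b), using the canonical duality between $X^*$ and $X^{**}$. Set $Y := X^*$ and $Z := X^{**}$, equipped with the canonical pairing $(F, f) := F(f)$ for $F \in X^{**}$, $f \in X^*$. The isometric involution on $X$ extends to isometric involutions on $X^*$ and $X^{**}$ by $f^*(x) := \overline{f(x^*)}$ and the iteration of this, and one checks immediately that the canonical pairing respects these involutions. Moreover, $\sigma(M_\infty(X^{**}), M_\infty(X^*))$ is by definition the weak$^*$-topology on $M_\infty(X^{**})$, so the hypothesis that $K$ be weak$^*$-closed matches the hypothesis required by the proposition.

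The proposition then furnishes a $\sigma(X^{**}, X^*)$-continuous, self-adjoint linear map $\psi: X^{**} \to M_n$ satisfying $\R \psi^{(\infty)}(w) \leq I_\infty$ for every $w \in K$ and $\R \psi^{(n)}(v_0) \not\leq I_{M_n(M_n)}$. Because $K \subseteq M_\infty(X^{**})_\sa$, $v_0 \in M_n(X^{**})_\sa$, and the amplifications of a self-adjoint linear map send self-adjoint elements to self-adjoint elements, the real-part symbols may be dropped, yielding $\psi^{(\infty)}(w) \leq I_\infty$ and $\psi^{(n)}(v_0) \not\leq I_{M_n(M_n)}$.

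The remaining step is to represent $\psi$ as $\phi^{**}$ for some bounded self-adjoint $\phi: X \to M_n$. Write $\psi = [\psi_{ij}]_{i,j=1}^n$ with $\psi_{ij} \in (X^{**})^*$ weak$^*$-continuous; each $\psi_{ij}$ is therefore evaluation at some $\phi_{ij} \in X^*$. Setting $\phi := [\phi_{ij}] \in M_n(X^*) = \CL(X, M_n)$ and using that $M_n$ is reflexive so that $\phi^{**}: X^{**} \to M_n$ is well defined, one computes
\[
  \phi^{**}(F)_{ij} = F(\phi_{ij}) = \psi_{ij}(F) = \psi(F)_{ij} \qquad (F \in X^{**}),
\]
so $\phi^{**} = \psi$. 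Self-adjointness of $\psi$ transfers to $\phi$ since $(\phi^*)^{**} = (\phi^{**})^* = \psi^* = \psi = \phi^{**}$ and the bidual map is injective on $\CL(X, M_n)$.

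No step poses a real obstacle; the only substantive check is that weak$^*$-continuity of $\psi$ into the finite-dimensional space $M_n$ upgrades it to the bidual of a genuine bounded linear map $\phi: X \to M_n$, which is routine once the entries of $\psi$ are identified with weak$^*$-continuous scalar functionals on $X^{**}$.
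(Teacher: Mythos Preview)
Your proof is correct and follows exactly the approach sketched in the paper: apply Proposition~\ref{prop:vs-polar}(b) with $Y=X^*$, $Z=X^{**}$ and the canonical pairing, then identify the resulting $\sigma(X^{**},X^*)$-continuous self-adjoint map $\psi:X^{**}\to M_n$ with $\phi^{**}$ for some $\phi\in\CL(X,M_n)$. You supply more detail than the paper (in particular the entrywise identification of $\psi$ with $\phi^{**}$ and the observation that the real parts may be dropped since $\psi$ is self-adjoint and $K$, $v_0$ are self-adjoint), but the argument is the same.
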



The following fact concerning a $^*$-operator space $X$ is obvious and will be used, sometimes implicitly, throughout this article: 
\begin{equation}\label{eqt:norm-sa}
	\|x\| = \Big\|\Big(\begin{matrix}
		0&x\\
		x^*&0
	\end{matrix}\Big)\Big\|
	\qquad (x\in M_n(X); n\in \BN). 
\end{equation}


Our next result is a known fact, but we give an argument for it using our previous discussion.  
Note that we regard the map $\kappa_X:X\to X^{**}$ as the inclusion map here. 


\begin{cor}\label{cor:dense-ball}
	For a $^*$-operator space $X$, the canonical image of the unit ball $B_{M_\infty(X)}$ in $M_\infty(X^{**})$ is $\sigma\big(M_\infty(X^{**}), M_\infty(X^*)\big)$-dense in $B_{M_\infty(X^{**})}$. 
\end{cor}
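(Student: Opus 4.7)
The approach is to apply the self-adjoint matrix separation theorem (Corollary~\ref{cor:polar}) and then reduce the general problem to the self-adjoint case by means of the off-diagonal identity~\eqref{eqt:norm-sa}. Let $K$ denote the $\sigma\big(M_\infty(X^{**}),M_\infty(X^*)\big)$-closure of the canonical image of $B_{M_\infty(X)}$ inside $M_\infty(X^{**})$. Example~\ref{eg:mat-conv}(b,c) guarantees that $K$ is matrix convex, weak$^*$-closed and contains $0$; complete isometry of $\kappa_X$ gives $K\subseteq B_{M_\infty(X^{**})}$. The goal is to prove the reverse containment.

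The first key step is the self-adjoint version. Let $K_\sa$ be the weak$^*$-closure in $M_\infty(X^{**})_\sa$ of the image of $B_{M_\infty(X)_\sa}$ (well-defined since the involution on $X^{**}$ is weak$^*$-continuous). I claim $K_\sa = B_{M_\infty(X^{**})_\sa}$. Suppose, for contradiction, there is $v\in B_{M_n(X^{**})_\sa}\setminus K_\sa$. Then Corollary~\ref{cor:polar} produces a bounded self-adjoint $\phi\in\CL(X,M_n)$ with $(\phi^{**})^{(\infty)}(w)\leq I_\infty$ on $K_\sa$ and $(\phi^{**})^{(n)}(v)\not\leq I$. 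Applying the first estimate to $\pm \kappa_X^{(m)}(x)$ for $x\in B_{M_m(X)_\sa}$ gives $\|\phi^{(m)}(x)\|\leq 1$ on self-adjoint contractions. Using self-adjointness of $\phi$ (so $\phi^{(m)}(x^*)=\phi^{(m)}(x)^*$) and~\eqref{eqt:norm-sa}, for an arbitrary $x\in B_{M_m(X)}$,
$$\|\phi^{(m)}(x)\| = \left\|\phi^{(2m)}\!\begin{pmatrix}0 & x\\ x^* & 0\end{pmatrix}\right\|\leq 1,$$
so $\|\phi\|_\cb\leq 1$ and $\|\phi^{**}\|_\cb\leq 1$. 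Then $(\phi^{**})^{(n)}(v)$ is self-adjoint with operator norm at most $\|v\|\leq 1$, forcing $(\phi^{**})^{(n)}(v)\leq I$, contradicting the choice of $\phi$.

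With $K_\sa = B_{M_\infty(X^{**})_\sa}$ established, I reduce the general case by the off-diagonal trick. Given $v_0\in B_{M_n(X^{**})}$, form
$$\tilde v_0 := \begin{pmatrix}0 & v_0\\ v_0^* & 0\end{pmatrix}\in M_{2n}(X^{**})_\sa,$$
whose norm equals $\|v_0\|\leq 1$ by~\eqref{eqt:norm-sa}. Hence $\tilde v_0\in K_\sa$, so there is a net $(u_\alpha)\subseteq B_{M_\infty(X)_\sa}$ with $u_\alpha\to\tilde v_0$ weak$^*$. Let $w_\alpha\in M_n(X)$ be the block formed by the $(i,n+j)$-entries of $u_\alpha$ ($1\le i,j\le n$); since $w_\alpha$ is a compression of $u_\alpha$ one has $\|w_\alpha\|\leq \|u_\alpha\|\leq 1$, and entrywise weak$^*$-convergence gives $w_\alpha\to v_0$. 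Therefore $v_0\in K$, and the proof is complete.

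The main obstacle is that Corollary~\ref{cor:polar} only delivers a self-adjoint separating functional, which a priori controls only the self-adjoint norm; the crucial leverage is the identity~\eqref{eqt:norm-sa} combined with $\phi^{(m)}(x^*)=\phi^{(m)}(x)^*$ (valid for self-adjoint $\phi$), which promotes the self-adjoint control to genuine cb-norm control. Once that promotion is in hand, the passage from self-adjoint to general $v_0$ is a routine off-diagonal lift and compression argument.
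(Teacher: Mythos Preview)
Your proof is correct and follows essentially the same approach as the paper. Both arguments reduce to the self-adjoint case via~\eqref{eqt:norm-sa}, apply Corollary~\ref{cor:polar} to a hypothetical $v_0\in B_{M_n(X^{**})_\sa}\setminus K_\sa$, and use the off-diagonal identity to promote the self-adjoint bound on $\phi$ to a genuine cb-norm bound; the paper simply compresses its reduction step into the sentence ``it suffices to show that $K=B_{M_\infty(X^{**})_\sa}$'', whereas you spell out the corner-extraction argument explicitly.
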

\begin{proof}
	Let $K$ be the $\sigma\big(M_\infty(X^{**}), M_\infty(X^*)\big)$-closure of $B_{M_\infty(X)_\sa}$. 
	Since $$\Big\|\Big(\begin{matrix}
		a&b\\
		c&d
	\end{matrix}\Big)\Big\|\leq 1\text{ will imply } \|b\|\leq 1\quad (a,b,c,d\in M_n(X)),$$ 
	it suffices to show that $K = B_{M_\infty(X^{**})_\sa}$ (see Relation \eqref{eqt:norm-sa}). 
	
	Suppose on the contrary that there exists $v_0\in B_{M_n(X^{**})_\sa}\setminus K_n$ for some $n\in \BN$.
	As $K$ is matrix convex (see Example \ref{eg:mat-conv}), Corollary \ref{cor:polar} produces a bounded self-adjoint linear map $\phi\in \CL(X, M_n) = M_n(X^*)$ with 
	\begin{equation*}\label{eqt:Re-psi-leq}
		\phi^{(\infty)}(w)\leq I_\infty\quad (w\in B_{M_\infty(X)_\sa}) \quad \text{but}\quad (\phi^{**})^{(n)}(v_0)\not\leq I_{M_n(M_n)}. 
	\end{equation*}
	The first set of inequality in the above and Relation \eqref{eqt:norm-sa} tell us that the self-adjoint map $\phi$ is a complete contraction; i.e. $\|\phi\|_{M_n(X^*)}\leq 1$.  
	However, this will imply that $\|(\phi^{**})^{(n)}(v_0)\| = \|v_0^{(n)}(\phi)\|\leq 1$, which contradicts the second inequality above; because $(\phi^{**})^{(n)}(v_0)\in M_n(M_n)_\sa$.
\end{proof}

\begin{defn}\label{defn:semi-mat-ord-sp}
	(a) A $^*$-vector space $X$ is called a \emph{semi-matrix ordered vector space} if there exists a subset
	$M_\infty(X)_+\subseteq M_\infty(X)_{sa}$ (called the \emph{matrix cone} of $X$)
	satisfying
	\[\alpha^* x\alpha + \beta^* y \beta\in M_\infty(X)_+ \qquad (\alpha,\beta\in M_\infty; x,y\in M_\infty(X)_+).\]
	In this case, we put $M_n(X)_\sa := M_n(X)\cap M_\infty(X)_\sa$ as well as $M_n(X)_+:=M_n(X)\cap M_\infty(X)_+$ ($n\in \BN$).
	
	\smnoind
	(b) A self-adjoint complex linear map $\varphi:X\to Y$ between two semi-matrix ordered vector spaces is called
	\emph{completely positive}, \emph{completely order monomorphic} or \emph{a complete order isomorphism} if $\varphi^{(\infty)}$ is, respectively, a  positive map, an order monomorphism or an order isomorphism.
	We denote by $\CP(X,Y)$ the set of all completely positive maps from $X$ to $Y$. 
\end{defn}


Note that we do not assume the matrix cone in the above to be proper; in other words, $M_\infty(X)_+ \cap - M_\infty(X)_+$ could be non-zero. 
That is why we added the prefix ``semi-'' to distinguish our spaces with the notion of matrix ordered vector spaces considered in \cite{Wern-subsp}. 


In the following, $e_k\in M_{1,n}$
is  the element $(0,...,0,1,0,...,0)$ with the $k$-entry being $1$, but all other entries being $0$. 
Moreover, we set 
$\beta_e:= (e_1, \dots, e_n)\in M_{1,n}(M_{1,n})$, as well as 
\begin{equation}\label{eqt:def-tau-kl}
	e^{k,l}:= e_k^*e_l\in M_n.
\end{equation}


Let $X$ and $Y$ be semi-matrix ordered vector spaces.
Then 
\begin{quote}
$L(X,Y)$ is an ordered vector space under the adjoint as defined in \eqref{eqt:def-adj} and the cone $\CP(X,Y)$ as in Definition \ref{defn:semi-mat-ord-sp}(b). 
\end{quote}


In the case when $X$ is an unital operator system, it is well-known that one can determine whether a map $\varphi:X\to M_n$ belongs to $\CP(X,M_n)$ via the positivity of a linear functional associated with it. 
The next result tells us that it is also true in the general case.
This result is taken from  \cite[Lemma 5]{JN} (whose proof can be found in the ``arXiv version'' of \cite{JN}). 


\begin{lem}\label{lem:JN-Lemma5}
	Let $X$ be a semi-matrix ordered vector space and $n\in \BN$. 
	For any $\phi\in L(X,M_n)$, we set 
	\begin{equation}\label{eqt:rel-Theta-F}
		\Upsilon_\phi(u) :={\sum}_{k,l=1}^n \phi(u_{k,l})_{k,l}=\beta_e \phi^{(n)}(u)\beta_e^* \qquad (u\in M_n(X)). 
	\end{equation}	
	Then $\phi\mapsto \Upsilon_\phi$ is an order isomorphism from $L(X,M_n)$ onto $L(M_n(X),\BC)$, and its inverse is given by $F\mapsto (\Upsilon^{-1})_F$, where 
	\begin{equation*}\label{eqt:def-Delta}
		(\Upsilon^{-1})_F(x)_{k,l}=F(e^{k,l}\otimes x) \qquad (x\in X; k,l=1,\dots,n).
	\end{equation*}
	for any
	$F\in L(M_n(X),\BC)$. 
\end{lem}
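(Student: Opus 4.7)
The plan is to verify three things in order: (i) the assignment $\phi \mapsto \Upsilon_\phi$ is a linear bijection with the stated inverse; (ii) it preserves self-adjointness in both directions; and (iii) it maps the cone $\CP(X, M_n) \subseteq L(X, M_n)$ bijectively onto the cone of positive functionals in $L(M_n(X), \BC)$. Together these give the order isomorphism on the self-adjoint parts.

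For (i), I would use the decomposition $u = \sum_{k,l} e^{k,l} \otimes u_{k,l}$ in $M_n(X)$. Given $F \in L(M_n(X), \BC)$ and setting $\psi := (\Upsilon^{-1})_F$, linearity of $F$ yields $\Upsilon_\psi(u) = \sum_{k,l} \psi(u_{k,l})_{k,l} = \sum_{k,l} F(e^{k,l}\otimes u_{k,l}) = F(u)$; for the reverse composition, the identity $(e^{k,l}\otimes x)_{i,j} = \delta_{ik}\delta_{jl}\, x$ gives $(\Upsilon^{-1})_{\Upsilon_\phi}(x)_{k,l} = \Upsilon_\phi(e^{k,l}\otimes x) = \phi(x)_{k,l}$. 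For (ii), a direct computation using $(u^*)_{k,l} = u_{l,k}^*$ and $(a^*)_{k,l} = \overline{a_{l,k}}$ for $a\in M_n$ gives $\Upsilon_{\phi^*} = (\Upsilon_\phi)^*$, where the involution on the right is $(\Upsilon_\phi)^*(u) := \overline{\Upsilon_\phi(u^*)}$.

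For the easy half of (iii), the identity $\Upsilon_\phi(u) = \beta_e \phi^{(n)}(u) \beta_e^*$ from the statement shows that if $\phi \in \CP(X, M_n)$ and $u \in M_n(X)_+$, then $\phi^{(n)}(u) \in M_n(M_n)_+$, and its compression $\Upsilon_\phi(u)$ is non-negative.

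The main content is the converse. Assume $\Upsilon_\phi$ is positive and fix $m \in \BN$ and $v \in M_m(X)_+$; the goal is $\phi^{(m)}(v) \geq 0$ in $M_m(M_n) \cong M_{mn}$. The key move is that every $\alpha \in M_{m,n}$, viewed as an element of $M_\infty$ via zero-padding, satisfies $\alpha^* v \alpha \in M_n(X)_+$ by the matrix-cone axiom of Definition \ref{defn:semi-mat-ord-sp}(a), so $\Upsilon_\phi(\alpha^* v \alpha) \geq 0$. Expanding the left side produces the identity
\[
\Upsilon_\phi(\alpha^* v \alpha) \;=\; \sum_{i,j=1}^m \langle \phi(v_{i,j}) \xi_j, \xi_i \rangle \;=\; \langle \phi^{(m)}(v) \xi, \xi \rangle,
\]
where $\xi_i \in \BC^n$ is the $i$-th row of $\alpha$ (i.e.\ $(\xi_i)_l = \alpha_{i,l}$) and $\xi := (\xi_1, \dots, \xi_m) \in \BC^{mn}$. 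As $\alpha$ ranges over $M_{m,n}$, the stacked vector $\xi$ ranges freely over $\BC^{mn}$, so the inequality forces $\phi^{(m)}(v) \geq 0$. The main obstacle is spotting and verifying this bookkeeping identity; once it is in hand, the converse is essentially formal.
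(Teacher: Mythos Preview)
Your argument is correct. The paper does not actually supply a proof of this lemma; it is quoted from \cite[Lemma 5]{JN} with a pointer to the arXiv version of that reference for the details. What you have written is essentially the standard proof of this fact (going back to Choi's characterization of complete positivity into matrix algebras): the bijectivity and self-adjointness computations are routine, and the substantive direction in (iii) is handled by your identity $\Upsilon_\phi(\alpha^* v \alpha)=\langle \phi^{(m)}(v)\xi,\xi\rangle$ together with the observation that the stacked rows of $\alpha\in M_{m,n}$ exhaust $\BC^{mn}$. One small point worth making explicit: the cone on $L(M_n(X),\BC)$ is by definition $\CP(M_n(X),\BC)$, not merely the positive functionals, but for scalar-valued maps these coincide (since $\gamma^* F^{(m)}(w)\gamma=F(\gamma^* w\gamma)$ for $\gamma\in\BC^m$), so your verification that $\Upsilon_\phi$ is positive on $M_n(X)_+$ is enough.
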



In our main study, we also need to consider an arbitrary semi-matrix ordered vector space $X$ equipped with a compatible matrix norm. 
In this case, there is a canonical norm on $\CB(X,M_n)$. 
It is natural to ask if it is possible to describe the norm on $M_n(X)^*$ induced by the map $\Upsilon: L(X,M_n)\to L(M_n(X),\BC)$ in the above. 
In Theorem \ref{thm:bdd-pos-CCP}(b) below, we will see that one can at least describe the restriction of this induced norm on $M_n(X)^*_\sa$. 
In order to do this, we need a technical proposition.
Let us first give some preparation for it.


Consider $m,n\in\BN$. 
We recall from \cite[Lemma 2.1]{choi} that the map $\theta:M_m(M_n)\to L(M_m,M_n)$ defined by 
\begin{equation*}\label{eqt:choi}
	\theta_\tau(\alpha) := {\sum}_{k,l=1}^m\tau_{k,l}\alpha_{k,l}\qquad (\tau\in M_m(M_n);\alpha\in M_m)
\end{equation*}
is an order isomorphism, when $M_m(M_n)$ is identified with the $C^*$-algebra $M_{m\times n}$ in the canonical way, and the cone of $L(M_m,M_n)$ is $\CP(M_m,M_n)$.
This produces an order monomorphism $\theta:M_\infty(M_n)\to L(M_\infty,M_n)$, via the order monomorphism from $L(M_m,M_n)$  to $L(M_\infty,M_n)$ given by the truncation map from $M_\infty$ to $M_m$ (which is completely positive).  
One has 
\begin{equation*}\label{eqt:theta-tau-otimes}
	(\theta_\tau\otimes \id_X)(v) = {\sum}_{k,l=1}^\infty\tau_{k,l}\otimes v_{k,l}.
\end{equation*}
for $\tau=[\tau_{k,l}]_{k,l}\in M_\infty(M_n)$ and $v=[v_{k,l}]_{k,l}\in M_\infty(X)$.


Moreover, $\Tr_n:M_n\to \BC$ is the usual trace; i.e. $\Tr_n\big([\alpha_{k,l}]_{k,l}\big) := \sum_{i=1}^n \alpha_{i,i}$.
We consider $\Tr_\infty:M_\infty\to \BC$ to be the functional with $\Tr_\infty|_{M_n} = \Tr_n$ ($n\in \BN$),  
and we set (here, $\Tr_n^{(\infty)}:=(\Tr_n)^{(\infty)}$)
\begin{equation*}\label{eqt:def-norm-1}
	\|\tau\|_1:=\Tr_\infty\big(\Tr_n^{(\infty)}\big((\tau^*\tau)^{1/2}\big)\big) \qquad (\tau\in M_\infty(M_n)).
\end{equation*}
In the case when $\tau \in M_m(M_n) = M_{m\times n}$, 
one has $\Tr_m\big(\Tr_n^{(m)}(\tau)\big) = \Tr_{m\times n}(\tau)$ (where $\Tr_n^{(m)}$ sends $M_m(M_n)$ to $M_m$), and $\|\tau\|_1$ coincides with the usual trace-class norm of $\tau\in M_{m\times n}$.


For $\tau = [\tau_{k,l}]_{k,l}\in M_\infty(M_n)$, let $\tau^\tp \in M_\infty(M_n)$ be the element satisfying
$$(\tau^\tp)_{k,l} = (\tau_{l,k})^\tp \in M_n\qquad (k,l\in \BN),$$ 
where $(\tau_{l,k})^\tp$ is the usual transpose in  $M_n$. 
If $\tau \in M_m(M_n)$, then $\tau^\tp\in M_m(M_n)$ can be identified with the usual transpose of $\tau$ in $M_{m\times n}$.
It is easy to see that $\tau\mapsto \tau^\tp$ is an order isomorphism from $M_\infty(M_n)$ onto itself. 
Now, part (a) of the following lemma is clear (via the corresponding well-known fact about $M_{m\times n}^*$), while part (b) is taken from \cite[Lemma 6]{JN}. 


\begin{lem}\label{lem:dual}
	Let $m,n\in \BN$. 
	
	\smnoind
	(a) For $\tau\in M_m(M_n)$, we define a functional $\varphi_\tau: M_m(M_n)\to \BC$ by $$\varphi_\tau(\alpha) := \Tr_m\big(\Tr_n^{(m)}(\tau^\tp\alpha)\big)
	\qquad (\alpha\in M_m(M_n)).$$
	Then $\tau\mapsto \varphi_\tau$ is an isometric order isomorphism from $\big(M_m(M_n), \|\cdot\|_1\big)$ onto $M_m(M_n)^*$. 
	
	\smnoind
	(b) If $X$ is a semi-matrix ordered vector space and $\phi\in L(X,M_n)$, then 
	\begin{align}\label{eqt:Upsil-phi}
		\Upsilon_\phi\big((\theta_\tau\otimes \id_X)(v)\big) = \Tr_\infty\big(\Tr_n^{(\infty)}\big(\tau^\tp\phi^{(\infty)}(v)\big)\big).
	\end{align}
	when $v\in M_\infty(X)$ and $\tau\in M_\infty(M_n))$. 
\end{lem}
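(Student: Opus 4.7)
The plan is to treat the two parts separately; both reduce to explicit index calculations, and no functional-analytic ingredient beyond the definitions is required.

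For part (a), I would begin by identifying $M_m(M_n)$ with the $C^*$-algebra $M_{mn}$ in the usual way; under this identification, the iterated trace $\Tr_m\circ \Tr_n^{(m)}$ becomes the standard trace $\Tr_{mn}$, and $\|\cdot\|_1$ becomes the ordinary trace-class norm on $M_{mn}$. Once this is set up, the statement reduces to the well-known fact that $\sigma\mapsto\big(\alpha\mapsto\Tr_{mn}(\sigma\alpha)\big)$ is an isometric linear bijection from $\big(M_{mn},\|\cdot\|_1\big)$ onto $M_{mn}^*$ and simultaneously an order isomorphism (for self-adjoint $\sigma$, positivity of $\Tr_{mn}(\sigma\alpha)$ on all positive $\alpha$ characterizes $\sigma\geq 0$). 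Since transposition $\tau\mapsto\tau^\tp$ is itself an isometric order-isomorphism on $M_{mn}$, composing with it yields the formula as stated.

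For part (b), my plan is a direct computation comparing the two sides entry by entry. Expanding
\[(\theta_\tau\otimes \id_X)(v) = {\sum}_{p,q=1}^{\infty}\tau_{p,q}\otimes v_{p,q}\in M_n(X)\]
shows that its $(r,s)$-entry in $M_n(X)$ is $\sum_{p,q}(\tau_{p,q})_{r,s}\,v_{p,q}$, so by \eqref{eqt:rel-Theta-F},
\[\Upsilon_\phi\big((\theta_\tau\otimes \id_X)(v)\big)=\sum_{p,q,r,s}(\tau_{p,q})_{r,s}\,\phi(v_{p,q})_{r,s}.\]
On the right-hand side, the $(p,s)$-entry of $\tau^\tp\phi^{(\infty)}(v)\in M_\infty(M_n)$ equals $\sum_q(\tau_{q,p})^\tp\,\phi(v_{q,s})\in M_n$; applying $\Tr_n^{(\infty)}$ entrywise and then $\Tr_\infty$ (which restricts to the diagonal $p=s$), and using the elementary identity $\Tr_n(A^\tp B)=\sum_{k,l}A_{k,l}B_{k,l}$, I obtain
\[\Tr_\infty\big(\Tr_n^{(\infty)}(\tau^\tp\phi^{(\infty)}(v))\big) = \sum_{p,q,k,l}(\tau_{q,p})_{k,l}\,\phi(v_{q,p})_{k,l},\]
which matches the previous display after swapping the summation variables $p\leftrightarrow q$ and renaming $(k,l)\to(r,s)$.

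The only genuine obstacle is keeping the two layers of transposition straight in part (b): one must track the outer $M_\infty$-transpose that defines $\tau^\tp$ together with the inner $M_n$-transpose of each block, and then see that they are reconciled against the trace by the identity $\Tr_n(A^\tp B)=\sum_{k,l}A_{k,l}B_{k,l}$. Once that identity is applied, the remainder is bookkeeping, and since only finitely many $\tau_{p,q}$ and $v_{p,q}$ are nonzero there are no convergence issues.
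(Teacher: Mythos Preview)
Your proposal is correct and matches the paper's treatment: the paper does not supply a proof but simply remarks that part (a) is clear from the well-known duality $M_{m\times n}^*\cong(M_{m\times n},\|\cdot\|_1)$ (exactly your reduction), and part (b) is quoted from \cite[Lemma 6]{JN}, whose content is precisely the entrywise trace calculation you carry out. Your bookkeeping of the double transpose via the identity $\Tr_n(A^\tp B)=\sum_{k,l}A_{k,l}B_{k,l}$ is the right way to reconcile the two sides, and the argument is complete as written.
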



\begin{defn}\label{defn:fin-mat-gau}
	Let $X$ be a complex vector space. 
	
	\smnoind
	(a) A function $\lambda:X\to \RP$ is called a 
	\emph{finite gauge} if 
	$$\lambda(s x + ty)\leq s\lambda(x) +t\lambda(y) \qquad (x,y\in X;s,t\in \RP).$$ 
	
	\smnoind
	(b) A function $\rho:M_\infty(X)\to \RP$ is called a \emph{finite matrix gauge} if 
	$$\rho(\alpha^* x \alpha + \beta^* y\beta) \leq \|\alpha^*\alpha + \beta^*\beta\|\max \{\rho(x),\rho(y)\}$$
	for  $x,y\in M_\infty(X)$ and $\alpha,\beta\in M_\infty$.
	We denote $\rho_n:= \rho|_{M_n(X)}$ ($n\in \BN$). 
\end{defn}


Although the above definition of matrix gauge looks a bit different from the one defined in \cite[\S 6]{effros1}, one can easily show that they are actually the same (by using the argument of \cite[Proposition 2.3.6]{OPS} as well as the fact that $\alpha^* x \alpha + \beta^* y \beta
= (\alpha^*, \beta^*)
(x\oplus y)(\alpha^*, \beta^*)^*$, for $x,y\in X$ and $\alpha, \beta\in M_\infty$). 


Motivated by some ideas in \cite{effros1}, we have the following (as in the above, $I_\infty$ is the identity of $\CL(\ell^2)\otimes \CL(\ell^2)$). 


\begin{prop}\label{bijective}
	Suppose that $X$ is a semi-matrix ordered vector space and $\rho$ is a finite matrix gauge on $X$.  
	Fix $n\in \BN$.
	We define a finite gauge $\gamma$ on $M_n(X)$ such that for $u\in M_n(X)$, 
	\begin{equation*}\label{eqt:def-hat-rho}
		\gamma (u):=\inf\big\{\left\|\tau\right\|_1\rho(v):v\in M_\infty(X); \tau\in M_\infty(M_n)_+; u=(\theta_\tau \otimes \id_X)(v)\big\}.
	\end{equation*}
	
	\smnoind
	(a) For $\phi\in L(X,M_n)_\sa$, one has
	$\phi^{(\infty)}(v) \leq \rho(v) I_\infty$
	for any $v\in M_\infty(X)_+$ (respectively, $v\in M_\infty(X)_\sa$) if and only if $\Upsilon_\phi(u) \leq \gamma (u)$ for any $u\in M_n(X)_+$  (respectively, $u\in M_n(X)_\sa$). 
	
	\smnoind
	(b) Suppose that 
	\[\rho(y^*) = \rho(y) = \rho(e^{\mathrm{i}s} y)=\rho\Big(\begin{matrix}
		0 & y\\
		y^* & 0
	\end{matrix}\Big)\quad  (y\in M_\infty(X);s\in\BR). \]
	Then for each $\phi\in L(X,M_n)_\sa$, the following statements are equivalent:
	\begin{itemize}
		\item $\big\|\phi^{(\infty)}(y)\big\| \leq \rho(y)$ ($y\in M_\infty(X)$). 
		
		\item $\Upsilon_\phi(u) \leq \gamma (u)$ ($u\in M_n(X)_\sa$).		
		
		\item $|\Upsilon_\phi(z)| \leq \gamma (z)$ ($z\in M_n(X)$). 
	\end{itemize}
\end{prop}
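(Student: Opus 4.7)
My plan is to route the equivalence in part (a) through a trace-duality reformulation on $M_m(M_n)\cong M_{mn}$. For fixed $v\in M_m(X)$, the self-duality of the positive cone of $M_{mn}$ under the trace pairing (combined with Lemma \ref{lem:dual}(a) and the identity $\|P\|_1=\Tr(P)$ for $P\ge 0$) shows that $\phi^{(m)}(v)\le \rho(v)I_{mn}$ is equivalent to
\[\Tr_m\bigl(\Tr_n^{(m)}\bigl(\tau^\tp\phi^{(m)}(v)\bigr)\bigr)\le\rho(v)\|\tau\|_1\qquad\text{for every }\tau\in M_m(M_n)_+.\]
By Lemma \ref{lem:dual}(b), the left-hand side coincides with $\Upsilon_\phi\bigl((\theta_\tau\otimes\id_X)(v)\bigr)$; this identification is the bridge between the two conditions appearing in part (a).

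For the backward direction of (a), I would pick arbitrary $v\in M_\infty(X)_+$ (respectively, $v\in M_\infty(X)_\sa$) and $\tau\in M_\infty(M_n)_+$, and set $u:=(\theta_\tau\otimes\id_X)(v)$. Since $\theta_\tau$ is completely positive (Choi, because $\tau\ge 0$), $u$ lies in $M_n(X)_+$ (respectively, $M_n(X)_\sa$), and the specific representation $(\tau,v)$ itself shows $\gamma(u)\le\|\tau\|_1\rho(v)$. Combining the hypothesis $\Upsilon_\phi(u)\le\gamma(u)$ with the bridge from the previous paragraph then delivers the required inequality $\phi^{(\infty)}(v)\le\rho(v)I_\infty$.

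For the forward direction, fix $u$ in the appropriate cone and any representation $u=(\theta_\tau\otimes\id_X)(v)$ with $\tau\in M_\infty(M_n)_+$ and $v\in M_\infty(X)$. Since $u=u^*$ and $\theta_\tau$ preserves the involution (as $\tau$ is Hermitian), $v^*$ also represents $u$, and the average $v_1:=(v+v^*)/2\in M_\infty(X)_\sa$ is therefore a self-adjoint representative of $u$. Applying the hypothesis to $v_1$ yields
\[\Upsilon_\phi(u)=\Tr\bigl(\tau^\tp\phi^{(\infty)}(v_1)\bigr)\le\rho(v_1)\|\tau\|_1,\]
and the matrix-gauge estimate $\rho(v_1)\le\max\{\rho(v),\rho(v^*)\}$, coupled with a careful management of the infimum that exploits simultaneously the two representations $(\tau,v)$ and $(\tau,v^*)$, produces the target bound $\Upsilon_\phi(u)\le\gamma(u)$. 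The delicate point here, and the main obstacle of the whole proof, is this last step: without $*$-symmetry of $\rho$ one cannot directly pass from $\rho(v_1)$ back to $\rho(v)$, so the full strength of the matrix-gauge axiom (and, in the positive case, of the positivity of the cone structure) must be exploited.

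Part (b) will be deduced from the self-adjoint case of (a) using the three extra symmetries of $\rho$. Because $\rho(e^{\mathrm{i}s}y)=\rho(y)$ (and in particular $\rho(-y)=\rho(y)$), the inequality $\phi^{(\infty)}(y)\le\rho(y)I$ on $M_\infty(X)_\sa$ is equivalent to $\|\phi^{(\infty)}(y)\|\le\rho(y)$ for $y\in M_\infty(X)_\sa$. The off-diagonal symmetry $\rho\bigl(\begin{smallmatrix}0 & y\\ y^* & 0\end{smallmatrix}\bigr)=\rho(y)$, together with \eqref{eqt:norm-sa}, then lifts this norm bound from the self-adjoint part to arbitrary $y\in M_\infty(X)$, establishing the equivalence of the first two conditions of (b). For the third, the invariance of $\gamma$ under $e^{\mathrm{i}s}$ (inherited from the analogous property of $\rho$) allows, for each $z\in M_n(X)$, a choice of $s$ with $e^{\mathrm{i}s}\Upsilon_\phi(z)=|\Upsilon_\phi(z)|$; writing $|\Upsilon_\phi(z)|=\Upsilon_\phi(\R(e^{\mathrm{i}s}z))$ and applying the self-adjoint estimate of (a) to $\R(e^{\mathrm{i}s}z)$ closes the loop.
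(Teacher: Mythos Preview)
Your backward direction in (a) and your treatment of (b) are essentially the paper's arguments. The real problem is the forward direction of (a), and you have correctly located it but not resolved it.

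Your averaging trick gives, for any representation $u=(\theta_\tau\otimes\id_X)(v)$, a self-adjoint representative $v_1=(v+v^*)/2$ with
\[
\Upsilon_\phi(u)\le \|\tau\|_1\,\rho(v_1)\le \|\tau\|_1\,\max\{\rho(v),\rho(v^*)\}.
\]
But to take the infimum and conclude $\Upsilon_\phi(u)\le\gamma(u)$ you would need $\Upsilon_\phi(u)\le\|\tau\|_1\rho(v)$ for the \emph{given} representation, and that fails precisely when $\rho(v^*)>\rho(v)$. Invoking ``the two representations $(\tau,v)$ and $(\tau,v^*)$'' only shows $\gamma(u)\le\|\tau\|_1\min\{\rho(v),\rho(v^*)\}$, which goes the wrong way. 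In the positive case the gap is even wider: $v_1$ is merely self-adjoint, so the hypothesis $\phi^{(\infty)}(w)\le\rho(w)I_\infty$ for $w\in M_\infty(X)_+$ does not apply to it at all. The vague appeal to ``the full strength of the matrix-gauge axiom'' does not close either gap; without $\rho(v^*)=\rho(v)$ there is no reason the infimum over arbitrary $v$ agrees with the infimum over self-adjoint (let alone positive) $v$. The paper in fact flags exactly this issue immediately after its proof.

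The paper's way out is different in kind: it invokes \cite[Lemma~6.8]{effros1} to rewrite
\[
\gamma(u)=\inf\bigl\{\Tr_n(\beta^2)\rho(w):\ \beta\in (M_n)_+\text{ invertible},\ w\in M_n(X),\ u=\beta w\beta\bigr\}.
\]
In this description the representing element is forced to be $w=\beta^{-1}u\beta^{-1}$, which \emph{automatically} lies in $M_n(X)_+$ (respectively $M_n(X)_\sa$) whenever $u$ does. One can then apply the hypothesis directly to $w$ and derive a contradiction from any putative $u^0$ with $\gamma(u^0)<\Upsilon_\phi(u^0)$. This step---passing to the Effros--Winkler form of $\gamma$ so that the representative inherits the order property of $u$---is the missing idea in your proposal.
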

\begin{proof}
	(a) $\Rightarrow$). 
	Pick any $v\in M_\infty(X)_+$ (respectively, $v\in M_\infty(X)_\sa$) and $\tau\in M_\infty(M_n)_+$.
	The assumption on $\phi^{(\infty)}$ and Lemma \ref{lem:dual}(a) 
	imply that 
	\begin{align*}
	\Tr_\infty\big(\Tr_n^{(\infty)}\big(\tau^\tp\phi^{(\infty)}(v)\big)\big) 
	& \leq \rho(v)\Tr_\infty\big(\Tr_n^{(\infty)}(\tau^\tp)\big)\\
	& = \rho(v)\Tr_\infty\big(\Tr_n^{(\infty)}(\tau)\big)
	= \left\|\tau\right\|_1\rho(v).
	\end{align*}
	Thus,  Relation \eqref{eqt:Upsil-phi} tells us that 
	\begin{equation}
		\label{F}
		\Upsilon_\phi((\theta_\tau\otimes \id_X)(v)) = \Tr_\infty\big(\Tr_n^{(\infty)}\big(\tau^\tp \phi^{(\infty)}(v)\big)\big) \leq \left\|\tau\right\|_1\rho(v).
	\end{equation}
	Suppose on the contrary that there is $u^0\in M_n(X)_+$ (respectively, $u^0\in M_n(X)_\sa$) with $\gamma (u^0)< \Upsilon_\phi(u^0)$. 
	By \cite[Lemma 6.8]{effros1}, one has
	\begin{align*}
	\gamma(u^0)=\inf\big\{\Tr_n(\beta^2)\rho(w): \beta\in (M_n)_+&\text{ is invertible and }\\
	& w\in M_n(X)  \text{ such that }u^0=\beta w\beta\big\}.
	\end{align*}
	Hence, we can find $w^0\in M_n(X)$ and an invertible matrix $\beta\in (M_n)_+$ with
	$$u^0=\beta w^0 \beta \quad \text{and} \quad \Tr_n(\beta^2)\rho(w^0)<\Upsilon_\phi(u^0).$$ 
	As $\beta^{-1}\in (M_n)_+$, one has $w^0=\beta^{-1}u^0\beta^{-1}\in M_n(X)_+$  (respectively, $w^0\in M_n(X)_\sa$).
	Let us write $\beta$ as $(\beta_1,\dots,\beta_n)$ with $\beta_i\in M_{n, 1}$, and put
	$\tilde{\beta}:=(\beta_1^*, \dots, \beta_n^*)\in M_{1,n}(M_{1,n})$. 
	If we set $\check\beta:=\tilde{\beta}^*\tilde{\beta}\in M_n(M_n)_+$, then 
	\begin{align*}
	u^0
	=\beta w^0 \beta
	= \beta w^0 \beta^*
	& =\Big[{\sum}_{k,l=1}^n\beta_{i,k}w^0_{k,l}\bar \beta_{j,l}\Big]_{i,j}\\
	& ={\sum}_{k,l=1}^n \check\beta_{k,l}\otimes w^0_{k,l} 
	= (\theta_{\check\beta}\otimes \id_X)(w^0).
	\end{align*}
	Thus,  $\left\|\check\beta\right\|_1\rho(w^0)=\Tr_n(\beta^2)\rho(w^0)< \Upsilon_\phi(u^0) = \Upsilon_\phi\big((\theta_{\check\beta}\otimes \id_X)(w^0)\big)$, which contradicts Relation (\ref{F}).
	
	\noindent
	$\Leftarrow$). 
	Consider $v\in M_\infty(X)_+$ (respectively, $v\in M_\infty(X)_\sa$).
	By the assumed inequality  and the definition of $\gamma$, we have
	\begin{equation}\label{G}
		\Upsilon_\phi\big((\theta_\tau\otimes \id_X)(v)\big)\leq \left\|\tau\right\|_1\rho(v) \qquad (\tau\in M_\infty(M_n)_+).
	\end{equation}
	It then follows from Relation \eqref{eqt:Upsil-phi} that for every $\tau\in M_\infty(M_n)_+$,  
	\[\Tr_\infty\big(\Tr_n^{(\infty)}\big(\tau^\tp \phi^{(\infty)}(v)\big)\big) 
	\leq \left\|\tau\right\|_1\rho(v) 
	= \rho(v)\Tr_\infty\big(\Tr_n^{(\infty)}(\tau^\tp)\big),\]
	which implies $\phi^{(\infty)}(v)\leq \rho(v) I_\infty$, because of Lemma \ref{lem:dual}(a). 
	
	\smnoind
	(b) As in Lemma \ref{lem:JN-Lemma5}, one has $\Upsilon_\phi\in L(M_n(X), \BC )_\sa$. 
	Moreover, the assumption on $\rho$ implies that 
	\begin{equation}\label{eqt:lambda-semi-norm}
		\gamma (z^*) = \gamma (z) = \gamma(e^{\mathrm{i}s}z)\qquad (z\in M_n(X);s\in \BR).
	\end{equation}                                                                             
	
	Assume  that $\|\phi^{(\infty)}(y)\| \leq \rho(y)$ ($y\in M_\infty(X)$). 
	Then 
	$$-\rho(v)I_\infty\leq \phi^{(\infty)}(v)\leq \rho(v)I_\infty\quad (v\in M_\infty(X)_\sa).$$ 
	It then follows from part (a) above that $\Upsilon_\phi(u) \leq \gamma (u)$ ($u\in M_n(X)_\sa$).
	
	Now, assume that $\Upsilon_\phi(u) \leq \gamma (u)$ ($u\in M_n(X)_\sa$). 
	Let $z\in M_n(X)$. 
	Then
	$$	\R \Upsilon_\phi(z)=\Upsilon_\phi(\R z)\leq\gamma (\R z)\leq (\gamma (z)+\gamma (z^*))/2 =\gamma (z).$$
	Since $\gamma (z) = \gamma (e^{\mathrm{i}s}z)$ for any $s\in \BR$, we then conclude that $|\Upsilon_\phi(z)| \leq \gamma (z)$. 
	
	Finally, assume that $|\Upsilon_\phi(z)| \leq \gamma (z)$ ($z\in M_n(X)$). 
	Then $\Upsilon_\phi(u)\leq \gamma (u)$, for each $u\in M_n(X)_\sa$.
	By part (a) above, for each $v\in M_\infty(X)_\sa$, 
	$$-\rho(v)I_\infty = -\rho(-v)I_\infty \leq \phi^{(\infty)}(v) \leq \rho(v)I_\infty;$$
	i.e. $\|\phi^{(\infty)}(v)\| \leq \rho(v)$. 
	Consequently, for $m\in \BN$ and $y\in M_m(X)$, the assumption on $\rho$ gives 
	$$\|\phi^{(m)}(y)\|
	= \Big\|\phi^{(2m)}\Big(\begin{matrix}
		0 & y\\
		y^* & 0
	\end{matrix}\Big)\Big\|
	\leq \rho\Big(\begin{matrix}
		0 & y\\
		y^* & 0
	\end{matrix}\Big) = \rho(y).$$ 
\end{proof}


Notice that Relation \eqref{F} does not imply $\Upsilon_\phi(u) \leq \gamma (u)$  directly, because in the definition of $\gamma$, the element $v$ is not assumed to be self-adjoint (respectively, positive) even when $u$ is self-adjoint (respectively, positive). 


\section{Some results concerning semi-matrix ordered operator spaces}


In order to study the dual space of an operator system (see Definition \ref{MOS}(b) below), which may not be an operator system any more, we need the notion of matrix ordered operator spaces from \cite{Wern-subsp}, and an adapted version of it (because the dual spaces may not even have proper matrix cones). 


\begin{defn}\label{MOS}
	(a) If $X$ is both a $^*$-operator space and a semi-matrix ordered vector space with the same involution, then it is called a \emph{semi-matrix ordered operator space (SMOS)} if the matrix cone $M_\infty(X)_+$ is norm closed. 
	Moreover, a SMOS $X$ is called a \emph{matrix ordered operator space (MOS)} if the matrix cone is \emph{proper}; i.e., $M_\infty(X)_+\cap-M_\infty(X)_+=\left\lbrace0\right\rbrace$.
	For SMOS $X$ and $Y$, we will denote by $\Morc(X,Y)$ the set of all completely positive complete contractions from $X$ to $Y$. 
	
	\smnoind
	(b) A MOS $V$ is called an (respectively, a \emph{unital}) \emph{operator system} if there is a completely order monomorphic  complete isometry $\Psi: V \to \CL(H)$ for a Hilbert space $H$ (respectively, such that $\Psi(V)$ contains the identity of $\CL(H)$). 
	On the other hand, $V$ is called a \emph{quasi-operator system} if $\Psi$ is only assumed to be a complete embedding instead of a complete isometry. 
\end{defn}


Operator systems as defined above is slightly different from the ones in \cite[Definition 4.13]{Wern-subsp}. 
In fact, ``operator systems'' in \cite{Wern-subsp} are precisely ``quasi-operator systems'' in our sense.


Note that our operator systems are self-adjoint subspace of some $\CL(H)$, equipped with the induced matrix norm and the induced matrix cone, and this definition coincides with the one in \cite{CvS} (see Remark \ref{rem:conn-with-lit} in the next section).


For basic materials on unital operator systems, the reader may consult, e.g. \cite{Paul}.


\begin{rem}\label{rem:dual-SMOS}
	Let $X$ and $Y$ be SMOS. 
	
	\smnoind
	(a) For an element 
	$$f\in M_\infty(X^*) = \CB(X,M_\infty),$$ 
	we always consider its norm $\|f\|$ to be the ``completely bounded norm'' as defined in \eqref{eqt:def-dual-mat-norm}, BUT NOT the usual operator norm $\|f\|_{\CL(X,M_\infty)}$. 
	
	\smnoind
	(b) The dual space $X^*$ is a SMOS, under the involution as in the second last paragraph of Section \ref{sec:Introd} and the following matrix cone: 
	$$M_\infty(X^*)_+:=\CB(X,M_\infty)\cap \CP(X,M_\infty).$$
	In other words, for $f\in M_\infty(X^*)$, one has $f\in M_\infty(X^*)_+$ if and only if 
	$$ f\in M_\infty(X^*)_\sa \qquad \text{and}\qquad  (f,x)\geq 0 \quad (x\in M_\infty(X)_+),$$ 
	where $(f,x)$ is as in Relation \eqref{eqt:def-inf-pair}. 
	Note that if $M_\infty(X)_+ = \{0\}$, then $M_\infty(X^*)_+=\CB(X,M_\infty)_\sa.$
	
	\smnoind
	(c) Since the involution on $X^*$ is weak$^*$-continuous, $M_\infty(X^*)_\sa$ is a weak$^*$-closed subspace of $M_\infty(X^*)$ (see Definition \ref{defn:weak-top}).

	\smnoind
	(d) One may identify $\Morc(X,M_n)$ with 
	$B_{M_n(X^*)}\cap M_n(X^*)_+$ $(n\in \BN)$. 
	
	\smnoind
	(e) If $\psi\in \Morc(X,Y)$, then $\psi^*$ is a weak$^*$-continuous map in $\Morc(Y^*,X^*)$.  
\end{rem}



Some people define the matrix cone $M_\infty(X^*)_+$ to be the one induced by the bijection $\Upsilon$ as in Lemma \ref{lem:JN-Lemma5}. 
The following result implies that this definition of matrix cones agrees with the one in Remark \ref{rem:dual-SMOS}(b). 
Actually, the following theorems give a connection between the SMOS $X^*$ (respectively, $X^{**}$)  and the sequence $\{(M_n(X)^*, \gamma_X^*)\}_{n\in \BN}$ (respectively, $\{M_n(X)^{**}\}_{n\in \BN}$) of ordered Banach spaces. 


Notice also that the semi-norm $\gamma_X$ in Theorem \ref{thm:bdd-pos-CCP} coincides with the one introduced in \cite[Relation (25)]{effros1}, because of \cite[Lemma 6.8]{effros1}. 
As shown in the first paragraph of \cite[p.140]{effros1}, this semi-norm is equivalent to the original norm $\|\cdot\|$ on $M_n(X)$ given by the operator space structure on $X$. 
More precisely,  one has $\|\cdot\|\leq \gamma_X \leq n\|\cdot\|$. 
Hence, $\gamma_X$ is a norm. 


\begin{thm}\label{thm:bdd-pos-CCP}
	Let  $X$ be a SMOS and $n\in \BN$. 
	Let $e^{k,l}$ be as in \eqref{eqt:def-tau-kl}. 
	Define a norm $\gamma_X$ on $M_n(X)$ by 
	\begin{align*}
	\gamma_X(x):= \inf\big\{\Tr_n(\beta^2)\|w\|: &\beta\in (M_n)_+\text{ is invertible and }\\
	& w\in M_n(X)  \text{ such that }x=\beta w\beta\big\}\quad (x\in M_n(X)).
	\end{align*}
For $F\in M_n(X)^*$, let us also define $\Theta^X_F\in M_n(L(X;\BC))$ by 
$$(\Theta^X_F)_{k,l}(x) := F(e^{k,l}\otimes x)\qquad (x\in X; k,l=1,\dots,n).$$

	\smnoind
	(a) $F\mapsto \Theta^X_F$ gives a continuous order isomorphism $\Theta^X: M_n(X)^*\to M_n(X^*)$. 

\smnoind
(b) $\Theta^X|_{M_n(X)_\sa^*}: M_n(X)_\sa^* \to M_n(X^*)_\sa$ 
	is an isometry when $M_n(X)_\sa^*$ is equipped with the dual norm $\gamma_X^*$ of $\gamma_X$ and $M_n(X^*)_\sa$ is considered as a real normed subspace of $M_\infty(X^*)$.
	
	\smnoind
	(c) For $x \in M_n(X)$ and $F\in M_n(X)^*$, one has (see \eqref{eqt:def-inf-pair})
	\begin{equation}\label{eqt:F-Theta-X-F}
	F(x) = (x,  \Theta^X_F).
	\end{equation}
	Hence, $\Theta^X$ is a $\sigma\big(M_n(X)^*, M_n(X)\big)$-$\sigma\big(M_n(X^{*}), M_n(X)\big)$-homeomorphism. 
\end{thm}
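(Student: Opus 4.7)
The plan is to reduce everything to Lemma \ref{lem:JN-Lemma5} and Proposition \ref{bijective}(b), via the standing identification $M_n(X^*)=\CB(X,M_n)\subseteq L(X,M_n)$. Under this identification, $\Theta^X$ is precisely the inverse $\Upsilon^{-1}$ from Lemma \ref{lem:JN-Lemma5}; in particular, $\Theta^X$ is already a bijective order isomorphism between $L(M_n(X),\BC)$ and $L(X,M_n)$ at the purely algebraic level, with cones the positive functionals on $M_n(X)$ and the completely positive maps $X\to M_n$, respectively. So part (a) reduces to checking that this algebraic order isomorphism restricts to a continuous bijection between the bounded functionals $M_n(X)^*$ and the completely bounded maps $M_n(X^*)=\CB(X,M_n)$.

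First I would dispatch (c) by a direct expansion: writing $x=\sum_{k,l} e^{k,l}\otimes x_{k,l}$, one has
$$(x,\Theta^X_F)=\sum_{k,l=1}^n (\Theta^X_F)_{k,l}(x_{k,l})=\sum_{k,l=1}^n F(e^{k,l}\otimes x_{k,l})=F(x),$$
which is \eqref{eqt:F-Theta-X-F}. From this formula a net $\{F^i\}$ converges in $\sigma(M_n(X)^*,M_n(X))$ iff $F^i(x)\to F(x)$ for every $x\in M_n(X)$, iff $(x,\Theta^X_{F^i})\to (x,\Theta^X_F)$ for every $x$, iff $\Theta^X_{F^i}\to\Theta^X_F$ in $\sigma(M_n(X^*),M_n(X))$; this gives the weak$^*$-homeomorphism.

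The heart of the proof is (b), and I would obtain it by applying Proposition \ref{bijective}(b) to the matrix norm $\rho:=\|\cdot\|$ on $X$. The hypotheses on $\rho$ required by that proposition all hold because $X$ is a $^*$-operator space: $\rho(y^*)=\rho(y)$ by isometry of the involution, $\rho(e^{\mathrm{i}s}y)=\rho(y)$ because $\|\cdot\|$ is a complex norm, and $\rho(y)=\rho\Big(\Big(\begin{matrix}0&y\\ y^*&0\end{matrix}\Big)\Big)$ by Relation \eqref{eqt:norm-sa}. By \cite[Lemma 6.8]{effros1} (already used inside the proof of Proposition \ref{bijective}), the matrix gauge produced by that proposition coincides on $M_n(X)$ with the norm $\gamma_X$ stated here. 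Consequently, for self-adjoint $\phi\in L(X,M_n)_\sa$, the equivalence in Proposition \ref{bijective}(b) reads
$$\|\phi\|_{M_n(X^*)}\leq 1 \iff |\Upsilon_\phi(z)|\leq \gamma_X(z)\ \text{for all }z\in M_n(X) \iff \|\Upsilon_\phi\|_{\gamma_X^*}\leq 1,$$
and homogeneity upgrades this to the equality $\|\Upsilon_\phi\|_{\gamma_X^*}=\|\phi\|_{M_n(X^*)}$. Transporting through $\Theta^X=\Upsilon^{-1}$ yields the isometry claimed in (b).

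To complete (a) it remains to handle general (not necessarily self-adjoint) elements. I would decompose any $F\in M_n(X)^*$ as $F=F_1+\mathrm{i}F_2$ with $F_1:=(F+F^*)/2$ and $F_2:=(F-F^*)/(2\mathrm{i})$ both self-adjoint and of norm $\leq\|F\|$ (using that the induced involution on $M_n(X)^*$ is isometric), and similarly $\phi\in\CB(X,M_n)$ as $\phi_1+\mathrm{i}\phi_2$ with $\|\phi_j\|_{\cb}\leq\|\phi\|_{\cb}$. Part (b) applied to each self-adjoint piece, combined with the equivalence $\|\cdot\|\leq\gamma_X\leq n\|\cdot\|$ on $M_n(X)$ recalled just before the theorem, shows that $\Theta^X$ sends $M_n(X)^*$ bijectively onto $M_n(X^*)$ and is continuous in both directions. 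The main obstacle of the argument is the step from $\phi$ being completely bounded to $\Upsilon_\phi$ being bounded on $(M_n(X),\gamma_X)$, which is exactly the content of Proposition \ref{bijective}(b); once that is in hand, the remainder is a straightforward combination of the Lemma \ref{lem:JN-Lemma5} identification with the self-adjoint decomposition.
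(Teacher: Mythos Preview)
Your proof is correct, and parts (b) and (c) match the paper exactly: both invoke Proposition \ref{bijective}(b) with $\rho=\|\cdot\|$ and \cite[Lemma 6.8]{effros1} for (b), and both do the straightforward expansion for (c).

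The difference is in part (a). The paper argues directly: for $F\in M_n(X)^*$ it computes $(\Upsilon^{-1})_F(x)=F^{(n)}\big([e^{k,l}]_{k,l}\otimes x\big)$ and uses $\big\|[e^{k,l}]_{k,l}\big\|=n$ together with Smith's lemma \cite[Corollary 2.2.4]{OPS} to get $\|\Theta^X_F\|_{\cb}\leq n^2\|F\|$; the reverse inclusion $\Upsilon_\phi\in M_n(X)^*$ is immediate from the defining formula \eqref{eqt:rel-Theta-F}. Your route instead bootstraps from (b): decompose $F=F_1+\mathrm{i}F_2$ into self-adjoint parts, apply the isometry in (b) to each, and convert between the $\gamma_X^*$-norm and the usual dual norm via $\|\cdot\|\leq\gamma_X\leq n\|\cdot\|$. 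This is legitimate because Proposition \ref{bijective}(b) is stated for arbitrary $\phi\in L(X,M_n)_\sa$, so the equivalence there already encodes complete boundedness of $\Theta^X_F$ once $\gamma_X^*(F)<\infty$; there is no circularity. Your argument is more unified (everything flows from Proposition \ref{bijective}(b)) but imports the Effros--Winkler norm comparison, while the paper's estimate is self-contained and gives an explicit constant $n^2$ without touching $\gamma_X$ at all.
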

\begin{proof}
	(a) Note that $\Theta^X_F = (\Upsilon^{-1})_F$, where  $\Upsilon^{-1}: L(M_n(X), \BC)\to L(X,M_n)$ is the order isomorphism in Lemma \ref{lem:JN-Lemma5}.
	Thus, in order to show that $\Theta^X$ is an order isomorphism from $M_n(X)^*$ to $M_n(X^*)$, it suffices to verify $\Upsilon^{-1}(M_n(X)^*) = \CB(X,M_n)$. 
	In fact, for $F\in M_n(X)^*$, one has $\|F^{(\infty)}\| = \|F\|$, which gives 
	\begin{equation}\label{eqt:norm-Delta-F}
\left\|(\Upsilon^{-1})_F(x)\right\|_{M_n}  = \big\|F^{(n)}\big([e^{k,l}]_{k,l}\otimes x\big)\big\| \leq n \|F\|\|x\| \qquad (x\in X), 
	\end{equation}
	because  $\big([e^{k,l}]_{k,l}\big)^* = [e^{k,l}]_{k,l}\in M_n(M_n)$ and $\big([e^{k,l}]_{k,l}\big)^2 = n [e^{k,l}]_{k,l}$. 
	Hence, 
	$$\Upsilon^{-1}(M_n(X)^*) \subseteq \CL(X,M_n) = \CB(X,M_n).$$ 
	Conversely, if $\phi\in \CB(X,M_n)$, then the functional $\Upsilon_\phi$ as defined in \eqref{eqt:rel-Theta-F} is clearly bounded; i.e., $\Upsilon_\phi\in M_n(X)^*$, and this ensures the surjectivity of $\Theta^X$.  
	
	Furthermore, \cite[Corollary 2.2.4]{OPS} implies 
	$$\|\Theta^X_F\|_{cb}\leq n\|\Theta_F^X\| = n\|(\Upsilon^{-1})_F\|.$$ 
	Hence, Relation \eqref{eqt:norm-Delta-F} gives $\|\Theta^X_F\|_{cb}\leq n^2\|F\|$. 
	This indicates that $\Theta^X$ is continuous. 
	
	\smnoind
	(b) Let $\gamma$ be the finite gauge on $M_n(X)$ as defined in Proposition \ref{bijective} when $\rho$ is the matrix norm on $X$.
	Then \cite[Lemma 6.8]{effros1} tells us that $\gamma = \gamma_X$. 
	It follows from Proposition \ref{bijective}(b) that for $F\in M_n(X)^*_\sa$, the condition
	$$\big\|(\Theta^X_F)^{(\infty)}(y)\big\|\leq \|y\| \ \  (y\in M_\infty(X))$$
	is the same as  $|F(z)| \leq \gamma_X(z)$ ($z\in M_n(X)$), 
	and this equivalence means that  $\gamma_X^*(F) = \|\Theta_F^X\|$ (see Remark \ref{rem:dual-SMOS}).

	\smnoind
	(c) This part follows from the definitions. 
\end{proof}

\begin{thm}\label{thm:bidual-MOS}
Let  $X$ be a SMOS and $n\in \BN$, and let $\Theta^X$ be as in Theorem \ref{thm:bdd-pos-CCP}. 
Define a map $\Lambda:M_n(X^{**})\to M_n(X)^{**}$ by 
	$$\Lambda(z)(F) := (z,  \Theta^X_F) \qquad (z\in M_n(X^{**}); F\in M_n(X)^*).$$
	Then $\Lambda$ is an isometric order isomorphism, when $M_n(X)^{**}$ is equipped with the bidual ordered Banach space structure and $M_n(X^{**})$ is considered as an ordered normed  subspace of $M_\infty(X^{**})$. 
	Moreover,  $\Lambda$ preserves the canonical images of $M_n(X)$ and is  a $\sigma\big(M_n(X^{**}), M_n(X^*)\big)$-$\sigma\big(M_n(X)^{**}, M_n(X)^*\big)$-homeomorphism. 
\end{thm}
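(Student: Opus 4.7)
The strategy is to factor $\Lambda$ through the natural identification of $M_n(X^{**})$ with the Banach dual $M_n(X^*)^*$, and then invoke Theorem \ref{thm:bdd-pos-CCP}. Define $\iota:M_n(X^{**})\to M_n(X^*)^*$ by $\iota(z)(g):=(z,g)$; since $M_n$ is finite-dimensional, this is a linear bijection (the inverse sends $\phi$ to the matrix whose $(k,l)$-entry is $g\mapsto \phi(e^{k,l}\otimes g)$), and it is a $\sigma(M_n(X^{**}),M_n(X^*))$--$\sigma(M_n(X^*)^*,M_n(X^*))$-homeomorphism essentially by definition. By Theorem \ref{thm:bdd-pos-CCP}(a), $\Theta^X:M_n(X)^*\to M_n(X^*)$ is a continuous linear bijection between Banach spaces, so the open mapping theorem shows $(\Theta^X)^{-1}$ is continuous and the Banach adjoint $(\Theta^X)^*:M_n(X^*)^*\to M_n(X)^{**}$ is both a topological linear isomorphism and a weak$^*$-homeomorphism. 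A direct computation gives $\Lambda=(\Theta^X)^*\circ \iota$, so $\Lambda$ is a linear bijection and a weak$^*$-homeomorphism between the indicated topologies.

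For the order isomorphism, positivity of $z\in M_n(X^{**})_\sa$ amounts to $(z,g)\geq 0$ for every $g\in M_n(X^*)_+$; using that $\Theta^X$ bijects $M_n(X)^*_+$ onto $M_n(X^*)_+$ (Theorem \ref{thm:bdd-pos-CCP}(a)), this is equivalent to $\Lambda(z)(F)=(z,\Theta^X_F)\geq 0$ for every $F\in M_n(X)^*_+$, i.e.\ $\Lambda(z)\geq 0$ in the bidual ordering. Together with the identity $\Lambda(z^*)=\Lambda(z)^*$ (which uses that $\Theta^X$ is self-adjoint), $\Lambda$ is an order isomorphism. For preservation of canonical images, if $x\in M_n(X)$ and $F\in M_n(X)^*$ then
\[\Lambda(x)(F)=\sum_{k,l}(\Theta^X_F)_{k,l}(x_{k,l})=\sum_{k,l}F(e^{k,l}\otimes x_{k,l})=F(x),\]
so $\Lambda(x)$ is the canonical image of $x$ in $M_n(X)^{**}$.

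Finally, the isometry is a two-sided weak$^*$-density argument. If $\|z\|_{M_n(X^{**})}\leq 1$, Corollary \ref{cor:dense-ball} combined with the complete contractivity of the truncation $M_\infty(X)\to M_n(X)$ produces a net $x^i\in B_{M_n(X)}$ with $x^i\to z$ in $\sigma(M_n(X^{**}),M_n(X^*))$; the weak$^*$-continuity of $\Lambda$ and the identity $\Lambda(x^i)=\kappa_{M_n(X)}(x^i)$ show $\kappa_{M_n(X)}(x^i)\to \Lambda(z)$ weak$^*$, and weak$^*$-lower semicontinuity of the bidual norm forces $\|\Lambda(z)\|\leq 1$. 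Conversely, if $\|\Lambda(z)\|\leq 1$, Goldstine's theorem produces $y^j\in B_{M_n(X)}$ with $\kappa_{M_n(X)}(y^j)\to \Lambda(z)$ weak$^*$; the homeomorphism property gives $y^j\to z$ in $\sigma(M_n(X^{**}),M_n(X^*))$, and the weak$^*$-lower semicontinuity of the operator-space norm on $M_n(X^{**})$---which holds because each evaluation $z\mapsto [z_{k,l}(f_{i,j})]_{i,j,k,l}$ at a fixed $f\in M_m(X^*)$ is weak$^*$-continuous into the finite-dimensional space $M_m(M_n)$---then yields $\|z\|\leq 1$. The most delicate point I anticipate is exactly this last semicontinuity, since Theorem \ref{thm:bdd-pos-CCP}(b) only provides an isometry on the self-adjoint part with respect to the auxiliary norm $\gamma_X^*$; thus the Banach bidual norm on $M_n(X)^{**}$ cannot be transported back through $\Lambda$ directly, and the semicontinuity on the $M_n(X^{**})$ side has to be established intrinsically from the dual operator space structure.
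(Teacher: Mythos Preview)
Your proof is correct and essentially the same as the paper's: your map $\iota$ is precisely $(\Theta^{X^*})^{-1}$ (compare your description of its inverse with the definition of $\Theta^{X^*}$), so your factorization $\Lambda=(\Theta^X)^*\circ\iota$ coincides with the paper's $\Lambda=(\Theta^X)^*\circ(\Theta^{X^*})^{-1}$. Your net-based isometry argument is a reformulation of the paper's observation that the weak$^*$-homeomorphism $\Lambda$, preserving the canonical image of $M_n(X)$, carries the weak$^*$-closure of $B_{M_n(X)}$ in $M_n(X^{**})$ (which equals $B_{M_n(X^{**})}$ by Corollary~\ref{cor:dense-ball}) onto the weak$^*$-closure of $B_{M_n(X)}$ in $M_n(X)^{**}$ (which equals $B_{M_n(X)^{**}}$ by Goldstine).
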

\begin{proof}
By Theorem \ref{thm:bdd-pos-CCP}(a), the two maps $\Theta^X: M_n(X)^* \to M_n(X^*)$ and $\Theta^{X^*}: M_n(X^*)^*\to M_n(X^{**})$ are order isomorphisms.  
	Hence, the map 
	$$(\Theta^X)^*\circ (\Theta^{X^*})^{-1}: M_n(X^{**}) \to M_n(X)^{**}$$ 
	is an order isomorphism. 
	Let $y\in M_n(X)^{**}$, and $g:= (\Theta^{X^*})^{-1}(y)$. 
	Then 
	\begin{align*}
		\Lambda(y)(F) 
		& = (\Theta^{X^*}_g, \Theta^X_F) 
		\ = \ {\sum}_{k,l=1}^n (\Theta^{X^*}_g)_{k,l}\big( (\Theta^X_F)_{k,l} \big)\\
		& =  {\sum}_{k,l=1}^n g\big( e^{k,l}\otimes (\Theta^X_F)_{k,l} \big)
		 =  g(\Theta^X(F)) 
		 =  (\Theta^X)^*(g)(F)
	\end{align*}
	for every $F\in M_n(X)^*$. 
	This shows that $\Lambda$ coincides with the order isomorphism $(\Theta^X)^*\circ (\Theta^{X^*})^{-1}$.

	Furthermore, Theorem \ref{thm:bdd-pos-CCP}(c) tells us that $\Lambda$ sends the canonical image of $M_n(X)$ in $M_n(X^{**})$ onto the canonical image of $M_n(X)$ in $M_n(X)^{**}$. 
	Theorem \ref{thm:bdd-pos-CCP}(c) and the bijectivity of $\Theta^X$ also imply  that the map $\Lambda$ is a $\sigma\big(M_n(X^{**}), M_n(X^*)\big)$-$\sigma\big(M_n(X)^{**}, M_n(X)^*\big)$-homeomorphism. 
	Therefore,  by Theorem \ref{thm:bdd-pos-CCP}(c) again,  $\Lambda$ will send the subset  $\overline{B_{M_n(X)}}^{\sigma(M_n(X^{**}), M_n(X^*))}$ onto the subset $\overline{B_{M_n(X)}}^{\sigma(M_n(X)^{**}, M_n(X)^*)}$. 
	Since the closed unit ball of a normed space is weak$^*$-dense in the closed unit ball of its bidual, we have
	$$B_{M_n(X)^{**}} = \overline{B_{M_n(X)}}^{\sigma(M_n(X)^{**}, M_n(X)^*)}.$$ 
	Moreover, Corollary \ref{cor:dense-ball} tells us that $B_{M_n(X^{**})} = \overline{B_{M_n(X)}}^{\sigma(M_n(X^{**}), M_n(X^*))}$. 
	Consequently, $\Lambda$ is an isometry. 
\end{proof}

For a SMOS $X$, let us denote
\begin{equation*}\label{eqt:def-B-Mn+}
	B_{M_n(X)}^+ := B_{M_n(X)}\cap M_n(X)_+ \qquad (n\in \BN).
\end{equation*}
We recall from \cite{Sch} that $X$ is said to be \emph{matrix regular} if for each $x\in M_\infty(X)_\sa$, the following conditions are satisfied:
\begin{itemize}
	\item the existence of $u\in B_{M_\infty(X)}^+$ with $-u\leq x\leq u$ will imply that $\|x\|\leq 1$;
	
	\item when $\|x\| < 1$, there exists $u\in B_{M_\infty(X)}^+$ with $-u\leq x\leq u$.
\end{itemize}


Our next lemma is easy to verify.


\begin{lem}\label{lem:matrix-reg}
	Let $X$ be a SMOS. 
	
	\smnoind
	(a) $X$ is matrix regular if and only if  $O_{M_\infty(X)_\sa}\subseteq \MS(B_{M_\infty(X)_\sa})\subseteq B_{M_\infty(X)_\sa}$, where $O_{M_\infty(X)_\sa}$ is the open unit ball of $M_\infty(X)_\sa$ and $\MS(B_{M_\infty(X)_\sa})$ is as defined in  Relation \eqref{eqt:def-solid}.
	
	\smnoind
	(b) If $X$ is matrix regular, then the norm on $M_\infty(X)_\sa$ is absolutely monotone (see Definition \ref{defn:Riesz}). 
	
	\smnoind
	(c) If $X$ is an operator system, the norm on $M_\infty(X)_\sa$ is absolutely monotone.  
	
	\smnoind
	(d) If $X$ is a $C^*$-algebra or a unital operator system, then it is matrix regular. 
\end{lem}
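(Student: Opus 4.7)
The plan is to handle the four parts largely by unwinding definitions and then reducing parts (c) and (d) to the standard $C^*$-algebraic fact that a self-adjoint element $a$ dominated in absolute value by a positive element $b$ satisfies $\|a\|\leq \|b\|$.

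For (a), I expect $\MS(B_{M_\infty(X)_\sa})$ from Relation \eqref{eqt:def-solid} to be the order-solid hull $\{x\in M_\infty(X)_\sa : \exists\, u\in B_{M_\infty(X)}^+,\ -u\leq x\leq u\}$. Under this reading, $\MS(B_{M_\infty(X)_\sa})\subseteq B_{M_\infty(X)_\sa}$ is literally the first matrix-regularity clause and $O_{M_\infty(X)_\sa}\subseteq \MS(B_{M_\infty(X)_\sa})$ is the second; so (a) is a direct reformulation.

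For (b), I will assume $-y\leq x\leq y$ with $y\in M_\infty(X)_+$. If $y\neq 0$, dividing by $\|y\|$ places $x/\|y\|$ in $\MS(B_{M_\infty(X)_\sa})\subseteq B_{M_\infty(X)_\sa}$ via matrix regularity, giving $\|x\|\leq \|y\|$. If $y=0$, the same inclusion applied to $x/r$ with witness $u=0$ for every $r>0$ forces $x=0$; either way the norm is absolutely monotone in the sense of Definition \ref{defn:Riesz}.

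For (c), I will fix a completely isometric complete order monomorphism $\Psi: X\to \CL(H)$ coming from Definition \ref{MOS}(b). For $x, y\in M_n(X)_\sa$ with $-y\leq x\leq y$, the same inequality holds in $M_n(\CL(H))=\CL(H^n)$, and (after adjoining a unit if necessary) $y\leq \|y\| I$ together with $-y\geq -\|y\| I$ give $-\|y\| I\leq x\leq \|y\| I$, hence $\|x\|\leq \|y\|$.

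For (d), part (c) already supplies the absolutely monotone half of matrix regularity (unital operator systems are operator systems, and $C^*$-algebras are operator systems, directly or after unitization). For the remaining solid half: if $V$ is unital with unit $e$ and $x\in M_n(V)_\sa$ has $\|x\|<1$, take $u:=I_n\otimes e$, which is positive of norm $1$ and satisfies $-u\leq x\leq u$ because $I_n\otimes e$ is the unit of $M_n(V)$ and $\|x\|<1$; if $A$ is a $C^*$-algebra and $x\in M_n(A)_\sa$ has $\|x\|<1$, use the Jordan decomposition $x=x_+-x_-$ of orthogonal positives and set $u:=x_++x_-=|x|$, so that $\|u\|=\|x\|<1$ and $-u\leq x\leq u$. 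No step is substantive; the only point requiring verification rather than invention is that $\MS$ and ``absolutely monotone'' from Definition \ref{defn:Riesz} match the natural interpretations I have used. Once that is confirmed, each part is a short scaling or direct $C^*$-argument.
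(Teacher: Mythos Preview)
Your proposal is correct. The paper offers no proof for this lemma, remarking only that it ``is easy to verify,'' and your direct unwinding of the definitions is exactly the natural verification: $\MS$ in Relation~\eqref{eqt:def-solid} and ``absolutely monotone'' in Definition~\ref{defn:Riesz} are precisely what you assumed, so parts (a) and (b) are immediate reformulations (indeed, (b) is also an instance of Lemma~\ref{lem:abs-mono}(a)), and your embedding argument for (c) and the order-unit/Jordan-decomposition constructions for (d) are the standard ones.
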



It follows from part (a) above and Lemma \ref{lem:abs-mono}(b) that  $X$ is matrix regular if and only if the norm on $M_\infty(X)_\sa$ is a regular norm (see Definition \ref{defn:Riesz}).
We will see in Proposition \ref{prop:iota-comp-embed} that an operator system $S$ is dualizable (see the Introduction) if and only if the norm on $M_\infty(S)_\sa$ is equivalent to a regular norm. 


\begin{lem}\label{lem:abs-cont-to-mat-reg}
	Let $X$ and $Y$ be SMOS such that the norm on $M_\infty(X)_\sa$ is absolutely monotone and $Y$ is matrix regular. 
	If $\varphi:X\to Y$ is a completely contractive complete order isomorphism satisfying $\|\varphi^{(\infty)}(u)\| = \|u\|$ ($u\in M_\infty(X)_+$),  then $\varphi$ is a complete isometry.
\end{lem}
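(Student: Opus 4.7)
The plan is to combine Relation \eqref{eqt:norm-sa} with a sandwich argument: use matrix regularity of $Y$ to squeeze a self-adjoint element of $M_n(Y)$ between $\pm u$ for some $u\in B_{M_\infty(Y)}^+$, lift $u$ back to $X$ via the complete order isomorphism, and then invoke absolute monotonicity of the norm on $M_\infty(X)_\sa$ to bound the norm of the preimage. Because $\varphi$ is self-adjoint and, by Relation \eqref{eqt:norm-sa}, the matrix norm on a $^*$-operator space is determined by its values on self-adjoint elements of $M_{2n}$, it suffices to show that $\varphi^{(n)}|_{M_n(X)_\sa}$ is isometric for every $n\in\BN$; and since $\varphi$ is already a complete contraction, the task reduces to proving the reverse inequality $\|x\|\leq\|\varphi^{(n)}(x)\|$ for $x\in M_n(X)_\sa$.

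To carry this out, fix $x\in M_n(X)_\sa$ and $s>\|\varphi^{(n)}(x)\|$, so that $\|\varphi^{(n)}(x/s)\|<1$. By matrix regularity of $Y$, there exist $m\geq n$ and $u\in B_{M_m(Y)}^+$ with $-u \leq \varphi^{(n)}(x)/s \leq u$ in $M_\infty(Y)$. Since $\varphi$ is a complete order isomorphism, $\varphi^{(\infty)}$ is bijective, which forces each $\varphi^{(m)}\colon M_m(X)\to M_m(Y)$ to be bijective with order-preserving inverse. Thus there is a unique $u'\in M_m(X)$ with $\varphi^{(m)}(u')=u$, and the positivity of $u\pm\varphi^{(n)}(x)/s$ in $M_m(Y)$ lifts to positivity of $u'\pm x/s$ in $M_m(X)$; in particular $u'\in M_m(X)_+$ and $-u'\leq x/s\leq u'$ in $M_\infty(X)$.

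Now the hypothesis $\|\varphi^{(\infty)}(v)\|=\|v\|$ on $M_\infty(X)_+$ applied to $u'$ yields $\|u'\|=\|\varphi^{(m)}(u')\|=\|u\|\leq 1$, and the absolute monotonicity of the norm on $M_\infty(X)_\sa$ then gives $\|x/s\|\leq\|u'\|\leq 1$, so $\|x\|\leq s$. Letting $s\searrow\|\varphi^{(n)}(x)\|$ completes the reverse inequality. The only substantive obstacle is dimensional bookkeeping: the element $u$ supplied by matrix regularity can live at a level $m>n$, so both the sandwich and the lift have to be carried out in $M_m$, and one must check that embedding $x$ into $M_m(X)$ via the upper-left corner is compatible with the order isomorphism property of $\varphi^{(m)}$. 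Once these embeddings are aligned, the remaining steps are entirely formal.
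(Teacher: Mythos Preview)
Your proof is correct and follows essentially the same strategy as the paper's: reduce to self-adjoint elements via Relation \eqref{eqt:norm-sa}, use matrix regularity of $Y$ to sandwich $\varphi^{(\infty)}(x)$ between $\pm u$ with $u$ a positive contraction, pull $u$ back via the complete order isomorphism, and apply absolute monotonicity. The paper avoids your dimensional bookkeeping entirely by working directly in $M_\infty(X)_\sa$ and $M_\infty(Y)_\sa$ throughout; your extra care about a possible jump from level $n$ to level $m$ is harmless but unnecessary, since one can always compress $u$ to $M_n(Y)_+$ by the corner projection without increasing its norm.
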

\begin{proof}
	Since $\varphi$ is self-adjoint, by Relation \eqref{eqt:norm-sa}, it suffices to show that $\varphi^{(\infty)}$ restricts to an isometry on $M_\infty(X)_\sa$. 
	Consider $x\in M_\infty(X)_\sa$ with $\|\varphi^{(\infty)}(x)\|< 1$. 
	As $Y$ is matrix regular, there exists $w\in B_{M_\infty(Y)}^+$ with $-w \leq \varphi^{(\infty)}(x) \leq w$.
	The hypothesis gives a unique $v\in B_{M_\infty(X)}^+$ with $\varphi^{(\infty)}(v) = w$ and we have $-v\leq x\leq v$. 
	The absolute monotonicity of the norm on $M_\infty(X)_\sa$ then implies that $\|x\|\leq 1$, as required. 
\end{proof}


The proof of the last two corollaries in this section demonstrate that Theorem \ref{thm:bidual-MOS} provides an easy passage from some results in ordered normed space to their matrix analogues. 
In fact, the following corollary is a direct application of Theorem \ref{thm:bidual-MOS} and Proposition \ref{prop:pos-unit-ball}.


\begin{cor}\label{cor:emb-into-bidual}
	Let $X$ be a SMOS and $\kappa_X:X\to X^{**}$ be the canonical map. 
	
	\smnoind
	(a) $\kappa_X$ is a completely order monomorphic complete isometry. 
	
	\smnoind
	(b) For $n\in \BN$, the images of $M_{n}(X)_+$ and $B_{M_{n}(X)}^+$ under $\kappa_X^{(n)}$ are weak$^*$-dense in $M_{n}(X^{**})_+$ and  $B_{M_{n}(X^{**})}^+$, respectively.
\end{cor}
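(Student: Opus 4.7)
The plan is to transport both assertions through the isometric order isomorphism $\Lambda: M_n(X^{**})\to M_n(X)^{**}$ of Theorem \ref{thm:bidual-MOS} and then invoke classical facts about the canonical embedding of an ordered Banach space into its bidual. Recall that $\Lambda$ is an isometry, an order isomorphism, a weak$^*$-homeomorphism, and it carries the canonical image of $M_n(X)$ in $M_n(X^{**})$ onto the canonical image of $M_n(X)$ in $M_n(X)^{**}$. These four features are precisely what are needed to pull back the corresponding statements for the scalar-level embedding $\kappa_{M_n(X)}: M_n(X) \to M_n(X)^{**}$.

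For part (a), fix $n\in \BN$. Under $\Lambda$, the map $\kappa_X^{(n)}: M_n(X)\to M_n(X^{**})$ corresponds to the standard bidual embedding $\kappa_{M_n(X)}$. The latter is always an isometry by the Hahn--Banach theorem; moreover, since $M_n(X)_+$ is a norm-closed cone in the SMOS $X$, if $x\in M_n(X)_\sa$ satisfies $F(x)\geq 0$ for every $F\in M_n(X)^*_+$, then Hahn--Banach separation forces $x\in M_n(X)_+$, i.e.\ $\kappa_{M_n(X)}$ is an order monomorphism. Because $\Lambda$ preserves norms, cones, and the canonical images, it follows that $\kappa_X^{(n)}$ is an isometric order monomorphism for every $n$, which is exactly the assertion that $\kappa_X$ is a complete isometry and completely order monomorphic.

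For part (b), using that $\Lambda$ is an isometric order isomorphism one has
\[\Lambda\big(M_n(X^{**})_+\big) = M_n(X)^{**}_+ \quad\text{and}\quad \Lambda\big(B_{M_n(X^{**})}^+\big) = B_{M_n(X)^{**}}^+.\]
By Proposition \ref{prop:pos-unit-ball} applied to the ordered Banach space $M_n(X)$, the canonical image of $M_n(X)_+$ is weak$^*$-dense in $M_n(X)^{**}_+$, and the canonical image of $B_{M_n(X)}^+$ is weak$^*$-dense in $B_{M_n(X)^{**}}^+$. Transporting these density statements through the weak$^*$-homeomorphism $\Lambda^{-1}$, while using once more that $\Lambda$ identifies the two canonical images of $M_n(X)$, yields the desired weak$^*$-density in $M_n(X^{**})_+$ and $B_{M_n(X^{**})}^+$.

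There is no real obstacle: all of the substance sits in Theorem \ref{thm:bidual-MOS} (which supplies the four-fold compatibility of $\Lambda$) and in Proposition \ref{prop:pos-unit-ball} (the scalar analogue of (b)); the only thing to check carefully is that $\Lambda$ and $\Lambda^{-1}$ simultaneously respect norms, cones, weak$^*$-topologies, and the canonical embeddings, which is precisely what Theorem \ref{thm:bidual-MOS} guarantees.
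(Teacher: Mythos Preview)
Your proposal is correct and follows essentially the same route as the paper: the paper states the corollary as a direct consequence of Theorem~\ref{thm:bidual-MOS} together with Proposition~\ref{prop:pos-unit-ball}, and you have simply written out the details of that deduction. The only minor remark is that Proposition~\ref{prop:pos-unit-ball} is formulated for real ordered normed spaces, so strictly speaking you are applying it to $M_n(X)_\sa$ rather than to $M_n(X)$; this causes no difficulty since the involution is isometric and the cones live in the self-adjoint part.
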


\emph{In the remainder of this paper, we may sometimes ignore the map $\kappa_X$ and identify $X$ as a sub-SMOS of $X^{**}$ directly. }


Note that the proof of the weak$^*$-density of $\kappa_X^{(n)}\big(B_{M_{n}(X)}^+\big)$ in $B_{M_{n}(X^{**})}^+$ given in \cite[Proposition 12]{JN} requires the matrix analogue of the Bonsall theorem, but it is not needed in our alternative proof above.



\begin{cor}\label{cor:bidual-abs-mono}
	Let $X$ be a SMOS. 
	
	\smnoind
	(a) The norm on $M_\infty(X^{**})_\sa$ is absolutely monotone if and only if the norm on $M_\infty(X)_\sa$ is absolutely monotone. 
	
	\smnoind
	(b) $X^{**}$ is matrix regular if and only if the norm on $M_\infty(X)_\sa$ is absolutely monotone, and for each $n\in \BN$ as well as  $x\in M_n(X)_\sa$ with $\|x\| < 1$, there is $u\in B_{M_n(X^{**})}^+$ with $-u\leq x\leq u$. 
	In particular, if $X$ is matrix regular, then so is $X^{**}$.
\end{cor}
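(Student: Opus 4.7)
The plan is to transfer both assertions to classical ordered-Banach-space statements about $M_n(X)$ and its bidual $M_n(X)^{**}$, via the isometric order isomorphism $\Lambda: M_n(X^{**}) \to M_n(X)^{**}$ of Theorem \ref{thm:bidual-MOS}. For part (a), the forward direction is immediate from Corollary \ref{cor:emb-into-bidual}(a), since $\kappa_X$ is a completely isometric complete order monomorphism, so absolute monotonicity of the norm on $M_\infty(X^{**})_\sa$ restricts to absolute monotonicity on $M_\infty(X)_\sa$. For the converse, absolute monotonicity at the $M_\infty$-level just means absolute monotonicity at each $M_n$-level; fixing $n$, the map $\Lambda$ identifies $M_n(X^{**})_\sa$ with $M_n(X)^{**}_\sa$ (carrying the bidual norm and the bidual cone) as real ordered normed spaces, and the matter reduces to the fact that the bidual of an absolutely monotone ordered Banach space is absolutely monotone --- the classical analogue which I expect to be available in the appendix alongside Proposition \ref{prop:pos-unit-ball}.

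For part (b), I would first unpack matrix regularity of $X^{**}$ into two pieces: (i) for $x\in M_\infty(X^{**})_\sa$, the existence of $u\in B_{M_\infty(X^{**})}^+$ with $-u\leq x\leq u$ forces $\|x\|\leq 1$; and (ii) whenever $\|x\|<1$, such a $u$ exists. By scaling, (i) is equivalent to absolute monotonicity of $M_\infty(X^{**})_\sa$, which by part (a) is equivalent to absolute monotonicity of $M_\infty(X)_\sa$. The forward implication of the equivalence between (ii) and the intermediate condition in the statement is trivial since $M_n(X)_\sa \subseteq M_n(X^{**})_\sa$. For the reverse implication, given $x\in M_n(X^{**})_\sa$ with $\|x\|<1$, pick $\delta>0$ with $(1+\delta)\|x\|\leq 1$; by Corollary \ref{cor:dense-ball} applied at level $n$, write $(1+\delta)x$ as the weak$^*$-limit of a net $y_\alpha \in B_{M_n(X)_\sa}$ and set $x_\alpha := (1+\delta)^{-1}y_\alpha$, so that $\|x_\alpha\|<1$ and $x_\alpha\to x$ weak$^*$. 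The hypothesis furnishes $u_\alpha \in B_{M_n(X^{**})}^+$ with $-u_\alpha\leq x_\alpha\leq u_\alpha$; by Theorem \ref{thm:bidual-MOS} the ball $B_{M_n(X^{**})}$ corresponds to $B_{M_n(X)^{**}}$ and is therefore weak$^*$-compact by Banach--Alaoglu, so a subnet of $(u_\alpha)$ converges weak$^*$ to some $u$. Weak$^*$-closedness of $M_n(X^{**})_+$, which follows from its definition as a dual cone, gives $u\in M_n(X^{**})_+$; and passing to the weak$^*$-limit in $u_\alpha\pm x_\alpha \geq 0$ yields $-u\leq x\leq u$, as required.

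Finally, the ``In particular'' clause is immediate: matrix regularity of $X$ supplies both absolute monotonicity of $M_\infty(X)_\sa$ by Lemma \ref{lem:matrix-reg}(b) and, at each level $n$, a witness $u\in B_{M_n(X)}^+\subseteq B_{M_n(X^{**})}^+$ (via $\kappa_X^{(n)}$) for the intermediate condition of the statement. The main obstacle I anticipate is in part (b): carrying out the weak$^*$-limit argument in the correct topology, namely $\sigma(M_n(X^{**}), M_n(X^*))$, so that the norm bound on $u$ (invoking the isomorphism with $B_{M_n(X)^{**}}$ and Banach--Alaoglu) and the order inequalities pass to the limit simultaneously. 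For part (a), the only subtlety is locating the classical bidual-absolute-monotonicity fact in the appendix on ordered Banach spaces.
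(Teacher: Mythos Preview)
Your proposal is correct and follows essentially the same strategy as the paper: reduce to the real ordered Banach space $M_n(X)_\sa$ via the isometric order isomorphism $\Lambda$ of Theorem~\ref{thm:bidual-MOS}, and then invoke a classical fact about biduals. Two small remarks are worth making. For part~(a), the classical statement ``absolute monotonicity passes to the bidual'' is not recorded verbatim in the appendix; the paper obtains it from Jameson's polarity result Proposition~\ref{prop:Jameson}(b), which yields $\MS(B_{M_n(X)^{**}_\sa}) = \overline{\MS(B_{M_n(X)_\sa})}^{w^*}$, and then Lemma~\ref{lem:abs-mono}(a) finishes. For the backward direction of part~(b), the paper again appeals to Proposition~\ref{prop:Jameson}(b) to see that $\MS(B_{M_n(X^{**})_\sa})$ is weak$^*$-closed and then uses density (Corollary~\ref{cor:dense-ball}), whereas you carry out the corresponding weak$^*$-limit by hand, extracting a convergent subnet of the witnesses $u_\alpha$ via Banach--Alaoglu transported through $\Lambda$. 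Your route is slightly more self-contained (it avoids a second appeal to Jameson), while the paper's is more uniform in that both parts rest on the same polarity identity; substantively the two arguments coincide.
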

\begin{proof}
	(a) Since the absolute monotonicity passes to subspaces, the forward implication follows from Corollary \ref{cor:emb-into-bidual}(a). 
	For the backward implication, let us fix $n\in \BN$. 
	Proposition \ref{prop:Jameson}(b) implies
	\begin{equation*}\label{eqt:weak-st-cl-of-S(B)}
		\MS(B_{M_n(X)^{**}_\sa})  = \MS\big(\overline{B_{M_n(X)_\sa}}^{\sigma(M_n(X)_\sa^{**}, M_n(X)_\sa^*)}\big)  = \overline{\MS(B_{M_n(X)_\sa})}^{\sigma(M_n(X)_\sa^{**}, M_n(X)_\sa^*)}.	
	\end{equation*}
	Moreover, by Lemma \ref{lem:abs-mono}(a), we have $\MS(B_{M_n(X)_\sa}) \subseteq B_{M_n(X)_\sa}$. 
	Hence, the displayed relation above gives $\MS(B_{M_n(X)^{**}_\sa}) \subseteq B_{M_n(X)^{**}_\sa}$. 
	It then follows from Theorem \ref{thm:bidual-MOS} that 
	$\MS(B_{M_n(X^{**})_\sa}) \subseteq B_{M_n(X^{**})_\sa}$, and  Lemma \ref{lem:abs-mono}(a) tells us that the norm on $M_n(X^{**})_\sa$ is absolutely monotone.
	
	\smnoind
	(b) We will only establish the first statement as the second statement follows directly from the first one. 
	Suppose that $X^{**}$ is matrix regular. 
	Then by Lemma \ref{lem:matrix-reg}(b) and part (a) above, we know that the norm on $M_n(X)_\sa$ is absolutely monotone. 
	Moreover, the second condition as in the statement clearly holds.  
	Conversely, assume that the said two conditions hold for $X$.
	If $n\in \BN$, then Lemma \ref{lem:abs-mono}(a) and part (a) above imply that 
	$$O_{M_n(X)_\sa}\subseteq \MS(B_{M_n(X^{**})_\sa}) \subseteq B_{M_n(X^{**})_\sa}.$$
	On the other hand, the displayed relation  in part (a) and Theorem \ref{thm:bidual-MOS} implies that $\MS(B_{M_n(X^{**})_\sa})$ is $\sigma(M_n(X^{**})_\sa, M_n(X^*)_\sa)$-closed. 
	Therefore, it follows from the inclusions above and Corollary \ref{cor:dense-ball} that $B_{M_n(X^{**})_\sa} = \MS(B_{M_n(X^{**})_\sa})$. 
	Now, Lemma \ref{lem:matrix-reg}(a) ensures $X^{**}$ to be matrix regular.
\end{proof}


\section{From dual SMOS to dual Operator systems}


To study the dual space of operator systems, we also need the dual version of SMOS defined below.


\begin{defn}\label{defn:dual-MOS}
	(a) Suppose that $X$ is a complete SMOS. 
	When $X^*$ is equipped with the SMOS structure as in Remark \ref{rem:dual-SMOS}(b) together with the weak$^*$-topology $\sigma(X^*,X)$, we call it a \emph{dual SMOS}. 
	In this case,  $X$ is called the \emph{fixed predual} of $X^*$. 
	If $X^*$ happens to be a MOS, then $X^*$ is called a \emph{dual MOS}. 
	For dual SMOS $E$ and $F$, we denote by $\Morc_w(E,F)$ the set of all weak$^*$-continuous completely positive complete contractions from $E$ to $F$.

	\smnoind
	(b) A dual MOS $V$ is called a \emph{dual operator system} 
	(respectively, \emph{dual quasi-operator system}) if there is a completely order monomorphic  complete isometry (respectively, complete embedding) $\Psi: V \to \CL(H)$ for some Hilbert space $H$ such that $\Psi$ is a weak$^*$-homeomorphism from $V$ to $\Psi(V)$. 
	In the case when $\Psi$ is completely isometric and $\Psi(V)$ contains the identity of $\CL(H)$, we call $V$ a \emph{unital dual operator system}. 
\end{defn}


In other words, we define a dual operator system as a self-adjoint weak$^*$-closed subspace of some $\CL(H)$,  equipped with the induced matrix norm, the induced matrix cone and the induced weak$^*$-topology.
This definition is slightly different from the one in some literature (where the unital assumption is usually imposed; see e.g., \cite{dual}).


In \cite{Ng-dual-OS}, a dual SMOS (respectively, a dual MOS) $E$ is defined to be a SMOS (respectively, MOS) with a Banach space predual $X$ such that the involution is weak$^*$-continuous and the matrix cone is weak$^*$-closed. 
This virtually weaker definition agrees with the definition as in the above because of \cite[Corollary 3.8]{Ng-dual-OS}.


For a complete SMOS $X$, one has 
$$\Morc_w(X^*,M_n)= B_{M_n(X)}\cap M_n(X)_+ = B_{M_n(X)}^+;$$ 
note that the norm on $M_n(X^{**})$ is the one induced from the completely bounded norm on $\Morc(X^*,M_n)$, and we have Corollary \ref{cor:emb-into-bidual}(a). 


Observe that it is possible to have two different complete SMOS $X$ and $Y$ such that there is a completely isometric complete order isomorphism from $X^*$ onto $Y^*$ (even in the case when $X^*$ is a unital dual operator system; see \cite[Proposition 2.3]{dual}). 
Thus, the predual of a dual SMOS $F$ may not be unique if one only considers the SMOS structure on $F$.
Of course,  the predual is unique if the weak$^*$-topology is taken into account.


In the following, we will extend a construction in \cite{Ng-MOS} and state some facts about it. 
More precisely, for a SMOS $X$, we denote by $j_X$ the evaluation map from $X$ to the $C^*$-algebra 
\begin{equation}\label{eqt:def-A(X)}
	A(X):= {\bigoplus}_{n\in \BN} C\big(B_{M_n(X^*)}^+;M_n\big);
\end{equation}
here, we identify $B_{M_n(X^*)}^+$ with $\Morc(X,M_n)$, and equip it with the compact Hausdorff topology induced from weak$^*$-topology on $M_n(X)^*$ (see Theorem \ref{thm:bdd-pos-CCP}(a)).
Parts (a) and (b) of the following result can be regarded as an extension of \cite[Lemma 2.4(c)]{Ng-MOS} to SMOS (in the original version, the matrix cone on $X$ is assumed to be proper).


\begin{lem}\label{lem:Ng-MOS}
	Let $X$ be a SMOS. 
	
	\smnoind
	(a) $j_X$ is a self-adjoint complete contraction satisfying
	$$\big(j_X^{(\infty)}\big)^{-1}\big(M_\infty(A(X))_+\big)\cap M_\infty(X)_\sa = M_\infty(X)_+.$$ 
	
	\smnoind
	(b) $X$ is a MOS if and only if $j_X$ is injective. 
	
	\smnoind
	(c) $X$ is an operator system  (respectively, a quasi-operator system) if and only if $j_X$ is a complete isometry (respectively, complete embedding). 
\end{lem}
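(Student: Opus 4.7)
My plan is to prove (a) directly and then derive (b) and (c) from it. Self-adjointness of $j_X$ is immediate because the involution on $A(X)$ is pointwise and every $\varphi$ in the index set $B_{M_n(X^*)}^+$ is self-adjoint; complete contractivity is also immediate from $\|\varphi^{(k)}(x)\| \leq \|\varphi\|_{cb}\|x\| \leq \|x\|$ applied over all $\varphi \in B_{M_n(X^*)}^+$ and all $n$. The forward inclusion of the cone equality in (a) is obvious: if $x \in M_n(X)_+$ then $\varphi^{(n)}(x) \geq 0$ pointwise because each $\varphi$ is completely positive, so $j_X^{(n)}(x) \in M_n(A(X))_+$. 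The reverse inclusion is where work is needed, and I would argue by contradiction. Pick $x_0 \in M_n(X)_\sa \setminus M_n(X)_+$. Since $M_n(X)_+$ is a norm-closed convex cone in the real Banach space $M_n(X)_\sa$ by the SMOS axioms, Hahn-Banach supplies $F \in M_n(X)^*_\sa$ with $F|_{M_n(X)_+} \geq 0$ and $F(x_0) < 0$. Theorem \ref{thm:bdd-pos-CCP}(a) promotes this $F$ to $\varphi := \Theta^X_F \in M_n(X^*)_+$, a nonzero CP completely bounded map (nonzero because $F(x_0) \neq 0$). Normalize to $\tilde\varphi := \varphi/\|\varphi\|_{cb} \in B_{M_n(X^*)}^+$. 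Then the identity $F(u) = \beta_e \varphi^{(n)}(u) \beta_e^*$ from Lemma \ref{lem:JN-Lemma5} gives $\beta_e \tilde\varphi^{(n)}(x_0) \beta_e^* = F(x_0)/\|\varphi\|_{cb} < 0$, which prevents $\tilde\varphi^{(n)}(x_0)$ from lying in $M_n(M_n)_+$, contradicting $j_X^{(n)}(x_0) \in M_n(A(X))_+$.

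Part (b) then follows quickly. If $X$ is a MOS and $j_X(x) = 0$, self-adjointness of $j_X$ reduces to the case $x \in X_\sa$; then $\pm j_X(x) \in A(X)_+$ forces $\pm x \in X_+$ by (a), so $x = 0$. Conversely, assuming $j_X$ injective, for $x \in M_\infty(X)_+ \cap -M_\infty(X)_+$ the forward half of (a) places $\pm j_X^{(\infty)}(x)$ in the proper cone of the $C^*$-algebra $A(X)$, so $j_X^{(\infty)}(x) = 0$ and entrywise injectivity gives $x = 0$. For the easy direction of (c), I would fix a faithful $*$-representation $\pi: A(X) \hookrightarrow \CL(H)$, which is automatically a completely isometric complete order isomorphism; composing with $j_X$ then realizes $X$ as a completely order monomorphic (by (a)) and completely isometric (respectively completely embedded) subspace of $\CL(H)$ whenever $j_X$ has the corresponding property.

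The main obstacle is the converse in (c): given a completely order monomorphic complete isometry (respectively complete embedding) $\iota: X \to \CL(H)$, I must produce sufficiently many CP complete contractions $X \to M_n$ to recover the matrix norm of each $x \in M_k(X)$. My plan is to use compressions by finite-rank projections. For each finite-rank projection $P \in \CL(H)$, the map $a \mapsto PaP$ is a CP CC map $\varphi_P: \CL(H) \to P\CL(H)P \cong M_{n_P}$, so $\varphi_P \circ \iota \in \Morc(X, M_{n_P})$ and
\[\|j_X^{(k)}(x)\| \ \geq \ \sup_P \|(\varphi_P \circ \iota)^{(k)}(x)\| \ = \ \sup_P \|P^{(k)} \iota^{(k)}(x) P^{(k)}\| \ = \ \|\iota^{(k)}(x)\|,\]
the last equality being the standard fact that finite-rank compressions recover the operator norm on $M_k(\CL(H)) = \CL(H^k)$ via vector states. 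Combined with the complete contractivity from (a), this gives $\|j_X^{(k)}(x)\| = \|x\|$ in the operator-system case, and $\|j_X^{(k)}(x)\| \geq c\|x\|$ with $c := \inf_{\|y\|=1}\|\iota^{(\infty)}(y)\| > 0$ in the quasi-operator-system case. The delicate step I expect to need care with is the scaling of $F$ to $\tilde\varphi$ in part (a), specifically verifying nonvanishing of $\varphi$ and tracking the sign through the $\beta_e$-sandwich; everything else is routine bookkeeping.
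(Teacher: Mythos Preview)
Your argument is correct and, for parts (a) and (b), essentially identical to the paper's: the same Hahn--Banach separation followed by the $\Theta^X$ transfer and the $\beta_e$-sandwich identity for (a), and the same properness-of-the-$C^*$-cone deduction for (b). For part (c) the paper simply defers to \cite[Lemma 2.4(d) and Theorem 2.6]{Ng-MOS}, whereas you supply the standard finite-rank compression argument directly; this is fine, with one small caveat in the quasi-operator-system case: a complete embedding $\iota$ need not be completely contractive, so $\varphi_P\circ\iota$ may fail to lie in $\Morc(X,M_{n_P})$ as written. Normalize $\iota$ by $\|\iota\|_{\cb}$ first (this preserves both the complete order monomorphism and the bounded-below constant up to a harmless factor), after which your chain of inequalities gives $\|j_X^{(k)}(x)\|\geq (c/\|\iota\|_{\cb})\|x\|$ as needed.
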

\begin{proof}
	(a) Clearly, $j_X$ is a completely positive complete contraction. 
	In particular, one has 
	$$M_\infty(X)_+ \subseteq \big(j_X^{(\infty)}\big)^{-1}\big(M_\infty(A(X))_+\big)\cap M_\infty(X)_\sa.$$ 
	Suppose on the contrary that there exist $n\in \BN$ and $x\in M_n(X)_\sa\setminus M_n(X)_+$ with $j_X^{(n)}(x)\in M_n(A(X))_+$. 
	As $M_n(X)_+$ is a norm closed cone, we can find $f\in (M_n(X)_\sa)^*$ such that $f(y) \geq 0$ for all $y\in M_n(X)_+$ but $f(x) < 0$. 
	If $g$ is the complexification of $f$, then $g\in M_n(X)^*_+$ and $g(x) < 0$. 
	By Theorem \ref{thm:bdd-pos-CCP}(a), the map $\Theta_g^X$ belongs to $\CB(X,M_n)\cap \CP(X,M_n)$. 
	On the other hand, Lemma \ref{lem:JN-Lemma5} and the proof of Theorem \ref{thm:bdd-pos-CCP}(a) tell us that $\beta_e (\Theta_g^X)^{(n)}(x) \beta_e^* = g(x) = f(x) < 0$. 
	This shows that $(\Theta_g^X)^{(n)}(x)\not\not\geq 0$. 
	By rescaling $\Theta_g^X$, we may assume that $\Theta_g^X\in B_{M_n(X^*)}^+$. 
	This gives the contradiction that $j_X^{(n)}(x)\notin M_n(A(X))_+$. 
	
	\smnoind
	(b) The forward implication follows from \cite[Lemma 2.4(c)]{Ng-MOS}.
	The backward implication follows from part (a) above and the fact that $A(X)$ is a MOS. 
	
	\smnoind
	(c) This follows from part (b) above as well as Lemma 2.4(d) and Theorem 2.6 of \cite{Ng-MOS}.
\end{proof}


\begin{cor}\label{cor:smaller-cone-oper-sys}
	Let $S$ be an operator system (respectively, a quasi-operator system). 
	If $T$ is the $^*$-operator space $S$ equipped with a closed matrix cone $M_\infty(T)_+$ that satisfies $M_\infty(T)_+\subseteq M_\infty(S)_+$, then $T$ is again an operator system (respectively, a quasi-operator system). 
\end{cor}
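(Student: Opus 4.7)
The plan is to invoke Lemma \ref{lem:Ng-MOS}(c), which reduces the assertion to showing that the canonical evaluation map $j_T$ is a complete isometry (respectively, a complete embedding), by comparing it with the corresponding map $j_S$ via a natural restriction between the $C^*$-algebras $A(T)$ and $A(S)$.

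The key observation is that, viewed as sets of maps $S = T \to M_n$, one has $\Morc(S, M_n) \subseteq \Morc(T, M_n)$: the matrix norms on $S$ and $T$ coincide (so complete contractivity is the same condition), while the inclusion $M_\infty(T)_+ \subseteq M_\infty(S)_+$ makes every completely positive map with respect to $S$ automatically completely positive with respect to $T$. Under the identification $\Morc(X, M_n) = B_{M_n(X^*)}^+$, this yields a continuous inclusion of compact Hausdorff spaces $B_{M_n(S^*)}^+ \hookrightarrow B_{M_n(T^*)}^+$, since both topologies are induced from the common weak$^*$ topology on $M_n(S)^* = M_n(T)^*$. Restriction of continuous functions on each summand then assembles into a $^*$-homomorphism $\pi: A(T) \to A(S)$, which is automatically a completely positive complete contraction, and by construction $\pi \circ j_T = j_S$ because evaluating either side at $\varphi \in \Morc(S, M_n) \subseteq \Morc(T, M_n)$ returns $\varphi(x)$.

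The argument then concludes quickly. In the operator system case, Lemma \ref{lem:Ng-MOS}(c) gives that $j_S$ is a complete isometry, so for every $x \in M_\infty(T)$ the chain
\begin{equation*}
\|x\| = \|j_S^{(\infty)}(x)\| = \|\pi^{(\infty)}(j_T^{(\infty)}(x))\| \leq \|j_T^{(\infty)}(x)\| \leq \|x\|
\end{equation*}
forces $j_T$ to be a complete isometry as well. The quasi-operator system case is identical with ``bounded below'' in place of ``isometric'': any lower bound for $j_S^{(\infty)}$ transfers to the same lower bound for $j_T^{(\infty)}$ via the completely contractive $\pi$. Another application of Lemma \ref{lem:Ng-MOS}(c), this time to $T$, then concludes the proof. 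I do not foresee any serious obstacle here; the only point needing care is to confirm that $B_{M_n(S^*)}^+$ inherits its topology as a subspace of $B_{M_n(T^*)}^+$, which is immediate since both are given by the common weak$^*$ topology.
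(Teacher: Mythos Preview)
Your proposal is correct and follows essentially the same approach as the paper: both reduce via Lemma \ref{lem:Ng-MOS}(c) to showing $j_T$ is a complete isometry (resp.\ embedding), use the inclusion $\Morc(S,M_n)\subseteq\Morc(T,M_n)$ coming from $M_\infty(T)_+\subseteq M_\infty(S)_+$, and conclude from $\|j_S^{(\infty)}(x)\|\le\|j_T^{(\infty)}(x)\|\le\|x\|$. The paper simply compares the two suprema directly rather than packaging the comparison into an explicit restriction $^*$-homomorphism $\pi:A(T)\to A(S)$, but this is a cosmetic difference.
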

\begin{proof}
	By Lemma \ref{lem:Ng-MOS}(c), it suffices to show that $j_T$ is a complete isometry (respectively, complete embedding). 
	Since $M_\infty(T)_+\subseteq M_\infty(S)_+$, we know that $\Morc(S;M_n) \subseteq \Morc(T;M_n)$ for every $n\in \BN$. 
	From this, and the fact that $j_T$ is a complete contraction (see Lemma \ref{lem:Ng-MOS}(a)), we see that $\|j_S^{(\infty)}(x)\|\leq \|j_T^{(\infty)}(x)\|\leq \|x\|$ for any $x\in M_\infty(S) = M_\infty(T)$. 
	As $j_S$ is a complete isometry (respectively, complete embedding), we know that $j_T$ is a complete isometry (respectively, complete embedding). 
\end{proof}


\begin{defn}\label{defn:unitization}
	Let $X$ be a SMOS. 
	Consider $V$ to be a unital operator system and $\iota\in \Morc(X,V)$. 
	Then $(V, \iota)$ is called a \emph{partial unitization} of $X$ if for every unital operator system $W$ and $\varphi\in \Morc(X,W)$, there is a unique unital complete contraction $\varphi^1:V\to W$ such that $\varphi = \varphi^1\circ \iota$. 	
\end{defn}


\begin{rem}\label{rem:unitization}
	(a) In the case when $X$ is a MOS, the requirement for the definition of partial unitization in \cite[Definition 4.1]{Wern-subsp}  is formally stronger than the one in Definition \ref{defn:unitization}, but these two definitions  are the same, because the pair $(V, \iota)$ satisfying the requirement in Definition \ref{defn:unitization} is clear unique up to bijective unital complete isometry (due to the universal property in the definition), and the object satisfying the requirement in \cite[Definition 4.1]{Wern-subsp} always exists (see \cite[Lemma 4.8(c)]{Wern-subsp}).  
	
	\smnoind
	(b) The pair in Definition \ref{defn:unitization} was called ``partial unitalization'' in \cite[Definition 3.2]{Ng-MOS}, but we change the name to ``partial unitization'' to match with the terminology in \cite{Wern-subsp}. 
\end{rem}


The following is a combination of \cite[Proposition 3.4]{Ng-MOS} and some results from \cite[\S 4]{Wern-subsp}.
Since we consider the more general situation of SMOS here (instead of MOS), some arguments are required. 


\begin{prop}\label{prop:reg}
	Let $X$ be a SMOS and $j_X:X\to A(X)$ be the map as above. 
	For each $x\in M_\infty(X)$, we define
	\begin{equation}\label{eqt:def-norm-r}
		\left\|x\right\|^\rn:= \big\|j_X^{(\infty)}(x)\big\|=\sup\big\{\|\varphi^{(\infty)}(x)\|: \varphi\in \Morc(X;M_m); m\in \BN \big\}. 
	\end{equation}
	Set $Z_0:=\ker j_X=\{x\in X:\|x\|^\rn = 0\}  $.
	Denote $\iota_X:X\to X/Z_0$ the quotient map, and $\|\cdot\|_0^\rn$ the quotient matrix norm on $X/Z_0$ induced by $\|\cdot\|^\rn$.  
	
	\smnoind
	(a) $X\check{\ }:= \big(X/Z_0, \|\cdot\|_0^\rn\big)$ is an operator system with its matrix cone $M_\infty(X\check{\ })_+$ being $\iota_X^{(\infty)}\big(M_\infty(X)_+\big)$. 
	Moreover, $X\check{\ }$ can be identified with $j_X(X)$ as operator systems; in other words, one may identify 
	$\iota_X = j_X$.
	Furthermore, 
	\begin{equation}\label{eqt:norm-jX}
		\|x\|^\rn
		= \sup\Big\lbrace\Big\|f\Big(\begin{matrix}
			0&x\\
			x^*&0
		\end{matrix}\Big)\Big\|:f\in B_{M_{2n}(X)^*}^+\Big\rbrace
		\quad (x\in M_n(X);n\in \BN).  
	\end{equation}
	
	\smnoind
	(b) $X$ is an operator system (respectively, a quasi-operator system) if and only if $\iota_X$ is a complete isometry (respectively, complete embedding). 
	
	\smnoind
	(c) If $T$ is an operator system, then for any $\varphi\in \Morc(X,T)$, there is a unique $\varphi\check{\ }\in \Morc( X\check{\ }, T)$ with $\varphi = \varphi\check{\ }\circ \iota_X$. 
	
	\smnoind
	(d) Let $T$ be an operator system.
	If $\varphi\in \Morc(X,T)$ satisfying: for each $n\in \BN$ and $\psi\in \Morc(X,M_n)$, there is $\bar\psi\in \Morc(T,M_n)$ with $\psi = \bar \psi\circ \varphi$, then the map $\varphi\check{\ }:X\check{\ }\to T$ as in part (c) is a completely order monomorphic complete isometry. 
\end{prop}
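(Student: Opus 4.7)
The overall strategy is to leverage the evaluation map $j_X:X\to A(X)$ introduced above: since the C*-algebra $A(X)$ is itself an operator system, transferring structure through $j_X$ will both set up $X\check{\ }$ and supply the universal property.

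For part (a), the starting observation is that by definition $\|x\|^\rn = \|j_X^{(\infty)}(x)\|$ and $Z_0$ is the kernel of $j_X$, so for any $z\in M_n(Z_0)$ we have $j_X^{(n)}(z)=0$ and therefore $\|x-z\|^\rn = \|j_X^{(n)}(x)\|$ is constant on cosets. This forces the quotient matrix norm to agree with the one induced from $A(X)$, so $\iota_X$ factors $j_X$ through a completely isometric inclusion $X\check{\ }\hookrightarrow A(X)$; identifying $\iota_X = j_X$ is then just a matter of identifying $X\check{\ }$ with its image $j_X(X)$. The asserted equality of matrix cones $\iota_X^{(\infty)}\bigl(M_\infty(X)_+\bigr) = j_X^{(\infty)}\bigl(M_\infty(X)_\sa\bigr)\cap M_\infty(A(X))_+$ is precisely Lemma \ref{lem:Ng-MOS}(a), and this intersection is norm-closed, making $X\check{\ }$ a self-adjoint subspace of the C*-algebra $A(X)$ with the induced matrix cone, hence an operator system in the sense of Definition \ref{MOS}(b). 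For the formula \eqref{eqt:norm-jX}, I would use Relation \eqref{eqt:norm-sa} to pass to the self-adjoint element $y := \bigl(\begin{smallmatrix}0&x\\ x^*&0\end{smallmatrix}\bigr)\in M_{2n}(X)_\sa$ and then compute $\|y\|^\rn = \|j_X^{(2n)}(y)\|$ inside the C*-algebra $M_{2n}(A(X))$ via the C*-algebraic identity that a self-adjoint element has norm equal to the supremum of $|\Phi(\cdot)|$ over positive contractive functionals $\Phi$. Each such $\Phi$ restricts through $j_X^{(2n)}$ to a positive contraction $f\in B_{M_{2n}(X)^*}^+$, giving one inequality; the reverse requires a Hahn--Banach-type extension of positive norm-$\leq 1$ functionals from $M_{2n}(X)$ to $M_{2n}(A(X))$, which holds because the induced cone on $j_X(X)$ is exactly the intersection with $M_\infty(A(X))_+$.

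For part (b), invoke Lemma \ref{lem:Ng-MOS}(c): $X$ is an operator system (respectively, quasi-operator system) iff $j_X$ is a complete isometry (respectively, complete embedding). Since $\|\iota_X^{(\infty)}(x)\|_0^\rn = \|j_X^{(\infty)}(x)\|$, the map $\iota_X$ is completely isometric (respectively, a complete embedding) if and only if $\|x\| = \|x\|^\rn$ (respectively, bounded below by a multiple of $\|\cdot\|^\rn$), which happens exactly when $j_X$ has the corresponding property. For part (c), given $\varphi\in\Morc(X,T)$ with $T$ an operator system, for every $\psi\in\Morc(T,M_m)$ the composition $\psi\circ\varphi$ lies in $\Morc(X,M_m)$, so $\|\psi^{(n)}\varphi^{(n)}(x)\|\leq \|x\|^\rn$ for every $x\in M_n(X)$. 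Since $T$ is an operator system, $j_T$ is a complete isometry by Lemma \ref{lem:Ng-MOS}(c), so $\|\varphi^{(n)}(x)\|_T = \sup_\psi \|\psi^{(n)}\varphi^{(n)}(x)\| \leq \|x\|^\rn$. This shows $\varphi$ vanishes on $M_\infty(Z_0)$ and descends to a complete contraction $\varphi\check{\ }:X\check{\ }\to T$, which is automatically completely positive because $M_\infty(X\check{\ })_+ = \iota_X^{(\infty)}(M_\infty(X)_+)$.

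For part (d), the hypothesis says every $\psi\in\Morc(X,M_m)$ factors as $\bar\psi\circ\varphi$. Using this surjectivity together with Lemma \ref{lem:Ng-MOS}(c) applied to $T$, for any $y\in M_n(X)$,
\begin{equation*}
\|y\|^\rn = \sup_{\psi\in\Morc(X,M_m),\,m\in\BN}\|\psi^{(n)}(y)\|
= \sup_{\bar\psi\in\Morc(T,M_m),\,m\in\BN}\|\bar\psi^{(n)}\varphi^{(n)}(y)\|
= \|\varphi^{(n)}(y)\|_T,
\end{equation*}
so $\varphi\check{\ }$ is a complete isometry. For the complete order monomorphism, take $y\in M_n(X)_\sa$ with $\varphi^{(n)}(y)\in M_n(T)_+$; by Lemma \ref{lem:Ng-MOS}(a) it suffices to show $\psi^{(n)}(y)\geq 0$ for every $\psi\in\Morc(X,M_m)$, which is immediate from $\psi^{(n)}(y) = \bar\psi^{(n)}(\varphi^{(n)}(y))\geq 0$. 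The main obstacle is securing the Hahn--Banach extension step underlying the formula \eqref{eqt:norm-jX} in part (a); once that is in place, the remaining arguments are mechanical manipulations with $j_X$, Lemma \ref{lem:Ng-MOS}, and the definition of $\|\cdot\|^\rn$.
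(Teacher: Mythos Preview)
Your arguments for parts (b), (c), (d), and for the operator-system structure on $X\check{\ }$ in part (a), are correct and essentially coincide with the paper's (the paper packages (c) and (d) via an induced $^*$-homomorphism $\varphi^A:A(X)\to A(T)$, but that is exactly the computation you do directly with the definition of $\|\cdot\|^\rn$).

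The genuine gap is precisely where you flag it: formula \eqref{eqt:norm-jX}. Your argument yields the inequality $\|x\|^\rn \leq \sup_f |f(y)|$ by pulling states on $M_{2n}(A(X))$ back through $j_X^{(2n)}$. For the reverse, given $f\in B_{M_{2n}(X)^*}^+$, the induced functional $g$ on $j_X^{(2n)}(M_{2n}(X))$ is indeed well-defined and positive --- this is all that Lemma \ref{lem:Ng-MOS}(a) buys you --- but there is no reason for $g$ to be \emph{contractive} in the $A(X)$-norm. You only know $|f(z)|\leq \|z\|$ for the original matrix norm on $M_{2n}(X)$, and since $\|\cdot\|^\rn \leq \|\cdot\|$ this goes the wrong way. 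Without contractivity of $g$ you cannot extend it to a state on $M_{2n}(A(X))$, and the Hahn--Banach step does not close.

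The paper avoids this entirely by identifying $\|\cdot\|^\rn$ with the norm coming from the partial unitization. When $X$ is a MOS, \cite[Theorem 3.3]{Ng-MOS} shows that $(j_X(X)+\BC\, 1_{A(X)},\, j_X)$ is the partial unitization of $X$, so $\|\cdot\|^\rn$ is the partial-unitization norm; \cite[Lemma 4.8(c)]{Wern-subsp} then identifies that norm with the right-hand side of \eqref{eqt:norm-jX} (the modified numerical radius). The general SMOS case is reduced to this by passing to the MOS quotient $X_0 = X/Z_0$ and checking that positive contractive functionals on $M_{2n}(X)$ and on $M_{2n}(X_0)$ correspond through $\iota_X^{(2n)}$.
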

\begin{proof}
	(a) Let $X_0$ be the vector space $X/Z_0$ equipped with the quotient matrix norm $\|\cdot\|_0$ induced from the matrix norm on $X$ and the quotient matrix cone $\iota_X^{(\infty)}\big(M_\infty(X)_+\big)$. 
	Let $k_X:X_0\to A(X)$ be the injection satisfying 
	$$j_X = k_X\circ \iota_X.$$ 
	This, together with the surjectivity of $\iota_X$, the continuity of $k_X$ and Lemma \ref{lem:Ng-MOS}(a), implies that $\iota_X^{(\infty)}\big(M_\infty(X)_+\big)$ is norm closed. 
	Furthermore, the relation $M_\infty(X)_+ \cap - M_\infty(X)_+ \subseteq \ker j_X^{(\infty)} = M_\infty(Z_0)$ tells us  that $X_0$ is a MOS. 
	
	It is obvious that $\iota_X\in \Morc(X,X_0)$ produces a continuous injection 
	$$\tilde \iota_X: \Morc(X_0,M_k) \to \Morc(X,M_k) \qquad (k\in \BN).$$ 
	This map $\tilde \iota_X$ is actually bijective, because  of the fact that for every $\phi\in  \Morc(X,M_k)$, there is $\psi\in  \Morc(X_0,M_k)$ with $\phi = \psi\circ \iota_X.$
	From these, one has a $^*$-isomorphism $\iota_X^A: A(X) \to A(X_0)$ such that 
	$$\iota_X^A\circ k_X = j_{X_0}.$$
	It follows from Lemma \ref{lem:Ng-MOS}(a) that $j_{X_0}$ is a complete order monomorphism.
	Thus, $k_X:X\check{\ } \to A(X)$ is a complete order monomorphism (recall that $X\check{\ }$ and $X_0$ have the same matrix cone).
	On the other hand, 
	Relation \eqref{eqt:def-norm-r} tells us that the matrix norm $\|\cdot\|_0^\rn$ on $X/Z_0$ coincides with the one induced from the map $k_X$; in other words, $k_X:X\check{\ } \to A(X)$ is a complete isometry. 
	Consequently, $X\check{\ }$ can be identified with the operator system $k_X(X_0) = j_X(X)$.

	Concerning the last statement in part (a), let us first show that Equality \eqref{eqt:norm-jX} holds when $X$ is a MOS.
	Indeed, in this case, we know from \cite[Theorem 3.3]{Ng-MOS} that $\big(j_{X}(X) + \BC 1_{X}, j_{X}\big)$ is the partial unitization of the MOS $X$, where $1_{X}$ is the identity of $A(X)$. 
	Thus, $\|\cdot\|^\rn$ is the matrix norm induced by the partial unitization of $X$. 
	On the other hand, \cite[Lemma 4.8(c)]{Wern-subsp} tells us that the matrix norm on $X$ induced by the partial unitization of $X$ is the one given by the right hand side of Equality \eqref{eqt:norm-jX} (see Remark \ref{rem:unitization}(a)). 
	
	For the general case, we first note that the surjectivity of the map $\tilde \iota_X$ above and Theorem \ref{thm:bdd-pos-CCP}(a), together with the identification 
	$$M_{2n}(X^*)_+ = \CP(X,M_{2n})\cap \CB(X,M_{2n}),$$ 
	imply that for each $n\in \BN$ and $f\in M_{2n}(X)^*_+$, there is $g\in M_{2n}(X_0)^*_+$ with
	$$f = g\circ \iota_X^{(2n)}.$$
	As the matrix norm on $X_0$ is the one induced from $X$ via $\iota_X$, we see that if $f\in B_{M_{2n}(X)^*}^+$, then $g\in B_{M_{2n}(X_0)^*}^+$. 
	Consequently, by applying the MOS case above to $X_0$, we have (since $\iota_X^A$ is completely isometric)
	\begin{align*}
		\big\|j_{X}^{(n)}(x)\big\| 
		& = \big\|j_{X_0}^{(n)}\big(\iota_X^{(n)}(x)\big)\big\|
		= \sup\Big\lbrace\Big\|g\circ \iota_X^{(2n)}\Big(\begin{matrix}
			0&x\\
			x^*&0
		\end{matrix}\Big)\Big\|:g\in B_{M_{2n}(X_0)^*}^+\Big\rbrace\\
		& 	= \sup\Big\lbrace\Big\|f\Big(\begin{matrix}
			0&x\\
			x^*&0
		\end{matrix}\Big)\Big\|:f\in B_{M_{2n}(X)^*}^+\Big\rbrace
		\qquad  \quad (n\in \BN; x\in M_n(X)). 
	\end{align*}

	\smnoind
	(b)  It follows from part (a) that one may identify $\iota_X$ with $j_X$. 
	The conclusion then follows from Lemma \ref{lem:Ng-MOS}(c). 
	
	\smnoind
	(c) The uniqueness of $\varphi\check{\ }$ follows from the surjectivity of $\iota_X:X\to X\check{\ }$. 
	For the existence, we first note that compositions with $\varphi$ produces a continuous map $\tilde \varphi_n: \Morc(T,M_n) \to \Morc(X,M_n)$ $(n\in \BN)$, and hence a $^*$-homomorphism 
	$$\varphi^A: A(X)\to A(T)$$ 
	satisfying $\varphi^A \circ j_X = j_T\circ \varphi$. 
	By parts (a) and (c) of Lemma \ref{lem:Ng-MOS}, $j_T$ is a completely order monomorphic complete isometry and one may identify $T$ with $j_T(T)$. 
	Now, $\varphi^A|_{j_X(X)}$ induces a map $\varphi\check{\ }\in \Morc( X\check{\ }, T)$ satisfying $j_T\circ\varphi\check{\ } =  \varphi^A|_{j_X(X)}$. 
	
	\smnoind
	(d) The hypothesis means that the map $\tilde \varphi_n$ as in the proof of part (c) above is surjective, for every $n\in \BN$. 
	Therefore, the map $\varphi^A: A(X)\to A(T)$ is an injective $^*$-homomorphism (and hence is a completely order monomorphic complete isometry). 
	Since $j_T\circ\varphi\check{\ } =  \varphi^A|_{j_X(X)}$ (see the proof of part (c)) and $j_T$ is a completely order monomorphic complete isometry, the conclusion follows. 
\end{proof}


\begin{rem}\label{rem:conn-with-lit}
	Relation \eqref{eqt:norm-jX} means that the matrix semi-norm $\|\cdot\|^\rn$ coincides with the modified numerical radius $\nu_X(\cdot)$ as defined in \cite[\S 3]{Wern-subsp}. 
	This helps us to make connections between our results with those in \cite{Wern-subsp} and \cite{Werner}. 
	In particular, in \cite[Definition 2.15]{CvS}, Connes and van Suijlekom defined an operator system to be a MOS $X$ whose matrix norm $\|\cdot\|$ coincides with $\nu_X(\cdot)$.
	Hence, we learn from Propositon \ref{prop:reg}(b) that the meaning of operator systems in \cite[Definition 2.15]{CvS} is the same as the one in this article. 
\end{rem}


Due to Proposition \ref{prop:reg}(c), one may regard $X\check{\ }$ as the ``regularization'' of the SMOS $X$ and it is the reason behind the notation $\|\cdot\|^\rn$. 


The corresponding ``weak$^*$-regularization'' exists for any  dual SMOS $F$.  
In fact, for the case of dual MOS, this has already been established in \cite[Proposition 3.6]{Ng-dual-OS}. 
In the following result, we will extend this to dual SMOS. 
Moreover, we will show that the ``weak$^*$-regularization'' is actually the completion of the ``regularization'' of $F$ under a suitable locally convex topology.

Let $Y$ be a complete SMOS. 
Suppose that $((Y^*)\check{\ },\iota_F)$ is as in Proposition \ref{prop:reg}, and $Y_1$ is the norm closed linear span in $((Y^*)\check{\ })^*$ of the subset 
$$\{\chi\circ \psi\check{\ }:  \psi\in \Morc_w(Y^*,M_m); \chi\in M_m^*; m\in \BN \};$$ 
here $\psi\check{\ }\in \Morc((Y^*)\check{\ }, M_m)$ is the map given by Proposition \ref{prop:reg}(c). 


\begin{thm}\label{thm:reg-dual}
	Let $Y$ be a complete SMOS, and $Y_1$ be as in the above. 
	
	\smnoind
	(a) $Y_1$ will separate points of $(Y^*)\check{\ }$.

	\smnoind
	(b) Let $Y^\rd$ be the completion of $(Y^*)\check{\ }$ under $\sigma((Y^*)\check{\ }, Y_1)$. 
	Then there is a dual operator system structure on $Y^\rd$ with predual $Y_1$ such that $(Y^*)\check{\ }$ is an operator subsystem of $Y^\rd$. 
	Furthermore,  $\iota_{Y^*}:Y^*\to Y^\rd$ is weak$^*$-continuous, and $\big(\iota_{Y^*}^{(\infty)}\big)^{-1}\big(M_\infty(Y^\rd)_+\big)\cap M_\infty(Y^*)_\sa = M_\infty(Y^*)_+$.
	
	\smnoind
	(c) $Y^*$ is a dual operator system (respectively, a dual quasi-operator system) if and only if $\iota_{Y^*}$ is a complete isometry (respectively, complete embedding). 
	In this case, $Y^\rd = (Y^*)\check{\ }$, and $\iota_{Y^*}:Y^*\to Y^\rd$ is a weak$^*$-homeomorphism. 	
\end{thm}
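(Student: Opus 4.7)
The plan is to exploit Corollary \ref{cor:emb-into-bidual}(b)---the weak$^*$-density of $B_{M_m(Y)}^+$ in $B_{M_m(Y^{**})}^+$---to pass from arbitrary CCP maps on $Y^*$ to weak$^*$-continuous ones. This single density statement drives (a) and (b); combined with a Krein--Smulian step it also yields (c).

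\smnoind\emph{Part (a).} Pick $f\in Y^*$ representing a nonzero class $[f]\in(Y^*)\check{\ }$, so $\|f\|^\rn>0$. By \eqref{eqt:def-norm-r} and Remark \ref{rem:dual-SMOS}(d), there exists $v\in B_{M_m(Y^{**})}^+$ whose associated CP map $\varphi_v(g):=[v_{i,j}(g)]_{i,j}$ satisfies $\varphi_v(f)\neq 0$. Corollary \ref{cor:emb-into-bidual}(b) produces a net $\{v_\alpha\}\subseteq B_{M_m(Y)}^+$ converging to $v$ weak$^*$, and weak$^*$-continuity of evaluation at $f$ forces $\varphi_{v_\alpha}(f)\to\varphi_v(f)\neq 0$. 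Each $\varphi_{v_\alpha}$ belongs to $\Morc_w(Y^*,M_m)$, so pairing $\varphi_{v_\alpha}\check{\ }$ (from Proposition \ref{prop:reg}(c)) with an appropriate $\chi\in M_m^*$ produces $\chi\circ\varphi_{v_\alpha}\check{\ }\in Y_1$ not vanishing on $[f]$.

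\smnoind\emph{Part (b).} Realize $Y^\rd$ concretely: set $H:=\bigoplus_{m\in\BN}\bigoplus_{\psi\in\Morc_w(Y^*,M_m)}\BC^m$ and $J:=\bigoplus_{m,\psi}\psi\check{\ }:(Y^*)\check{\ }\to\CL(H)$. A matrix-level version of the density argument in (a), applied to each $F\in M_n((Y^*)\check{\ })$, shows that $J$ is a completely isometric complete order monomorphism. Define $Y^\rd$ to be the weak$^*$-closure of $J((Y^*)\check{\ })$ in $\CL(H)$; as a weak$^*$-closed self-adjoint subspace, it is automatically a dual operator system containing $(Y^*)\check{\ }$ as a weak$^*$-dense operator subsystem. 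Now identify the predual inside $\CL(H)_*$: any functional $T\mapsto\langle T\xi,\eta\rangle$ on $\CL(H)$ restricts on $J((Y^*)\check{\ })$ to a norm-convergent sum of terms of type $\chi\circ\psi\check{\ }$, and every such term arises this way, so the predual of $Y^\rd$ is exactly $Y_1$ and the subspace topology on $(Y^*)\check{\ }$ coincides with $\sigma((Y^*)\check{\ },Y_1)$; this matches the abstract completion in the statement. Weak$^*$-continuity of $\iota_{Y^*}$ is immediate, since each generator $\chi\circ\psi\check{\ }$ of $Y_1$ pulls back to $\chi\circ\psi\in Y$. For the cone equality, if $\iota_{Y^*}^{(n)}(f)\in M_n(Y^\rd)_+$ with $f\in M_n(Y^*)_\sa$, then $\psi^{(n)}(f)\geq 0$ for all $\psi\in\Morc_w(Y^*,M_m)$ (since $\psi=\psi\check{\ }\circ\iota_{Y^*}$ and $\psi\check{\ }$ is CP), and the density argument promotes this to $\varphi^{(n)}(f)\geq 0$ for all $\varphi\in\Morc(Y^*,M_m)$, so $f\in M_n(Y^*)_+$ by Remark \ref{rem:dual-SMOS}(b).

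\smnoind\emph{Part (c) and main obstacle.} By Proposition \ref{prop:reg}(b), $\iota_{Y^*}$ is a complete isometry (respectively complete embedding) iff $Y^*$ is an operator system (respectively quasi-operator system); in either situation $(Y^*)\check{\ }=Y^*$. To conclude $Y^\rd=Y^*$ I will show that $J(Y^*)$ is already weak$^*$-closed in $\CL(H)$: the map $J$ is weak$^*$-to-weak$^*$-continuous because each constituent $\psi$ is weak$^*$-continuous, so the weak$^*$-compact ball $rB_{Y^*}$ is sent to a weak$^*$-compact, hence weak$^*$-closed, subset of $\CL(H)$; complete isometry (respectively embedding) of $J$ gives $J(Y^*)\cap rB_{\CL(H)}\subseteq J(r'B_{Y^*})$ for some $r'$, and Krein--Smulian then forces $J(Y^*)$ itself to be weak$^*$-closed. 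Consequently $\iota_{Y^*}$ becomes a weak$^*$-homeomorphism onto $Y^\rd$. The subtlest step in the plan is the predual identification in (b)---checking that the functionals $T\mapsto\langle T\xi,\eta\rangle$ restricted to $J((Y^*)\check{\ })$ span precisely $Y_1$, so that the concrete weak$^*$-closure really is the abstract $\sigma((Y^*)\check{\ },Y_1)$-completion; the Krein--Smulian argument in (c) is a related but less involved application of the same circle of ideas.
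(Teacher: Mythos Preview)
Your proposal is correct and follows essentially the same route as the paper. The paper packages your map $J$ as the evaluation $\mu_{Y^*}$ into the von Neumann algebra $N(Y^*)=\bigoplus_n^{\ell^\infty}\ell^\infty(B_{M_n(Y)}^+;M_n)$ rather than into $\CL(H)$, but since your $J$ lands in the block-diagonal part this is the same construction; the key input in both arguments is precisely Corollary~\ref{cor:emb-into-bidual}(b), used exactly as you describe to identify $\|j_{Y^*}^{(\infty)}(\cdot)\|$ with $\|\mu_{Y^*}^{(\infty)}(\cdot)\|$ and to match the cones, and the paper's appeal to \cite[Proposition 3.6]{Ng-dual-OS} for (c) is your Krein--Smulian step (compare Lemma~\ref{lem:image-weak-st-cont-bdd-below}).
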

\begin{proof}
	We denote by $\mu_{Y^*}$ the canonical map (given by evaluations) from $Y^*$ to the von Neumann algebra  
	\begin{equation}\label{eqt:def-N(F)}
		N(Y^*):={\bigoplus}_{n\in \BN}^{\ell^\infty} \ell^\infty\big(B_{M_n(Y)}^+;M_n\big)
	\end{equation}
	(here, $B_{M_n(Y)}^+$ is identified with $\Morc_w(Y^*,M_n)$). 
	Then 
	\begin{equation}\label{eqt:pre-dual}
	N(Y^*)_* ={\bigoplus}_{n\in \BN}^{\ell^1} \ell^1\big(B_{M_n(Y)}^+;M_n^*\big),	
	\end{equation}
	where $N(Y^*)_*$ is the predual of $N(Y^*)$. 
	Observe that for $y\in M_\infty(Y^*)_\sa$, 
	$$\mu_{Y^*}^{(\infty)}(y)\geq 0\quad \text{if and only if} \quad \omega^{(\infty)}(y)\geq 0\text{ for all }\omega\in B_{M_\infty(Y)}^+.$$ 
	Therefore, Corollary \ref{cor:emb-into-bidual}(b) tells us that for every $y\in M_\infty(Y^*)_\sa$, 
	\begin{equation}\label{eqt:j-pos-mu-pos}
		\mu_{Y^*}^{(\infty)}(y)\in M_\infty(N(Y^*))_+\quad \text{ if and only if }\quad j_{Y^*}^{(\infty)}(y) \in M_\infty(A(Y^*))_+
	\end{equation}
	(as $j_{Y^*}^{(\infty)}(y) \geq 0$ if and only if $\varphi^{(\infty)}(y)\geq 0$ for all $\varphi\in B_{M_\infty(Y^{**})}^+$).
	Moreover,  
	$$\|\mu_{Y^*}^{(\infty)}(x)\| = \sup\big\{\|\omega^{(\infty)}(x)\|: \omega\in B_{M_\infty(Y)}^+ \big\} \qquad (x\in M_\infty(Y^*)),$$
	Corollary \ref{cor:emb-into-bidual}(b) and Relation \eqref{eqt:def-norm-r} imply that for each $x\in M_\infty(Y^*)$, 
	\begin{equation}\label{eqt:norm-mu-F}
		\big\|\mu_{Y^*}^{(\infty)}(x)\big\| = \sup \big\{\|\phi^{(\infty)}(x)\|: \phi\in B_{M_\infty(Y^{**})}^+ \big\} = \big\|j_{Y^*}(x)\big\| = \|x\|^\rn.
	\end{equation}  
	Consequently, by Proposition \ref{prop:reg}(a), we may identify 
	\begin{equation}\label{eqt:iota=mu}
		j_{Y^*}(Y^*) =  (Y^*)\check{\ } = \mu_{Y^*}(Y^*) \quad \text{and} \quad j_{Y^*} = \iota_{Y^*} = \mu_{Y^*}.
	\end{equation}
	
	\smnoind
	(a) For $m\in \BN$, $\psi\in B_{M_m(Y)}^+$ and $\chi\in M_m^*$, we consider the element $\delta^\chi_\psi\in \ell^1\big(B_{M_m(Y)}^+;M_m^*\big)\subseteq N(Y^*)_*$ defined by
	$$\delta^\chi_\psi(\phi) := \begin{cases}
		\chi \ & \text{when } \phi = \psi\\
		0 &\text{otherwise.}
	\end{cases}$$
	It follows from $\chi\circ \psi = \delta_\psi^\chi\circ \mu_{Y^*}$ and Proposition \ref{prop:reg}(c) that $\chi\circ \psi\check{\ } = \delta_\psi^\chi|_{(Y^*)\check{\ }}$. 
	Since the complex linear span of 
	$\big\{\delta^\chi_\psi: \psi\in B_{M_m(Y)}^+; \chi\in M_m^*; m\in \BN\big\}$ 
	is norm dense in $N(Y^*)_*$, we know that 
	\begin{equation}\label{eqt:Y-1}
	Y_1 = \{\omega|_{(Y^*)\check{\ }}:\omega\in N(Y^*)_* \}, 	
	\end{equation}
and hence it will separate points of $(Y^*)\check{\ } = \mu_{Y^*}(Y^*)$.  
	
	\smnoind
	(b) We learn from \eqref{eqt:iota=mu} and \eqref{eqt:Y-1} that $\sigma((Y^*)\check{\ }, Y_1)$ can be seen as the restriction of $\sigma(N(Y^*), N(Y^*)_*)$ on $(Y^*)\check{\ } = \mu_{Y^*}(Y^*)$. 
	Thus, we may identify
	\begin{equation}\label{eqt:alt-F-reg}
		Y^\rd = \overline{\mu_{Y^*}(Y^*)}^{\sigma(N(Y^*),N(Y^*)_*)} \subseteq N(Y^*).
	\end{equation}
	From this, we know that $Y^\rd$ is a dual operator system containing $(Y^*)\check{\ }$ as a subsystem. 
	Moreover, since $Y$ is complete, using the argument of \cite[Propositon 3.6(a)]{Ng-dual-OS}, we know that $\mu_{Y^*}$ is weak$^*$-continuous (note that the argument for this fact does not depend on the properness of the matrix cone on $Y^*$). 
	This gives the weak$^*$-continuity of $\iota_{Y^*}:Y^*\to Y^\rd$. 
	Finally, the last equality in the statement follows from Lemma \ref{lem:Ng-MOS}(a) and Relation \eqref{eqt:iota=mu}.

	\smnoind
	(c) If $Y^*$ is a dual operator system (respectively, a dual quasi-operator system), then Proposition \ref{prop:reg}(b) implies that $\iota_{Y^*}$ is a complete isometry (respectively,  a complete embedding). 
	Conversely, suppose that $\iota_{Y^*} = j_{Y^*}$ is a complete isometry (respectively,  a complete embedding). 
	Then $Y^*$ is a MOS (see Lemma \ref{lem:Ng-MOS}(b)) and hence is a dual MOS. 
	Now, as $\mu_{Y^*} = \iota_{Y^*}$ is a complete isometry (respectively,  a complete embedding), one can use \cite[Proposition 3.6(b)]{Ng-dual-OS} to conclude that $Y^*$ is a dual operator system (respectively, a dual quasi-operator system). 
	The last statement in this part follows from \cite[Proposition 3.6(c)]{Ng-dual-OS}.
\end{proof}

\begin{thm}\label{thm:prop-os-dual}
	Let $Y$ be a complete SMOS and $Y^\rd$ be as in Theorem \ref{thm:reg-dual}. 
	Let $V$ be a dual operator system and $\varphi\in \Morc_w(Y^*,V)$.
	
	\smnoind
	(a) The map $\varphi\check{\ }:(Y^*)\check{\ }\to V$ as in Proposition \ref{prop:reg}(c) is $\sigma((Y^*)\check{\ },(Y^\rd)_*)$-$\sigma(V,V_*)$-continuous. 
	If we denote by $\overline{\varphi}: Y^\rd\to V$ the weak$^*$-continuous extension of $\varphi\check{\ }$, then $\overline{\varphi}\in \Morc_w(Y^\rd, V)$ and $\overline{\varphi}\circ \iota_{Y^*} = \varphi$.  
	
	\smnoind
	(b) Suppose that for each $n\in \BN$ and $\psi\in \Morc_w(Y^*,M_n)$, there exists $\hat\psi\in \Morc_w(V,M_n)$ with $\psi = \hat \psi\circ \varphi$. 
	Then $\overline{\varphi}:Y^\rd\to \overline{\varphi}(Y^\rd)$  is a completely isometric weak$^*$-homemorphic complete order isomorphism. 
	
	\smnoind
	(c) $\iota_{Y^*}^{(\infty)}(M_\infty(Y^*)_+)$ is weak$^*$-dense in $M_\infty(Y^\rd)_+$. 
\end{thm}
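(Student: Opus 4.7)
The plan is to handle (a) and (b) by constructing $\overline{\varphi}$ as the restriction of a single unital $*$-homomorphism between ambient $W^*$-algebras, and to prove (c) by a matricial Hahn--Banach separation. For (a), since $V$ is itself a dual operator system, Theorem \ref{thm:reg-dual}(c) applied to its predual identifies $V$ with a $w^*$-closed subspace of the $W^*$-algebra $N(V)=\bigoplus^{\ell^\infty}\ell^\infty(\Morc_w(V,M_n);M_n)$ via the evaluation map $\mu_V$. I would define $\Phi^*\colon N(Y^*)\to N(V)$ by $\Phi^*(f)(\hat\psi):=f(\hat\psi\circ\varphi)$; this is well-defined because $\hat\psi\circ\varphi\in\Morc_w(Y^*,M_n)$ indexes a coordinate of $N(Y^*)$. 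A coordinate-wise check shows that $\Phi^*$ is a unital $*$-homomorphism, and computing its preadjoint on the dense rank-one elements $\delta^\chi_{\hat\psi}\in N(V)_*$, which are sent to $\delta^\chi_{\hat\psi\circ\varphi}\in N(Y^*)_*$, shows that $\Phi^*$ is normal. The identity $\Phi^*\circ\mu_{Y^*}=\mu_V\circ\varphi$ identifies $\Phi^*|_{(Y^*)\check{\ }}$ with $\varphi\check{\ }$; since $V$ is $w^*$-closed in $N(V)$ and $\Phi^*$ is $w^*$-continuous, $\Phi^*$ sends $Y^\rd=\overline{(Y^*)\check{\ }}^{w^*}$ into $V$. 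Setting $\overline{\varphi}:=\Phi^*|_{Y^\rd}$ then yields a $w^*$-continuous completely positive complete contraction extending $\varphi\check{\ }$ with $\overline{\varphi}\circ\iota_{Y^*}=\varphi$, while the $\sigma((Y^*)\check{\ },Y_1)$-$\sigma(V,V_*)$-continuity of $\varphi\check{\ }$ itself is just the restriction of the $w^*$-continuity of $\overline{\varphi}$.

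For (b), the hypothesis says exactly that the precomposition map $\hat\psi\mapsto\hat\psi\circ\varphi$ from $\Morc_w(V,M_n)$ to $\Morc_w(Y^*,M_n)$ is surjective for every $n\in\BN$. If $\Phi^*(f)=0$, then $f_n(\hat\psi\circ\varphi)=0$ for every $\hat\psi\in\Morc_w(V,M_n)$, and surjectivity forces $f_n\equiv 0$ on $\Morc_w(Y^*,M_n)$ for each $n$, whence $f=0$. Thus $\Phi^*$ is an injective normal unital $*$-homomorphism between $W^*$-algebras, hence a completely isometric complete order embedding and a $w^*$-homeomorphism onto its $w^*$-closed image; restricting to the $w^*$-closed subspace $Y^\rd\subseteq N(Y^*)$ transports all of these properties to $\overline{\varphi}\colon Y^\rd\to\overline{\varphi}(Y^\rd)$.

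Part (c) is essentially independent. Let $K_n$ denote the $w^*$-closure of $\iota_{Y^*}^{(n)}(M_n(Y^*)_+)$ in $M_n(Y^\rd)$; then $K:=\bigcup_n K_n$ is a $w^*$-closed matrix convex cone contained in $M_\infty(Y^\rd)_+$, and the goal is to show $K_n=M_n(Y^\rd)_+$. Suppose for contradiction that some $z_0\in M_n(Y^\rd)_+\setminus K_n$ exists; Proposition \ref{prop:vs-polar}(b), applied to the pairing between $Y^\rd$ and its predual $Y_1$, yields a $w^*$-continuous self-adjoint map $\psi\colon Y^\rd\to M_n$ with $\psi^{(\infty)}|_K\leq I_\infty$ but $\psi^{(n)}(z_0)\not\leq I_{M_n(M_n)}$. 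Since $K$ is a cone, scaling by positive reals forces $\psi^{(\infty)}|_K\leq 0$, so $-\psi\circ\iota_{Y^*}\colon Y^*\to M_n$ is $w^*$-continuous and completely positive; after rescaling by a constant $C>0$ it becomes a coordinate $\phi\in\Morc_w(Y^*,M_n)$ of $N(Y^*)$. Evaluation at $\phi$ is a normal $*$-homomorphism $e_\phi\colon N(Y^*)\to M_n$ that agrees with $-\psi/C$ on the $w^*$-dense subspace $\iota_{Y^*}(Y^*)\subseteq Y^\rd$, and hence on all of $Y^\rd$ by $w^*$-continuity. Applying both sides to $z_0\in M_n(Y^\rd)_+\subseteq M_n(N(Y^*))_+$ and using positivity of $e_\phi$ yields $\psi^{(n)}(z_0)=-C\,e_\phi^{(n)}(z_0)\leq 0\leq I_{M_n(M_n)}$, the desired contradiction.

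The main technical hurdle is the explicit verification that $\Phi^*$ is a well-defined $w^*$-continuous unital $*$-homomorphism whose restriction to $Y^\rd$ actually lands in $V\subseteq N(V)$; once that bookkeeping is in place, the $*$-homomorphism nature of $\Phi^*$ delivers complete positivity and complete contractivity for free, collapsing (a) and (b) to short formal arguments.
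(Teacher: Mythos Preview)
Your arguments for (a) and (b) are essentially identical to the paper's: both construct the normal $*$-homomorphism $\varphi^N\colon N(Y^*)\to N(V)$ (your $\Phi^*$) by precomposition on coordinates, verify $\varphi^N\circ\mu_{Y^*}=\mu_V\circ\varphi$, and then restrict to $Y^\rd$; for (b) both deduce injectivity of $\varphi^N$ from the surjectivity of $\hat\psi\mapsto\hat\psi\circ\varphi$ and invoke the standard fact that injective normal $*$-homomorphisms are completely isometric $w^*$-embeddings.

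Your proof of (c), however, takes a genuinely different route. The paper argues indirectly: it equips the underlying dual $^*$-operator space of $Y^\rd$ with the smaller matrix cone $M_\infty(W)_+:=\overline{\iota_{Y^*}^{(\infty)}(M_\infty(Y^*)_+)}^{w^*}$, invokes \cite[Corollary~3.7(a)]{Ng-dual-OS} to see that this new object $W$ is again a dual operator system, and then applies part~(a) to $\iota_{Y^*}\colon Y^*\to W$; the resulting $\overline{\iota_{Y^*}}\colon Y^\rd\to W$ is forced to be the identity by $w^*$-density, hence $M_\infty(Y^\rd)_+\subseteq M_\infty(W)_+$. Your approach is a direct matricial separation: you suppose $z_0\in M_n(Y^\rd)_+\setminus K_n$, apply Proposition~\ref{prop:vs-polar}(b) to obtain a $w^*$-continuous self-adjoint $\psi$ separating $z_0$ from $K$, use the cone property of $K$ to get $\psi^{(\infty)}|_K\leq 0$, and then identify $-\psi|_{Y^\rd}$ (after rescaling) with a coordinate evaluation $e_\phi$ on $N(Y^*)$ to derive the contradiction $\psi^{(n)}(z_0)\leq 0$. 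Your argument is more self-contained---it uses only the separation machinery already developed in Section~\ref{section2} and avoids the external citation---while the paper's argument is shorter once that citation is granted and nicely illustrates how part~(a) can be bootstrapped. Both are correct.
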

\begin{proof}
	(a) Similar to the argument of Proposition \ref{prop:reg}(c), the weak$^*$-continuous map $\varphi$ induces a normal $^*$-homomorphism 
	$$\varphi^N: N(Y^*)\to N(V)$$
	satisfying $\varphi^N\circ \mu_{Y^*} = \mu_V\circ \varphi$. 
	Theorem \ref{thm:reg-dual}(c) ensures that $\mu_V:V\to \mu_V(V)$ is a completely isometric weak$^*$-homemorphic complete order isomorphism. 
	Thus, one can find $\bar \varphi\in \Morc_w( Y^\rd, V)$ with  $\varphi^N|_{Y^\rd} = \mu_V\circ \bar \varphi$ (see \eqref{eqt:alt-F-reg}), and 
$$\mu_V\circ \bar \varphi\circ \iota_{Y^*} = \varphi^N \circ \iota_{Y^*} = \varphi^N \circ \mu_{Y^*} = \mu_V\circ \varphi,$$
(see \eqref{eqt:iota=mu}).
From this, the injectivity of $\mu_V$ and the uniqueness of $\varphi\check{\ }$,  we know that $\bar \varphi$ extends $\varphi\check{\ }$ (see also \eqref{eqt:iota=mu}).

	\smnoind
	(b) Similar to the argument of Proposition \ref{prop:reg}(d), the assumption for $\varphi$ implies that the normal $^*$-homomorphism $\varphi^N$ in the proof of part (a) above is a completely order monomorphic complete isometry. 
	As in the proof of part (b) above, we know that $\mu_V$ is a completely order monomorphic complete isometry and that  
	$$\varphi^N|_{Y^\rd} = \mu_V\circ \overline{\varphi}.$$ 
	Hence, $\overline{\varphi}$ is a completely order monomorphic complete isometry. 
	Since $\overline{\varphi}$ is weak$^*$-continuous and isometric, $\overline{\varphi}$ is also a weak$^*$-homeomorphism from $Y^\rd$ onto $\overline{\varphi}(Y^\rd)$ (see Lemma \ref{lem:image-weak-st-cont-bdd-below}). 
	
	\smnoind
	(c) Let $W$ be the dual $^*$-operator space $Y^\rd$, and we set $M_n(W)_+$ to be the weak$^*$-closure of $\iota_{Y^*}^{(n)}(M_n(Y^*)_+)$ in $M_n(W)_\sa$ ($n\in \BN$).  
	The last equality in Theorem \ref{thm:reg-dual}(b) implies that $M_\infty(Y^*)_+\subseteq M_\infty(Y^\rd)_+$, and so, 
	$$M_\infty(W)_+\subseteq M_\infty(Y^\rd)_+.$$ 
	Since $Y^\rd$ is a dual operator system, we know from \cite[Corollary 3.7(a)]{Ng-dual-OS} that $W$ is again a dual operator system. 
	Consider 
	$\varphi:Y^*\to W$ 
	to be  the map $\iota_{Y^*}:{Y^*}\to Y^\rd$. 
	Then $\varphi\in \Morc_w(Y^*,W)$.
	As $Y^*$ is weak$^*$-dense in $Y^\rd$, the induced completely positive map $\overline{\varphi}\in \Morc_w(Y^\rd,W)$ as in part (a) above is the identity map. 
	This produces $M_\infty(Y^\rd)_+\subseteq M_\infty(W)_+$.
\end{proof}



Theorem \ref{thm:reg-dual} tells us that one may use the quantity $$n^\os(Y^*):={\inf}_{n\in \BN}\ \! {\inf}_{z\in S_{M_n(Y^*)}}\ \! \|z\|^\rn,$$
(where $S_{M_n(Y^*)}$ is the unit sphere of $M_n(Y^*)$) to measure how far a dual SMOS $Y^*$ is from being a dual operator system. 
In particular, $n^\os(Y^*) = 1$ (respectively, $n^\os(Y^*) > 0$) if and only if $Y^*$ is a dual operator system (respectively, a dual quasi-operator system). 


\begin{cor}\label{cor:dualizable}
	Let $Y$ be a complete SMOS.
	If $Y^*$ is an operator system under the dual SMOS structure, then it is a dual operator system (under the dual SMOS structure). 
	On the other hand, if $Y^*$ is a quasi-operator system,  then $Y^*$ can be turned into a dual operator system under the dual matrix cone, the topology $\sigma(Y^*,Y)$ and the matrix norm $\|\cdot\|^\rn$ as in Relation \eqref{eqt:norm-jX} (i.e. the modified numerical radius as in \cite{Wern-subsp}). 
\end{cor}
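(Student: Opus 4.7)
The plan is to exploit the dichotomy supplied by Proposition \ref{prop:reg}(b) and Theorem \ref{thm:reg-dual}(c), both of which translate structural properties of $Y^*$ into properties of the canonical map $\iota_{Y^*}: Y^* \to Y^\rd$.

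For the first statement, suppose $Y^*$ is an operator system under its dual SMOS structure. Then Proposition \ref{prop:reg}(b), applied with $X = Y^*$, forces $\iota_{Y^*}$ to be a complete isometry. Feeding this into Theorem \ref{thm:reg-dual}(c) yields at once that $Y^*$ is a dual operator system.

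For the second statement, I would first apply Proposition \ref{prop:reg}(b) to conclude that, since $Y^*$ is a quasi-operator system, $\iota_{Y^*}$ is a complete embedding; in particular it is injective, so $Y^*$ is a MOS (Lemma \ref{lem:Ng-MOS}(b)) and $\iota_{Y^*}$ is a bijection of $Y^*$ onto $(Y^*)\check{\ }$. Invoking Theorem \ref{thm:reg-dual}(c) in its ``respectively'' branch then gives that $Y^*$ is in fact a dual quasi-operator system, that $Y^\rd = (Y^*)\check{\ }$, and that $\iota_{Y^*}: Y^* \to Y^\rd$ is a weak$^*$-homeomorphism. I would conclude by transporting the dual operator system structure of $Y^\rd$ (which is a dual operator system by Theorem \ref{thm:reg-dual}(b)) back to $Y^*$ along this bijection. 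The pulled-back matrix norm is exactly $\|\cdot\|^\rn$ by its very definition in \eqref{eqt:def-norm-r}; the pulled-back matrix cone coincides with the original dual matrix cone in view of the identity $M_\infty((Y^*)\check{\ })_+ = \iota_{Y^*}^{(\infty)}(M_\infty(Y^*)_+)$ from Proposition \ref{prop:reg}(a); and the pulled-back weak$^*$-topology is $\sigma(Y^*, Y)$ because $\iota_{Y^*}$ is a weak$^*$-homeomorphism. Hence $Y^*$, equipped with this data, is a dual operator system.

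I do not foresee a serious obstacle: the proof is essentially a reassembly of pieces already in Theorem \ref{thm:reg-dual} and Proposition \ref{prop:reg}. The only point worth checking is that the ``In this case'' clause of Theorem \ref{thm:reg-dual}(c) can be triggered from the bare hypothesis ``$Y^*$ is a quasi-operator system'' rather than from the a priori stronger ``$Y^*$ is a dual quasi-operator system'', and this is granted by the ``if and only if'' format of that theorem.
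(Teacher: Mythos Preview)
Your proposal is correct and follows essentially the same approach as the paper, which simply records that the corollary ``follows directly from Proposition \ref{prop:reg}(b) and Theorem \ref{thm:reg-dual}(c) (since Theorem \ref{thm:reg-dual}(b) tells us that $Y^\rd$ is a dual operator system).'' You have just unpacked these citations more explicitly, including the transport of structure along $\iota_{Y^*}$ in the quasi-operator-system case, and your observation about triggering the ``In this case'' clause via the biconditional in Theorem \ref{thm:reg-dual}(c) is exactly the right bookkeeping.
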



In fact, this follows directly from Proposition \ref{prop:reg}(b) and Theorem \ref{thm:reg-dual}(c) (since Theorem \ref{thm:reg-dual}(b) tells us that $Y^\rd$ is a dual operator system). 


Corollary \ref{cor:dualizable} applies, in particular, to the case when $Y$ is a $C^*$-algebra, because the dual space of a $C^*$-algebra is always a quasi-operator system (see \cite[Corollary 2.8]{Ng-MOS}). 


\section{The dual functor for general operator systems}


In this section, we will use Theorem \ref{thm:reg-dual} to extend the ``duality functor'' in \cite{Ng-dual-OS} (for dualizable operator systems; see the Introduction) to all operator systems, and we will call it the ``dual functor'' instead. 
We will then give some further study of this functor. 


Suppose that $S$ is an operator system. 
\begin{quote}
	We will consider the completion $\tilde S$ of $S$ (which is a $^*$-operator space) as an SMOS with its matrix cone $M_\infty(\tilde S)_+$ being the norm closure of $M_\infty(S)_+$ in $M_\infty(\tilde S)_\sa$.  
\end{quote}

\begin{rem}\label{rem:comp-OS}
Since $S$ can be regarded as an operator subsystem of some $\CL(H)$, we learn from Corollary \ref{cor:smaller-cone-oper-sys} that $\tilde S$ is an operator system under the above matrix cone. 
\end{rem}

Let us once again make clear the following convention:  	
\begin{quote}
	We equip $S^* = \tilde S^*$ with the weak$^*$-topology $\sigma(S^*, \tilde S)$ (instead of $\sigma(S^*, S)$). 
\end{quote}
Under  this weak$^*$-topology, $S^*$ is a dual SMOS. 
As in the proof of Theorem \ref{thm:reg-dual}(b), one may regard $\tilde S^\rd$ as the weak$^*$-closure of $\mu_{\tilde S^*}(\tilde S^*)$ in the von Neumann algebra $N(\tilde S^*)$ (see Relation \eqref{eqt:def-N(F)}), and we may identify $\mu_{\tilde S^*}$ with 
\begin{equation}\label{eqt:def-iota-incomp}
\iota_{S^*}: S^* \to (S^*)\check{\ }\subseteq S^\rd, \quad \text{where} \quad S^\rd:= \tilde S^\rd.
\end{equation}


For operator systems $S$ and $T$ as well as $\varphi\in \Morc(S,T)$, we define 
$\varphi^\rd$ to be the map $\overline{\iota_{S^*}\circ \varphi^*}\in \Morc_w(T^\rd, S^\rd)$ (see Theorem \ref{thm:prop-os-dual}(a)). 
In particular, $\varphi^\rd$ is the unique map in $\Morc_w(T^\rd, S^\rd)$ satisfying
\begin{equation}\label{eqt:phi-d}
	\varphi^\rd \circ \iota_{T^*} = \iota_{S^*}\circ \varphi^*.
\end{equation}
It is easy to see that $(S,T,\varphi)\mapsto (T^\rd, S^\rd, \varphi^\rd)$ is a contravariant functor from the category of operator systems (whose morphisms are  completely positive complete contractions) to the category of dual operator systems (whose morphisms are weak$^*$-continuous completely positive complete contractions). 


\begin{defn}\label{defn:dual-functor}
	The contravariant functor $(S,T,\varphi)\mapsto (T^\rd, S^\rd, \varphi^\rd)$ is called the \emph{dual functor} for operator systems. 
\end{defn}


There is an unpleasant feature of this functor that $S^\rd = \{0\}$ whenever $S_+ = \{0\}$, regardless of the size of $S$ (because in this case, we have $$M_m(S^*)_+ = \CB(S;M_m)\cap \CP(S,M_m) = M_m(S^*)_\sa,$$ which implies that $\Morc_w(S^*,M_m) = \{0\}$ for any $m\in \BN$,  and hence $\mu_F$ as in the proof of Theorem \ref{thm:reg-dual} is the zero map). 


\begin{eg}\label{eg:not-unique}
	Let $U$ (respectively, $V$) be the subspace of $M_2(\BC)$ (respectively, $M_3(\BC)$) consisting of matrices whose diagonal entries being zero. 
	Then $U^\rd = \{0\} = V^\rd$. 
\end{eg}


Nevertheless, we have the following direct application of \cite[Theorem 3.9(a)]{Ng-dual-OS} and Corollary \ref{cor:dualizable}, which implies that $S^\rd$ is big enough when $S$ is dualizable. 
Observe that any one of the conditions in the statement will imply $X^*$ to be a dual MOS (see e.g. \cite[Proposition 3.5]{Ng-dual-OS}), and hence \cite[Theorem 3.9(a)]{Ng-dual-OS} can be applied. 


\begin{lem}\label{lem:dualizable}
	Let $X$ be a complete SMOS. 
	The following are equivalent: 
	\begin{itemize}
		\item $\iota_{X^*}: X^*\to X^\rd$ is a complete embedding;
		
		\item $B_{M_\infty(X)}^+ - B_{M_\infty(X)}^+$ is a norm zero neighborhood of $M_\infty(X)_\sa$; i.e., there exists $r > 0$ with $rB_{M_\infty(X)_{sa}}\subseteq B_{M_\infty(X)}^+ - B_{M_\infty(X)}^+$;
		
		\item the norm closure of $B_{M_\infty(X)}^+ - B_{M_\infty(X)}^+$ is a norm zero neighborhood of $M_\infty(X)_\sa$;

		\item $X^*$ is a dual quasi-operator system;
		
		\item $X^*$ is a quasi-operator system. 
	\end{itemize}
\end{lem}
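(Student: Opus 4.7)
The plan is to establish the five conditions in three groups---(1)$\iff$(5), (4)$\iff$(5), and (1)$\iff$(2)$\iff$(3)---by leveraging two main ingredients already available: the identification $\iota_{X^*}=j_{X^*}$ from Relation \eqref{eqt:iota=mu}, and the matrix Krein--\v{S}mulian type result \cite[Theorem 3.9(a)]{Ng-dual-OS}.

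For (1)$\iff$(5), I would use Relation \eqref{eqt:iota=mu} together with Proposition \ref{prop:reg}(a) to see that $\iota_{X^*}:X^*\to X^\rd$ factors as $j_{X^*}$ followed by the inclusion $(X^*)\check{\ }\hookrightarrow X^\rd$ of operator subsystems. Since the matrix norm on $(X^*)\check{\ }$ is the one induced from $X^\rd$ (and coincides with the one induced from $A(X^*)$), the map $\iota_{X^*}$ is a complete embedding precisely when $j_{X^*}$ is, and Lemma \ref{lem:Ng-MOS}(c) identifies the latter with (5). The equivalence (4)$\iff$(5) is then quick: (4)$\Rightarrow$(5) is immediate from Definition \ref{defn:dual-MOS}(b), since a weak$^*$-homeomorphic complete embedding is in particular a complete embedding; for (5)$\Rightarrow$(4) I would invoke Theorem \ref{thm:reg-dual}(c) once (5) yields that $\iota_{X^*}=j_{X^*}$ is a complete embedding.

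For (1)$\iff$(2)$\iff$(3), the approach is to reduce to \cite[Theorem 3.9(a)]{Ng-dual-OS}, whose hypothesis requires $X^*$ to be a dual MOS. I would first verify that each of (1), (2), (3) forces this: weak$^*$-closedness of the matrix cone on $X^*$ is automatic from its definition via pointwise positivity against $M_\infty(X)_+$; properness under (1) follows from Lemma \ref{lem:Ng-MOS}(b), and under (2) or (3) from the observation that any $f\in M_\infty(X^*)_+\cap -M_\infty(X^*)_+$ annihilates $B_{M_\infty(X)}^+-B_{M_\infty(X)}^+$ and hence, by continuity, its norm closure, which under the hypothesis contains a ball of $M_\infty(X)_\sa$. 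With $X^*$ a dual MOS in hand, \cite[Theorem 3.9(a)]{Ng-dual-OS} delivers the three-way equivalence directly. The main obstacle is bookkeeping rather than substance: the real content of (1)$\iff$(2)$\iff$(3) is a black-box citation, and the only intellectual step is to confirm via Relation \eqref{eqt:iota=mu} that $\iota_{X^*}$ here matches the corresponding map in \cite{Ng-dual-OS}.
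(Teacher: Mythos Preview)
Your proposal is correct and follows essentially the same approach as the paper. The paper simply remarks that any one of the five conditions forces $X^*$ to be a dual MOS (citing \cite[Proposition 3.5]{Ng-dual-OS} in place of your direct annihilation argument for (2)/(3)), and then appeals to \cite[Theorem 3.9(a)]{Ng-dual-OS} together with Corollary~\ref{cor:dualizable} (equivalently, Theorem~\ref{thm:reg-dual}(c)); your decomposition into (1)$\iff$(5), (4)$\iff$(5), and (1)$\iff$(2)$\iff$(3) is a slightly more explicit unpacking of the same citations.
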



Let us extend the definition of ``dualizbility'' to all complete SMOS.


\begin{defn}\label{defn:dual}
	A complete SMOS $X$ is said to be \emph{dualizable} if and only if it
	satisfies any one of the equivalent conditions in Lemma \ref{lem:dualizable}.
\end{defn}


In the following, we will have a closer look at the case when $\iota_{S^*}:S^*\to S^\rd$ is injective, or is bounded below or is a complete embedding, when $S$ is an (not necessarily complete) operator system. 


\begin{prop}\label{prop:mu-S-st}
	Suppose that $S$ is an operator system. 
	
	\smnoind
	(a) $B_{M_\infty(S^*)}^+ - B_{M_\infty(S^*)}^+$ is a norm zero neighborhood of $M_\infty(S^*)_\sa$.

	\smnoind
	(b) $M_\infty(S^\rd)_+ - M_\infty(S^\rd)_+$ is weak$^*$-dense in $M_\infty(S^\rd)_\sa$.

	\smnoind
	(c) Suppose that $S$ is complete. 
Then $\iota_{S^*}$ is injective if and only if $M_\infty(S)_+ - M_\infty(S)_+$ is norm dense in $M_\infty(S)_\sa$. 
\end{prop}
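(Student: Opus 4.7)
For part (a), I would invoke the Wittstock--Paulsen machinery. Realize $S$ as an operator subsystem of some $\CL(H)$; then each $f\in B_{M_n(S^*)_\sa}$ is a self-adjoint completely bounded map $S\to M_n$ with $\|f\|_\cb\le 1$. The Wittstock--Paulsen extension theorem produces a CB extension $\tilde f:\CL(H)\to M_n$ of the same CB-norm, and symmetrizing $\tilde f$ against its adjoint preserves both the extension property and self-adjointness. On the unital C*-algebra $\CL(H)$, Wittstock's decomposition then writes $\tilde f=\tilde f_+-\tilde f_-$ with $\tilde f_\pm$ completely positive and $\|\tilde f_\pm\|_\cb\le\|\tilde f\|_\cb\le 1$; restricting back to $S$ expresses $f$ as an element of $B_{M_n(S^*)}^+-B_{M_n(S^*)}^+$, which is exactly the desired zero-neighborhood property.

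Part (b) I would obtain by combining (a) with Theorem \ref{thm:prop-os-dual}(c). Part (a) gives $M_\infty(S^*)_\sa=M_\infty(S^*)_+-M_\infty(S^*)_+$, hence $\iota_{S^*}^{(\infty)}(M_\infty(S^*)_\sa)\subseteq M_\infty(S^\rd)_+-M_\infty(S^\rd)_+$. The weak$^*$-density of $\iota_{S^*}(S^*)$ in $S^\rd$ from Theorem \ref{thm:reg-dual}(b) propagates entrywise to weak$^*$-density of $\iota_{S^*}^{(\infty)}(M_\infty(S^*))$ in $M_\infty(S^\rd)$, and the weak$^*$-continuity of the involution on the dual SMOS $S^\rd$ refines this to weak$^*$-density of the self-adjoint parts. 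Hence $M_\infty(S^\rd)_+-M_\infty(S^\rd)_+$ is weak$^*$-dense in $M_\infty(S^\rd)_\sa$.

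For part (c), the plan is to use the identification $\iota_{S^*}=j_{\tilde S^*}$ from Theorem \ref{thm:reg-dual} together with Lemma \ref{lem:Ng-MOS}(b) to reduce injectivity of $\iota_{S^*}$ to properness of $M_\infty(S^*)_+$. A self-adjoint $f\in M_m(S^*)$ will lie in $M_m(S^*)_+\cap -M_m(S^*)_+$ precisely when $f(u_{ij})=0$ for every entry of every matrix positive $u\in M_\infty(S)_+$; using the matrix-positivity identities $(e_i\pm e_j)u(e_i\pm e_j)^*\in S_+$ and $(e_i\pm\mathrm{i}e_j)u(e_i\pm\mathrm{i}e_j)^*\in S_+$, every such entry will be shown to lie in $(S_+-S_+)+\mathrm{i}(S_+-S_+)$. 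By Hahn--Banach, the non-existence of a nontrivial such $f$ will then be equivalent to $S_+-S_+$ being norm dense in $S_\sa$. To match the $M_\infty$-formulation in the statement, the $\Leftarrow$-direction is trivial (take $n=1$), while for $\Rightarrow$ I would use the rank-one matrix positives $s\otimes vv^*\in M_n(S)_+$ with $s\in S_+$ and $v\in\BC^n$ (choosing $v=e_i^*+e_j^*$ and $v=e_i^*+\mathrm{i}e_j^*$), which after subtracting diagonal corrections yield $s\otimes(e^{ij}+e^{ji})$ and $\mathrm{i}s\otimes(e^{ij}-e^{ji})$ in $M_n(S)_+-M_n(S)_+$; combined with $M_1$-density of $S_+-S_+$ in $S_\sa$, this approximates any $x\in M_n(S)_\sa$ block by block.

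The main obstacle will be the Wittstock-decomposition step in (a), which relies on standard but delicate machinery; once (a) is in hand, (b) is a routine weak$^*$-closure argument, and (c) reduces to bookkeeping once the Hahn--Banach argument and the rank-one-positivity construction are assembled.
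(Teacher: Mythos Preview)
Your proposal is correct, and part (b) matches the paper's argument almost verbatim. Parts (a) and (c), however, take genuinely different routes from the paper.

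For part (a), the paper argues abstractly: the completely isometric order embedding $S\hookrightarrow\CL(H)$ bidualizes to a completely isometric completely positive map $S^{**}\to\CL(H)^{**}$, so by Corollaries~\ref{cor:smaller-cone-oper-sys} and~\ref{cor:dualizable} the bidual $S^{**}$ is a dual operator system; then Lemma~\ref{lem:dualizable} applied to the complete MOS $X=S^*$ gives the zero-neighborhood property. Your argument is the concrete classical one: extend $f$ to $\CL(H)$ via Wittstock--Paulsen, decompose there, and restrict. Your route is more self-contained and gives the explicit constant $1$ (i.e.\ $B_{M_\infty(S^*)_\sa}\subseteq B_{M_\infty(S^*)}^+-B_{M_\infty(S^*)}^+$), while the paper's route stays inside its own framework and reuses the machinery already in place.

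For part (c), the paper simply invokes \cite[Proposition~3.5]{Ng-dual-OS} for the equivalence ``$M_\infty(S)_+-M_\infty(S)_+$ dense in $M_\infty(S)_\sa$ $\Leftrightarrow$ $S^*$ is a MOS'' and then Lemma~\ref{lem:Ng-MOS}(b). You instead reprove that cited proposition directly: first reducing properness of $M_\infty(S^*)_+$ to the $n=1$ density statement via the polarization identities (which show every entry of a matrix positive lies in the complex span of $S_+$), and then lifting $n=1$ density to $M_n$ density via rank-one positives $s\otimes vv^*$. This is a perfectly valid elementary substitute for the black-box citation; the paper's version is shorter only because it outsources the work.
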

\begin{proof}
As in Proposition \ref{prop:reg} and \eqref{eqt:iota=mu}, we identify \begin{equation}\label{eqt:iota-ti-S-*}
	j_{S^*} = \iota_{S^*} = \iota_{\tilde S^*} = \mu_{\tilde S^*}:S^* \to (\tilde S^*)\check{\ }\subseteq  S^\rd \subseteq N(\tilde S^*).
\end{equation}

\smnoind
	(a) If $\Phi: S\to \CL(H)$ is a completely isometric complete order monomorphism, then $\Phi^{**}: S^{**}\to \CL(H)^{**}$ is a completely isometric completely positive map. 
	Hence, $S^{**}$ is a dual operator system, by Corollaries \ref{cor:smaller-cone-oper-sys} and \ref{cor:dualizable}. 
	This part then follows from Lemma \ref{lem:dualizable} (when applying to the complete MOS $S^*$).

	\smnoind
	(b) This follows from part (a) above and the facts that $\iota_{S^*}^{(\infty)}(M_\infty(S^*)_\sa)$ is weak$^*$-dense in $M_\infty(S^\rd)_\sa$ (because $\iota_{S^*}(S^*)$ is weak$^*$-dense in $\tilde S^\rd = S^\rd$) and that $\iota_{S^*}$ is completely positive. 
	
	\smnoind
	(c) It follows from \cite[Proposition 3.5]{Ng-dual-OS} that $M_\infty(S)_+ - M_\infty(S)_+$ is norm dense in $M_\infty(S)_\sa$ if and only if $S^*$ is a MOS. 
	Hence, the conclusion follows from Lemma \ref{lem:Ng-MOS}(b). 
\end{proof}

\begin{prop}\label{prop:iota-bdd-below}
For an operator system $S$, the following are equivalent:
	\begin{enumerate}[label=\ \ C\arabic*).]
		\item $\iota_{S^*}$ is bounded below;		
		
		\item $\tilde S_\sa = \tilde S_+ - \tilde S_+$ (i.e., $\tilde S$ has a generating cone);
		
		\item $B_{\tilde S}^+ - B_{\tilde S}^+$ is a norm zero neighborhood of $\tilde S_\sa$;	
			
		\item the norm-closure $\overline{B_S^+ - B_S^+}$ is a norm zero neighborhood of $S_\sa$. 
	\end{enumerate}
	In this case, the map $\iota_{S^*}:S^* \to S^\rd$ is a weak$^*$-homeomorphic complete order isomorphism (which gives $(S^*)\check{\ } = S^\rd$) and $S^\rd$ has a generating cone.
\end{prop}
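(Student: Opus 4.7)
My plan has three parts: establish C2 $\Leftrightarrow$ C3 $\Leftrightarrow$ C4 by standard ordered-Banach-space arguments; establish C1 $\Leftrightarrow$ C3 via a crucial matrix-norm collapse estimate for self-adjoint functionals; then derive the additional conclusions using Krein--Smulian and closed-range duality. The main obstacle will be the norm-collapse estimate, which is what lets scalar-level hypotheses control the matrix-level norm $\|\iota_{S^*}(f)\|$; without such a device, one would need matrix-level analogues of C2/C3/C4, which need not follow from their scalar versions for general operator systems.

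For C2 $\Leftrightarrow$ C3, the forward implication is trivial, and for the converse, since $\tilde S_\sa$ is a Banach space with $\tilde S_\sa = \bigcup_n n\,\overline{B_{\tilde S}^+ - B_{\tilde S}^+}$, Baire category forces $\overline{B_{\tilde S}^+ - B_{\tilde S}^+}$ to contain a zero neighborhood, and the standard open-mapping iteration (geometric-series approximation) removes the closure. For C3 $\Leftrightarrow$ C4, one uses that $\tilde S_+$ is the $\tilde S$-norm closure of $S_+$ together with a rescaling argument showing $B_{\tilde S}^+$ is the closure of $B_S^+$ in $\tilde S_\sa$.

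Now the key norm-collapse estimate: for self-adjoint $f \in S^*$ and $\omega \in M_n(\tilde S)_+$, the matrix $[f(\omega_{ij})] \in M_n$ is self-adjoint with operator norm equal to $\sup_{\xi \in \BC^n,\ \|\xi\|\leq 1}|f(\xi^*\omega\xi)|$; moreover $\xi^*\omega\xi := \sum_{i,j}\bar\xi_i\omega_{ij}\xi_j$ lies in $\tilde S_+$ with $\|\xi^*\omega\xi\| \leq \|\omega\|$ whenever $\|\xi\| \leq 1$, using the positivity of $\omega$ as an operator in $M_n(\CL(H))$ for any completely order monomorphic embedding $\tilde S \subseteq \CL(H)$. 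Combined with \eqref{eqt:norm-mu-F}, this yields $\|\iota_{S^*}(f)\| = \sup_{y \in B_{\tilde S}^+}|f(y)|$ for every self-adjoint $f \in S^*$. For C3 $\Rightarrow$ C1, writing $rx = y - z$ with $y, z \in B_{\tilde S}^+$ for $x \in B_{\tilde S_\sa}$ gives $\|f\|_{(\tilde S_\sa)^*} \leq (2/r)\|\iota_{S^*}(f)\|$ for self-adjoint $f$, which extends to general $f$ via the decomposition into self-adjoint and anti-self-adjoint parts together with the isometry of the involution. Conversely, if C3 fails, Hahn--Banach separation produces self-adjoint $g_n \in (\tilde S_\sa)^*$ with $\|g_n\| \to \infty$ but $\sup_{y \in B_{\tilde S}^+}|g_n(y)| \leq 1$; complex-linearly extending to self-adjoint $f_n \in S^*$, the key estimate gives $\|\iota_{S^*}(f_n)\| \leq 1$ while $\|f_n\| \to \infty$, contradicting C1.

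Under C1 the map $\iota_{S^*}$ is injective with norm-closed image $(S^*)\check{\ } \subseteq S^\rd$ that is also weak$^*$-dense (by Theorem \ref{thm:reg-dual}(b)). To upgrade to surjectivity I apply Krein--Smulian: for each $r > 0$, the set $\iota_{S^*}^{-1}(rB_{S^\rd})$ is bounded (by bounded-below) and weak$^*$-closed (by weak$^*$-continuity), hence weak$^*$-compact; its weak$^*$-continuous image $\iota_{S^*}(S^*) \cap rB_{S^\rd}$ is therefore weak$^*$-closed in $S^\rd$, so $\iota_{S^*}(S^*)$ is weak$^*$-closed and equals $S^\rd$. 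Bijectivity together with the equality $\iota_{S^*}^{-1}(M_\infty(S^\rd)_+) \cap M_\infty(S^*)_\sa = M_\infty(S^*)_+$ from Theorem \ref{thm:reg-dual}(b) yields the complete order isomorphism. A standard closed-range argument (C1 gives surjectivity of the predual adjoint of $\iota_{S^*}$, while surjectivity of $\iota_{S^*}$ gives it to be bounded below, so it is a Banach-space isomorphism whose inverse-adjoint realizes the weak$^*$-continuous inverse of $\iota_{S^*}$) supplies the weak$^*$-homeomorphism. Finally, $S^\rd$ has a generating cone because $S^*$ does by Proposition \ref{prop:mu-S-st}(a) and $\iota_{S^*}$ is a complete order isomorphism.
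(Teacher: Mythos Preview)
Your proof is correct and shares the paper's central device: the norm-collapse identity $\|\iota_{S^*}(f)\| = \sup_{y \in B_{\tilde S}^+}|f(y)|$ for self-adjoint $f$ (the paper phrases this as $\|\Phi(f)\| = \|\iota_{S^*}(f)\|$, where $\Phi$ is the truncation of $\mu_{\tilde S^*}$ to the scalar component $\ell^\infty(B_{\tilde S}^+;\BC)$). The equivalences C2 $\Leftrightarrow$ C3 $\Leftrightarrow$ C4 and the final conclusions (Krein--Smulian plus the order identity from Theorem \ref{thm:reg-dual}(b)) mirror the paper's use of Proposition \ref{prop:dual-quasi-func-sys} and Lemma \ref{lem:image-weak-st-cont-bdd-below}.

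The one genuine difference is in the direction C2/C3 $\Rightarrow$ C1. You argue directly: C3 plus the norm-collapse estimate bounds $\|f\|$ by a multiple of $\|\iota_{S^*}(f)\|$ for self-adjoint $f$, and then for general $f$ by splitting into real and imaginary parts. The paper instead identifies the predual map $\varphi:(S^\rd)_* \to \tilde S$ of the weak$^*$-continuous $\iota_{S^*}$, checks via point-mass functionals $\delta_u^1$ that $\varphi$ hits every $u \in B_{\tilde S}^+$, concludes from C3 that $\varphi$ is surjective, and invokes the open mapping theorem to get $\iota_{S^*}=\varphi^*$ bounded below. Your route is more elementary and self-contained; the paper's has the side benefit of exhibiting the predual map concretely, which is reused later.
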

\begin{proof}
We continuous to use the identifications as in \eqref{eqt:iota-ti-S-*}.

	\smnoind
	(C1) $\Rightarrow$ (C2). 
	Consider $Q:N(\tilde S^*)\to \ell^\infty(B_{\tilde S}^+;\BC)$ to be the surjective normal $^*$-homomorphism given by truncation (see \eqref{eqt:def-N(F)}). 
	Define
	$$\Phi:= Q\circ \iota_{S^*}|_{S^*_\sa}: S^*_\sa  \to \ell^\infty(B_{\tilde S}^+;\BR).$$
	Let  $f\in \tilde S^*_\sa = S^*_\sa$ and $\epsilon > 0$.
	There are $n\in \BN$ and $\psi\in \Morc_w(\tilde S^*,M_n)$ with 
	$$\|\iota_{S^*}(f)\| \leq \|\psi(f)\| + \epsilon.$$
	Moreover, since $\psi(f)\in (M_n)_\sa$, there exists $\xi\in \BC^n$ with $\|\xi\| =1$ satisfying 
	$$\|\psi(f)\| = |\langle \psi(f)\xi, \xi\rangle|.$$ 
	The weak$^*$-continuity of $\psi$ ensures that $\psi_\xi: g\mapsto \langle \psi(g)\xi, \xi\rangle$ belongs to $B_{\tilde S}^+$.
	Since $\|\iota_{S^*}(f)\| \leq |\psi_\xi(f)| + \epsilon$, we have 
	$$\|\iota_{S^*}(f)\| \leq \|\Phi(f)\| + \epsilon.$$
	Conversely, one obviously has $\|\Phi(f)\|\leq \|\iota_{S^*}(f)\|$. 
	Hence, 
	$$\|\Phi(f)\| = \|\iota_{S^*}(f)\| \qquad (f\in \tilde S^*_\sa).$$
	This relation and Condition (C1) imply that  the weak$^*$-continuous positive map $\Phi$ is bounded below. 
	Consequently, $\Phi: \tilde S^* \to \Phi(\tilde S^*)$ is a weak$^*$-homeomorphism (see Lemma \ref{lem:image-weak-st-cont-bdd-below}). 
	The conclusion now follows from the equivalence of Conditions (2) and (4) in Proposition \ref{prop:dual-quasi-func-sys}.

	\smnoind
	(C2) $\Rightarrow$ (C1). 
Denote by $\tilde S_1$ the predual of $S^\rd=\tilde S^\rd$. 
As $\iota_{S^*}: \tilde S^* \to S^\rd$ is a weak$^*$-continuous completely positive complete contraction (see Theorem \ref{thm:reg-dual}(b)), there exists a completely positive complete contraction $\varphi:\tilde S_1\to \tilde S$ (by Corollary \ref{cor:emb-into-bidual}(a)) such that $\iota_{S^*} = \varphi^*$. 

	Consider $u\in B_{\tilde S}^+ = \Morc_w(\tilde S^*;\BC)$.
	Let $u\check{\ }: (\tilde S^*)\check{\ }\to \BC$ be the function given by Proposition \ref{prop:reg}(c), and let $\delta_u^1\in \ell^1(B_{\tilde S}^+,\BC)$ be the point-mass at $u$. 
	Then $\delta_u^1$ belongs to $N(\tilde S^* )_*$ (see \eqref{eqt:pre-dual}) and satisfies $u = \delta_u^1\circ \mu_{\tilde S^*}$, which implies $u\check{\ } = \delta_u^1|_{(\tilde S^*)\check{\ }}$. 
	Moreover, by \eqref{eqt:Y-1} and Theorem \ref{thm:reg-dual}(b), one has $u\check{\ }\in (\tilde S_1)_+$, and it is not hard to check that 
	$$\varphi(u\check{\ }) = \iota_{\tilde{S}^*}^*(u\check{\ }) = \mu_{\tilde S^*}^*(\delta_u^1) = u.$$ 
	From this and  Statement (C3), we know that $\varphi$ is surjective and hence is an open map. 
	Consequently, $\varphi^*$ is bounded below. 
	
	\smnoind
	(C2) $\Leftrightarrow$ (C3) $\Leftrightarrow$ (C4). 
	These follow directly from Proposition \ref{prop:dual-quasi-func-sys}.

	Finally, in order to verify the last statement, we note that if the weak$^*$-continuous map $\iota_{S^*}$ is bounded below, then by Lemma \ref{lem:image-weak-st-cont-bdd-below}, its range is weak$^*$-closed (which implies $\iota_{S^*}(S^*) =  S^\rd$) and $\iota_{S^*}: S^*\to S^\rd$ is a weak$^*$-homeomorphism. 
	Thus, it follows from Condition (C2) and Lemma \ref{lem:Ng-MOS}(a) that $\iota_{S^*}$ is a complete order isomorphism. 
	Proposition \ref{prop:mu-S-st}(a) then implies that $S^\rd$ has a generating cone.
\end{proof}

\begin{prop}\label{prop:iota-comp-embed}
For an operator system $S$, the following are equivalent: 
	\begin{enumerate}[label=\ \ D\arabic*).]
		\item $\iota_{S^*}$ is a complete embedding;	
		
		\item $\tilde S$ is dualizable;	
		
		\item there is a regular norm on $M_\infty(\tilde S)_\sa$ (see Definition \ref{defn:Riesz}) equivalent to the original norm on $M_\infty(\tilde S)_\sa$;		
		
		\item the norm-closure of $B_{M_\infty(S)}^+ - B_{M_\infty(S)}^+$ is a norm zero neighborhood of $M_\infty(S)_\sa$. 
	\end{enumerate}
	In this case, $S^\rd$ is dualizable.  
\end{prop}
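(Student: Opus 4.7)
My plan is to mirror Proposition~\ref{prop:iota-bdd-below}, with every scalar-level condition promoted to the matrix ($M_\infty$) level. Since $S^\rd = \tilde S^\rd$ and $\iota_{S^*} = \iota_{\tilde S^*}$ by the identifications recorded in \eqref{eqt:def-iota-incomp} and \eqref{eqt:iota-ti-S-*}, the equivalence (D1)$\Leftrightarrow$(D2) is immediate from Definition~\ref{defn:dual} together with the first two items of Lemma~\ref{lem:dualizable}. The same lemma also gives (D2) $\Leftrightarrow$ the existence of some $r>0$ with $rB_{M_\infty(\tilde S)_\sa}\subseteq B^+_{M_\infty(\tilde S)}-B^+_{M_\infty(\tilde S)}$; rescaling the norm so that the right-hand side contains the open unit ball (while automatically sitting inside some multiple of $B_{M_\infty(\tilde S)_\sa}$) shows, via Lemma~\ref{lem:abs-mono}(b) used exactly as in the paragraph following Lemma~\ref{lem:matrix-reg}, that this is equivalent to the existence of an equivalent regular norm on $M_\infty(\tilde S)_\sa$. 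This yields (D2)$\Leftrightarrow$(D3).

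For (D2)$\Leftrightarrow$(D4), I would argue exactly as for (C3)$\Leftrightarrow$(C4) in Proposition~\ref{prop:iota-bdd-below}, by applying Proposition~\ref{prop:dual-quasi-func-sys} to the ordered normed space $M_\infty(S)_\sa$, whose completion at every level is $M_\infty(\tilde S)_\sa$, with cone $M_\infty(\tilde S)_+$ defined as the norm closure of $M_\infty(S)_+$. The substantive content of the equivalence is whether the norm closure of $B^+_{M_\infty(S)}-B^+_{M_\infty(S)}$ is formed inside $M_\infty(S)_\sa$ or inside $M_\infty(\tilde S)_\sa$, which is precisely the passage that Proposition~\ref{prop:dual-quasi-func-sys} is designed to handle.

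For the closing claim that $S^\rd$ is itself dualizable when (D1) holds, I would combine two inputs. First, Proposition~\ref{prop:mu-S-st}(a) asserts that $B^+_{M_\infty(S^*)}-B^+_{M_\infty(S^*)}$ is already a norm zero neighborhood of $M_\infty(S^*)_\sa$, regardless of $S$. Second, the last sentence of Proposition~\ref{prop:iota-bdd-below} (whose hypothesis is weaker than (D1)) makes $\iota_{S^*}:S^*\to S^\rd$ a weak$^*$-homeomorphic complete order isomorphism, and combined with (D1) it is even a complete operator space isomorphism with matched matrix cones. Transporting the zero-neighborhood property through $\iota_{S^*}^{(\infty)}$ then yields item~(2) of Lemma~\ref{lem:dualizable} for $S^\rd$, so $S^\rd$ is dualizable.

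The main obstacle I foresee is being precise about the matrix-level density step in (D2)$\Leftrightarrow$(D4): one must carefully track how the norm closure of $B^+_{M_n(S)}-B^+_{M_n(S)}$ computed inside $M_n(S)_\sa$ compares, through the isometric inclusion $M_n(S)_\sa\hookrightarrow M_n(\tilde S)_\sa$, with the corresponding closure inside $M_n(\tilde S)_\sa$, and verify that the uniformity in $n$ built into the $M_\infty$-formulation is preserved under this comparison.
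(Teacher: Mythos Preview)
Your treatment of (D1)$\Leftrightarrow$(D2) and of the final dualizability of $S^\rd$ matches the paper and is correct. There are, however, two places where your sketch does not go through as written.

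\textbf{(D2)$\Leftrightarrow$(D3).} Taking $B^+-B^+$ (suitably rescaled) as the unit ball does give a norm equivalent to the original, but Lemma~\ref{lem:abs-mono}(b) does not certify that this norm is regular: regularity requires $O_p\subseteq \MS(B_p)\subseteq B_p$, and for the set $C:=B^+-B^+$ one only gets $C\subseteq 2\MS(C)$ in general, while the reverse containment $\MS(C)\subseteq \text{const}\cdot C$ needs control on $\|x\|$ once $-w\le x\le w$, i.e.\ precisely absolute monotonicity of the \emph{original} norm. The paper supplies this by noting (via Remark~\ref{rem:comp-OS} and Lemma~\ref{lem:matrix-reg}(c)) that $\tilde S$ is an operator system, so the norm on $M_\infty(\tilde S)_\sa$ is absolutely monotone, and then invokes Proposition~\ref{prop:rel-bdd-decomp-prop} directly. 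Your reference to ``the paragraph following Lemma~\ref{lem:matrix-reg}'' concerns matrix regularity (the original norm being regular), not the existence of an \emph{equivalent} regular norm, so it does not quite fit here.

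\textbf{(D2)$\Leftrightarrow$(D4).} Applying Proposition~\ref{prop:dual-quasi-func-sys} to $M_\infty(S)_\sa$ is problematic because $M_\infty(\tilde S)_\sa$ is not its completion. If instead you apply it level by level to $M_n(S)_\sa$, the implication (5)$\Rightarrow$(3) in that proposition passes through the open mapping theorem and is not quantitative, so the constant $r$ you obtain for $M_n(\tilde S)_\sa$ may depend on $n$; the uniformity you flag as an obstacle is therefore a genuine one for this route. The paper avoids this entirely by a direct elementary argument: for (D2)$\Rightarrow$(D4) one uses that $M_\infty(\tilde S)_+$ is the norm closure of $M_\infty(S)_+$ to obtain $\overline{B_{M_\infty(\tilde S)}^+ - B_{M_\infty(\tilde S)}^+}\cap M_\infty(S)_\sa \subseteq \overline{B_{M_\infty(S)}^+ - B_{M_\infty(S)}^+}$, and for (D4)$\Rightarrow$(D2) one simply notes that the closure in $M_\infty(\tilde S)_\sa$ of a zero neighborhood of $M_\infty(S)_\sa$ is a zero neighborhood of $M_\infty(\tilde S)_\sa$. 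Both steps manifestly preserve the constant $r$ uniformly in $n$, so Lemma~\ref{lem:dualizable} (its closure version) applies.
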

\begin{proof}
As in \eqref{eqt:iota-ti-S-*}, we identify $\iota_{S^*} =  \iota_{\tilde S^*}$. 

\smnoind
(D1) $\Leftrightarrow$ (D2). 
	This follows from Definition \ref{defn:dual}. 
	
	\smnoind
	(D2) $\Leftrightarrow$ (D3). 
	By Definition \ref{defn:dual}, Statement (D2) is equivalent to $M_\infty(\tilde S)_\sa$ being locally decomposable, in the sense of Definition \ref{defn:loc-decomp}(c). 
	Moreover, since $\tilde S$ is an operator system (see Remark \ref{rem:comp-OS}), the norm on $M_\infty(\tilde S)$ is absolutely monotone (see Lemma \ref{lem:matrix-reg}(c)). 
	Thus, Proposition \ref{prop:rel-bdd-decomp-prop} gives the required equivalence. 
	
	\smnoind
	(D2) $\Leftrightarrow$ (D4). 
	Let us denote $\overline{B_{M_\infty(S)}^+ - B_{M_\infty(S)}^+}$ to be the norm closure of $B_{M_\infty(S)}^+ - B_{M_\infty(S)}^+$. 
	By Definition \ref{defn:dual}, Statement (D2) is equivalent to $\overline{B_{M_\infty(\tilde S)}^+ - B_{M_\infty(\tilde S)}^+}$ being a norm zero neighborhood of $M_\infty(\tilde S)_\sa$. 
	Since 
	$$\overline{B_{M_\infty(\tilde S)}^+ - B_{M_\infty(\tilde S)}^+}\cap M_\infty(S)_\sa \subseteq \overline{B_{M_\infty(S)}^+ - B_{M_\infty(S)}^+}$$ (because $M_\infty(\tilde S)_+$ is the norm closure of $M_\infty(S)_+$), we have (D2) $\Rightarrow$ (D4). 
	Conversely, if Statement (D4) holds, then since $\overline{B_{M_\infty(S)}^+ - B_{M_\infty(S)}^+}$ is contained in $\overline{B_{M_\infty(\tilde S)}^+ - B_{M_\infty(\tilde S)}^+}$, we know that Statement (D2) holds (observe that the closure of a norm neighborhood of $M_n(S)_\sa$ in its completion $M_n(\tilde S)_\sa$ is a norm neighborhood of $M_n(\tilde S)_\sa$).
	
	Finally, assume that  $\iota_{S^*}$ is a complete embedding. 
	Then $\iota_{S^*}$ is bounded below, and it follows from Proposition \ref{prop:iota-bdd-below} that $\iota_{S^*}^{(\infty)}$ is an order isomorphism from $M_\infty(S^*)$ onto $M_\infty(S^\rd)$. 
	Moreover, since $\iota_{S^*}^{(\infty)}$ is also a normed space isomorphism, Proposition \ref{prop:mu-S-st}(a) implies that $B_{M_\infty(S^\rd)}^+ - B_{M_\infty(S^\rd)}^+$ is a norm zero neighborhood of $M_\infty(S^\rd)_\sa$; i.e., $S^\rd$ is dualizable.
\end{proof}



\begin{cor}\label{cor:dual-funct}
	(a) The restriction of the dual functor $(S,T,\varphi)\mapsto (T^\rd, S^\rd, \varphi^\rd)$ to the category of complete operator systems having generating cones (which include all dualizable operator systems)	 is faithful.
	
	\smnoind
	(b) If a complete operator system $S$ has a generating cone, then so is $S^\rd$. 
	
	\smnoind
	(c) If a complete operator system $S$ is dualizable, then $S^\rd$ is dualizable. 
\end{cor}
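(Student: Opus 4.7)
The plan is to observe that all three parts follow quickly from results already established in the paper, with only one short direct argument needed for (a). Since $S$ is complete, we may identify $S$ with $\tilde S$ throughout, so the hypotheses of Propositions \ref{prop:iota-bdd-below} and \ref{prop:iota-comp-embed}, which are phrased in terms of $\tilde S$, apply directly to $S$ itself.

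For (b), the assumption that $S$ has a generating cone is precisely condition (C2) of Proposition \ref{prop:iota-bdd-below}, whose final assertion states that $S^\rd$ then has a generating cone. Similarly, for (c) the hypothesis that $S$ is dualizable is condition (D2) of Proposition \ref{prop:iota-comp-embed}, whose closing sentence supplies the dualizability of $S^\rd$. In both cases one simply quotes the relevant proposition.

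For (a), I will verify faithfulness on morphism sets directly. Given $\varphi_1, \varphi_2 \in \Morc(S,T)$ with $\varphi_1^\rd = \varphi_2^\rd$, applying the defining identity \eqref{eqt:phi-d} to each $\varphi_i$ yields
$$\iota_{S^*}\circ \varphi_1^* \ =\ \varphi_1^\rd\circ \iota_{T^*} \ =\ \varphi_2^\rd\circ \iota_{T^*} \ =\ \iota_{S^*}\circ \varphi_2^*.$$
Because $S$ has a generating cone, Proposition \ref{prop:iota-bdd-below} tells us that $\iota_{S^*}:S^*\to S^\rd$ is a weak$^*$-homeomorphic complete order isomorphism, and in particular injective. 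Hence $\varphi_1^* = \varphi_2^*$, and a standard Hahn--Banach argument (equivalently, the injectivity of the canonical embedding $T\hookrightarrow T^{**}$) then forces $\varphi_1 = \varphi_2$. There is no real obstacle: every part is essentially a one-line deduction from an earlier proposition, the only mild subtlety being to notice that completeness of $S$ makes the $\tilde S$-phrased hypotheses of Propositions \ref{prop:iota-bdd-below} and \ref{prop:iota-comp-embed} apply to $S$ unchanged.
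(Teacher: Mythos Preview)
Your proof is correct and follows essentially the same route as the paper: parts (b) and (c) are quoted directly from Propositions \ref{prop:iota-bdd-below} and \ref{prop:iota-comp-embed}, and part (a) is obtained from \eqref{eqt:phi-d} together with the injectivity of $\iota_{S^*}$ supplied by Proposition \ref{prop:iota-bdd-below}. The only cosmetic difference is that the paper also records the bijectivity of $\iota_{T^*}$, but (as your argument shows) only the injectivity of $\iota_{S^*}$ is actually needed.
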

\begin{proof}
	(a) Let $S$ and $T$ be complete operator systems having generating cones.
	By Proposition \ref{prop:iota-bdd-below}, the map $\iota_{S^*}:S^*\to S^\rd$ is bijective and so is $\iota_{T^*}$. 
	Suppose that $\phi, \psi\in \Morc(S,T)$ satisfying $\phi^\rd = \psi^\rd$.
	It then follows from Relation \eqref{eqt:phi-d} that $\phi^* = \psi^*$ and hence $\phi =\psi$.

	\smnoind
	(b) This part follows from Proposition \ref{prop:iota-bdd-below}.

	\smnoind
	(c) This part follows from Proposition \ref{prop:iota-comp-embed}. 
\end{proof}


We will see in the next section that the dual functor is also injective on objects and full, if we further restrict the functor to complete operator systems that admits a  ``good'' increasing net.


Suppose that $T$ is an operator system with $\tilde T_\sa = \tilde T_+ - \tilde T_+$.
It follows from Proposition \ref{prop:iota-bdd-below} that $\iota_{T^*}: T^*\to T^\rd$ is a Banach space isomorphism, and $T^\rd_\sa = T^\rd_+ - T^\rd_+$. 
Hence, $\iota_{(T^\rd)^*}:  (T^\rd)^* \to (T^\rd)^\rd$ 
is again a Banach space isomorphism (by Proposition \ref{prop:iota-bdd-below}). 
In Relation (3.10) of \cite{Ng-dual-OS}, the following bijection was considered:   
\begin{equation}\label{eqt:def-tau}
	\tau_T:=\iota_{(T^\rd)^*}\circ (\iota_{T^*}^*)^{-1}: T^{**}\to (T^\rd)^\rd, 
\end{equation}
and it was asked when this map will be a complete isometry. 


Let us look at the map $\tau_T$ from another angle. 
Suppose that $S$ is an operator system (but $\tilde S$ is not assumed to have a generating cone). 
Then $S^{**}$ is a dual operator system (see the proof of Proposition \ref{prop:mu-S-st}(a)). 
Hence, by Theorem \ref{thm:prop-os-dual}(a), the map $\iota_{S^*}^*\in \Morc_w((S^\rd)^*,S^{**})$ induces 
a map $\overline{\iota_{S^*}^*}\in \Morc_w((S^\rd)^\rd, S^{**})$ satisfying 
\begin{equation}\label{eqt:def-bar-iota-S*-*}
		\xymatrix{
	& (S^\rd)^* \ar[d]_{\iota_{(S^\rd)^*}} \ar[r]^{	\iota_{S^*}^*} & S^{**}\\
	& (S^\rd)^\rd \ar[ur]_{\overline{\iota_{S^*}^*}} 
}
\end{equation}
(see Theorem \ref{thm:prop-os-dual}(a)).
From this, we have the following. 


\begin{lem}\label{lem:inv-tau-S}
	Suppose that $S$ is an operator system with its completion $\tilde S$ having a generating cone. 
	Then $\tau_S^{-1} = \overline{\iota_{S^*}^*}$. 
	In this case, $\tau_S$ is a complete isometry if and only if $\tau_S$ is a complete contraction. 
\end{lem}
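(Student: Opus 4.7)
My plan is to verify the two assertions separately, both by short diagram-chasing arguments that rely on machinery already developed in the paper.

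\smnoind
\textbf{Verifying $\tau_S^{-1} = \overline{\iota_{S^*}^*}$.} Since $\tilde S$ has a generating cone, Proposition~\ref{prop:iota-bdd-below} tells us that $\iota_{S^*}:S^* \to S^\rd$ is a weak$^*$-homeomorphic complete order isomorphism; in particular it is a bijection, so its Banach-space adjoint $\iota_{S^*}^*:(S^\rd)^*\to S^{**}$ is also a bijection and the formula defining $\tau_S$ in \eqref{eqt:def-tau} makes sense. Corollary~\ref{cor:dual-funct}(b) gives that $S^\rd$ also has a generating cone, so applying Proposition~\ref{prop:iota-bdd-below} to $S^\rd$ shows $\iota_{(S^\rd)^*}$ is bijective as well, confirming that $\tau_S$ is a bijection. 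The commutative triangle \eqref{eqt:def-bar-iota-S*-*} defining $\overline{\iota_{S^*}^*}$ reads $\overline{\iota_{S^*}^*}\circ \iota_{(S^\rd)^*} = \iota_{S^*}^*$, hence
\[
\overline{\iota_{S^*}^*}\circ \tau_S \;=\; \overline{\iota_{S^*}^*}\circ \iota_{(S^\rd)^*}\circ (\iota_{S^*}^*)^{-1} \;=\; \iota_{S^*}^*\circ (\iota_{S^*}^*)^{-1} \;=\; \id_{S^{**}}.
\]
Because $\tau_S$ is a bijection, this forces $\tau_S^{-1} = \overline{\iota_{S^*}^*}$, as desired.

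\smnoind
\textbf{The if-and-only-if.} By Theorem~\ref{thm:prop-os-dual}(a), the map $\overline{\iota_{S^*}^*}\in \Morc_w((S^\rd)^\rd,S^{**})$; in particular, it is automatically a complete contraction. Consequently $\tau_S^{-1}$ is a complete contraction. If, in addition, $\tau_S$ itself is a complete contraction, then for any $n\in \BN$ and $x\in M_n(S^{**})$ we have
\[
\|x\| \;=\; \bigl\|(\tau_S^{-1})^{(n)}\tau_S^{(n)}(x)\bigr\| \;\leq\; \|\tau_S^{(n)}(x)\| \;\leq\; \|x\|,
\]
so $\tau_S$ is a complete isometry. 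The reverse implication is trivial.

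\smnoind
\textbf{Difficulty.} There is no real technical obstacle: the equality $\tau_S^{-1}=\overline{\iota_{S^*}^*}$ is formal once the bijectivity of $\iota_{S^*}$ and $\iota_{(S^\rd)^*}$ is in hand (both supplied by Proposition~\ref{prop:iota-bdd-below} together with Corollary~\ref{cor:dual-funct}(b)), and the equivalence of ``complete isometry'' with ``complete contraction'' uses only the already-known contractivity of $\overline{\iota_{S^*}^*}$ guaranteed by Theorem~\ref{thm:prop-os-dual}(a). The only point to be careful about is making sure the universal extension $\overline{\iota_{S^*}^*}$ really is the inverse of $\tau_S$ on the nose, rather than merely agreeing on the weak$^*$-dense subset $\iota_{(S^\rd)^*}((S^\rd)^*)$; this is addressed by the bijectivity observation above.
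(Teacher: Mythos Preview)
Your proof is correct and follows essentially the same approach as the paper, which treats the lemma as an immediate consequence of the commutative triangle \eqref{eqt:def-bar-iota-S*-*} together with the bijectivity of $\iota_{S^*}$ and $\iota_{(S^\rd)^*}$ supplied by Proposition~\ref{prop:iota-bdd-below}. Your write-up simply makes explicit the diagram chase and the standard ``two-sided complete contraction implies complete isometry'' argument that the paper leaves to the reader.
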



The good points about $\overline{\iota_{S^*}^*}$ are that its definition is more canonical, and that it can be defined for all operator systems.
The natural questions of when $\overline{\iota_{S^*}^*}$ is surjective, is injective, is weak$^*$-homeomorphic, is completely order monomorphic and is a complete isometry could also be asked for general operator systems.  


\begin{thm}\label{thm:comp-isom-tau}
	Let $S$ be an operator system such that $\tilde S$ have a generating cone. 
	
	\smnoind
	(a) $\overline{\iota_{S^*}^*}:(S^\rd)^\rd \to S^{**}$ is a weak$^*$-homeomorphic complete order isomorphism. 
	
	\smnoind
	(b) One has $\big\|(\iota_{S^*}^*)^{(\infty)}(v)\big\| 
	= \|v\|$ ($v\in M_\infty((S^\rd)^*)_+$) and $\big\|\overline{\iota_{S^*}^*}^{(\infty)}(u)\big\| 
	= \|u\|$ ($u\in M_\infty((S^\rd)^\rd)_+$).

	\smnoind
	(c) The following statements equivalent. 
	\begin{enumerate}[label=\ \ \arabic*).]
		\item The map $\tau_S$ as in \eqref{eqt:def-tau} is a complete isometry. 
		
		\item $\overline{\iota_{S^*}^*}:(S^\rd)^\rd \to S^{**}$ is a complete isometry. 
		
		\item $S^{**} \cong T^\rd$ completely isometrically as dual operator systems,  for a complete operator system $T$. 
		
		\item There is a complete contraction $\Phi: S\to (S^\rd)^\rd$ with $\kappa_S = \overline{\iota_{S^*}^*}\circ \Phi$. 
	\end{enumerate}
	
	\smnoind
	(d) If $S^{**}$ is matrix regular (in particular, if $S$ is matrix regular), then $\overline{\iota_{S^*}^*}:(S^\rd)^\rd \to S^{**}$ is a completely isometric dual operator system isomorphism.  
\end{thm}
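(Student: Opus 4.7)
My plan is to prove the four parts in order, leveraging the preceding duality machinery.

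For part (a), I invoke Proposition \ref{prop:iota-bdd-below} twice. Since $\tilde S$ has a generating cone, that proposition gives that $\iota_{S^*}\colon S^*\to S^\rd$ is a weak$^*$-homeomorphic complete order isomorphism and that $S^\rd$ itself has a generating cone. Applied to the complete SMOS $S^\rd$, it yields that $\iota_{(S^\rd)^*}\colon (S^\rd)^*\to (S^\rd)^\rd$ is likewise a weak$^*$-homeomorphic complete order isomorphism. The Banach adjoint $\iota_{S^*}^*\colon (S^\rd)^*\to S^{**}$ is a weak$^*$-homeomorphic Banach space isomorphism (being the adjoint of one); moreover, because $\iota_{S^*}$ is a complete order isomorphism, precomposition with $\iota_{S^*}$ makes $\iota_{S^*}^*$ a complete order isomorphism of SMOS at every matrix level. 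Hence $\tau_S = \iota_{(S^\rd)^*}\circ (\iota_{S^*}^*)^{-1}$, and so by Lemma \ref{lem:inv-tau-S} its inverse $\overline{\iota_{S^*}^*}$, is a weak$^*$-homeomorphic complete order isomorphism, establishing (a).

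For part (b), the factorization $\iota_{S^*}^* = \overline{\iota_{S^*}^*}\circ \iota_{(S^\rd)^*}$ from the diagram \eqref{eqt:def-bar-iota-S*-*} combined with (a) shows that the two equalities are two instances of the same claim applied to $S$ and to $S^\rd$, reducing the task to a single statement. Complete contractivity of $\iota_{S^*}^*$ gives the inequality $\|\iota_{S^*}^*(v)\|\leq \|v\|$ for free. For the reverse inequality on a positive $v\in M_\infty((S^\rd)^*)_+$, I would exploit that for a CP map the cb-norm is captured by positive elements of the domain, together with Theorem \ref{thm:prop-os-dual}(c) (weak$^*$-density of $\iota_{S^*}^{(\infty)}(M_\infty(S^*)_+)$ in $M_\infty(S^\rd)_+$) and the norm description of Proposition \ref{prop:reg}(a) for the operator system $S^\rd$, which lets one approximate the norm of $v$ by its values on positive elements that lift through $\iota_{S^*}$.

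For part (c), the equivalence 1 $\Leftrightarrow$ 2 is immediate from Lemma \ref{lem:inv-tau-S}. For 2 $\Rightarrow$ 4, set $\Phi := \overline{\iota_{S^*}^*}^{-1}\circ \kappa_S$; under hypothesis 2 this is a complete contraction (as $\kappa_S$ is completely isometric by Corollary \ref{cor:emb-into-bidual}(a)), and by construction $\kappa_S = \overline{\iota_{S^*}^*}\circ \Phi$. For 4 $\Rightarrow$ 2, the complete contraction $\Phi$ extends to a weak$^*$-continuous complete contraction $\Phi^\sigma\colon S^{**}\to (S^\rd)^\rd$; the composition $\overline{\iota_{S^*}^*}\circ \Phi^\sigma$ is weak$^*$-continuous and equals the identity on the weak$^*$-dense subset $\kappa_S(S)$, hence on all of $S^{**}$. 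Thus $\Phi^\sigma = \overline{\iota_{S^*}^*}^{-1}$ is completely contractive, forcing $\overline{\iota_{S^*}^*}$ to be a complete isometry. For 2 $\Leftrightarrow$ 3, take $T = S^\rd$ in one direction; for the converse, a completely isometric dual operator system isomorphism $S^{**}\cong T^\rd$ composes with $\kappa_S$ to a complete contraction $S\to T^\rd$, whose $\rd$-dual extension via the universal property of the dual functor furnishes condition 4.

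For part (d), I invoke Lemma \ref{lem:abs-cont-to-mat-reg} with $\varphi = \overline{\iota_{S^*}^*}\colon (S^\rd)^\rd\to S^{**}$. Absolute monotonicity of $M_\infty((S^\rd)^\rd)_\sa$ comes from Lemma \ref{lem:matrix-reg}(c) (as $(S^\rd)^\rd$ is an operator system), matrix regularity of $S^{**}$ is the assumption, and parts (a) and (b) supply the completely contractive complete order isomorphism property together with norm preservation on positive elements. The lemma thus yields a complete isometry, which combined with (a) gives a completely isometric weak$^*$-homeomorphic complete order isomorphism, i.e., a dual operator system isomorphism. The ``in particular'' clause follows from Corollary \ref{cor:bidual-abs-mono}(b), which ensures that $S$ matrix regular forces $S^{**}$ matrix regular. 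The principal obstacle throughout is part (b): without matrix regularity of $S^\rd$ or $S^{**}$ at our disposal, bootstrapping norm preservation from complete contractivity alone requires a delicate Kaplansky-style interplay between the weak$^*$-density of positive elements from Theorem \ref{thm:prop-os-dual}(c), the CP-evaluation description of the operator system norm in Proposition \ref{prop:reg}(a), and the complete order isomorphism from (a).
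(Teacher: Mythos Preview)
Your treatment of parts (a) and (d) is essentially the paper's argument, and your (c)(1)$\Leftrightarrow$(2), (2)$\Rightarrow$(3), (2)$\Rightarrow$(4) match as well. Your (4)$\Rightarrow$(2) via a weak$^*$-continuous completely contractive extension $\Phi^\sigma:S^{**}\to (S^\rd)^\rd$ is different from the paper's route (the paper instead picks a net in $B_{M_n(S)}$ weak$^*$-approximating $\overline{\iota_{S^*}^*}^{(n)}(x)$ via Corollary \ref{cor:dense-ball}, pushes it through $\Phi^{(n)}$, and uses that $\overline{\iota_{S^*}^*}$ is a weak$^*$-homeomorphism). Your approach works too, but you must justify that the canonical projection $((S^\rd)^\rd)^{**}\to (S^\rd)^\rd$ is a weak$^*$-continuous complete contraction so that $\Phi^\sigma:=P\circ \Phi^{**}$ has the claimed properties.

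There are two genuine gaps. First, part (b): your sketch does not locate the actual mechanism. The paper does \emph{not} use weak$^*$-density from Theorem \ref{thm:prop-os-dual}(c); it uses surjectivity of $\iota_{S^*}$ (Proposition \ref{prop:iota-bdd-below}) to lift an arbitrary $f\in B_{M_m(S^\rd)}$ to $\omega\in M_m(S^*)$, and then the crucial point is that the $\|\cdot\|^\rn$-formula for $\|f\|=\|\iota_{S^*}^{(m)}(\omega)\|$ is a supremum over all $x\in B_{M_\infty(S^{**})}^+$, while $(\iota_{S^*}^*)^{(n)}(v)$ is itself one of those testers. This immediately gives $\|f^{(n)}(v)\|=\big\|\big((\iota_{S^*}^*)^{(n)}(v)\big)^{(m)}(\omega)\big\|\le 1$. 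Your ``Kaplansky-style interplay'' paragraph correctly flags that you have not closed this, and the ingredients you list (positive-element determinations of the cb-norm, weak$^*$-density) do not by themselves supply the reverse inequality.

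Second, (3)$\Rightarrow$(4) in part (c) is not an argument as written. Composing $\kappa_S$ with the isomorphism $S^{**}\cong T^\rd$ yields a complete contraction $S\to T^\rd$, but nothing in the dual functor's universal property turns this into a map $S\to (S^\rd)^\rd$ satisfying $\kappa_S=\overline{\iota_{S^*}^*}\circ \Phi$. The paper instead proves (3)$\Rightarrow$(2) directly: it sets $\varphi:=\iota_{T^*}^*|_{S^*}\in \Morc(S^*,T)$, forms $\varphi\check{\ }\in \Morc(S^\rd,T)$ via Proposition \ref{prop:reg}(c) (using $(S^*)\check{\ }=S^\rd$), takes $(\varphi\check{\ })^\rd\in \Morc_w(T^\rd,(S^\rd)^\rd)$, and then verifies by a direct computation on $\iota_{T^*}(T^*)$ that $\overline{\iota_{S^*}^*}\circ (\varphi\check{\ })^\rd$ is the identity on $T^\rd=S^{**}$; since $\overline{\iota_{S^*}^*}$ is a bijective complete contraction, this forces it to be a complete isometry.
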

\begin{proof}
	Note that the statements in parts (a), (b) and (d) as well as Statements (1) - (3) in part (c) remain unchanged when $S$ is replaced by $\tilde S$, and a complete contraction $\Phi$ as in Statement (4) will induce a complete contraction satisfying the corresponding statement for $\tilde S$. 
	Thus, we may assume that $S$ is complete. 
	
	Proposition \ref{prop:iota-bdd-below} implies that $\iota_{S^*}:S^*\to S^\rd$ is a weak$^*$-homeomorphic complete order isomorphism and $S^\rd_\sa = S^\rd_+ - S^\rd_+$. 
	Thus, the complete order isomorphism 
	$\iota_{S^*}^*: (S^\rd)^*\to S^{**}$ 
	is a weak$^*$-homeomorphism, and 
	$$\iota_{(S^d)^*}: (S^\rd)^* \to (S^\rd)^\rd$$ 
	is a weak$^*$-homeomorphic complete order isomorphism
	(again, by Proposition \ref{prop:iota-bdd-below}). 
	Therefore, 
	$\overline{\iota_{S^*}^*}: (S^\rd)^\rd \to S^{**}$ 
	is a weak$^*$-homeomorphism (recall from Relation \eqref{eqt:def-bar-iota-S*-*} that $\iota_{S^*}^*=\overline{\iota_{S^*}^*}\circ\iota_{(S^\rd)^*})$. 
	
	\smnoind
	(a) Since the complete order isomorphism $\iota_{S^*}:S^* \to S^\rd$ is norm homeomorphic (by the open mapping theorem), $(\iota_{S^*})^{-1}:S^\rd \to S^*$ is a bounded complete order isomorphism, and hence so is $((\iota_{S^*})^{-1})^*$.  
	From this, we see that $\iota_{S^*}^*$ is a complete order isomorphism. 
	Consequently, 
	$\overline{\iota_{S^*}^*}$ is also a complete order isomorphism (see Relation \eqref{eqt:def-bar-iota-S*-*}).
	
	\smnoind
	(b) Note that as $\overline{\iota_{S^*}^*}$ is a complete contraction, the second equality follows from the first one, Relation \eqref{eqt:def-bar-iota-S*-*}, 
	as well as the fact that $\iota_{(S^\rd)^*}$ is a completely contrative complete order isomorphism. 
	
	In order to show the first equality, consider $n\in \BN$ and $v\in M_n((S^\rd)^*)_+$ with $\big\|(\iota_{S^*}^*)^{(n)}(v)\big\| \leq 1$. 
	Pick any $m\in\BN$ and $f\in B_{M_m(S^\rd)}$. 
	Since $\iota_{S^*}(S^*)  =S^\rd$ (by Proposition \ref{prop:iota-bdd-below}), there is $\omega\in M_m(S^*)$ with $f = \iota_{S^*}^{(m)}(\omega)$. 
	As in Proposition \ref{prop:reg}(a), we identify $\iota_{S^*}$ with $j_{S^*}$, and obtain
	$$\sup \big\{\|x^{(m)}(\omega)\| : x \in B_{M_\infty(S^{**})}^+\big\} = \|\iota_{S^*}^{(m)}(\omega)\| = \|f\|\leq 1.$$
	Since $(\iota_{S^*}^*)^{(n)}(v)\in B_{M_n(S^{**})}^+$, the above displayed inequality implies that 
	$$\|f^{(n)}(v) \| = \big\|\big(\iota_{S^*}^{(m)}(\omega)\big)^{(n)}(v) \big\| = \big\|\big((\iota_{S^*}^*)^{(n)}(v)\big)^{(m)}(\omega)\big\|\leq 1.$$
	This means that $\|v\|\leq 1$.
	From this, and the complete contractivity of $\iota_{S^*}^*$, we see that  $\|\iota_{S^*}^*(v)\| = \|v\|$.

	\smnoind
	(c) (1)$\Leftrightarrow$(2). 
	This follows from Lemma \ref{lem:inv-tau-S}. 
	
	\noindent
	(2)$\Rightarrow$(3). 
	It follows from part (a) above that if Statement (2) holds, then $S^{**}\cong (S^\rd)^\rd$ completely isometrically as dual operator systems. 
	
	\noindent
	(3)$\Rightarrow$(2). 
	We identify the two dual operator systems $S^{**}$ and $T^\rd$ directly.  
	Set  
	$$\varphi:= \iota_{T^*}^*|_{S^*}\in \Morc(S^*,T),$$
	(observe that $\iota_{T^*}\in \Morc_w(T^*,S^{**})$). 
	Consider $\varphi\check{\ }\in \Morc\big(S^\rd,T\big)$ to be the map as in Proposition \ref{prop:reg}(c) (note that $(S^*)\check{\ } = S^\rd$, because of Proposition \ref{prop:iota-bdd-below}). 
	Then $\varphi\check{\ }\circ \iota_{S^*} = \varphi$, and $(\varphi\check{\ })^\rd\in \Morc_w\big(T^\rd, (S^\rd)^\rd \big)$  satisfies (see \eqref{eqt:phi-d})
	$$(\varphi\check{\ })^\rd\circ \iota_{T^*} = \iota_{(S^\rd)^*}\circ (\varphi\check{\ })^*.$$	
	
	We claim that $\overline{\iota_{S^*}^*}\circ (\varphi\check{\ })^\rd: T^\rd \to S^{**}$ is the identity map $\id_{S^{**}}$ on ${S^{**}}=T^\rd$.
	In fact, pick any $f\in T^*$ and $\omega\in S^*$. 
	By \eqref{eqt:def-bar-iota-S*-*}, one has 
	\begin{align*}
		\big(\overline{\iota_{S^*}^*}\big((\varphi\check{\ })^\rd\big(\iota_{T^*}(f)\big)\big)\big)(\omega)
		&  = \big(\overline{\iota_{S^*}^*}\big(\iota_{(S^\rd)^*}\big((\varphi\check{\ })^*(f)\big)\big)\big)(\omega)
		\ = \ \big(\iota_{S^*}^*\big(f\circ \varphi\check{\ }\big)\big)(\omega)\\
		& = (f\circ \varphi\check{\ })(\iota_{S^*}(\omega))
		\ = \ f(\varphi(\omega))\\
		& =  f\big(\iota_{T^*}^*(\omega)\big) 
		\ = \ \iota_{T^*}(f)(\omega). 
	\end{align*}
	This means  that $\overline{\iota_{S^*}^*}\circ (\varphi\check{\ })^\rd\circ \iota_{T^*} = \iota_{T^*}$. 
	As $\iota_{T^*}(T^*)$ is weak$^*$-dense in $T^\rd$ and $\overline{\iota_{S^*}^*}\circ (\varphi\check{\ })^\rd$ is weak$^*$-continuous, our claim is verified. 
	
	Now, Statement (2) follows from the above claim as well as the fact that the complete contraction $\overline{\iota_{S^*}^*}$ is bijective (see part (a)). 
	
	\noindent
	(2)$\Rightarrow$(4). 
	One may simply take $\Phi := \overline{\iota_{S^*}^*}^{-1}\circ \kappa_S$ (see part (a) above). 
	
	\noindent
	(4)$\Rightarrow$(2). 
	Consider $n \in \BN$ and $x\in M_n((S^\rd)^\rd)$ with $\|\overline{\iota_{S^*}^*}^{(n)}(x)\| \leq 1$. 
	By Corollary \ref{cor:dense-ball}, one can find a net $\{y_i\}_{i\in \KI}$ in $B_{M_n(S)}$ such that $\{\kappa_S^{(n)}(y_i)\}_{i\in \KI}$ weak$^*$-converges to $\overline{\iota_{S^*}^*}^{(n)}(x)$. 
	As $\Phi^{(n)}$ is contractive, one can find a weak$^*$-convergent subnet of $\{\Phi^{(n)}(y_i)\}_{i\in \KI}$ in $B_{M_n((S^\rd)^\rd)}$ (note that $(S^\rd)^\rd$ is a dual operator system). 
	Without loss of generality, we may assume that $\{\Phi^{(n)}(y_i)\}_{i\in \KI}$ weak$^*$-converges to some $z\in B_{M_n((S^\rd)^\rd)}$. 
	Since $\overline{\iota_{S^*}^*}$ is a weak$^*$-homeomorphism and $\kappa_S = \overline{\iota_{S^*}^*}\circ \Phi$, we see that $\{\kappa_S^{(n)}(y_i)\}_{i\in \KI}$ will weak$^*$-converge to $\overline{\iota_{S^*}^*}^{(n)}(z)$. 
	Now, the injectivity of $\overline{\iota_{S^*}^*}$ (see part (a)) tells us that $x = z\in B_{M_n((S^\rd)^\rd)}$. 
	Consequently, $\overline{\iota_{S^*}^*}^{(n)}$ is an isometry. 
	
	\smnoind
	(d) Notice that by Corollary \ref{cor:bidual-abs-mono}(b), if $S$ is matrix regular, then so is $S^{**}$. 
Thus, the statement concerning the matrix regularity of $S$ follows from the one concerning the matrix regularity of $S^{**}$. 

	As in  part (b) above, we have  
	$\big\|\overline{\iota_{S^*}^*}^{(\infty)}(w)\big\| = \|w\|$ ($w\in M_\infty((S^\rd)^\rd)_+$).
	Hence, if $S^{**}$ is matrix regular, then  
	 part (a) above as well as Lemma \ref{lem:matrix-reg}(c) and Lemma \ref{lem:abs-cont-to-mat-reg} imply that $\overline{\iota_{S^*}^*}$ is a complete isometry. 
	This, together with part (a), gives the required conclusion. 
\end{proof}


Part (d) above, together with Lemma \ref{lem:matrix-reg}(d), gives \cite[Theorem 3.17(b)]{Ng-dual-OS}, which states (via Lemma \ref{lem:inv-tau-S}) that $\overline{\iota_{S^*}^*}$ is a complete isometry when $S$ is unital or when $S$ is a $C^*$-algebra. 
It was conjectured in the paragraph following \cite[Theorem 3.17]{Ng-dual-OS} that the same is true when $S$ is ``approximately unital'' in a suitable sense. 
In the next section, we will give an explicit verification for it (again, via part (d) above). 
Another motivation for the next section comes from the work of Connes and van Suijlekom concerning tolerance relations.


\section{Approximately unital operator systems}


In \cite{CvS} and \cite{CvS2}, Connes and van Suijlekom associated an (not necessarily complete) operator system to every tolerance relation. 
In their study in \cite{CvS2}, they used the fact that such an operator system admits an approximate order unit that defines the matrix norm. 
The aim of this section is to consider duality of such operator systems. 
In fact, we will consider a slightly more general situation when an operator system contains a weakly matrix norm defining increasing net (see Theorem \ref{thm:unital-bidual}(c)). 
One good feature of this weaker alternative is that it is stable under completion (see Theorem \ref{thm:unital-bidual}(a)). 
On our way, we show that the bidual of a complete SMOS is a unital operator system if and only if this SMOS has a weakly matrix norm defining increasing net. 


Let $X$ be a SMOS.
If $a=\{a_i\}_{i\in \KI}$  is an increasing net in $X_+$, then we set, for each $n\in \BN$ and $x\in M_n(X)$,  
\begin{equation}\label{eqt:def-norm-e}
	\|x\|_a:= \inf \left\{t \in \RP: \begin{pmatrix}
		t (I_n\otimes a_i)&x\\
		x^*& t (I_n\otimes a_i)
	\end{pmatrix}\in M_{2n}(X)_+, \exists i\in \KI \right\},
\end{equation}
and 
\begin{align*}
	\|x\|_a^w := \inf \Bigg\{t \in \RP: \lim_i\, & F\begin{pmatrix}
		t (I_n\otimes a_i)&x\\
		x^*& t (I_n\otimes a_i)
	\end{pmatrix}\in \RP\cup \{+\infty\},\\
& \qquad \qquad \qquad \qquad \quad \text{ for every }F\in M_{2n}(X)^*_+\Bigg\}.
\end{align*}
Here, we use the convention that $\inf \emptyset := +\infty$.  
Notice that as the net $$\bigg\{F\begin{pmatrix}
	t (I_n\otimes a_i)&x\\
	x^*& t (I_n\otimes a_i)
\end{pmatrix}\bigg\}_{i\in\KI}$$ is increasing, its limit always exists in $(-\infty,+\infty]$. 


As in the above, we regard $x\in M_n(X)$ as an element in $M_{n+k}(X)$ when $k\in\BN$.
By rearranging the rows and the columns, we identify 
$\begin{pmatrix}
	t (I_{n+k}\otimes a_i)&x\\
	x^*& t (I_{n+k}\otimes a_i)
\end{pmatrix}$
with 
$\begin{pmatrix}
	t (I_{n}\otimes a_i)&x&0\\
	x^*& t (I_{n}\otimes a_i)&0\\
	0&0&t (I_{2k}\otimes a_i)
\end{pmatrix}$. 
This tells us  that the value $\|x\|_a$ is unchanged whether $x$ is regarded as an element in $M_n(X)$ or an element in $M_{n+k}(X)$.

On the other hand, using Theorem \ref{thm:bdd-pos-CCP}(c), we also see that the quantity $\|x\|_a^w$ is unchanged when we regard $x\in M_n(X)$ as an element in $M_{n+k}(X)$.

Moreover, if $\{a_i\}_{i\in \KI}$  is an approximate order unit of $X_\sa$  (see Definition \ref{defn:loc-decomp}(a)), then both $\|x\|_a$ and $\|x\|_a^w$ are finite (see the proof of \cite[Lemma 2.9]{CvS2}).


\begin{defn}\label{defn:weakly-matrix}
	Let $X$ be a SMOS with its matrix norm $\|\cdot\|$.  
	Let $a=\{a_i\}_{i\in \KI}$ be an increasing net in $X_+$. 
	
	\smnoind
	(a) $\{a_i\}_{i\in \KI}$ is said to be \emph{matrix norm defining}, or said to \emph{define the matrix norm} of $X$,  if $\|x\| = \|x\|_a$  for every  $x\in M_\infty(X)$. 
	In the case when $a=\{a_i\}_{i\in \KI}$ is the constant net, then we call it a \emph{matrix norm defining order unit}. 
	
	\smnoind
(b) $\{a_i\}_{i\in \KI}$ is said to be \emph{weakly matrix norm defining}, or said to \emph{weakly defines the matrix norm} of $X$, if $\|x \| = \|x\|_a^w$ for every  $x\in M_\infty(X)$.

\smnoind
(c) $X$ is said to be \emph{approximately unital} if $X$ admits a weakly matrix norm defining increasing net. 
\end{defn}


By \cite[Proposition 1.3.5]{OPS}, if an increasing net is matrix norm defining, then it is an approximate order unit of $X_\sa$. However, we will see in Example \ref{eg:weak-norm-def} below, an explicit example of a weakly matrix norm defining increasing net that is not an approximate order unit (and hence cannot be matrix norm defining).

Nevertheless, if we have a weakly matrix norm defining increasing net, there exists an approximate unit in the cone of the completion $\tilde X_\sa$ (but may not be the original increasing net) that weakly defines the matrix norm of $\tilde X$ (see Theorem \ref{thm:unital-bidual}(a) below). In order to
establish this, we need two lemmas. 


\begin{lem}\label{lem:approx-unit}
	Let $X$ be a SMOS and $a=\{a_i\}_{i\in \KI}$ be an increasing net in $X_+$. 
	
	\smnoind
	(a) $\|a_i\|_a\leq 1$ ($i\in \KI$). 
	
	\smnoind
	(b) $\|x\|_a^w \leq \|x\|_a$ for each $x\in M_\infty(X)$.

	\smnoind
	(c) For $u,v\in M_\infty(X)_\sa$ with $-v \leq u\leq v$, one has $\|u\|_a\leq \|v\|_a$. 
	
	\smnoind
	(d) If $\|\cdot\|_a$ is a norm, then $M_\infty(X)_+\cap - M_\infty(X)_+ = \{0\}$. 
	
	\smnoind
	(e) If $\{a_i\}_{i\in \KI}$ is matrix norm defining (respectively, weakly matrix norm defining), then any subnet of $\{a_i\}_{i\in \KI}$ is matrix norm defining (respectively, weakly matrix norm defining). 
\end{lem}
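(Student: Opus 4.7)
The plan is to prove the five parts separately, relying throughout on the defining SMOS axiom $\alpha^*x\alpha+\beta^*y\beta\in M_\infty(X)_+$ and on the following monotonicity remark. Because $\{a_i\}$ is increasing, for $i\leq j$ the difference
\begin{equation*}
\bigl(\begin{smallmatrix}tI_n\otimes a_j & x\\ x^* & tI_n\otimes a_j\end{smallmatrix}\bigr) - \bigl(\begin{smallmatrix}tI_n\otimes a_i & x\\ x^* & tI_n\otimes a_i\end{smallmatrix}\bigr) = \mathrm{diag}\bigl(tI_n\otimes(a_j-a_i),\ tI_n\otimes(a_j-a_i)\bigr)
\end{equation*}
lies in $M_{2n}(X)_+$, so the net of these $2n\times 2n$ matrices increases with $i$ in the order induced by $M_{2n}(X)_+$, and for each $F\in M_{2n}(X)^*_+$ the scalar net $\{F(\cdot)\}_i$ is increasing in $i$.

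For (a), I would take $j=i$ and observe that $\bigl(\begin{smallmatrix}a_i & a_i\\ a_i & a_i\end{smallmatrix}\bigr)=\alpha^* a_i\alpha\in M_2(X)_+$ with $\alpha=(1,1)\in M_{1,2}\subseteq M_2$, providing the required witness at $t=1$. For (b), whenever $t>\|x\|_a$ a positive witness $M\in M_{2n}(X)_+$ exists at some index $i_0$, giving $F(M)\geq 0$ for every $F\in M_{2n}(X)^*_+$; monotonicity then pushes this to all $j\geq i_0$, so the increasing limit lies in $\RP\cup\{+\infty\}$ and hence $\|x\|_a^w\leq t$.

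The main obstacle is (c). Set $r=tI_n\otimes a_i$ and introduce the Hadamard-type matrix $W=\tfrac{1}{\sqrt 2}\bigl(\begin{smallmatrix}1 & 1\\ 1 & -1\end{smallmatrix}\bigr)\otimes I_n\in M_{2n}$, which is self-adjoint and unitary. A direct calculation gives the identity
\begin{equation*}
W\bigl(\begin{smallmatrix}r & y\\ y & r\end{smallmatrix}\bigr)W=\mathrm{diag}(r+y,\,r-y)\qquad(r,y\in M_n(X)_\sa).
\end{equation*}
Starting from $t>\|v\|_a$ with a witness $\bigl(\begin{smallmatrix}r & v\\ v & r\end{smallmatrix}\bigr)\in M_{2n}(X)_+$, conjugating by $W$ via the SMOS axiom yields $\mathrm{diag}(r+v,r-v)\in M_{2n}(X)_+$, and extracting the diagonal blocks (via the axiom applied to the coordinate projections) gives $r\pm v\in M_n(X)_+$. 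Combining with $v\pm u\in M_n(X)_+$ from $-v\leq u\leq v$, both $r+u=(r-v)+(v+u)$ and $r-u=(r-v)+(v-u)$ are sums of positives, so $\mathrm{diag}(r+u,r-u)\in M_{2n}(X)_+$; re-conjugating by $W$ returns $\bigl(\begin{smallmatrix}r & u\\ u & r\end{smallmatrix}\bigr)\in M_{2n}(X)_+$, giving $\|u\|_a\leq t$ and hence $\|u\|_a\leq\|v\|_a$. Part (d) is then immediate from (c) with $v=0$: any $w\in M_\infty(X)_+\cap-M_\infty(X)_+$ satisfies $-0\leq w\leq 0$, so $\|w\|_a\leq\|0\|_a=0$, forcing $w=0$ whenever $\|\cdot\|_a$ is a norm.

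For (e), the weakly matrix norm defining case is immediate from the monotonicity remark, since the limit of an increasing net of reals agrees with the limit over any cofinal subnet, so $\|x\|_{(a_{i_k})}^w=\|x\|_a^w$. For the strong version, the inequality $\|\cdot\|_{(a_{i_k})}\geq\|\cdot\|_a$ is automatic because any subnet witness is in particular a full-net witness, while the reverse inequality uses that a full-net witness $M_{i_0}\in M_{2n}(X)_+$ can be lifted to an arbitrary cofinal subnet index $i_k\geq i_0$ by adding the positive matrix $\mathrm{diag}\bigl(tI_n\otimes(a_{i_k}-a_{i_0}),\ tI_n\otimes(a_{i_k}-a_{i_0})\bigr)\in M_{2n}(X)_+$. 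Apart from the diagonalisation identity used in (c), every step reduces mechanically to the SMOS axiom and the opening monotonicity remark.
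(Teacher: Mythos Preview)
Your proof is correct and follows essentially the same approach as the paper. The only notable difference is in part (c): the paper invokes \cite[Proposition 1.3.5]{OPS}, which states that for $w\in M_n(X)_\sa$ one has $\bigl(\begin{smallmatrix}tI_n\otimes a_i & w\\ w & tI_n\otimes a_i\end{smallmatrix}\bigr)\in M_{2n}(X)_+$ if and only if $-t(I_n\otimes a_i)\leq w\leq t(I_n\otimes a_i)$, and then the monotonicity $\|u\|_a\leq\|v\|_a$ is immediate; your Hadamard-unitary diagonalisation is precisely an inline proof of that equivalence, so the arguments are the same in content, yours being self-contained while the paper's is shorter by citation.
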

\begin{proof}
	For $n\in \BN$, $x\in M_n(X)$,  $s\in \RP$ and  
	$i\in \KI$, we denote $$x^{s,i}:= \begin{pmatrix}
		s (I_n\otimes a_{i})&x\\
		x^*& s (I_n\otimes a_{i})
	\end{pmatrix}.$$ 
	
	\smnoind
	(a) This part follows from the fact that $\begin{pmatrix}
		a_i& a_i\\
		a_i& a_i
	\end{pmatrix} = \begin{pmatrix}
		1\\
		1
	\end{pmatrix} a_i (1, 1)\in M_2(X)_+$.

	\smnoind
	(b) If $x^{s,i_0}\in M_{2n}(X)_+$ for some $i_0\in \KI$, then $F(x^{s,i_0})\in \RP$ ($F\in M_{2n}(X)^*_+$). 
	Since $\{x^{s,i}\}_{i\in \KI}$ is an increasing net  in $M_{2n}(X)_\sa$, we know that $\|x\|_a^w\leq \|x\|_a$. 
	
	\smnoind
	(c) This follows from the fact that for every $w\in M_n(X)_\sa$ and $t\in \RP$, one has  $w^{t,i}\in M_{2n}(X)_+$ if and only if $- t (I_n\otimes a_i) \leq w \leq t (I_n\otimes a_i)$ (see \cite[Proposition 1.3.5]{OPS}). 
	
	\smnoind
	(d) Suppose that $u\in M_\infty(X)_+\cap - M_\infty(X)_+$. 
	Then $0\leq u\leq 0$ and we know from part (c) that $\|u\|_a \leq \|0\|_a = 0$, which forces $u$ to be zero. 
	
	\smnoind
	(e) Suppose that $\{a_{i_j}\}_{j\in \KJ}$ is a subnet of $\{a_i\}_{i\in \KI}$ and $x\in M_n(X)$. 
	Set 
	$$\hat{x}^{t,j}:=\begin{pmatrix}
		t (I_n\otimes a_{i_j})&x\\
		x^*& t (I_n\otimes a_{i_j})
	\end{pmatrix}\qquad (j\in \KJ).$$ 
	The statement for the case when $\{a_{i}\}_{i\in \KI}$ is matrix norm defining follows from the fact that for each $t\in \RP$, the condition that $x^{t,i}\in M_{2n}(X)_+$ for some $i\in \KI$ is the same as  $\hat{x}^{t,j}\in M_{2n}(X)_+$ for some $j\in \KJ$. 
	
	Let now consider the case when $\{a_{i}\}_{i\in \KI}$ is weakly matrix norm defining. 
	For each $F\in M_{2n}(X)^*_+$, as
	$\big\{F(\hat{x}^{t,j})\big\}_{j\in \KJ}$
	is a subnet of the increasing net 
	$\big\{F(x^{t,i})\big\}_{i\in \KI}$, 
	they converge to the same limit. 
	Hence, 
	\begin{align*}
		\|x\| & = \inf \left\{t \in \RP: {\lim}_i  F(x^{t,i})\in \RP\cup \{+\infty\}, \text{ for every }F\in M_{2n}(X)^*_+\right\}\\
		& = \inf \big\{t \in \RP: {\lim}_j  F(\hat{x}^{t,j})\in \RP\cup \{+\infty\}, \text{ for every }F\in M_{2n}(X)^*_+\big\}, 
	\end{align*}
	as required. 
\end{proof}


Theorem \ref{thm:bdd-pos-CCP} allow us to extend $\|\cdot\|_a^w$ to a positive function $\|\cdot\|_a^{w^*}$ on $M_\infty(X^{**})$. 
More precisely, for $n\in \BN$ and $y\in M_n(X^{**})$,  we set 
\begin{eqnarray*}\|y\|_a^{w^*} :=\inf \Bigg\{t \in \RP: \lim_i  \left(\begin{pmatrix}
	t (I_n\otimes a_i)& y\\
	y^*& t (I_n\otimes a_i)
\end{pmatrix}, \Theta^X_F\right)\in \RP\cup \{+\infty\},\\
 \text{ for every }F\in M_{2n}(X)^*_+\Bigg\},
\end{eqnarray*}
where $\left(\begin{pmatrix}
	t (I_n\otimes a_i)& y\\
	y^*& t (I_n\otimes a_i)
\end{pmatrix}, \Theta^X_F\right)$ is the pairing as in  \eqref{eqt:def-inf-pair} and $\Theta^X_F$ is as in Theorem \ref{thm:bdd-pos-CCP}. 
By \eqref{eqt:F-Theta-X-F}, we have 
$$\|x\|_a^{w^*} = \|x\|_a^{w}\qquad (x\in M_\infty(X)).$$


\begin{lem}\label{lem:approx-unit-SMOS}
	Let $X$ be a SMOS, and $a=\{a_i\}_{i\in \KI}$ be an increasing net in $X_+$ that weak$^*$-converges to $p\in X^{**}_+$.
	Denote by $\|\cdot\|_p$ the function on $M_\infty(X^{**})$ as in \eqref{eqt:def-norm-e} for the net $\{p\}$, and by  $\|\cdot\|$ the bidual matrix norm on $X^{**}$. 
	
	\smnoind
	(a) $\|\cdot\|_a^{w^*} = \|\cdot\|_p$ on $M_\infty(X^{**})$.
	
	\smnoind
	(b) If $\|x\|_p \leq \|x\|$ ($x\in M_\infty(X)$), then $\|y \|_p \leq \|y\|$ ($y\in M_\infty(X^{**})$). 
	
	\smnoind
	(c) If $\|p\| \leq 1$ and the norm on $M_\infty(X^{**})_\sa$ is absolutely monotone (see Definition \ref{defn:Riesz}), then $\|y\| \leq \|y\|_p$ for any $y\in M_\infty(X^{**})$. 
\end{lem}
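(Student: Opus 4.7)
The plan is to treat the three parts in order, using Theorem \ref{thm:bidual-MOS} throughout to identify $M_{n}(X^{**})$ (ordered) with $M_{n}(X)^{**}$ (order-isomorphically and weak$^*$-homeomorphically via $\Lambda$).

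For part~(a), I would first note that by Theorem \ref{thm:bidual-MOS}(c), the pairing $z \mapsto (z, \Theta^X_F)$ is weak$^*$-continuous on $M_{2n}(X^{**})$. Since $a_i \to p$ in $\sigma(X^{**},X^*)$, the matrix $\begin{pmatrix} t(I_n \otimes a_i) & y \\ y^* & t(I_n \otimes a_i) \end{pmatrix}$ will weak$^*$-converge to $\begin{pmatrix} t(I_n \otimes p) & y \\ y^* & t(I_n \otimes p) \end{pmatrix}$ in $M_{2n}(X^{**})$, so the limit defining $\|y\|_a^{w^*}$ exists as a finite real number equal to the pairing at $p$. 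The requirement that this limit lie in $\RP \cup \{+\infty\}$ for every $F \in M_{2n}(X)^*_+$ then reduces to nonnegativity of $\Lambda(\text{limit matrix})(F)$ for all such $F$, and by the defining property of $M_{2n}(X)^{**}_+$ together with the order isomorphism $\Lambda$, this is equivalent to the limit matrix lying in $M_{2n}(X^{**})_+$, i.e.\ $\|y\|_p \leq t$. Taking infima then matches $\|y\|_a^{w^*}$ with $\|y\|_p$.

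For part~(b), given $y \in M_n(X^{**})$ with $\|y\| < 1$, I would use Corollary \ref{cor:dense-ball} to select a net $\{x_\alpha\} \subseteq B_{M_n(X)}$ weak$^*$-convergent to $y$. The hypothesis forces $\|x_\alpha\|_p \leq \|x_\alpha\| \leq 1$, so each $\begin{pmatrix} I_n \otimes p & x_\alpha \\ x_\alpha^* & I_n \otimes p \end{pmatrix}$ lies in $M_{2n}(X^{**})_+$. The cone $M_{2n}(X^{**})_+$ is weak$^*$-closed, being the image under the weak$^*$-homeomorphism $\Lambda$ of the weak$^*$-closed bidual cone $M_{2n}(X)^{**}_+$; passing to the limit then yields $\begin{pmatrix} I_n \otimes p & y \\ y^* & I_n \otimes p \end{pmatrix} \in M_{2n}(X^{**})_+$, that is, $\|y\|_p \leq 1$. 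Homogeneity delivers the general inequality.

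For part~(c), I would reduce to the self-adjoint case via $\tilde y := \begin{pmatrix} 0 & y \\ y^* & 0 \end{pmatrix} \in M_{2n}(X^{**})_\sa$: Relation \eqref{eqt:norm-sa} gives $\|\tilde y\| = \|y\|$, while a row-and-column permutation of the $4n \times 4n$ defining matrix of $\|\tilde y\|_p$ will exhibit it as block-diagonal with two copies of the $2n \times 2n$ matrix defining $\|y\|_p$, yielding $\|\tilde y\|_p = \|y\|_p$. For any $t > \|\tilde y\|_p$, the equivalence invoked in the proof of Lemma \ref{lem:approx-unit}(c) (namely \cite[Proposition~1.3.5]{OPS}) applied in $X^{**}$ gives $-t(I_{2n} \otimes p) \leq \tilde y \leq t(I_{2n} \otimes p)$. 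Since the diagonal embedding $X^{**} \to M_{2n}(X^{**})$ is a complete contraction, $\|I_{2n} \otimes p\| \leq \|p\| \leq 1$, and absolute monotonicity of the norm on $M_{2n}(X^{**})_\sa$ then gives $\|\tilde y\| \leq t$; taking infimum yields $\|y\| \leq \|y\|_p$. The main obstacle is the bookkeeping in part~(a): one must carefully argue that the finiteness of the weak$^*$-continuous pairing collapses the ``$\RP \cup \{+\infty\}$'' condition to plain nonnegativity, and that $\Lambda$ cleanly identifies the bidual order with the matrix cone on $M_{2n}(X^{**})$. Once this identification is secured, (b) and (c) follow by standard weak$^*$-closure and absolute-monotonicity manipulations.
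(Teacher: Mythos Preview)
Your proposal is correct and follows essentially the same route as the paper. Two small remarks: first, Theorem \ref{thm:bidual-MOS} is a single-statement result with no part~(c), so your reference should simply be to that theorem (or to Theorem \ref{thm:bdd-pos-CCP}(a), which is what the paper actually invokes to pass between positivity of the limit matrix and nonnegativity of all the pairings). Second, in part~(c) the paper is slightly more direct: rather than computing $\|\tilde y\|_p$ via a $4n\times 4n$ permutation, it observes that $\|y\|_p < s$ forces both $\begin{pmatrix} s(I_n\otimes p) & y \\ y^* & s(I_n\otimes p)\end{pmatrix}$ and $\begin{pmatrix} s(I_n\otimes p) & -y \\ -y^* & s(I_n\otimes p)\end{pmatrix}$ to be positive, which immediately yields the sandwich $-s(I_{2n}\otimes p) \leq \begin{pmatrix} 0 & y \\ y^* & 0\end{pmatrix} \leq s(I_{2n}\otimes p)$; your permutation argument reaches the same inequality and is equally valid.
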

\begin{proof}
	Fix $n\in \BN$, $y\in M_n(X^{**})$ and $s\in \RP$. 
	We set $$y^{s,i}:= \begin{pmatrix}
		s (I_n\otimes a_i)& y\\
		y^*& s (I_n\otimes a_i)
	\end{pmatrix}\in M_{2n}(X^{**})_\sa\quad
	(i\in \KI).$$
	
	\smnoind
	(a) If $F\in M_{2n}(X)^*_+$, then \eqref{eqt:def-inf-pair} produces
	\begin{equation}\label{eqt:rel-weak-strong}
		{\lim}_i  (y^{s,i}, \Theta^X_F) 
		= \left(\begin{pmatrix}
			s (I_n\otimes p)& y\\
			y^*& s (I_n\otimes p) 
		\end{pmatrix}, \Theta^X_F \right).
	\end{equation}
	Therefore, if 
	\begin{equation}\label{eqt:x-e-gep-0}
		\begin{pmatrix}
			s (I_n\otimes p)& y\\
			y^* & s (I_n\otimes p)
		\end{pmatrix}\in M_{2n}(X^{**})_+, 
	\end{equation}
	then ${\lim}_i  (y^{s,i}, \Theta^X_F)\in \RP$ for every $F\in M_{2n}(X)^*_+$ (as $\Theta^X_F \geq 0$, by Theorem \ref{thm:bdd-pos-CCP}(a)).
	Conversely, if ${\lim}_i  (y^{s,i}, \Theta^X_F)\in \RP\cup \{+\infty\}$ for each $F\in M_{2n}(X)^*_+$, then Relation \eqref{eqt:x-e-gep-0} holds, 
	because of Relation \eqref{eqt:rel-weak-strong} as well as Theorem \ref{thm:bdd-pos-CCP}(a). 
	These show that $\|y\|_p = \|y\|_a^{w^*}$.
	
	\smnoind
	(b) Suppose that $n\in \BN$ and $y\in B_{M_n(X^{**})}$. 
	Using Corollary \ref{cor:dense-ball}, one can find a net $\{y_j\}_{j\in \KJ}$ in $M_n(X)$ with $\|y_j\| < 1$ ($j\in \KJ$) that $\sigma(M_n(X^{**}), M_n(X^*))$-converges to $y$. 
	For $j\in \KJ$, we learn from $\|y_j\|_p = \|y_j\| < 1$ that  
	$$\begin{pmatrix}
		I_n\otimes p& y_j\\
		y_j^*& I_n\otimes p
	\end{pmatrix}\in M_{2n}(X^{**})_+.$$
	The $\sigma(M_{2n}(X^{**}), M_{2n}(X^*))$-closedness of $M_{2n}(X^{**})_+$  implies that $\|y\|_p \leq 1$. 
	
	\smnoind
	(c) Consider $y\in M_\infty(X^{**})$ with $\|y\|_p < s$. 
	Then Relation \eqref{eqt:x-e-gep-0} holds, which gives 
	$\begin{pmatrix}
		s (I_n\otimes p)&- y\\
		-y^*& s (I_n\otimes p)
	\end{pmatrix}\in M_{2n}(X^{**})_+$. 
	Hence, we conclude from 
	\begin{align*}
		\begin{pmatrix}
			-s (I_n\otimes p)& 0 \\
			0 & -s (I_n\otimes p)
		\end{pmatrix}
		& \leq \begin{pmatrix}
			0 & y\\
			y^*& 0
		\end{pmatrix}
		\leq \begin{pmatrix}
			s (I_n\otimes p)& 0 \\
			0 & s (I_n\otimes p)
		\end{pmatrix},
	\end{align*}
	and the absolutely monotone assumption that $\|y\| 
	\leq s$ (see Relation \eqref{eqt:norm-sa}). 
	This gives $\|y\|\leq \|y\|_p$.
\end{proof}

\begin{cor}\label{cor:C-st-alg-approx-unit}
Let $A$ be a $C^*$-algebra and $\{a_i\}_{i\in \KI}$ is an approximate unit of $A$. 
Then $\{a_i\}_{i\in \KI}$ is weakly matrix norm defining. 
\end{cor}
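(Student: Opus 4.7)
My plan is to apply Lemma \ref{lem:approx-unit-SMOS}(a) to identify $\|\cdot\|_a^w$ on $M_\infty(A)$ with $\|\cdot\|_p$ for a suitable $p \in A^{**}_+$, and then to evaluate $\|\cdot\|_p$ using the $C^*$-algebra structure of the bidual $A^{**}$.

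The first step is to verify that the approximate unit $a = \{a_i\}_{i \in \KI}$ is $\sigma(A^{**},A^*)$-convergent to the identity $1_{A^{**}}$ of the von Neumann algebra $A^{**}$. Since the net is increasing and norm-bounded, it admits a weak$^*$-limit $p \in A^{**}_+$. For each $b \in A$ and $f \in A^*$, the defining property $\|a_i b - b\| \to 0$ of the approximate unit yields $f(a_i b) \to f(b)$; combined with the weak$^*$-continuity of right multiplication by $b$ on $A^{**}$, this forces $p b = b$ for every $b \in A$. The weak$^*$-density of $A$ in $A^{**}$ together with the weak$^*$-continuity of left multiplication by $p$ on the von Neumann algebra $A^{**}$ then gives $p = 1_{A^{**}}$.

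With this in hand, Lemma \ref{lem:approx-unit-SMOS}(a) (together with the coincidence $\|x\|_a^{w^*}=\|x\|_a^w$ on $M_\infty(A)$) yields $\|x\|_a^w = \|x\|_p$ for every $x \in M_n(A) \subseteq M_n(A^{**})$. Since $p = 1_{A^{**}}$, the element $I_n \otimes p$ is the identity of $M_n(A^{**})$, so
$$\|x\|_p = \inf\left\{ t \in \RP : \begin{pmatrix} t\, I_n & x \\ x^* & t\, I_n \end{pmatrix} \in M_{2n}(A^{**})_+ \right\}.$$
The standard $2\times 2$-block positivity criterion in the unital $C^*$-algebra $M_{2n}(A^{**})$ asserts that $\bigl(\begin{smallmatrix} tI & y \\ y^* & tI \end{smallmatrix}\bigr) \geq 0$ if and only if $\|y\| \leq t$, giving $\|x\|_p = \|x\|_{M_n(A^{**})}$. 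The isometric inclusion $M_n(A) \hookrightarrow M_n(A^{**})$ then yields $\|x\|_p = \|x\|$, as desired.

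The main obstacle is essentially the identification $p = 1_{A^{**}}$, which is standard folklore for $C^*$-algebras; the rest of the argument is a direct application of the preceding Lemma and the block-positivity characterization of norms in a unital $C^*$-algebra. Observe in particular that we do not need parts (b) or (c) of Lemma \ref{lem:approx-unit-SMOS} here, because the identity of $A^{**}$ produces an exact equality $\|x\|_p=\|x\|$ on $M_\infty(A)$ directly.
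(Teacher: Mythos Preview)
Your proof is correct and follows essentially the same approach as the paper: you identify the weak$^*$-limit of the approximate unit with the identity of $A^{**}$, invoke Lemma \ref{lem:approx-unit-SMOS}(a) to obtain $\|\cdot\|_a^{w}=\|\cdot\|_{1_{A^{**}}}$ on $M_\infty(A)$, and then recognize the latter as the matrix norm. The paper's argument is just a terser version of yours, taking the weak$^*$-convergence to $1_{A^{**}}$ and the equality $\|\cdot\|_{1_{A^{**}}}=\|\cdot\|$ as standard facts.
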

\begin{proof}
Since $\{a_i\}_{i\in\KI}$ weak$^*$-converges to the identity of $A^{**}$, we learn from  Lemma \ref{lem:approx-unit-SMOS}(a) that $\|\cdot\|_a^{w^*}$ is the matrix norm on the von Neumann algebra $A^{**}$. 
This gives the required conclusion. 
\end{proof}

\begin{eg}\label{eg:weak-norm-def}
	For each $n\in \BN$, we define $a_n \in \mathrm{c}_0(\BN)$ by 
	\[a_n(k):=\begin{cases}
		1 \qquad &k\leq n\\
		0 \qquad &k >n
	\end{cases} \qquad (k\in \BN).\]
	Then $\{a_n\}_{n\in\BN}$ is an approximate unit of $\mathrm{c}_0(\BN)$ and Corollary \ref{cor:C-st-alg-approx-unit} tells us that 
	 $\{a_n\}_{n\in\BN}$ is weakly matrix norm defining. 
However, it is clear that $\{a_n\}_{n\in\BN}$ is not an approximate order unit (e.g., consider the element $b\in c_0(\BN)$ with $b(k):=1/k$), and so, it cannot be 
matrix norm defining.
\end{eg}

\begin{rem}
Let $S$ be an operator system, and $a\in S_+$. 
If the constant net $\{a\}$ is matrix norm defining, then $I_n\otimes a$ is an Archimedean order unit for $M_n(S)$ ($n\in \BN$). 
Thus, $S$ is an unital operator system if and only if it admits a matrix norm defining order unit. 
\end{rem}

Recall that the matrix cone $M_\infty(\tilde X)_+$ of the complete $\tilde X$  of a SMOS $X$ is  the norm closure of $M_\infty(X)_+$ in $M_\infty(\tilde X)_\sa$.

\begin{thm}\label{thm:unital-bidual}
	Let $X$ be a SMOS and $\tilde X$ be its completion.

	\smnoind
	(a) Consider the following statements:
	\begin{enumerate}[label=\ \ U\arabic*).]
		\item $X$ is approximately unital. 
		
		\item $X^{**}$ is a unital operator system. 
		
		\item There exists an approximate order unit $\{u_i\}_{i\in \KI}$ of $\tilde X_\sa$, which is a subnet of $O_{\tilde X}^+:=\{u\in \tilde X_+: \|u\| < 1 \}$,
		that weakly defines the matrix norm on $\tilde X$ (hence, $\tilde X$ is approximately unital). 
	\end{enumerate}
	Then (U1) $\Rightarrow$ (U2) $\Rightarrow$ (U3). 
	Consequently, when $X$ is complete, these three statements are equivalent. 
	
	\smnoind
	(b) If $X$ is approximately unital, then $X$ is an operator system.  
	
	\smnoind
	(c) If $\{a_i\}_{i\in \KI}$ is a matrix norm defining approximate order unit, then $\{a_i\}_{i\in \KI}$ is weakly matrix norm defining. 
\end{thm}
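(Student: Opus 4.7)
For part (c), the bound $\|x\|_a^w\leq \|x\|_a=\|x\|$ is immediate from Lemma~\ref{lem:approx-unit}(b) together with matrix-norm-defining. For the reverse, Lemma~\ref{lem:approx-unit}(a) gives $\|a_i\|\leq 1$, so after passing to a weak$^*$-convergent subnet with limit $p\in X^{**}_+$ of norm $\leq 1$ (which remains matrix-norm-defining by Lemma~\ref{lem:approx-unit}(e)), Lemma~\ref{lem:approx-unit-SMOS}(a) identifies $\|x\|_a^w$ with $\|x\|_p$ on $M_\infty(X)$. Matrix-norm-defining combined with Lemma~\ref{lem:approx-unit}(c) makes the norm on $M_\infty(X)_\sa$ absolutely monotone; Corollary~\ref{cor:bidual-abs-mono}(a) transfers this to $M_\infty(X^{**})_\sa$, and Lemma~\ref{lem:approx-unit-SMOS}(c) then yields $\|x\|\leq \|x\|_p = \|x\|_a^w$.

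For (U1)$\Rightarrow$(U2) in part (a), the same subnet machinery applied to a weakly-matrix-norm-defining net $\{a_i\}$ (necessarily bounded, since $\|a_i\|=\|a_i\|_a^w\leq\|a_i\|_a\leq 1$) produces a weak$^*$-limit $p\in X^{**}_+$ with $\|x\|_p = \|x\|_a^w = \|x\|$ on $M_\infty(X)$. A direct order chase (if $\pm v\leq t(I_n\otimes p)$ and $-v\leq u\leq v$ then $\pm u\leq t(I_n\otimes p)$) shows $\|\cdot\|_p$ is monotone, so via the equality $\|\cdot\|_p=\|\cdot\|$ on $M_\infty(X)$ the norm on $M_\infty(X)_\sa$ becomes absolutely monotone; Corollary~\ref{cor:bidual-abs-mono}(a) together with parts (b) and (c) of Lemma~\ref{lem:approx-unit-SMOS} yields $\|y\|_p = \|y\|$ for every $y\in M_\infty(X^{**})$. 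This identifies $p$ as an Archimedean matrix order unit of $X^{**}$ whose order-unit matrix norm coincides with the bidual operator-space norm; properness of $M_n(X^{**})_+$ follows since $y,-y\geq 0$ forces $\|y\|_p = 0$, and the Choi-Effros characterization of unital operator systems then installs a unital operator system structure on $X^{**}$ with unit $p$.

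For (U2)$\Rightarrow$(U3) I view $\tilde X$ as an operator subsystem of the unital operator system $X^{**}$ via Corollary~\ref{cor:emb-into-bidual}(a), writing $e$ for its unit. The goal is to assemble an increasing net $\{u_i\}\subseteq O_{\tilde X}^+$ that weak$^*$-approximates $e$ and dominates each $y\in\tilde X_\sa$ eventually in the order of $\tilde X$. For each finite $F\subseteq\tilde X_\sa$ and each $\epsilon>0$, the unit inequality $\pm y\leq\|y\|e$ in $X^{**}$ gives strict positivity $(\|y\|+\epsilon)(e/(1+\eta))\pm y\geq \delta e$ for suitable $\eta,\delta>0$, and one lifts this to $u_{F,\epsilon}\in O_{\tilde X}^+$ using the weak$^*$-density of $B_{\tilde X_+}$ in $B_{X^{**}_+}$ (Corollary~\ref{cor:emb-into-bidual}(b)) together with the quantitative gap $\delta e$; the selected family is then stitched into a monotone net via a Zorn-style argument on maximal monotone chains in $O_{\tilde X}^+$. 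The weakly-matrix-norm-defining property is then automatic: for any net $u_i\to e$ weak$^*$, the block matrix $z^{t,i}$ converges weak$^*$ to $z^{t,e}$ in $M_{2n}(X^{**})$, and since every $F\in M_{2n}(\tilde X)^*_+$ extends to a weak$^*$-continuous functional on $M_{2n}(X^{**})$, one has $\lim_i F(z^{t,i})=F(z^{t,e})$, with $z^{t,e}\in M_{2n}(X^{**})_+$ precisely when $t\geq\|z\|_e=\|z\|$. Part (b) is then a formal consequence: (U1)$\Rightarrow$(U2) makes $X^{**}$ a unital operator system, and Corollary~\ref{cor:emb-into-bidual}(a) presents $X$ as a subsystem, hence an operator system.

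The main obstacle is the lifting step in (U2)$\Rightarrow$(U3): weak$^*$-density of $B_{\tilde X_+}$ in $B_{X^{**}_+}$ does not by itself produce $u\in O_{\tilde X}^+$ witnessing the strict order inequalities $(\|y\|+\epsilon)u\pm y\in X^{**}_+$, so one has to exploit the $\delta e$-gap, most likely via a Hahn-Banach separation applied to the candidate convex set $\{u\in O_{\tilde X}^+:(\|y\|+\epsilon)u\pm y\in X^{**}_+\,\forall y\in F\}$ or a Kaplansky-density-style refinement of Corollary~\ref{cor:emb-into-bidual}(b). Arranging the selected elements into a genuinely monotone net within $O_{\tilde X}^+$—which, being norm-bounded by $1$, is not itself upward directed—adds a second technical layer I expect to handle by Zornification on monotone chains.
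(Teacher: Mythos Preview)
Your arguments for part~(c), for (U1)$\Rightarrow$(U2), and for part~(b) are correct and essentially coincide with the paper's proof; the only cosmetic difference is that in~(c) you extract absolute monotonicity directly from Lemma~\ref{lem:approx-unit}(c) and the hypothesis $\|\cdot\|=\|\cdot\|_a$, whereas the paper quotes \cite[Proposition~2.10]{CvS2} to obtain matrix regularity first. Your route is marginally cleaner there.

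The genuine gap is in (U2)$\Rightarrow$(U3). You propose a two-stage construction: first lift finitely many strict order inequalities from $X^{**}$ down to elements of $O_{\tilde X}^+$ using the weak$^*$-density of $B_{\tilde X}^+$ in $B_{X^{**}}^+$, then Zornify to obtain a monotone net. Both stages are incomplete as written, and you acknowledge this yourself. The lifting step is not a formality: weak$^*$-density gives approximants to $e$, but the condition $(\|y\|+\epsilon)u\pm y\in X^{**}_+$ is weak$^*$-\emph{closed}, not open, so the $\delta e$ gap does not automatically survive passage to an approximant without further input. The Zorn step is equally unclear: a maximal monotone chain in $O_{\tilde X}^+$ need not be an approximate order unit, and even if it is, its weak$^*$ limit need not be $e$ unless the chain is cofinal in $O_{\tilde X}^+$, which you have no mechanism to enforce.

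The paper avoids both difficulties by proving exactly the statement you discard in your parenthetical: under (U2), the set $O_{\tilde X}^+$ \emph{is} upward directed. This is the content of Proposition~\ref{prop:conv-to-unit}, and it is established not by lifting but by a duality chain from \cite{Ng69}: the order-unit norm on $X^{**}$ has an upward-directed open ball, hence the norm on $X^*$ is additive on $X^*_+$, hence $O_{\tilde X}$ and $O_{\tilde X}^+$ are upward directed. Once $O_{\tilde X}^+$ is directed it is automatically an approximate order unit (every $x\in O_{\tilde X}$ lies below some $y\in O_{\tilde X}^+$), any weak$^*$-cluster point of this net is sandwiched between $e$ and itself, and Lemma~\ref{lem:approx-unit-SMOS}(a) finishes as you indicate. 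The missing idea is this classical ordered-Banach-space duality, not a separation or Zorn argument.
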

\begin{proof}
	We denote by $\|\cdot\|$ the bidual matrix norm on $X^{**}$. 
	
	\smnoind
	(a) (U1)$\Rightarrow$(U2). 
	Suppose that $a=\{a_i\}_{i\in \BN}$ is a weakly matrix norm defining increasing net in $X_+$. 
	We learn from parts (a) and (b) of Lemma \ref{lem:approx-unit} that $\|a_i\| = \|a_i\|_a^w \leq 1$ ($i\in \KI$). 
	Hence, there is a subnet $\{a_{i_j}\}_{j\in \KI}$ of $\{a_i\}_{i\in \KI}$ that weak$^*$-converges to an element $p\in B_{X^{**}}^+$. 
	By replacing $\{a_i\}_{i\in \KI}$ with this subnet if necessary (see Lemma \ref{lem:approx-unit}(e)), we may assume that $\{a_i\}_{i\in \KI}$ weak$^*$-converges to $p$.
	It then follows from Lemma \ref{lem:approx-unit-SMOS}(a) and the weakly matrix norm defining assumption that 
	$$\|x\| = \|x\|_p \qquad (x\in M_\infty(X)).$$ 
	
	Therefore, Lemma \ref{lem:approx-unit-SMOS}(b) tells us that $\|y\|_p\leq \|y\|$ for each $y\in M_\infty(X^{**})$. 
	On the other hand, we learn from Lemma \ref{lem:approx-unit}(c) that $\|u\|_p\leq \|v\|_p$ for $u,v\in M_\infty(X)_\sa$ with $-v \leq u\leq v$. 
	Hence, the norm $\|\cdot\|$ on $M_\infty(X)_\sa$ is absolutely monotone, and Corollary \ref{cor:bidual-abs-mono}(a) tells us that the norm on $M_\infty(X^{**})_\sa$ is absolutely monotone. 
	Now, Lemma \ref{lem:approx-unit-SMOS}(c) gives $\|\cdot\| \leq  \|\cdot\|_p$ on $M_\infty(X^{**})$.
	
	The equality $\|\cdot\| = \|\cdot\|_p$ on $M_\infty(X^{**})$ ensures that $X^{**}$ is a unital operator system (note that $X^{**}$ is a MOS because of Lemma \ref{lem:approx-unit}(d)) with $I_n\otimes p$ being an Archimedean order unit of $M_n(X^{**})$ for every $n\in \BN$.

	\smnoind
	(U2)$\Rightarrow$(U3). 
	Put $Y:=\tilde X$. 
	Condition (U2) states that $Y^{**} = X^{**}$ is a unital operator system. 
	This gives $p\in Y^{**}_+$ satisfying $\|y\| = \|y\|_p$ ($y\in M_\infty(Y^{**})$).
	For $u\in Y^{**}_\sa$ and $t\in \RP$, it follows from \cite[Proposition 1.3.5]{OPS} that 
	\begin{equation}\label{eqt:order-unit}
		-tp\leq u\leq tp \quad \text{if and only if}\quad \begin{pmatrix}
			t p& u\\
			u & tp
		\end{pmatrix}\in M_{2}(Y^{**})_+.
	\end{equation}
	This, together with the equality $\|\cdot\| = \|\cdot\|_p$ on $Y^{**}_\sa$,  implies that the hypothesis of Proposition \ref{prop:conv-to-unit} is satisfied for the real ordered Banach space $Y_\sa$. 
	Consequently, there exists an approximate order unit $a=\{a_i\}_{i\in \KI}$, which is a subnet of $O_Y^+$ that weak$^*$-converges to $p$.
	By Lemma \ref{lem:approx-unit-SMOS}(a), we know that $\|\cdot\|_a^{w^*} = \|\cdot\|_p$ on $M_\infty(Y^{**})$. 
	Hence, we have $\|\cdot\|_a^w = \|\cdot\|$ on $M_\infty(Y)$, as required.
	
	\smnoind
	Finally, if $X$ is complete, then Statement (U3) implies Statement (U1).

	\smnoind
	(b) We know from Statement (U2) and Corollary \ref{cor:emb-into-bidual}(a) that $X$ is an operator system. 
	
	\smnoind
	(c) Lemma \ref{lem:approx-unit}(b) tells us that $\|x\|_a^w\leq \|x\|_a = \|x\|$ for  $x\in M_\infty(X)$. 
	Conversely, by \cite[Proposition 2.10]{CvS2}, one knows that $X$ is matrix regular. 
	In particular, the norm on $M_\infty(X)_\sa$ is absolutely monotone (see Lemma \ref{lem:matrix-reg}(b)), and so is the norm on $M_\infty(X^{**})_\sa$ (by Corollary \ref{cor:bidual-abs-mono}(a)). 
	Moreover, Lemma \ref{lem:approx-unit}(a) implies $\|a_i\| = \|a_i\|_a\leq 1$ ($i\in \KI$). 
	By passing to a subnet if necessary  (see Lemma \ref{lem:approx-unit}(e)), we may assume that $\{a_i\}_{i\in \KI}$ weak$^*$-converges to an element  $p\in B^+_{X^{**}}$. 
	Now, parts (a) and (c) of Lemma \ref{lem:approx-unit-SMOS} give $\|x\| \leq \|x\|_p = \|x\|_a^{w^*} = \|x\|_a^w$ ($x\in M_\infty(X)$). 
\end{proof}


In \cite{huang}, a description for those complete operator systems whose biduals being unital was given. 
Our description in part (a) above is more general, in the sense that we describe those complete SMOS whose biduals being unital operator systems, and our characterization is different from the one in \cite{huang}. 
On the other hand, no characterization of complete operator systems whose biduals being unital was found in either \cite{Karn} or \cite{K}.


Because of Theorem \ref{thm:unital-bidual}(c), the implication $(U1)\Rightarrow (U2)$ of Theorem \ref{thm:unital-bidual}(a) can be regarded as an extension of \cite[Proposition 2.10]{CvS2}. 


Since all unital operator systems and all $C^*$-algebras are approximately unital operator systems (see Corollary \ref{cor:C-st-alg-approx-unit}), our next theorem  generalizes both Theorem 3.17(b) and Corollary 3.18 of \cite{Ng-dual-OS}. 


\begin{prop}\label{prop:approx-unit}
	Let $S$ be an approximately unital operator system.
	
	\smnoind
	(a) The complete operator system $\tilde S$ is dualizable. 
	
	\smnoind
	(b) One has $\|\iota_{S^*}^{(\infty)}(f)\| \leq \|f\|_{M_\infty(S^*)} \leq 4 \|\iota_{S^*}^{(\infty)}(f)\|$ ($f\in M_\infty(S^*)$) and 
	$\|g\|_{M_\infty(S^*)} = \|\iota_{S^*}^{(\infty)}(g)\|$ ($g\in M_\infty(S^*)_+$).
	
	\smnoind
	(c) The map $\overline{\iota_{S^*}^*}: (S^\rd)^\rd \to S^{**}$ is a completely isometric dual operator system isomorphism. 
\end{prop}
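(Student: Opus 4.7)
The plan is to prove the three parts in the order (c), (b), (a), since (c) is a direct application of Theorem \ref{thm:comp-isom-tau}(d), while (b) is the main technical step and (a) then follows from (b) via Proposition \ref{prop:iota-comp-embed}.

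For part (c), I would appeal directly to Theorem \ref{thm:comp-isom-tau}(d). Its hypotheses require that $\tilde S$ has a generating cone and that $S^{**}$ is matrix regular. Since $S$ is approximately unital, Theorem \ref{thm:unital-bidual}(a) simultaneously produces an approximate order unit $\{u_i\}$ in $O_{\tilde S}^+$ that weakly defines the matrix norm, and asserts that $S^{**}$ is a unital operator system. The former yields $\tilde S_\sa = \tilde S_+ - \tilde S_+$ via decompositions of the form $x = (x + \lambda u_i) - \lambda u_i$, and the latter gives matrix regularity of $S^{**}$ by Lemma \ref{lem:matrix-reg}(d). Theorem \ref{thm:comp-isom-tau}(d) then delivers (c) at once.

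For part (b), the inequality $\|\iota_{S^*}^{(\infty)}(f)\| \leq \|f\|_{M_\infty(S^*)}$ is the complete contractivity of $\iota_{S^*}$. To prove the equality on positives, fix $g\in M_n(S^*)_+$ (viewed as a completely positive map $\tilde S \to M_n$) and let $g^{**}: S^{**}\to M_n$ be its weak$^*$-continuous bidual extension, which is again completely positive by Corollary \ref{cor:emb-into-bidual}(b). Since $S^{**}$ is a unital operator system with unit $p$ obtained as a weak$^*$-limit of $\{u_i\}$ (see the proof of the implication (U2)$\Rightarrow$(U3) in Theorem \ref{thm:unital-bidual}(a)), the standard identity $\|g^{**}\|_\cb = \|g^{**}(p)\|$ applies; combined with monotonicity of $\{g(u_i)\}$ and finite-dimensionality of $M_n$, one obtains $\|g\|_\cb = \|g^{**}(p)\| = \lim_i \|g(u_i)\| \leq \|\iota_{S^*}^{(n)}(g)\|$, with the reverse inequality being complete contractivity.

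The main work is establishing $\|f\|_{M_n(S^*)} \leq 4\|\iota_{S^*}^{(n)}(f)\|$ for arbitrary $f$. Given $x\in B_{M_m(\tilde S)}$, decompose $x = x_1 + ix_2$ with each $x_j$ self-adjoint and $\|x_j\| < 1$. For each such $x_j$, the weakly matrix-norm-defining property of $\{u_i\}$ together with Lemma \ref{lem:approx-unit-SMOS}(a) (applied with $p$ as the weak$^*$-limit) supplies some $t<1$ with
\[\begin{pmatrix} tI_m\otimes p & x_j \\ x_j & tI_m\otimes p \end{pmatrix} \geq 0 \quad \text{in } M_{2m}(S^{**}),\]
so $y_\pm^{(j)} := tI_m\otimes p \pm x_j$ lie in $2B_{M_m(S^{**})}^+$ and $x_j = \tfrac12(y_+^{(j)} - y_-^{(j)})$. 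By Corollary \ref{cor:emb-into-bidual}(b), each $y_\pm^{(j)}$ is a weak$^*$-limit of a net from $2B_{M_m(\tilde S)}^+$; since $(f^{**})^{(m)}$ is weak$^*$-continuous and the target $M_{mn}$ is finite-dimensional, this yields $\|(f^{**})^{(m)}(y_\pm^{(j)})\| \leq 2\|\iota_{S^*}^{(n)}(f)\|$. Assembling the four resulting bounds gives $\|f^{(m)}(x)\| \leq 4\|\iota_{S^*}^{(n)}(f)\|$, completing (b). Consequently $\iota_{S^*}$ is bounded below, hence a complete embedding (being also a complete contraction), and (a) follows from Proposition \ref{prop:iota-comp-embed}.

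The main obstacle is the weak$^*$-approximation step in part (b): the positive-part decomposition of $x_j$ lives in $M_m(S^{**})$, not $M_m(\tilde S)$, so the $\tilde S$-level norm bound must be reconstructed by approximating $y_\pm^{(j)}$ from within $M_m(\tilde S)$ and exploiting the weak$^*$-continuity of $f^{**}$ together with the collapse of weak$^*$ and norm topologies in the finite-dimensional range $M_{mn}$.
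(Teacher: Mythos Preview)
Your argument for part (c) is essentially the paper's: both invoke Theorem~\ref{thm:unital-bidual}(a) to get that $S^{**}$ is unital, then Lemma~\ref{lem:matrix-reg}(d) for matrix regularity, then Theorem~\ref{thm:comp-isom-tau}(d).

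For parts (a) and (b), however, your route differs genuinely from the paper's. The paper does not compute the factor-$4$ bound directly. Instead it observes (via Relations~\eqref{eqt:def-norm-r}, \eqref{eqt:norm-mu-F}, \eqref{eqt:iota=mu} and Corollary~\ref{cor:emb-into-bidual}(b)) that
\[
\big\|\iota_{S^*}^{(\infty)}(h)\big\| = \big\|\iota_{S^{***}}^{(\infty)}(h)\big\|
\quad\text{and}\quad
\|h\|_{M_\infty(S^*)} = \|h\|_{M_\infty(S^{***})}
\qquad (h\in M_\infty(S^*)),
\]
and then imports both the factor-$4$ inequality and the equality on positives from \cite[Theorem~3.17(a)]{Ng-dual-OS} applied to the \emph{unital} operator system $S^{**}$. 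Part (a) is handled similarly: $S^{**}$ unital implies $S^{**}$ dualizable by \cite[Theorem~3.17]{Ng-dual-OS}, hence $S^{***}$ is a quasi-operator system, hence so is its subsystem $S^*$, i.e.\ $\tilde S$ is dualizable. Your argument, by contrast, is self-contained: you extract the positive decomposition $x_j=\tfrac12(y_+^{(j)}-y_-^{(j)})$ directly inside $M_m(S^{**})$ using the unit $p$, then push the estimate back to $\tilde S$ by weak$^*$-approximating $y_\pm^{(j)}$ from $B_{M_m(\tilde S)}^+$ (Corollary~\ref{cor:emb-into-bidual}(b)) and using that the range $M_{mn}$ is finite-dimensional. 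This yields (b), and (a) then drops out of Proposition~\ref{prop:iota-comp-embed}. The paper's approach is shorter and more structural---it recognises that everything about $\iota_{S^*}$ can be read off $\iota_{S^{***}}$ once $S^{**}$ is unital---but it leans on the external reference for the actual constants. Your approach reproves the constant $4$ from scratch and makes transparent exactly where the unit of $S^{**}$ enters.
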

\begin{proof}
	(a) By Theorem \ref{thm:unital-bidual}(a), $S^{**}$ is a unital operator system.
	Hence, \cite[Theorem 3.17]{Ng-dual-OS} implies that $S^{**}$ is dualizable; i.e., $S^{***}$ is a quasi-operator system (see Definition \ref{defn:dual}). 
	From this, and  Corollary \ref{cor:emb-into-bidual}(a), we see that $\tilde S^* = S^{*}$ is a quasi-operator system. 
	Thus, $\tilde S$ is dualizable. 
	
	\smnoind
	(b) Consider $h\in M_\infty(S^*)$. 
	It follows from Proposition \ref{prop:reg}(a) that one may identify $\iota_{S^*}$ with $j_{S^*}$. 
	Thus, Relations \eqref{eqt:def-norm-r}, \eqref{eqt:norm-mu-F} and \eqref{eqt:iota=mu} imply that 
	$$\big\|\iota_{S^*}^{(\infty)}(h)\big\| = \big\|j_{S^*}^{(\infty)}(h)\big\| = \|h\|^\rn = \big\|\mu_{S^{***}}^{(\infty)}(h)\big\|= \big\|\iota_{S^{***}}^{(\infty)}(h)\big\|.$$
	Moreover, by Corollary \ref{cor:emb-into-bidual}(a), one has $\|h\|_{M_\infty(S^*)} = \|h\|_{M_\infty(S^{***})}$. 
	Therefore, the conclusion follows from \cite[Theorem 3.17(a)]{Ng-dual-OS} (when applied to the unital operator system $S^{**}$). 
	
	\smnoind
	(c) This follows from Theorem \ref{thm:unital-bidual}(a) and Theorem \ref{thm:comp-isom-tau}(d) (see also Lemma \ref{lem:matrix-reg}(d)).
\end{proof}


\begin{thm}\label{thm:dual-funct-approx-unit}
	The restriction of the dual functor to the category of approximately unital complete operator systems,  is injective on objects, full and faithful. 
\end{thm}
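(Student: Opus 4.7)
Faithfulness is immediate from Corollary~\ref{cor:dual-funct}(a), since an approximately unital complete operator system has a generating cone by Proposition~\ref{prop:approx-unit}(a). Injectivity on objects will follow formally from fullness plus faithfulness: given a dual operator system isomorphism $\alpha\colon S^\rd \to T^\rd$, fullness will produce $\varphi \in \Morc(T,S)$ and $\psi \in \Morc(S,T)$ with $\varphi^\rd = \alpha$ and $\psi^\rd = \alpha^{-1}$; contravariance then yields $(\psi\varphi)^\rd = \varphi^\rd\circ\psi^\rd = \id_{T^\rd} = (\id_T)^\rd$ and similarly $(\varphi\psi)^\rd = (\id_S)^\rd$, so faithfulness forces $\psi\varphi = \id_T$ and $\varphi\psi = \id_S$, making $\varphi,\psi$ mutually inverse completely positive complete contractions, i.e., $S$ and $T$ isomorphic via a completely isometric complete order isomorphism. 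The substantive task is therefore to prove fullness.

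For fullness, let $\alpha \in \Morc_w(T^\rd, S^\rd)$. My plan is to pass to biduals, use canonical identifications, and then descend. Form $\alpha^\rd \in \Morc_w\bigl((S^\rd)^\rd, (T^\rd)^\rd\bigr)$ via the dual functor, and using the completely isometric dual operator system isomorphisms supplied by Proposition~\ref{prop:approx-unit}(c), define
\[
\tilde\alpha \;:=\; \overline{\iota_{T^*}^*} \circ \alpha^\rd \circ \bigl(\overline{\iota_{S^*}^*}\bigr)^{-1}\colon S^{**}\to T^{**},
\]
a weak$^*$-continuous completely positive complete contraction. The key claim is $\tilde\alpha(\kappa_S(S)) \subseteq \kappa_T(T)$. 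Granted this, I set $\varphi := \kappa_T^{-1}\circ\tilde\alpha\circ\kappa_S$, which lies in $\Morc(S,T)$ because $\kappa_S$ and $\kappa_T$ are completely isometric complete order monomorphisms by Corollary~\ref{cor:emb-into-bidual}(a); the identity $\varphi^\rd = \alpha$ will then follow by verifying $\iota_{S^*}\circ\varphi^* = \alpha\circ\iota_{T^*}$ on $T^*$, then invoking weak$^*$-density of $\iota_{T^*}(T^*)$ in $T^\rd$ together with the weak$^*$-continuity of $\varphi^\rd$ and $\alpha$.

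The heart of the argument is the key claim, which I would verify by tracking an arbitrary $x \in S$ explicitly. Because $S$ has generating cone, Proposition~\ref{prop:iota-bdd-below} makes $\iota_{S^*}\colon S^* \to S^\rd$ a weak$^*$-homeomorphic Banach space isomorphism, so there is a unique $\omega_x \in (S^\rd)_*$ satisfying $\omega_x(\iota_{S^*}(f)) = f(x)$ for every $f \in S^*$, and unwinding definitions gives $\overline{\iota_{S^*}^*}(\iota_{(S^\rd)^*}(\omega_x)) = \iota_{S^*}^*(\omega_x) = \kappa_S(x)$. Applying $\alpha^\rd$, which obeys $\alpha^\rd\circ\iota_{(S^\rd)^*} = \iota_{(T^\rd)^*}\circ\alpha^*$ by \eqref{eqt:phi-d}, sends this to $\iota_{(T^\rd)^*}(\alpha^*(\omega_x))$; and the weak$^*$-continuity of $\alpha$ is precisely the condition $\alpha^*\bigl((S^\rd)_*\bigr) \subseteq (T^\rd)_*$, so $\alpha^*(\omega_x) \in (T^\rd)_*$. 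Composing with $\overline{\iota_{T^*}^*}$ yields $\tilde\alpha(\kappa_S(x)) = \iota_{T^*}^*(\alpha^*(\omega_x))$, whose action $f \mapsto \alpha^*(\omega_x)(\iota_{T^*}(f))$ is a composition of the $\sigma(T^*,T)$-$\sigma(T^\rd,(T^\rd)_*)$-continuous map $\iota_{T^*}$ with the $\sigma(T^\rd,(T^\rd)_*)$-continuous functional $\alpha^*(\omega_x)$, hence $\sigma(T^*,T)$-continuous on $T^*$, so that $\iota_{T^*}^*(\alpha^*(\omega_x)) \in \kappa_T(T)$ as required.

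The main obstacle is the careful bookkeeping between the three distinct weak$^*$-topologies involved (on $S^*$, on $S^\rd$, and on $(S^\rd)^\rd$) and the identification of their preduals. The passage through the bidual is essential: the naive attempt to define $\varphi^* := \iota_{S^*}^{-1}\circ\alpha\circ\iota_{T^*}$ directly would only deliver complete boundedness of $\varphi^*$ with $\|\varphi^*\|_{\cb} \leq 4$ via Proposition~\ref{prop:approx-unit}(b), not the required complete contraction, so exploiting the completely isometric dual operator system identification afforded by Proposition~\ref{prop:approx-unit}(c) is what makes the construction deliver a morphism in $\Morc(S,T)$.
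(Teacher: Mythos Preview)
Your proof is correct and follows essentially the same strategy as the paper: both construct the map $\hat\Phi = \tilde\alpha := \overline{\iota_{T^*}^*}\circ\alpha^\rd\circ(\overline{\iota_{S^*}^*})^{-1}\in\Morc_w(S^{**},T^{**})$ and identify it with $\phi^{**}$ for the desired $\phi\in\Morc(S,T)$. The only organizational difference is that the paper first defines $\phi\in\CL(S,T)$ directly as the predual of the weak$^*$-continuous composite $\iota_{S^*}^{-1}\circ\alpha\circ\iota_{T^*}:T^*\to S^*$ (exactly the ``naive'' map you mention) and then verifies $\phi^{**}=\hat\Phi$ via a chain of adjoint identities, thereby upgrading $\phi$ from merely bounded to completely positive and completely contractive; you instead show $\tilde\alpha$ restricts to $\kappa_S(S)\to\kappa_T(T)$ and then define $\varphi$. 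These are the same computation unraveled in opposite directions. Your derivation of injectivity on objects as a formal consequence of fullness plus faithfulness is a small economy over the paper's direct check.
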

\begin{proof}
	By Proposition \ref{prop:approx-unit}(a) and Corollary \ref{cor:dual-funct}(a), we know that the restriction of the dual functor to the category of approximately unital complete operator systems is faithful. 
	
	Consider $S$ and $T$ to be approximately unital complete operator systems and $\Phi\in \Morc_w(T^\rd, S^\rd)$. 
	We learn  from Proposition \ref{prop:approx-unit}(a)   that both $S^*$  and $T^*$ are quasi-operator systems. 
	Therefore, Theorem \ref{thm:reg-dual}(c) implies that $\iota_{S^*}:S^*\to S^\rd$ and $\iota_{T^*}:T^*\to T^\rd$ are weak$^*$-homeomorphisms.  
	These, together with the weak$^*$-continuity of $\Phi$, produce a map $\phi\in \CL(S, T)$ with 
	\begin{equation}\label{eqt:phi-*}
		\iota_{S^*}	\circ \phi^* = \Phi\circ \iota_{T^*}.
	\end{equation}
	On the other hand, one has
	$	\Phi^\rd\in \Morc_w\big((S^\rd)^\rd, (T^\rd)^\rd\big).$
	It follows from Proposition \ref{prop:approx-unit}(c) that there exists $\hat \Phi\in \Morc_w(S^{**}, T^{**})$ with 
	\begin{equation}\label{eqt:hat-Phi-circ-bar-iota}
		\hat \Phi \circ \overline{\iota_{S^*}^*}= \overline{\iota_{T^*}^*}\circ \Phi^\rd.
	\end{equation}
	Therefore, 
	\begin{align*}
	\phi^{**}
	 = \iota_{T^*}^*\circ\Phi^*\circ(\iota_{S^*}^*)^{-1}
	&  =\overline{\iota_{T^*}^*}\circ\iota_{(T^\rd)^*}\circ\Phi^*\circ(\iota_{S^*}^*)^{-1}\\
	& = \overline{\iota_{T^*}^*}\circ\Phi^\rd\circ\iota_{(S^\rd)^*}\circ(\iota_{S^*}^*)^{-1}
	=\hat{\Phi};
	\end{align*}
here, the first equality follows from \eqref{eqt:phi-*} (and the bijectivity of  $\iota_{S^*}^*$; see Lemma \ref{lem:dualizable}), the second one from \eqref{eqt:def-bar-iota-S*-*}, the third one from \eqref{eqt:phi-d} and the last one from both \eqref{eqt:def-bar-iota-S*-*} and \eqref{eqt:hat-Phi-circ-bar-iota}.
	Now, one knows from Corollary \ref{cor:emb-into-bidual}(a) that $\phi\in \Morc(S,T)$.
	Since $\Phi = \phi^\rd$ (because of Equalities \eqref{eqt:phi-*} and \eqref{eqt:phi-d} as well as the bijectivity of $\iota_{T^*}$), we see that the functor is full. 
	
	In order to verify that this functor is injective on objects, we suppose that $\Phi\in \Morc_w(T^\rd, S^\rd)$ is a completely isometric dual operator system isomorphism. 
	By considering $(\Phi^{-1})^\rd$, we know that 
	$$\Phi^\rd:(S^\rd)^\rd \to (T^\rd)^\rd$$ 
	is a completely isometric operator system isomorphism. 
	Thus, if $\phi:S\to T$ and $\hat \Phi:S^{**}\to T^{**}$ are as in the above, then 
	$\phi^{**} = \hat \Phi$ is also a completely isometric operator system isomorphism (by Relation \eqref{eqt:hat-Phi-circ-bar-iota} and Proposition \ref{prop:approx-unit}(c)).
	Now, we conclude that $\phi$ is a completely isometric operator system isomorphism (because of Corollary \ref{cor:emb-into-bidual}(a)).
	This show that the functor is injective on objects.  
\end{proof}


The above tells us that one may regard  the category of approximately unital complete operator systems as a full subcategory of the category of dual operator systems (actually, that of dualizable dual operator systems).  


Let us also say a few words about ``sub-objects'' of approximately unital operator systems. 
Note that one usually considers unital operator subsystems as ``sub-objects'' of unital operator systems. 


\begin{defn}\label{defn:approx-unit-subsys}
	A self-adjoint subspace $T$ of an operator system $S$ is called an \emph{approximately unital operator subsystem} if there is an increasing net in $T_+$ that weakly defines the matrix norm of $S$. 
\end{defn}


Obviously, if $S$ has an approximately unital operator subsystem, then $S$ is itself an  approximately unital operator system.


\begin{prop}\label{prop:approx-unit-subsys}
	Let $S$ be an approximately unital operator system and $T\subseteq S$ be a self-adjoint subspace. 
	
	\smnoind
	(a) If $T$ is an approximately unital operator subsystem of $S$, then $T^{**}$ contains the matrix norm defining order unit of $S^{**}$. 
	The converse holds when $T$ is complete. 
	
	\smnoind
	(b) If $T$ is an approximately unital operator subsystem of $S$, then for every $n\in \BN$ and $\phi\in \Morc(T,M_n)$, one can find $\tilde \phi\in \Morc(S,M_n)$ satisfying $\phi = \tilde \phi|_T$ and $\|\tilde \phi\| = \|\phi\|$. 
\end{prop}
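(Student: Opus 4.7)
The proof naturally splits into three pieces that share one technical identification. For the forward direction of (a), I would take the given increasing net $\{a_i\}_{i\in\KI} \subseteq T_+$ that weakly defines the matrix norm of $S$ and mirror the argument of (U1)$\Rightarrow$(U2) in Theorem~\ref{thm:unital-bidual}(a). Lemma~\ref{lem:approx-unit}(a),(b) gives $\|a_i\| = \|a_i\|_a^w \leq 1$, so after passing to a subnet (which remains weakly matrix norm defining by Lemma~\ref{lem:approx-unit}(e)) we obtain a weak$^*$-limit $p \in B_{S^{**}}^+$, and repeating the calculation in Theorem~\ref{thm:unital-bidual}(a) shows that $p$ is the Archimedean matrix norm defining order unit of $S^{**}$. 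Since each $a_i$ lies in $\kappa_S(T)$, and $T^{**}$ is canonically identified with the weak$^*$-closure of $\kappa_S(T)$ in $S^{**}$ (the standard $T^{\perp\perp}$ description), one concludes $p \in T^{**}$.

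For the converse, I would assume $T$ complete and $p \in T^{**}$ being the order unit of $S^{**}$. The structural observation needed is that the induced map $\iota^{**}: T^{**} \to S^{**}$ is a completely order monomorphic complete isometry, so that $T^{**}$ is itself a unital operator system with Archimedean matrix norm defining order unit $p$. Granting this, Theorem~\ref{thm:unital-bidual}(a), (U2)$\Rightarrow$(U3), applied to the complete SMOS $T$, produces an approximate order unit $\{a_i\}_{i\in \KI}$ in $O_T^+$ that weak$^*$-converges to $p$ in $T^{**}$, and by weak$^*$-continuity of $\iota^{**}$ also in $S^{**}$. Lemma~\ref{lem:approx-unit-SMOS}(a) applied to $S$ then gives $\|\cdot\|_a^{w^*} = \|\cdot\|_p$ on $M_\infty(S^{**})$; since $\|\cdot\|_p = \|\cdot\|$ on $S^{**}$ by assumption, we deduce $\|\cdot\|_a^w = \|\cdot\|$ on $M_\infty(S)$, so $\{a_i\}$ weakly defines the matrix norm of $S$.

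For part (b), part (a) together with the identification of $T^{**}$ as a unital sub-operator-system of $S^{**}$ places us in the Arveson setting. Given $\phi \in \Morc(T,M_n)$, its second transpose $\phi^{**}: T^{**} \to M_n^{**} \cong M_n$ is weak$^*$-continuous, completely positive, with $\|\phi^{**}\|_{\cb} = \|\phi\|_{\cb} \leq 1$. Arveson's extension theorem (for cp maps into $M_n$ out of a unital operator subsystem of a unital operator system) then supplies a completely positive extension $\Psi: S^{**} \to M_n$ with $\|\Psi\|_{\cb} = \|\phi^{**}\|_{\cb}$. Setting $\tilde\phi := \Psi \circ \kappa_S \in \Morc(S,M_n)$, the compatibility $\iota^{**}\circ \kappa_T = \kappa_S|_T$ yields $\tilde\phi|_T = \phi$, and since any extension has cb-norm at least $\|\phi\|_{\cb}$, we obtain $\|\tilde\phi\| = \|\phi\|$.

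The hard part is the claim that $\iota^{**}: T^{**} \to S^{**}$ is a completely order monomorphic complete isometry. The inclusion $\iota^{**}(M_n(T^{**})_+) \subseteq M_n(S^{**})_+$ is immediate from weak$^*$-density (Corollary~\ref{cor:emb-into-bidual}), but the reverse inclusion, attacked by direct functional separation, reduces to the statement that every $\varphi \in M_n(T^*)_+$ extends to some $\psi \in M_n(S^*)_+$, which is exactly part (b) and hence circular. To break the circularity I would choose a completely order monomorphic complete isometry $S \hookrightarrow \CL(H)$, realize both $T^{**}$ and $S^{**}$ as weak$^*$-closed sub-dual-operator-systems of $\CL(H)^{**}$ via the construction of Section~4 (compare Relation~\eqref{eqt:def-N(F)} and the proof of Theorem~\ref{thm:reg-dual}), and verify that the matrix cones and matrix norms transfer compatibly from the ambient $\CL(H)^{**}$, so that $\iota^{**}$ inherits its complete order monomorphism from the common embedding.
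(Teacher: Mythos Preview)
Your plan matches the paper's proof in all three parts, and you have correctly located the one delicate step. The difference worth recording is in how part (b) is executed. You invoke Arveson on $\phi^{**}$ viewed as a completely positive map on $(T^{**},T^{**}_+)$, which indeed requires knowing that the intrinsic bidual cone coincides with the cone induced from $S^{**}$; this is the ``hard part'' you flag. The paper bypasses this comparison. It first records $\alpha:=\phi^{**}(p)$ and the equality $\|\alpha\|=\|\phi^{**}\|_{\cb}$, and then invokes \cite[Lemma~5.1.6]{OPS}: on a \emph{unital} operator system, a completely bounded map $\psi$ with $\psi(1)\ge 0$ and $\|\psi\|_{\cb}=\|\psi(1)\|$ is automatically completely positive. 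Applying this with the unital operator system taken to be $(T^{**},\,T^{**}\cap S^{**}_+)$, one obtains complete positivity of $\phi^{**}$ for the \emph{induced} cone, which is exactly what Arveson needs. No comparison of the two cones on $T^{**}$ is required, so the circularity you worry about does not arise, and your detour through $\CL(H)^{**}$ is unnecessary (and, as you suspect, simply relocates the same question).

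Your instinct is not entirely misplaced, though. The equality $\|\alpha\|=\|\phi^{**}\|_{\cb}$ is asserted without argument, and a direct proof of it appears to need that $p$ be matrix-norm-defining for $(T^{**},T^{**}_+)$; likewise, the paper's proof of the converse of~(a) invokes the proof of (U2)$\Rightarrow$(U3) for $T$, which presupposes the same thing. A cleaner logical arrangement is: prove forward~(a); establish the norm equality and then~(b) via Lemma~5.1.6 as above; deduce from~(b) that every element of $M_n(T^*)_+$ extends to $M_n(S^*)_+$, whence $\iota^{**}$ is a complete order monomorphism; and finally obtain the converse of~(a). You have identified the right pressure point, but the paper's use of \cite[Lemma~5.1.6]{OPS} is the device you were missing.
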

\begin{proof}
	(a) For the first statement, let $\{a_i\}_{i\in \KI}$ be an increasing net in $T_+$ that weakly defines the matrix norm of $S$. 
	By the proof of $(U1)\Rightarrow(U2)$ of Theorem \ref{thm:unital-bidual}(a), there is a subnet of $\{a_i\}_{i\in \KI}$ that $\sigma(S^{**},S^*)$-converges to the matrix norm defining order unit $p$ of $S^{**}$.
	Since $a_i\in T_+$ for every $i\in \KI$, one knows that $p\in T^{**}_+$. 
	
	For the second statement, suppose that $T$ is complete and that the matrix norm defining  order unit $p$ of $S^{**}$ belongs to $T^{**}$. 
	As $p$ is the order unit of $T^{**}$, the proof of $(U2)\Rightarrow(U3)$ of Theorem \ref{thm:unital-bidual}(a) produces an approximate order unit $\{a_i\}_{i\in \KI}$ of $T_\sa$ that $\sigma(T^{**}, T^*)$-converge to $p$. 
	Now, Lemma \ref{lem:approx-unit-SMOS}(a) tells us that $\|\cdot\|_a^{w}$ coincides with $\|\cdot\|_{p} = \|\cdot\|$ on $M_\infty(S)$. 
	This shows that $\{a_i\}_{i\in \KI}$ weakly defines the matrix norm on $S$. 
	
	\smnoind
	(b) Note that $\phi$ extends to a map $\phi^{**}\in \Morc_w(T^{**}, M_n)$ with $\|(\phi^{**})^{(\infty)}\| = \|\phi^{(\infty)}\|$. 
	By part (a) above, $T^{**}$ contains the order unit $p$ of $S^{**}$. 
	If we put $\alpha:= \phi^{**}(p)$, then $\|\alpha\| = \|(\phi^{**})^{(\infty)}\| \leq 1$. 
	It follows from \cite[Lemma 5.1.6]{OPS} and the Arveson extension theorem that $\phi^{**}$ extends to a completely positive map $\bar \phi:S^{**}\to M_n$ with $\|\bar \phi^{(\infty)}\| = \|\alpha\|$. 
	Now, $\tilde \phi := \bar \phi|_{S}$  will satisfy the requirement.
\end{proof}


Because of Theorem \ref{thm:unital-bidual}(c), one may regard part (b) of the above as an analogue of \cite[Proposition 2.13]{CvS2}.
Furthermore, by \cite[Lemma 4.2(a)]{Ng-MOS}, part (b) above implies that an approximately unital operator subsystem of an approximately unital operator system $S$ is a MOS-subspace of $S$, in the sense of \cite[Definition 2.12]{Werner}. 


It was shown in \cite[Proposition 4.10(3)]{CvS2} that the non-complete operator system associated with a tolerance relation (as in \cite{CvS} and \cite{CvS2}) on a metric admits a matrix norm defining approximate order unit. 
By Theorem \ref{thm:unital-bidual}(c), we know that such an operator system is approximately unital, and so is it completion (see Theorem \ref{thm:unital-bidual}(a)). 
This gives part (a) of the following corollary. 
Moreover, parts (b), (c) and (d) of this result follow from Propositions \ref{prop:iota-bdd-below} and \ref{prop:approx-unit} as well as Theorem \ref{thm:dual-funct-approx-unit}. 


\begin{cor}\label{cor:toler-rel}
	Let $\mathcal{E}$ be the non-complete operator system associated with a tolerance relation on a path metric measure space with a measure of full support. 
	
	\smnoind
	(a) Both $\mathcal{E}$ and $\tilde{\mathcal{E}}$ are approximately unital. 
	
	\smnoind
	(b) The map $\iota_{\mathcal{E}^*}:\mathcal{E}^*\to \mathcal{E}^\rd$ (see \eqref{eqt:def-iota-incomp}) is a weak$^*$-homeomorphic (under the weak$^*$-topology $\sigma(\mathcal{E}^*, \tilde{\mathcal{E}})$) complete order isomorphism as well as an operator space isomorphism (but not necessarily completely isometric). 
	
	\smnoind
	(c) The canonical map from $(\mathcal{E}^\rd)^\rd$ to $\mathcal{E}^{**}$ is a weak$^*$-homeomorphic completely isometric complete order isomorphism.  
	
	\smnoind
	(d) If $\mathcal{F}$ is the non-complete operator system associated with another tolerance relation on another metric space such that $\mathcal{E}^\rd\cong \mathcal{F}^\rd$ completely isometrically as dual operator systems, then $\tilde{\mathcal{E}}\cong \tilde{\mathcal{F}}$ completely isometrically as operator systems.   
\end{cor}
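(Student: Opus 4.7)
The proof will be obtained by stitching together results already established in the paper, once the input from Connes–van Suijlekom is available. The anchor fact is \cite[Proposition 4.10(3)]{CvS2}, which supplies a matrix norm defining approximate order unit in $\mathcal{E}_+$ for the operator system $\mathcal{E}$ associated with a tolerance relation on a path metric measure space (with measure of full support). Starting from this fact, I plan to apply the machinery of the previous sections verbatim.

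For part (a), I would first invoke Theorem \ref{thm:unital-bidual}(c): a matrix norm defining approximate order unit is automatically weakly matrix norm defining, hence $\mathcal{E}$ is approximately unital in the sense of Definition \ref{defn:weakly-matrix}(c). To pass to $\tilde{\mathcal{E}}$, I would run the chain (U1)$\Rightarrow$(U2)$\Rightarrow$(U3) of Theorem \ref{thm:unital-bidual}(a) applied with $X = \mathcal{E}$: since $\mathcal{E}^{**} = \tilde{\mathcal{E}}^{**}$, Statement (U2) transfers to $\tilde{\mathcal{E}}$, and then (U2)$\Rightarrow$(U3) for the complete SMOS $\tilde{\mathcal{E}}$ produces a weakly matrix norm defining approximate order unit sitting in $\tilde{\mathcal{E}}_+$.

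For part (b), the strategy is: approximately unital $\Rightarrow$ dualizable $\Rightarrow$ generating cone $\Rightarrow$ $\iota_{\mathcal{E}^*}$ good. More precisely, Proposition \ref{prop:approx-unit}(a) gives the dualizability of $\tilde{\mathcal{E}}$, and Definition \ref{defn:dual} via Lemma \ref{lem:dualizable} immediately implies that $\tilde{\mathcal{E}}_\sa = \tilde{\mathcal{E}}_+ - \tilde{\mathcal{E}}_+$ (condition (C2) of Proposition \ref{prop:iota-bdd-below}). The last sentence of Proposition \ref{prop:iota-bdd-below} then yields that $\iota_{\mathcal{E}^*}:\mathcal{E}^* \to \mathcal{E}^\rd$ is a weak$^*$-homeomorphic complete order isomorphism onto $\mathcal{E}^\rd = (\mathcal{E}^*)\check{\ }$. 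The operator space isomorphism statement is then exactly the two-sided estimate of Proposition \ref{prop:approx-unit}(b).

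Part (c) is almost a direct citation: Proposition \ref{prop:approx-unit}(c) gives that $\overline{\iota_{\mathcal{E}^*}^*}: (\mathcal{E}^\rd)^\rd \to \mathcal{E}^{**}$ is a completely isometric dual operator system isomorphism, and the weak$^*$-homeomorphism assertion is contained in Theorem \ref{thm:comp-isom-tau}(a). For part (d), I would apply Theorem \ref{thm:dual-funct-approx-unit} to the two approximately unital complete operator systems $\tilde{\mathcal{E}}$ and $\tilde{\mathcal{F}}$ (using part (a) for $\mathcal{F}$ as well). Since the construction $S\mapsto S^\rd$ factors through the completion (as $S^\rd = \tilde S^\rd$ by Relation \eqref{eqt:def-iota-incomp}), the given completely isometric dual operator system isomorphism $\mathcal{E}^\rd \cong \mathcal{F}^\rd$ is the same as $\tilde{\mathcal{E}}^\rd \cong \tilde{\mathcal{F}}^\rd$; injectivity of the dual functor on objects (Theorem \ref{thm:dual-funct-approx-unit}) then forces $\tilde{\mathcal{E}}\cong \tilde{\mathcal{F}}$ completely isometrically as operator systems. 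No real obstacle arises once part (a) is secured; the only delicate point to check is that the functorial identification $\mathcal{E}^\rd = \tilde{\mathcal{E}}^\rd$ is used consistently in part (d), which is immediate from the convention $S^* = \tilde S^*$ and the definition of $S^\rd$.
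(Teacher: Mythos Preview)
Your proposal is correct and follows essentially the same route as the paper's own argument: the paper likewise starts from \cite[Proposition 4.10(3)]{CvS2} and Theorem \ref{thm:unital-bidual}(c),(a) for part (a), and then derives parts (b)--(d) from Propositions \ref{prop:iota-bdd-below} and \ref{prop:approx-unit} together with Theorem \ref{thm:dual-funct-approx-unit}. Your version simply spells out the intermediate steps (dualizability, generating cone, the explicit two-sided norm estimate, and the identification $\mathcal{E}^\rd = \tilde{\mathcal{E}}^\rd$) in more detail than the paper does.
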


\appendix\section{Some facts on ordered vector spaces and ordered normed spaces}\label{sec:ord-vs}
\renewcommand{\thesection}{\Alph{section}}


In this appendix, we will give some well-known notations and facts on ordered vector spaces and ordered normed spaces. 
We will present proofs for those non-trivial results that we do not find their explicit references, although all the materials in this appendix are known. 

Let us first recall the following well-known fact from \cite[Lemma 2.5]{Ng-dual-OS}. 

\begin{lem}\label{lem:image-weak-st-cont-bdd-below}
	Let $E$ and $F$ be Banach spaces.
	Let $\Phi:E^*\to F^*$ be a bounded below weak$^*$-continuous linear map. 
	Then $\Phi(E^*)$ is a weak$^*$-closed subspace of $F^*$, and $\Phi$ is a weak$^*$-homeomorphism from $E^*$ onto $\Phi(E^*)$. 
\end{lem}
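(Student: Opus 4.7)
The plan is to realize $\Phi$ as the dual map of a surjection and then identify its range with an annihilator. Since $\Phi$ is weak$^*$-continuous, standard duality yields a bounded linear map $\varphi : F \to E$ with $\Phi = \varphi^*$. The first step is to show that the bounded-below hypothesis on $\Phi$ forces $\varphi$ to be surjective. One direction of this equivalence is the key: $\Phi$ bounded below means $\Phi$ is injective and has norm-closed range; injectivity of $\Phi = \varphi^*$ gives $\overline{\varphi(F)} = E$, while the closed range theorem transforms norm-closedness of the range of $\varphi^*$ into norm-closedness of the range of $\varphi$. Combining these two facts yields $\varphi(F) = E$.

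Next I would identify the range of $\Phi$ explicitly. Setting $K := \ker \varphi$, the containment $\Phi(E^*) \subseteq K^{\perp}$ is immediate from $\Phi(\xi)(x) = \xi(\varphi(x))$. For the reverse inclusion, any $f \in K^{\perp} \subseteq F^*$ descends to a bounded functional on $F/K$, and the surjection $\varphi$ induces a Banach space isomorphism $\bar\varphi : F/K \to E$ (via the open mapping theorem), so that $f = \xi \circ \varphi$ for some $\xi \in E^*$; that is, $f = \Phi(\xi)$. Hence $\Phi(E^*) = K^{\perp}$, which is a weak$^*$-closed subspace of $F^*$ because annihilators of subsets of $F$ are always weak$^*$-closed.

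Finally, for the weak$^*$-homeomorphism statement, $\Phi$ is already weak$^*$-continuous and injective, so it remains to check that its inverse $\Phi^{-1} : K^{\perp} \to E^*$ is weak$^*$-continuous. A clean way is to trace basic neighborhoods: a typical weak$^*$-neighborhood of $0$ in $E^*$ has the form $V = \{\xi \in E^* : |\xi(x_i)| < \varepsilon,\ i=1,\dots,n\}$ for some $x_i \in E$; writing $x_i = \varphi(y_i)$ using surjectivity of $\varphi$, and using that $\Phi(\xi)(y_i) = \xi(\varphi(y_i)) = \xi(x_i)$, we see that $\Phi(V) = \{f \in K^{\perp} : |f(y_i)| < \varepsilon\}$, which is a basic weak$^*$-neighborhood of $0$ in $K^{\perp}$ inherited from $F^*$. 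This shows $\Phi$ maps a weak$^*$-open base at $0$ bijectively onto one for $\Phi(E^*)$, giving the homeomorphism.

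The main obstacle is the first step: passing from ``$\Phi$ bounded below'' to ``$\varphi$ surjective.'' This is where the closed range theorem (or equivalently the open mapping theorem applied to the quotient) does the real work; once surjectivity of $\varphi$ is in hand, the remaining identifications with $K^{\perp}$ and the neighborhood-base argument are essentially formal consequences of duality.
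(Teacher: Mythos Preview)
Your proof is correct and is the standard argument for this fact. The paper does not actually prove this lemma; it merely cites it as a well-known fact from \cite[Lemma 2.5]{Ng-dual-OS}, so there is no in-paper proof to compare against. Your route via $\Phi = \varphi^*$, the closed range theorem to obtain surjectivity of $\varphi$, the identification $\Phi(E^*) = (\ker\varphi)^{\perp}$, and the neighborhood-base argument for the weak$^*$-homeomorphism is exactly the expected proof and would serve as a complete justification.
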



Let us recall that a real vector space $E$ is an \emph{ordered vector space}  if it is equipped with a convex subset $E_+\subseteq E$ satisfying $\RP \cdot E_+\subseteq E_+$ (called the \emph{cone} of $E$). 
The cone $E_+$ is said to be 
\begin{itemize}
	\item \emph{proper} if $E_+ \cap -E_+ = \{0\}$;
	
	\item \emph{generating} if $E = E_+ - E_+$. 
\end{itemize}
We do NOT assume the cone to be proper nor generating, unless it is stated explicitly. 


\begin{defn}\label{defn:pos}
	Let  $E$ and $F$ be real ordered vector spaces.
	A linear map $\psi:E\to F$ is called  
	\begin{itemize}
		\item a \emph{positive map} if $\psi(E_+)\subseteq F_+$;  
		
		\item an \emph{order monomorphism} if $\psi$ is injective and $\psi(E_+) = \psi(E)\cap F_+$; 
		
		\item an \emph{order isomorphism} if $\psi$ is a surjective order monomorphism. 
	\end{itemize}
\end{defn}


Notice that an injection $\psi$ is an order monomorphism if and only if the cone $E_+$ is the one induced by $\psi$; i.e. $E_+ = \psi^{-1}(F_+)$. 


For a convex subset $B\subseteq E$ satisfying $-B = B$, we put
\begin{equation}\label{eqt:def-B+}
	B^+:= B\cap E_+
\end{equation}
and denote, as in \cite[p.9]{wong}, 
\begin{equation}\label{eqt:def-solid}
	\MS(B):= \{x\in E: -v \leq x \leq v, \text{ for some }v\in B^+ \}.
\end{equation}
It is easy to see that $\MS(\MS(B)) = \MS(B)$. 
Let us also recall the following definitions from pages 20 and 24 of \cite{wong}. 


\begin{defn}\label{defn:Riesz}
	A semi-norm $p$ on an ordered vector space $E$ is said to be  
	\begin{itemize}
		\item \emph{absolutely monotone} if for every $x\in E$ and $u\in E_+$ with $-u \leq x \leq u$, one has $p(x) \leq p(u)$;
		
		\item \emph{regular} (or \emph{Riesz}) if 
		$p(x) = \inf \{p(u): u\in E_+; -u\leq x\leq u \}$ $(x\in E)$. 
	\end{itemize}
\end{defn}


The following fact is easy to verify. 


\begin{lem}\label{lem:abs-mono}
	Let $E$ be an ordered vector space and $p$ be a semi-norm on $E$.
	Denote 
	$$O_p:= \{x\in E: p(x) <1\}\quad \text{and} \quad B_p:= \{x\in E: p(x) \leq 1\}.$$
	
	\smnoind
	(a) $p$ is absolutely monotone if and only if $\MS(B_p)\subseteq B_p$.
	
	\smnoind
	(b) $p$ is regular if and only if $O_p = \MS(O_p)$, which is also equivalent to 
	$O_p \subseteq \MS(B_p)\subseteq B_p.$ 
\end{lem}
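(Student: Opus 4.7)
Plan for Lemma~\ref{lem:abs-mono}.

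For part (a), the forward implication is immediate: if $x\in \MS(B_p)$, pick $v\in B_p^+$ with $-v\le x\le v$, and absolute monotonicity gives $p(x)\le p(v)\le 1$. For the converse, I would use a scaling argument to pull the hypothesis down to $\MS(B_p)\subseteq B_p$. Given $-u\le x\le u$ with $u\in E_+$ and $p(u)>0$, the element $u/p(u)$ lies in $B_p^+$ and dominates $\pm x/p(u)$, so $x/p(u)\in \MS(B_p)\subseteq B_p$, giving $p(x)\le p(u)$. The degenerate case $p(u)=0$ is handled separately: for every $\varepsilon>0$, $u/\varepsilon\in B_p^+$ still dominates $\pm x/\varepsilon$, hence $p(x)\le \varepsilon$, and letting $\varepsilon\to 0$ forces $p(x)=0=p(u)$.

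For part (b), I would establish the cycle (i)$\Rightarrow$(iii)$\Rightarrow$(ii)$\Rightarrow$(i), where (i), (ii), (iii) abbreviate regularity, $O_p=\MS(O_p)$, and $O_p\subseteq \MS(B_p)\subseteq B_p$, respectively. For (i)$\Rightarrow$(iii): regularity immediately implies absolute monotonicity (any $u$ witnessing $-u\le x\le u$ is a candidate in the infimum), so part (a) delivers $\MS(B_p)\subseteq B_p$; moreover, for $x\in O_p$ the defining infimum produces $u\in E_+$ with $-u\le x\le u$ and $p(u)<1$, hence $x\in \MS(B_p)$. For (iii)$\Rightarrow$(ii): absolute monotonicity (via part (a)) gives $\MS(O_p)\subseteq O_p$ directly; for the reverse inclusion, given $x\in O_p$, I pick $\lambda\in(p(x),1)$, apply $O_p\subseteq \MS(B_p)$ to $x/\lambda$ to get $w\in B_p^+$ with $-w\le x/\lambda\le w$, and observe that $v:=\lambda w$ satisfies $p(v)\le \lambda<1$ and $-v\le x\le v$, so $x\in\MS(O_p)$. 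For (ii)$\Rightarrow$(i): for any $\lambda>p(x)$, scaling places $x/\lambda\in O_p=\MS(O_p)$ and produces $v\in O_p^+$ with $-v\le x/\lambda\le v$, so $\lambda v$ dominates $x$ with $p(\lambda v)<\lambda$, giving $\inf\{p(u):u\in E_+,-u\le x\le u\}\le p(x)$; the reverse inequality is absolute monotonicity, proved by scaling any witness $u$ by $1/\lambda$ with $\lambda>p(u)$ so that $u/\lambda\in O_p^+$ forces $x/\lambda\in\MS(O_p)=O_p$, i.e.\ $p(x)<\lambda$.

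The main obstacle is mundane but requires care: the arguments alternate between strict ($O_p$) and non-strict ($B_p$) sublevel sets, so I must systematically rescale by factors slightly larger than $p(x)$ or $p(u)$ to convert $B_p$-witnesses into $O_p$-witnesses and vice versa, and handle the degenerate case where the relevant semi-norm value vanishes by letting the scaling parameter tend to $0$.
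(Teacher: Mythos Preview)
Your proof is correct. The paper gives no proof of this lemma, stating only that it ``is easy to verify,'' so there is nothing to compare your approach against; your scaling arguments handle both the strict/non-strict ball interplay and the degenerate $p(u)=0$ case cleanly.
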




\begin{defn}\label{defn:loc-decomp}
	(a) An increasing net $\{a_i\}_{i\in \KI}$ in the cone $E_+$ is called an \emph{approximate order unit} of $E$ if for every $x\in E$, there exist $t>0$ and $i\in \KI$ such that $-t a_i \leq x \leq ta_i$. 
	
	\smnoind
	(b) A (real)  \emph{ordered normed space} (respectively, \emph{ordered Banach space}) $E$ is a real normed space (respectively, Banach space) equipped with a norm closed cone $E_+$ (again not assumed to be proper nor generating). 
	
	\smnoind
	(c) An ordered normed space $E$ is said to be 
	\emph{locally decomposable} if $B_E^+ - B_E^+$ is a norm zero neighborhood, where $B_E$ is the closed unit ball of $E$ (see \eqref{eqt:def-B+}). 
\end{defn}


Our notion of local decomposability is the same as that defined in \cite{wong}. 


\begin{prop}\label{prop:rel-bdd-decomp-prop}
	Let $E$ be a real ordered normed space with its norm being absolutely monotone.
	Then $E$ is locally decomposable if and only if there exists a regular norm on $E$ that is equivalent to the original norm. 
\end{prop}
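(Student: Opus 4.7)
\smallskip

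\noindent\textbf{Proof proposal.} The plan is to prove both implications by explicitly producing the relevant data on the same underlying space.

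For the forward direction, assuming $E$ is locally decomposable, I would define
\[
p(x) := \inf\{\|w\| : w\in E_+,\ -w\leq x\leq w\}\qquad (x\in E),
\]
with the convention $\inf\emptyset =+\infty$. Local decomposability gives some $r>0$ with $rB_E\subseteq B_E^+-B_E^+$, so for any $x$ with $\|x\|\leq r$ we may write $x=u'-v'$ with $u',v'\in B_E^+$; then $w:=u'+v'\in E_+$ satisfies $-w\leq x\leq w$ and $\|w\|\leq 2$. By homogeneity this yields $p(x)\leq (2/r)\|x\|$ for every $x\in E$, so $p$ is everywhere finite. On the other hand, absolute monotonicity of $\|\cdot\|$ gives $\|x\|\leq \|w\|$ whenever $-w\leq x\leq w$ with $w\in E_+$, hence $\|x\|\leq p(x)$. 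Thus $p$ is a norm on $E$ with $\|\cdot\|\leq p\leq (2/r)\|\cdot\|$. For regularity, observe that for every $u\in E_+$ one has $-u\leq u\leq u$, so $p(u)\leq \|u\|$, while absolute monotonicity of $\|\cdot\|$ gives $\|u\|\leq \|w\|$ whenever $-w\leq u\leq w$, hence $\|u\|\leq p(u)$; combining, $p(u)=\|u\|$ for $u\in E_+$. Consequently
\[
p(x)=\inf\{\|u\|:u\in E_+,\ -u\leq x\leq u\}=\inf\{p(u):u\in E_+,\ -u\leq x\leq u\},
\]
which is exactly the regularity condition in Definition \ref{defn:Riesz}.

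For the reverse direction, suppose $p$ is a regular norm on $E$ equivalent to $\|\cdot\|$, say $C_1 p\leq \|\cdot\|\leq C_2 p$ for some $C_1,C_2>0$. Fix $r>0$ small (to be specified), and let $x\in rB_E$. Then $p(x)\leq r/C_1$, and regularity of $p$ yields, for every $\varepsilon>0$, some $w\in E_+$ with $-w\leq x\leq w$ and $p(w)\leq r/C_1+\varepsilon$; hence $\|w\|\leq C_2 r/C_1+C_2\varepsilon$. Setting $u:=(w+x)/2$ and $v:=(w-x)/2$, the inequalities $-w\leq x\leq w$ force $u,v\in E_+$, and using $\|x\|\leq r$ one obtains
\[
\max\{\|u\|,\|v\|\}\leq \tfrac12(\|w\|+\|x\|)\leq \tfrac12\bigl(C_2 r/C_1+C_2\varepsilon+r\bigr).
\]
Choosing $r<2C_1/(C_1+C_2)$ and $\varepsilon$ small enough makes $u,v\in B_E^+$, so $x=u-v\in B_E^+-B_E^+$. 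This proves $rB_E\subseteq B_E^+-B_E^+$; i.e.\ $E$ is locally decomposable.

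I do not expect any serious obstacle: the construction of $p$ is standard, and the only step that requires care is verifying that the candidate $p$ is finite everywhere (which uses local decomposability) and that the formula reduces to the regularity identity on elements of $E_+$ (which uses absolute monotonicity of the original norm). The reverse direction is purely an elementary computation splitting $x=(w+x)/2-(w-x)/2$ and controlling the sizes via the equivalence of norms.
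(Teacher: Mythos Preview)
Your proof is correct and follows essentially the same approach as the paper: both directions hinge on the regularization seminorm $p(x)=\inf\{\|w\|:w\in E_+,\ -w\le x\le w\}$, which the paper presents as the Minkowski gauge of $\MS(B_E)$. The paper phrases the argument via the set $\MS(B_E)$ and the characterizations in Lemma~\ref{lem:abs-mono}, while you carry out the equivalent norm estimates and the splitting $x=(w+x)/2-(w-x)/2$ directly; the content is the same.
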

\begin{proof}
	Clearly, $B_E^+ - B_E^+\subseteq 2\MS(B_E)$. 
	On the other hand, Lemma \ref{lem:abs-mono}(a) tells us that $\MS(B_E)\subseteq B_E$, which implies $\MS(B_E)\subseteq B_E^+ - B_E^+$ (if $-v\leq x\leq v$, then $x = (v+x)/2 - (v-x)/2$). 
	Hence, $E$ is locally decomposable if and only if $\MS(B_E)$ is a norm zero neighborhood. 
	
	Let us define a semi-norm $p$ on $E$ by 
	$$p(x):=\inf \{t>0: x\in t\MS(B_E) \}\qquad (x\in E)$$
	(note that $S(B_E)$ is a convex set containing $0$).
	We have (as $\MS(B_E)\subseteq B_E$) 
	\begin{equation}\label{eqt:Op-BE}
		O_p \subseteq \MS(B_E)\subseteq B_p  \subseteq B_E. 
	\end{equation}
	Thus, $O_p\subseteq \MS(B_E) = \MS(\MS(B_E))\subseteq \MS(B_p)$ and $\MS(B_p)\subseteq \MS(B_E)\subseteq B_p$. 
	Consequently, $p$ is a regular semi-norm (because of Lemma \ref{lem:abs-mono}(b)).
By Relation \eqref{eqt:Op-BE}, if $\MS(B_E)$ is a norm zero neighborhood of $E$, then $p$ is a norm and is equivalent to the original norm on $E$. 

Conversely, suppose that $q$ is a regular norm equivalent to the original norm. 
By rescaling, we assume that there is $\kappa > 0$ with (see Lemma \ref{lem:abs-mono}(b))
$$\kappa B_E\subseteq O_q\subseteq \MS(B_q)\subseteq B_q\subseteq B_E.$$
We then learn from $\kappa B_E\subseteq \MS(B_q) = \MS(\MS(B_q)) \subseteq \MS(B_E)\subseteq B_E$ that $\MS(B_E)$ is a norm zero neighborhood. 	
\end{proof}


\begin{rem}\label{rem:dual-cone}
	For a subset $A\subseteq E$, we define
	$$A^\bot:= \{f\in E^*: f(x) \leq 1, \text{ for every } x\in A\}.$$
	If we equip $E^*$ with the cone $E^*_+:= -E_+^\bot$, then $E^*$ becomes an ordered Banach space.  
	In the same way, we have an ordered Banach space structure on $E^{**}$. 
\end{rem}


The following is an application of the (strong) separation theorem as well as \cite[Theorem 1.1.1]{wong} (see also the argument of \cite[Lemma 1.1.5]{wong}).


\begin{prop}\label{prop:pos-unit-ball}
	Let $E$ be a real ordered normed space with a cone $E_+$. 
	
	\smnoind
	(a) $E_+$ is weak$^*$-dense in $E^{**}_+$. 
	
	\smnoind
	(b)	$B_E^+$ is weak$^*$-dense in $B_{E^{**}}^+$. 
\end{prop}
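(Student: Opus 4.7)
The plan is to handle (a) by a direct application of the separation theorem, and to deduce (b) from the bipolar theorem combined with a Hahn--Banach decomposition of $(B_E^+)^{\bot}$.

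For (a), I would argue by contradiction. Suppose some $F \in E^{**}_+$ lies outside the convex weak$^*$-closed set $\overline{E_+}^{w^*}$. The separation theorem, applied in $(E^{**}, \sigma(E^{**}, E^*))$, produces $f \in E^*$ and $\alpha \in \BR$ with $f(x) \leq \alpha$ for every $x \in E_+$ and $F(f) > \alpha$. Since $0 \in E_+$ and $E_+$ is a cone, $\alpha \geq 0$ and $f(x) \leq 0$ on $E_+$; by the definition of the dual cone (Remark \ref{rem:dual-cone}), this gives $-f \in E^*_+$. A direct check shows that $F \in E^{**}_+$ means $F(g) \geq 0$ for every $g \in E^*_+$, so $F(-f) \geq 0$, contradicting $F(f) > \alpha \geq 0$.

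For (b), since $B_E^+$ is convex and contains $0$, the one-sided bipolar theorem identifies its weak$^*$-closure in $E^{**}$ with $(B_E^+)^{\bot\bot}$. The inclusion $\overline{B_E^+}^{w^*} \subseteq B_{E^{**}}^+$ is immediate, since $B_{E^{**}}^+$ is weak$^*$-closed and contains the canonical image of $B_E^+$. Hence the content of (b) reduces to proving
\[ B_{E^{**}}^+ \subseteq (B_E^+)^{\bot\bot}, \]
and for this it suffices to decompose every $f \in (B_E^+)^{\bot}$ as $f = g - h$ with $\|g\| \leq 1$ and $h \in E^*_+$. Granting the decomposition, for $F \in B_{E^{**}}^+$ one immediately obtains $F(f) = F(g) - F(h) \leq F(g) \leq 1$, using $F(h) \geq 0$ by positivity and $F(g) \leq \|F\|\|g\| \leq 1$.

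The main technical step is the decomposition, which I would extract from Hahn--Banach. Given $f \in (B_E^+)^\bot$ (equivalently, $f(u) \leq \|u\|$ for all $u \in E_+$, by positive homogeneity), define
\[ p(x) := \inf\{\|x+u\| - f(u) : u \in E_+\} \qquad (x \in E). \]
The functional $p$ is real-valued, since $\|x+u\| - f(u) \geq \|x+u\| - \|u\| \geq -\|x\|$, and sublinear (positive homogeneity comes from the cone property of $E_+$, subadditivity from the triangle inequality applied to $u = u_1 + u_2$). A Hahn--Banach extension $g \leq p$ on $E$ then satisfies $\|g\| \leq 1$ (take $u = 0$, giving $g(x) \leq \|x\|$, hence $|g(x)| \leq \|x\|$), and for each $u \in E_+$,
\[ -g(u) = g(-u) \leq p(-u) \leq \|-u + u\| - f(u) = -f(u), \]
so $h := g - f \in E^*_+$ and $f = g - h$, as required. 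The Hahn--Banach construction is the only non-routine step; everything else is straightforward polar and positivity verification.
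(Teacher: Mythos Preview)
Your proof is correct and follows essentially the route the paper indicates: the paper does not spell out an argument but states that the result follows from the strong separation theorem together with \cite[Theorem 1.1.1]{wong} (and the argument of \cite[Lemma 1.1.5]{wong}). Your part (a) is exactly the separation-theorem argument, and for part (b) your Hahn--Banach construction of the decomposition $f = g - h$ with $\|g\|\leq 1$ and $h\in E^*_+$ is precisely the content of the cited result in \cite{wong} (namely, that $(B_E^+)^\bot = B_{E^*} - E^*_+$), which you have reproved from scratch rather than quoted.
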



\begin{prop}\label{prop:conv-to-unit}
	Let $E$ be a real ordered Banach space.
	Suppose that there exists $u\in B_{E^{**}}^+$ satisfying 
	$$\|x\| = \inf \{t>0: -tu \leq x \leq tu\}\qquad (x\in E^{**}).$$
	Then there is an approximate order unit $\{a_i\}_{i\in \KI}$ of $E$, which is a subnet of $O_E^+:=\{x\in E_+: \|x\| < 1\}$, that weak$^*$-converges to $u$. 
\end{prop}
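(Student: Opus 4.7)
The plan is to build the net by a Hahn--Banach/convex-separation argument that exploits the order-unit structure of $u$ in $E^{**}$ together with the weak$^*$-density of $E_+$ in $E^{**}_+$ from Proposition~\ref{prop:pos-unit-ball}. First, unpack the hypothesis: it says $(E^{**},u,E^{**}_+)$ is an Archimedean order-unit space whose order-unit norm is the bidual norm. Consequences to record: $B_{E^{**}}^+=[0,u]_{E^{**}}$, the bidual norm on $E^{**}_\sa$ is absolutely monotone, each $a\in O_E^+$ satisfies $a\leq\|a\|u\leq u$ in $E^{**}$, and each $x\in E$ with $\|x\|<t$ satisfies $-tu\leq x\leq tu$ in $E^{**}$.

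Next, I would index the net by the directed set $\KI$ of tuples $(F,C,G,\epsilon)$, where $F\subseteq E$, $C\subseteq O_E^+$, and $G\subseteq E^*$ are finite and $\epsilon>0$, ordered componentwise by inclusion and $\epsilon'\leq\epsilon$. For each $i=(F,C,G,\epsilon)$, the goal is to produce $a_i\in O_E^+$ satisfying simultaneously
\begin{enumerate}
\item[(a)] $a_i\geq c$ in $E$ for every $c\in C$,
\item[(b)] for each $x\in F$ there exists $s_x>0$ with $-s_x a_i\leq x\leq s_x a_i$ in $E$, and
\item[(c)] $|f(a_i)-u(f)|<\epsilon$ for every $f\in G$.
\end{enumerate}
The required properties of the net then follow directly: weak$^*$-convergence to $u$ from (c) by choosing $i$ with $f\in G$ and $\epsilon$ small, the approximate-order-unit property from (b) by choosing $i$ with $x\in F$, and monotonicity from (a) combined with a recursive (or Zorn-type) refinement of the index that forces $C$ to contain all previously chosen $a_{i'}$.

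The principal technical step, and the main obstacle, is the existence of $a_i$ satisfying (a), (b), (c) simultaneously. The set $\mathcal B:=\{a\in E_+:\text{(a), (b) hold}\}$ is convex and upward-closed in $E_+$ (if $a\in\mathcal B$ and $y\in E_+$ then $a+y\in\mathcal B$, since (b) uses the same $s_x$). If $\mathcal B\cap O_E$ fails to meet the weak$^*$-neighborhood of $u$ determined by (c), Hahn--Banach separation in $E^{**}$ would produce $f\in E^*$ and $\alpha\in\BR$ with $f(a)\leq\alpha<u(f)$ for every $a\in\mathcal B\cap O_E$. Starting from any $a_0\in\mathcal B\cap O_E$, upward-closure of $\mathcal B$ gives $a_0+ty\in\mathcal B\cap O_E$ for $y\in E_+$ and sufficiently small $t>0$; letting $t\to 0$ after rescaling forces $f\leq 0$ on $E_+$, whence $u(f)\leq 0$ (since $u\in E^{**}_+$), contradicting $u(f)>0$. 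Non-emptiness of $\mathcal B\cap O_E$ is secured by approximating $(1-\delta)u$---which satisfies (a) and (b) in $E^{**}$ and has norm $1-\delta<1$ for $\delta$ small---by positive elements of $E$ via Proposition~\ref{prop:pos-unit-ball}(a), taking care to preserve (a) and (b) using the order-unit structure of $u$ and the positive margins $(1-\|c\|)u\geq 0$ and $(t-\|x\|)u\geq 0$ recorded at the start.
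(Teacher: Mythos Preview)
Your separation argument and the non-emptiness claim both hide the real difficulty, which is precisely the content of the paper's proof.

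For non-emptiness of $\mathcal B\cap O_E$: already with $F=\emptyset$ and $C=\{c_1,c_2\}\subseteq O_E^+$, you are asking for $a\in O_E^+$ with $a\geq c_1,c_2$, i.e.\ that $O_E^+$ is upward directed. With $C=\emptyset$ and $F=\{x\}$, you are asking that some $a\in E_+$ satisfy $-sa\leq x\leq sa$ \emph{in $E$}, i.e.\ that $E_+$ order-generates. Neither of these follows from ``approximating $(1-\delta)u$ by elements of $E_+$ via Proposition~\ref{prop:pos-unit-ball}(a)'': weak$^*$-approximation gives no control on the order relations $a\geq c$ or $-sa\leq x\leq sa$, and there is no positive margin argument available in $E$ itself (only in $E^{**}$, where $u$ lives). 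The paper obtains exactly these two facts by a non-trivial duality chain from \cite{Ng69}: the order-unit norm on $E^{**}$ forces $O_{E^{**}}$ to be upward directed, hence the dual norm on $E^*$ is additive on $E^*_+$, and this in turn forces $O_E$ and $O_E^+$ to be upward directed. Once directedness is known, the paper simply takes a weak$^*$-convergent subnet of the net $O_E^+$ and identifies the limit with $u$.

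Your separation step is also broken as written. From $a_0+ty\in\mathcal B\cap O_E$ you only get this for \emph{small} $t>0$ (the norm constraint $\|a_0+ty\|<1$ fails for large $t$), so $f(a_0)+tf(y)\leq\alpha$ for small $t$ yields no bound on $f(y)$; ``rescaling'' does not help because shrinking $a_0$ destroys condition~(a). The usual cone-unboundedness trick needs the full cone direction available, which the ball $O_E$ cuts off. In short, the obstruction you are trying to separate away is real: without first establishing that $O_E^+$ is directed (the Ng69 input), $\mathcal B\cap O_E$ may well be empty, and your scheme cannot get started.
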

\begin{proof}
	Let us first show that $E^{**}_+$ is a proper cone (and hence so is $E_+$). 
	In fact, if  $x\in E^{**}_+\cap - E^{**}_+$, then $-tu \leq 0 \leq x \leq 0 \leq tu$ for all $t\in \RP$, which implies that $\|x\| = 0$; i.e. $x= 0$.  
	
	Moreover, since the norm on $E^{**}$ is an approximate-order-unit norm in the sense of \cite{Ng69} (note that the constant net $\{u\}$ is an approximate order unit of $E^{**}$), we know from \cite[Proposition 1]{Ng69} that the open unit ball $O_{E^{**}}$ is upward-directed. 
	Therefore, \cite[Theorem 3]{Ng69} implies that the norm on $E^*$ is additive on $E^*_+$.
	We then conclude from Proposition 3 and Theorem 4 of \cite{Ng69} that both the open unit ball $O_E$ and its positive part $O_E^+$ are upward directed. 
	Therefore, $O_E^+$ is an approximate order unit of $E$ (observe that for every $x\in O_E$, one can find $y\in O_E$ with $x, -x\leq y$, which implies $y\in O_E^+$ and $-y \leq x \leq y$), and hence is a net. 
	Now, there is a subnet $\{a_i\}_{i\in \KI}$ of $O_E^+$ that weak$^*$-converges to an element $v\in B_{E^{**}}^+$. 
Then $\{a_i\}_{i\in \KI}$ is increasing and the weak$^*$-closedness of $B_{E^{**}}^+$ ensures that $a_i\leq v$ ($i\in \KI$). 
	
	The hypothesis concerning $u$ tells us that $v\leq u$. 
	Conversely, as $w\leq v$ for all $w\in O_E^+$ (note that $\{a_i\}_{i\in \KI}$ is a subnet of $O_E^+$) and $O_E^+$ is weak$^*$-dense in $B_{E^{**}}^+$ (see Proposition \ref{prop:pos-unit-ball}(b)), we know that $u\leq v$ (as $B_{E^{**}}^+$ is weak$^*$-closed).
	Finally, the properness of the cone  $E^{**}_+$ ensures that $u=v$. 	
\end{proof}


\begin{prop}[Jameson]\label{prop:Jameson}
	Let $E$ be a real ordered normed space, and $V$ be a bounded convex norm zero neighborhood of $E$. 
	
	\smnoind
	(a) $\MS(V^\bot) =\MS(V)^\bot$. 
	
	\smnoind
	(b) $\MS\big(\overline{V}^{\sigma(E^{**},E^*)}\big) = \overline{\MS(V)}^{\sigma(E^{**},E^*)}$. 
\end{prop}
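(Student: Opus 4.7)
The forward inclusion $\MS(V^\bot)\subseteq \MS(V)^\bot$ of part (a) is direct. Given $f\in \MS(V^\bot)$ with $-g\leq f\leq g$ for some $g\in V^\bot\cap E^*_+$, and any $x\in \MS(V)$ with witness $-v\leq x\leq v$, $v\in V\cap E_+$, the decomposition $x = (v+x)/2 - (v-x)/2$ places both summands in $E_+$. Since $g\pm f\in E^*_+$, one gets $f(x) \leq g((v+x)/2) + g((v-x)/2) = g(v) \leq 1$, proving $f \in \MS(V)^\bot$.

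The reverse inclusion is the main technical step. Given $f\in \MS(V)^\bot$, the plan is to produce a positive functional $g \in V^\bot\cap E^*_+$ with $-g\leq f\leq g$ via Hahn--Banach. I would first introduce the positively homogeneous function $|f|\colon E_+\to \RP$, $|f|(x) := \sup\{|f(y)|: -x\leq y\leq x\}$, which satisfies $|f|(v)\leq 1$ for $v\in V\cap E_+$ (since the bracketing $y$'s lie in $\MS(V)$); by homogeneity, $|f|\leq p_V$ on $E_+$, where $p_V$ is the Minkowski functional of $V$. The target is then to construct a sublinear functional $q\colon E\to \RP\cup\{+\infty\}$ such that any linear $g\leq q$ automatically satisfies (i) $g\geq |f|$ on $E_+$, (ii) $g(-x)\leq 0$ for $x\in E_+$, and (iii) $g\leq p_V$ on $E$. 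Hahn--Banach then yields $g\in E^*$ with $g\leq q$, and this $g$ lies in $V^\bot\cap E^*_+$ with $\pm f\leq g$, i.e.\ $f\in \MS(V^\bot)$. The subtlety -- and the main obstacle -- is that $E_+$ is not assumed to be generating nor to enjoy the Riesz decomposition property, so the function $|f|$ on $E_+$ is in general only super-additive rather than sub-additive, and one has to interlace it with $p_V$ in a careful manner to produce a genuinely sublinear $q$ whose linear minorants automatically meet all three requirements.

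Part (b) will follow from part (a) by a bipolar argument. First note that $V^\bot\subseteq E^*$ is itself a bounded convex norm zero neighborhood of $E^*$: bounded because $V$ is absorbing, and a neighborhood because $V$ is bounded. Applying part (a) to the ordered normed space $E^*$ with $V^\bot$ and its polar $V^{\bot\bot}\subseteq (E^*)^*=E^{**}$ yields $\MS(V^{\bot\bot})=\MS(V^\bot)^\bot$. Applying part (a) in $E$ gives $\MS(V^\bot)=\MS(V)^\bot$, hence $\MS(V^\bot)^\bot=\MS(V)^{\bot\bot}$. Finally, the bipolar theorem for the dual pairing $(E^{**},E^*)$, applied to the convex sets $V$ and $\MS(V)$ (both containing $0$), identifies $V^{\bot\bot}=\overline{V}^{\sigma(E^{**},E^*)}$ and $\MS(V)^{\bot\bot}=\overline{\MS(V)}^{\sigma(E^{**},E^*)}$; stringing together $\MS(\overline{V}^{\sigma(E^{**},E^*)}) = \MS(V^{\bot\bot}) = \MS(V^\bot)^\bot = \MS(V)^{\bot\bot} = \overline{\MS(V)}^{\sigma(E^{**},E^*)}$ then delivers the desired identity.
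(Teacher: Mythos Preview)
Your derivation of part (b) from part (a) is correct and coincides exactly with the paper's route: apply (a) in $E^*$ to the set $V^\bot$ (which is again a bounded convex norm zero neighborhood, since $V$ is bounded and absorbing), and then invoke the bipolar identity $A^{\bot\bot}=\overline{A}^{\sigma(E^{**},E^*)}$ for convex $A\ni 0$.

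For part (a) the paper gives no self-contained argument; it simply cites \cite[Theorem~1.1.9]{wong}. Your forward inclusion is fine. For the reverse inclusion your strategy is the standard one, but you stop at the acknowledged obstacle of actually constructing the sublinear $q$, so as written the proposal is not yet a proof. The missing ingredient is the explicit ``sandwich'' functional
\[
q(x):=\inf\bigl\{\,p_V(x+u)-|f|(u)\;:\;u\in E_+\,\bigr\}\qquad(x\in E).
\]
Sublinearity of $q$ follows from sublinearity of $p_V$ together with the super-additivity of $|f|$ on $E_+$ that you correctly identified; from $|f|\leq p_V$ on $E_+$ one gets $q(x)\geq -p_V(-x)>-\infty$. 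Taking $u=0$ gives $q\leq p_V$, while for $x\in E_+$ the choice $u=x$ gives $q(-x)\leq p_V(0)-|f|(x)=-|f|(x)$. Hahn--Banach then produces a linear $g\leq q$; such a $g$ automatically satisfies $g\leq p_V$ (hence $g\in V^\bot$) and, for $x\in E_+$, $g(x)=-g(-x)\geq -q(-x)\geq |f|(x)\geq |f(x)|$ (hence $g\in E^*_+$ and $-g\leq f\leq g$). This is precisely the interlacing you alluded to; once it is written down, your outline becomes a complete proof of (a).
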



In fact, part (a) of the above is a particular case of \cite[Theorem 1.1.9]{wong} (note that the Mackey topology $\tau(E,E^*)$ coincides with the norm topology on $E$).
On the other hand, part (b) follows from part (a) as well as the facts that $V^\bot$ is a bounded convex norm zero neighborhood of $E^*$ (since $V$ is bounded convex norm zero neighborhood of $E$) and that $A^{\bot\bot} = \overline{A}^{\sigma(E^{**},E^*)}$ for any convex subset $A\subseteq E$ with $0\in A$. 


\begin{defn}
	Let $F$ be a real ordered Banach space. 
	Then $F^*$ is called a \emph{dual function system} (respectively, \emph{dual quasi-function system}) if there exist a set $I$ and an isometric (respectively, a continuous) order monomorphism $\Phi:F^*\to \ell^\infty(I;\BR)$ such that $\Phi:F^*\to \Phi(F^*)$ is a weak$^*$-homeomorphism. 
\end{defn}


If $F$ is a dual quasi-function system, then $\Phi(F^*)$ is norm-closed (as it is weak$^*$-closed, because $F^*$ is weak$^*$-complete) in $\ell^\infty(I;\BR)$, and the open mapping theorem implies that $\Phi$ is bounded below.


We will consider the completion $\tilde E$ of a real ordered normed space $E$ as a real ordered Banach space with its cone $\tilde E_+$ being the norm closure of $E_+$ in $\tilde E$ (we do not assume it to be proper).


\begin{prop}\label{prop:dual-quasi-func-sys}
	Let $E$ be a real ordered normed space. 
	The following statements are equivalent. 
	\begin{enumerate}[label=\ \ \arabic*).]
		\item $\tilde E^*$ is a dual quasi-function system.
		
		\item There exist a set $I$ and a continuous positive linear map $\Phi:\tilde E^*\to \ell^\infty(I;\BR)$ such that $\Phi: \tilde E^* \to \Phi(\tilde E^*)$ is a weak$^*$-homeomorphism.

		\item $\tilde E$ is locally decomposable (see Definition \ref{defn:loc-decomp}(c)). 
		
		\item $\tilde E  = \tilde E_+ - \tilde E_+$.
		
		\item The norm closure of $B_E^+ - B_E^+$ is a norm zero neighborhood of $E$. 
	\end{enumerate}
\end{prop}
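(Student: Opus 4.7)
My plan is to prove the cycle $(1)\Rightarrow(2)\Rightarrow(3)\Rightarrow(1)$ together with the separate equivalences $(3)\Leftrightarrow(4)$ and $(3)\Leftrightarrow(5)$. The implication $(1)\Rightarrow(2)$ is immediate: an order monomorphism is in particular a positive linear map.

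For $(3)\Rightarrow(1)$, I take $I := B_{\tilde E}^+$ and define $\Phi\colon \tilde E^* \to \ell^\infty(I;\BR)$ by the evaluation $\Phi(f)(u) := f(u)$. Then $\Phi$ is weak$^*$-continuous by construction and contractive. Positivity is clear, and using $\tilde E_+ = \RP \cdot B_{\tilde E}^+$ the converse also holds so $\Phi$ is an order monomorphism; injectivity follows because $(3)\Rightarrow(4)$ implies $\tilde E = \tilde E_+ - \tilde E_+$. Boundedness below is a direct consequence of $(3)$: if $rB_{\tilde E} \subseteq B_{\tilde E}^+ - B_{\tilde E}^+$, then each $x \in B_{\tilde E}$ decomposes as $rx = u - v$ with $u,v \in B_{\tilde E}^+$, giving $r|f(x)| \leq 2\|\Phi(f)\|_\infty$. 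Lemma \ref{lem:image-weak-st-cont-bdd-below} then promotes these properties to a weak$^*$-homeomorphism onto $\Phi(\tilde E^*)$, verifying $(1)$.

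The equivalences $(3)\Leftrightarrow(4)\Leftrightarrow(5)$ rest on an open-mapping-type upgrade lemma: if $\overline{C}^{\tilde E} \supseteq rB_{\tilde E}$ for either $C = B_{\tilde E}^+ - B_{\tilde E}^+$ or $C = \MS(B_{\tilde E})$, then $C \supseteq (r/2)B_{\tilde E}$. The proof follows the classical geometric-series argument of the open mapping theorem: iteratively choose $c_n \in 2^{-(n-1)}C$ with $\|y - \sum_{k \leq n} c_k\| < r2^{-n}$, express $c_n = 2^{-(n-1)}(u_n - v_n)$ with $u_n, v_n \in B_{\tilde E}^+$, and exploit norm-closedness of $\tilde E_+$ to sum the two tail series to elements of $2B_{\tilde E}^+$. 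With this lemma in hand, $(3)\Leftrightarrow(4)$ is routine: $(3)\Rightarrow(4)$ is trivial, and for $(4)\Rightarrow(3)$ the Baire category theorem applied to $\tilde E = \bigcup_n n(B_{\tilde E}^+ - B_{\tilde E}^+)$ provides a ball inside $\overline{B_{\tilde E}^+ - B_{\tilde E}^+}$ that the lemma then upgrades. For $(3)\Leftrightarrow(5)$, I transport closures between $E$ and $\tilde E$ using density of $B_E^+$ in $B_{\tilde E}^+$ (obtained from $E_+$ dense in $\tilde E_+$ by rescaling $u_n/\max(1,\|u_n\|)$) together with $\overline{B_E}^{\tilde E} = B_{\tilde E}$, and then apply the upgrade lemma.

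The main obstacle is $(2)\Rightarrow(3)$, because $(2)$ furnishes only positivity (not an order monomorphism) and a weak$^*$-homeomorphism (not an explicit norm estimate). The crucial step is extracting boundedness below of $\Phi$ from the weak$^*$-homeomorphism alone: the set $\Phi(\tilde E^*) \cap B_{\ell^\infty(I)}$ is weak$^*$-compact in $\ell^\infty(I)$ by Alaoglu's theorem, and since $\Phi^{-1}$ is weak$^*$-continuous on $\Phi(\tilde E^*)$, its preimage in $\tilde E^*$ is weak$^*$-compact, hence norm-bounded by the uniform boundedness principle. Once $\Phi$ is bounded below by some $c > 0$, positivity yields $\|\Phi(f)\|_\infty \leq \|\Phi(g)\|_\infty \leq \|\Phi\|$ whenever $-g \leq f \leq g$ with $g \in B_{\tilde E^*}^+$, so $\MS(B_{\tilde E^*})$ is norm-bounded in $\tilde E^*$. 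Proposition \ref{prop:Jameson}(a) with $V = B_{\tilde E}$ identifies $\MS(B_{\tilde E^*}) = \MS(B_{\tilde E})^\bot$, and the bipolar theorem applied to the convex symmetric set $\MS(B_{\tilde E})$ makes $\overline{\MS(B_{\tilde E})} = \MS(B_{\tilde E})^{\bot\bot}$ a zero neighborhood of $\tilde E$. The upgrade lemma applied to $C = \MS(B_{\tilde E})$ then shows $\MS(B_{\tilde E})$ itself contains a ball, and the inclusion $\MS(B_{\tilde E}) \cap B_{\tilde E} \subseteq B_{\tilde E}^+ - B_{\tilde E}^+$ (from $x = (u+x)/2 - (u-x)/2$ with $\|(u \pm x)/2\| \leq 1$ when $\|u\|,\|x\| \leq 1$) finally delivers $(3)$.
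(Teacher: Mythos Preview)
Your argument for $(2)\Rightarrow(3)$ contains a genuine gap. The claim that $\Phi(\tilde E^*)\cap B_{\ell^\infty(I)}$ is weak$^*$-compact is not justified by Alaoglu's theorem: Alaoglu gives only that $B_{\ell^\infty(I)}$ is weak$^*$-compact, and the intersection with $\Phi(\tilde E^*)$ is compact precisely when it is weak$^*$-closed in $B_{\ell^\infty(I)}$, which is equivalent (via Krein--Smulian) to $\Phi(\tilde E^*)$ being weak$^*$-closed in $\ell^\infty(I)$. But that is essentially the conclusion you want---by Lemma~\ref{lem:image-weak-st-cont-bdd-below} it is equivalent to $\Phi$ being bounded below. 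So the argument is circular at this step.

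The conclusion that $\Phi$ is bounded below \emph{is} correct, but the right mechanism is the predual. Since $\Phi$ is weak$^*$-continuous one has $\Phi = T^*$ for a bounded map $T:\ell^1(I;\BR)\to \tilde E$. The weak$^*$-continuity of $\Phi^{-1}$ on $\Phi(\tilde E^*)$ says that for every $x\in\tilde E$ the functional $\Phi(f)\mapsto f(x)$ is $\sigma(\ell^\infty,\ell^1)$-continuous on $\Phi(\tilde E^*)$, hence is the restriction of some $\mu_x\in\ell^1(I;\BR)$; then $f(x)=\langle \Phi(f),\mu_x\rangle = f(T\mu_x)$ for all $f$, so $x=T\mu_x$ and $T$ is surjective. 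The open mapping theorem now gives $\Phi=T^*$ bounded below. This is exactly the paper's route: it passes to the predual isomorphism $\Psi:Y\to\tilde E$ (where $Y=\ell^1(I;\BR)/\ker T$) and transports the local decomposability of $\ell^1$ directly through $\Psi$, bypassing your detour through $\MS(B_{\tilde E^*})$, Proposition~\ref{prop:Jameson}, the bipolar theorem, and the upgrade lemma. Once the gap is repaired your longer path does work, but the predual argument is both the fix and the shorter proof.

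The remaining parts of your proposal are correct and close in spirit to the paper, which proves $(1)\Rightarrow(2)\Rightarrow(3)\Rightarrow(4)\Rightarrow(1)$ together with $(3)\Rightarrow(5)\Rightarrow(2)$; your $(3)\Rightarrow(1)$ via evaluation on $B_{\tilde E}^+$ matches the paper's $(4)\Rightarrow(1)$, and your Baire-plus-upgrade treatment of $(3)\Leftrightarrow(4)$ and $(3)\Leftrightarrow(5)$ supplies the details the paper leaves implicit.
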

\begin{proof}
	Consider $i_{\tilde E^*}: \tilde E^*\to \ell^\infty\big(B_{\tilde E}^+\big)$ to be the map given by evaluations. 
	Obviously, $i_{\tilde E^*}$ is a positive contraction. 
	As $\tilde E$ is complete, we know that $i_{\tilde E^*}$ is weak$^*$-continuous; i.e. $i_{\tilde E^*}^*\big(\ell^1\big(B_{\tilde E}^+\big)\big)\subseteq \tilde E$.
	Moreover, the positivity of $i_{\tilde E^*}$ gives $i_{\tilde E^*}^*\big(\ell^1\big(B_{\tilde E}^+\big)_+\big)\subseteq \tilde E_+$. 
	
	\smnoind
	(1) $\Rightarrow$ (2). 
	This implication is clear, as order monomorphisms are positive. 
	
	\smnoind
	(2) $\Rightarrow$ (3). 
	Let $I$ and $\Phi$ be as in Statement (2).
	Denote 
	$$Y:= \{\omega|_{\Phi(\tilde E^*)}: \omega\in \ell^1(I;\BR) \} \quad \text{and} \quad Y_+:= \{\omega|_{\Phi(\tilde E^*)}: \omega\in \ell^1(I;\BR)_+ \}.$$ 
	We equip $Y$ with the quotient norm induced from $\ell^1(I;\BR)$. 
	Since $\Phi: \tilde E^*\to \Phi(\tilde E^*)$ is a positive weak$^*$-homeomorphism, $\Phi^*$ induces a Banach space isomorphism $\Psi: Y\to \tilde E$ satisfying $\Psi(Y_+)\subseteq \tilde E_+$.
	On the other hand, since $B_{\ell^1(I;\BR)}^+ - B_{\ell^1(I;\BR)}^+$ contains $B_{\ell^1(I;\BR)}$, we know that $B_Y^+ - B_Y^+$ is a norm zero neighborhood of $Y$. 
	These show that $B_{\tilde E}^+ - B_{\tilde E}^+$ is a norm zero neighborhood of $\tilde E$, as is required.
	
	\smnoind
	(3) $\Rightarrow$ (4). 
	This implication is clear.

	\smnoind
	(4) $\Rightarrow$ (1). 
	If $f\in \ker i_{\tilde E^*}$, then $f(\tilde E_+) = \{0\}$, and Statement (4) implies that $f=0$. 
	This shows that $i_{\tilde E^*}$ is injective.
	If $f\in \tilde E^*\setminus \tilde E^*_+$, then we can find $y\in \tilde E_+$ with $f(y) < 0$, and by considering $y/\|y\|\in B_{\tilde E}^+$, we know that $i_{\tilde E^*}(f)\not\geq 0$. 
	Consequently, $i_{\tilde E^*}$ is an order monomorphism. 
	Pick any $u \in B_{\tilde E}^+\setminus \{0\}$.
	The point-mass $\delta_u^1$ at $u$ belongs to $\ell^1(B_{\tilde E}^+)_+$ and satisfies $i_{\tilde E^*}^*(\delta_u^1) = u$.
	This gives  $i_{\tilde E^*}^*\big(\ell^1(B_{\tilde E}^+)_+\big) = \tilde E_+$.
	Statement (4) then implies that $i_{\tilde E^*}^*$ is a continuous surjection from $\ell^1(B_{\tilde E}^+)$ onto $\tilde E$, and thus, it is an open map. 
	Therefore, $i_{\tilde E^*}: \tilde E^* \to i_{\tilde E^*}(\tilde E^*)$ is a Banach space isomorphism.
	It is a folklore result (see Lemma \ref{lem:image-weak-st-cont-bdd-below}) that the weak$^*$-continuous map $i_{\tilde E^*}$ will then be a weak$^*$-homeomorphism from $\tilde E^*$ to $i_{\tilde E^*}(\tilde E^*)$ as well.  
	
	\smnoind
	(3) $\Rightarrow$ (5). 
	Let $\overline{B_E^+ - B_E^+}$  be the norm closure of $B_E^+ - B_E^+$.
	It is clear that $\overline{B_{\tilde E}^+ - B_{\tilde E}^+}\cap E\subseteq \overline{B_E^+ - B_E^+}$, and the implication follows. 
	
	\smnoind
	(5) $\Rightarrow$ (2). 
	Statement (5) implies that there exists $s > 0$ with
	\begin{equation}\label{eqt:diff-pos-ball}
		s B_E\subseteq \overline{B_E^+ - B_E^+}.
	\end{equation}
	As $B_E^+\subseteq B_{\tilde{E}}^+$, the restriction map is a normal surjective $^*$-homomorphism $\Theta: \ell^\infty(B_{\tilde{E}}^+) \to \ell^\infty(B_E^+)$. 
	Let us set $k_{\tilde E^*} = \Theta\circ i_{\tilde E^*}$. 
	Then $k_{\tilde E^*}$ is a weak$^*$-continuous positive contraction. 
	Fix any $f\in \tilde E^*$ with $\|f\| = 1$, and denote $g:= f|_E$. 
	Then $g\in E^*$ with $\|g\| = 1$.
	Relation \eqref{eqt:diff-pos-ball} implies 
	$$\sup \{ |g(u-v)|: u,v\in B_E^+ \} \geq s .$$ 
	From this, we know that $\sup \{ |g(w)|: w\in B_E^+ \} \geq s/2 $. 
	Hence, $\|k_{\tilde E^*}(f)\| \geq s/2$, and  $k_{\tilde E^*}$ is bounded below. 
	Again, Lemma \ref{lem:image-weak-st-cont-bdd-below} implies that $k_{\tilde E^*}$ is a weak$^*$-homeomorphism from $\tilde E^*$ onto $k_{\tilde E^*}(\tilde E^*)$. 
\end{proof}


\section*{Acknowledgement}


The authors are supported by the National Natural Science Foundation of China (11871285) and  by the Nankai Zhide Foundation. 
We would like to thank Prof. van Suijlikom and Prof. D'Andrea for some comments on this work. 


\end{document}